\documentclass[reqno]{amsart}
\pdfoutput=1\relax\pdfpagewidth=8.26in\pdfpageheight=11.69in\pdfcompresslevel=9
\usepackage{mathptmx,amsmath,mathbbol,lineno}
\usepackage{hyperref}
\usepackage{amsfonts}
\usepackage{amssymb}
\usepackage{amscd}
\usepackage{latexsym}
\usepackage{graphics}
\usepackage{float}
\usepackage[all,knot,poly,arc,web]{xy}
\usepackage{ifthen}
\usepackage{epsfig}
\usepackage{tikz}
\usetikzlibrary{decorations.fractals}
\usetikzlibrary{lindenmayersystems}
\usetikzlibrary[shadings]

\newtheorem{theorem}{Theorem}
\newtheorem{Hypotheses (H)}{Hypotheses (H)}
\newtheorem{definition}{Definition}

\newtheorem{corollary}{Corollary}
\newtheorem{lemma}{Lemma}
\newtheorem{remark}{Remark}
\newtheorem{example}{Example}

\newtheorem{prop}{Proposition}

\newtheorem{assumption}{Assumption}

\def\neweq#1{\begin{equation}\label{#1}}
\def\endeq{\end{equation}}

\def\phi{\varphi}

\date{}

\def\pp{\overrightarrow{p}(\cdot) }
\def\qq{\overrightarrow{q}(\cdot) }

\def\fun(#1,#2,#3){\mathcal{E}_{_{#1}}(#2, #3)}
\def\sob(#1,#2){W^{1,#1}(#2)}
\def\forma(#1){\fun({\pp,\qq},\cdot,\cdot)}

\def\l2{L^2(\Omega)}

\def\funn(#1,#2,#3,#4,#5){\langle J_{#1} #2, #3 \rangle_{_{#4,#5}}}
\def\nn(#1,#2){\|#1\|_{_{#2}}}

\AtBeginDocument{{\noindent\small
{\em {\color{blue}Potential Analysis} {\color{orange}(to appear)}}}
\vspace{9mm}}

\begin{document}

\title{A priori estimates for general elliptic and parabolic boundary value problems over irregular domains}

\author{Maria R. Lancia,\,\,\,Alejandro V\'elez\,-\,Santiago}

\address{Maria R. Lancia\hfill\break
Dipartimento di Scienze di Base e Applicate per I'Ingegneria\\
``Sapienza" Universit\`{a} di Roma \\ Roma, Italy}
\email{mariarosaria.lancia@uniroma1.it}

\address{Alejandro V\'elez\,-\,Santiago\hfill\break
Department of Mathematics\\
University of Puerto Rico at R\'io Piedras\\
San Juan, Puerto Rico\, 00925}
\email{alejandro.velez2@upr.edu,\,\,\,dr.velez.santiago32@gmail.com}

\subjclass[2020]{35J25, 35J20, 49J40, 35D30, 35B45, 47F05, 47A07, 47A30}
\keywords{Neumann boundary conditions, Robin boundary conditions, Wentzell boundary conditions, Nonlocal operators, Weak solutions, A priori estimates, Dirichlet-to-Neumann map, Extension domains,Upper $d$-Ahlfors measure}

\numberwithin{equation}{section}

\begin{abstract}
We investigate the realization of a myriad of general local and nonlocal inhomogeneous elliptic and parabolic boundary value problems over classes of irregular regions.
We present a unified approach in which either local or nonlocal Neumann, Robin, and Wentzell boundary value problems are treated simultaneously. We establish
solvability and global regularity results for both the stationary and time-dependent heat equations governed by general differential operators with unbounded
measurable coefficients and various boundary conditions at once, first on a general framework, and then by presenting concrete important examples of irregular domains,
Wentzell-type boundary conditions, and nonlocal maps. As a consequence, we develop a priori estimates for multiple differential equations under various situations, which
are tied to a large number of applications performed over real world regions, such heat transfer, electrical conductivity, stable-like processes (probability theory),
diffusion of medical sprays in the bronchial trees, and oceanography (among many others).
\end{abstract}
\maketitle

\section{Introduction}\label{sec01}

\indent The aim of this paper is to establish the solvability and regularity theory over for a general class of local and nonlocal elliptic and parabolic boundary value problems
with various boundary conditions on a large class of ``bad" domains, which include among others, many fractal-type domains, rough domains, and ramified domains.
By a {\it ``bad" domain} we will refer to a non-Lipschitz domain, in the sense that the boundary is ``worst" than the one of a bounded Lipschitz domain.
Although Lipschitz (or smoother) domains are clear examples of regions where all the arguments presented here will work, we will concentrate mainly
on regions with more irregular structure, as the smooth case (we refer to {\it ``smooth domains"} those domains that are at least Lipschitz) has been intensely investigated,
with many results achieved. Boundary value problems over more general regions are less investigated, with many open questions, especially when one examines
the regularity theory over irregular domains. In our case, we will also deal with generalized differential operators involving measure coefficients, many which are
not required to be bounded, and in addition we will allow the inclusion of classes of nonlocal-type interior and boundary operators. For some boundary value problems,
it will be important to keep track of the boundary terms, and given the fact that the regions under consideration are in general of irregular nature,
one needs to pair the domain with a suitable boundary measure for which the corresponding problem may be well-posed. Therefore, to deal with such
complexities in the different equations of interest, we seek to develop a unified approach in which one may encapsulate all the different structures on the operators,
all the boundary conditions, and all the different domains into a single model equation.\\
\indent To begin, before posing our problems of interest, we will need to introduce a generalized notion of normal derivative (in the weak sense) for Sobolev
functions on ``bad" domains.\\

\begin{definition}\label{Def-gen-normal}
Given $\alpha_{ij},\,\hat{a}_i$ measurable functions on $\overline{\Omega}$ (for $i,\,j\in
\{1,\ldots,N\}$), let $u\in W^{1,1}_{loc}(\Omega)$
be such that $$\displaystyle\sum^N_{i,j=1}(\alpha_{ij}\partial_{x_j}u+\hat{a}_ju)\partial_{x_i}\varphi\in L^1(\Omega),
\indent\textrm{for every}\,\,\,\varphi\in C^1(\overline{\Omega}).$$ Let $\mu$ be a Borel regular measure on the boundary
$\Gamma:=\partial\Omega$. If there exists a function $f\in L^1_{loc}(\Omega)$ such that $$\displaystyle\int_{\Omega}
\displaystyle\sum^N_{i,j=1}(\alpha_{ij}\partial_{x_j}u+\hat{a}_ju)\partial_{x_i}\varphi\,dx=\displaystyle\int_{\Omega}f\varphi\,dx+
\displaystyle\int_{\Gamma}\varphi\,d\mu,\indent\textrm{for all}\,\,\,\varphi\in C^1(\overline{\Omega}),$$
then we say that $\mu$ is the {\bf generalized normal derivative} of $u$, and we denote
$$\mathcal{N}^{\ast}(\bar{\alpha}Du+\hat{\mathfrak{a}}):=\displaystyle\sum^N_{i,j=1}\mathcal{N}^{\ast}_i
(\alpha_{ij}\partial_{x_j}u+a_ju):=\mu.$$
(where $\bar{\alpha}:=(\alpha_{11},\ldots,\alpha_{ij},\ldots,\alpha_{NN}),\,\,
\hat{\mathfrak{a}}:=(\hat{a}_1,\ldots,\hat{a}_N)$), and in this case, we say that $u$ is the {\bf weak solution} of the Eq.
$$\left\{
\begin{array}{lcl}
-\displaystyle\sum^N_{i,j=1}\partial_i(\alpha_{ij}\partial_{x_j}u+\hat{a}_ju)\,=\,f\,\,\,\textrm{in}\,\,\Omega\\[2ex]
\,\,\displaystyle\sum^N_{i,j=1}\mathcal{N}^{\ast}_i(\alpha_{ij}\partial_{x_j}u+\hat{a}_ju)\,=\,1\,\,\,\textrm{on}\,\,\,\Gamma\\
\end{array}
\right.$$
\end{definition}
\indent If $\Omega$ is sufficiently smooth and $u$ is smooth enough, then we see that
$$\displaystyle\sum^N_{i,j=1}\mathcal{N}^{\ast}_i(\alpha_{ij}\partial_{x_j}u+\hat{a}_ju)=\displaystyle\sum^N_{i,j=1}
(\alpha_{ij}\partial_{x_j}u+\hat{a}_ju)\nu_i$$
for $\nu:=(\nu_1,\ldots,\nu_N)$ the outer unit normal at $\Gamma$ (with respect to the classical Surface measure on $\Gamma$).
Therefore, for the remaining of the paper, we will denote
$$\displaystyle\sum^N_{i,j=1}\mathcal{N}^{\ast}_i(\alpha_{ij}\partial_{x_j}u+\hat{a}_ju):=\displaystyle\sum^N_{i,j=1}
(\alpha_{ij}\partial_{x_j}u+\hat{a}_ju)\nu_{\mu_i},$$
where for ``bad" domains, $\nu_{\mu_i}$ stands as the interpretation of the $i$-th ``outward unit vector" in $\Gamma$
with respect to a suitable measure $\mu$ supported in $\Gamma$ (in the sense of the above definition).\\
\indent We now present more concretely the unified general structure of the differential operators under consideration. To do so, we first recall that
throughout most of this paper we will consider the set $\Omega\subseteq\mathbb{R\!}^N$ as a bounded extension domain, for $N\geq2$.
and $\mu$ to be an upper $d$-Ahlfors measure supported on $\Gamma:=\partial\Omega$, for $d\in(N-2,N)$ (see Section \ref{sec02} for these definitions).
Then, we consider the differential operators $\mathcal{A}$ and $\mathcal{B}$, formally given by
\begin{equation}
\label{1.01}\mathcal{A}u:=-\displaystyle\sum^N_{^{i,j=1}}\partial_i(\alpha_{ij}(x)\partial_{x_j}u+\hat{a}_j(x)u)
+\displaystyle\sum^N_{i=1}\check{a}_i(x)\partial_{x_i}u+\lambda(x)u+\mathcal{J}_{_{\Omega}}u
\end{equation}
and
\begin{equation}
\label{1.02}\mathcal{B}u:=\displaystyle\sum^N_{^{i,j=1}}(\alpha_{ij}(x)\partial_{x_j}u+\hat{a}_j(x)u)\nu_{\mu_i}
+\mathcal{L}_{\mu}u+\beta(x)u+\Theta_{_{\Gamma}}u\,,
\end{equation}
where
\begin{equation}\label{1.03}
\langle\mathcal{L}_{\mu}u,\varphi\rangle=\Lambda_{_{\Gamma}}(u,\varphi),
\end{equation}
that is, $\mathcal{L}_{\mu}$ is the ``second-order" operator associated with a bilinear continuous weakly coercive (not necessarily symmetric)
``Wentzell-type" form $(\Lambda_{_{\Gamma}},D(\Lambda_{_{\Gamma}}))$
(for $D(\Lambda_{_{\Gamma}})$ a subspace of $H^{r_d/2}_{\mu}(\Gamma)$; see Section \ref{sec03} for more details),
and where $\mathcal{J}_{_{\Omega}}:H^{\mathfrak{s}/2}(\Omega)\rightarrow H^{-\mathfrak{s}/2}(\Omega)$ and
$\Theta_{_{\Gamma}}:H^{r_d/2}_{\mu}(\Gamma)\rightarrow H^{-r_d/2}_{\mu}(\Gamma)$ stand as linear bounded
operators, such that $\mathcal{J}_{_{\Omega}}1=\Theta_{_{\Gamma}}1=0$,\, $\langle\mathcal{J}_{_{\Omega}}u,1\rangle_{\mathfrak{s}/2}=0=
\langle\Theta_{_{\Gamma}}v,1\rangle_{r_d/2}$, \,$\langle\mathcal{J}_{_{\Omega}}u,u\rangle_{\mathfrak{s}/2}\geq0$,
and $\langle\Theta_{_{\Gamma}}v,v\rangle_{r_d/2}\geq0$,
for each $u\in H^{\mathfrak{s}/2}(\Omega)$ and $v\in H^{r_d/2}_{\mu}(\Gamma)$ (see next section for the definition of this trace space).
Here $\alpha_{ij}\in L^{\infty}(\Omega)$,\,$\hat{a}_i\in L^{r_1}(\Omega)$, \,$\check{a}_i \in L^{r_2}(\Omega)$ ($1\leq i,\,j\leq N$),\,$\lambda
\in L^{r_3}(\Omega)$, and $\beta\in L^{s}_{\mu}(\Gamma)$, where $r_1,\,r_2>N\chi_{_{\{N>2\}}}+\chi_{_{\{N=2\}}}$,\, $r_3>N/2$,
and $s>d(d+2-N)^{-1}$. Furthermore,
$\langle\cdot,\cdot\rangle_{r_d/2}$ denotes the duality between $H^{r_d/2}_{\mu}(\Gamma)$ and
its dual space $H^{-r_d/2}_{\mu}(\Gamma)$. We also assume that $\mathcal{A}$ is uniformly strongly elliptic,
that is, there exists a constant $c_0>0$ such that
\begin{equation}
\label{1.04}\displaystyle\sum^N_{^{i,j=1}}\alpha_{ij}(x)\xi_i\xi_j\,\geq\,c_0\,|\xi|^2,
\indent\,\forall\,x\in\overline{\Omega},\,\,\,\forall\,\xi=(\xi_1,\ldots,\xi_N)\in\mathbb{R\!}^N.
\end{equation}
Hence throughout the paper we will assume that the conditions (\ref{1.01}),\, (\ref{1.02}),\, (\ref{1.03}), and (\ref{1.04}) hold.\\
\indent On the first part of the paper, consider the generalized inhomogeneous linear elliptic boundary value problem with
general (mostly nonlocal) boundary conditions, formally given by\\
\begin{equation}
\label{1.05}\left\{
\begin{array}{lcl}
\mathcal{A}u\,=\,f(x)\indent\,\textrm{in}\,\,\Omega\\
\mathcal{B}u\,=\,g(x)\indent\,\textrm{on}\,\,\Gamma\\
\end{array}
\right.
\end{equation}\indent\\
where $f\in L^p(\Omega)$ and $g\in L^q_{\mu}(\Gamma)$ for given $p,\,q\in[1,\infty]$.
We will focus our work on the following conditions on the operators $\mathcal{B}$ and $L_{\mu}$:
\begin{enumerate}
\item[{\bf (N)}]\,\,\,{\it Neumann boundary conditions:}\,\,\,If $\mathcal{L}_{\mu}=0$ and $\beta=0$;
\item[{\bf (R)}]\,\,\,{\it Robin boundary conditions:}\,\,\,If $\mathcal{L}_{\mu}=0$ and $\beta\neq0$;
\item[{\bf (W)}]\,\,\,{\it Wentzell boundary conditions:}\,\,\,If $\mathcal{L}_{\mu}\neq0$.
\end{enumerate}
We stress that the case {\bf (W)} is a delicate one, and at times we may need to assume some additional structure at the boundary
of the domain, such that the notion of tangential gradient makes sense, and that the operator $\mathcal{L}_{\mu}$ given by (\ref{1.03})
may be well defined (refer to next section for more details). Then under some conditions on $p$ and $q$, we show that a notion of weak solution
to problem (\ref{1.05}) can be defined in a suitable way, and then develop a priori estimates for weak solutions of the equation (\ref{1.05}). In particular,
if $p>N/2$ and $q>d(2+d-N)^{-1}$, then a weak solution $u\in\mathcal{W}_2(\Omega;\Gamma)$ is globally bounded, with
\begin{equation}\label{H-norm}
\max\left\{\left\|u\right\|_{_{\infty,\Omega}},\,\left\|u\right\|_{_{\infty,\Gamma}}\right\}\,\leq\,C\left(\|f\|_{_{p,\Omega}}+\|g\|_{_{q,\Gamma}}+\|u\|_{_{2,\Omega}}\right)
\end{equation} for some constant $C>0$. Moreover, if either $\lambda\in L^{r_3}(\Omega)$ or $\beta\in L^{s}_{\mu}(\Gamma)$ is ``sufficiently positive"
(see Remark \ref{coercivity}), then
\begin{equation}\label{H-norm-coercive}
\max\left\{\left\|u\right\|_{_{\infty,\Omega}},\,\left\|u\right\|_{_{\infty,\Gamma}}\right\}\,\leq\,C\left(\|f\|_{_{p,\Omega}}+\|g\|_{_{q,\Gamma}}\right).
\end{equation}
The conditions $p>N/2$ and $q>d(2+d-N)^{-1}$ are optimal in order to obtain $L^{\infty}$-estimates for the weak solutions to problem (\ref{1.05}).
However, if either $p\leq N/2$ or $q\leq d(2+d-N)^{-1}$, then some norm estimates for weak solutions are also obtained, and in addition, some maximum principles
are established.\\
\indent The second part of the paper deals with the generalized heat equation
\begin{equation}
\label{1.06}\left\{
\begin{array}{lcl}
u_t-\mathcal{A}u\,=\,f(t,x)\indent\,\textrm{in}\,\,(0,\infty)\times\Omega\\
\,\,\,\,\mathcal{B}u\,\,\,\,\,\,\,\,=\,g(t,x)\indent\,\textrm{on}\,\,(0,\infty)\times\Gamma\\
\,u(0,x)=u_0(x)\,\,\,\,\,\,\,\,\,\,\,\,\,\,\,\,\,\,\,\,\,\,\,\,\,\,\,\,\textrm{in}\,\,\Omega
\end{array}
\right.
\end{equation}\indent\\
Then given $u_0\in L^2(\Omega)$, we show that problem (\ref{1.06}) is solvable, and unique within the class of mild solutions. Furthermore,
given $\kappa_1,\,\kappa_2,\,p,\,q\in[2,\infty)$ fulfilling
$$\displaystyle\frac{1}{\kappa_1}+\displaystyle\frac{N}{2p}<1\,\,\,\,\,\,\,\,\textrm{and}
\,\,\,\,\,\,\,\,\displaystyle\frac{1}{\kappa_2}+\displaystyle\frac{d}{2q(d+2-N)}<\frac{1}{2},$$
if $u_0\in L^{\infty}(\Omega)$,\, $f\in L^{\kappa_1}(0,T;L^p(\Omega))$, and $g\in L^{\kappa_1}(0,T;L^q_{\mu}(\Gamma))$, and
if $u\in C(0,T;L^2(\Omega))\cap L^2(0,T;\mathcal{W}_2(\Omega;\Gamma))$ is a weak solution of problem (\ref{1.06}), then
\begin{equation}\label{P-norm}
\|u\|_{_{L^{\infty}(0,T;L^{\infty}(\Omega))}}
\,\leq\,C'\left(\|u_0\|_{_{\infty,\Omega}}+\|f\|_{_{L^{\kappa_1}(0,T;L^{p}(\Omega))}}+
\|g\|_{_{L^{\kappa_2}(0,T;L^{q}_{\mu}(\Gamma))}}\right),
\end{equation}
for some constant $C'>0$. Moreover, if problem (\ref{1.06}) is local in time, then we show that the corresponding weak solution is globally bounded in space, with
\begin{equation}\label{P-norm-global}
\max\left\{\|u\|_{_{L^{\infty}(T_0,T;L^{\infty}(\Omega))}},\,\|u\|_{_{L^{\infty}(T_0,T;L^{\infty}_{\mu}(\Gamma))}}\right\}
\,\leq\,C'\left(\|u_0\|_{_{2,\Omega}}+\|f\|_{_{L^{\kappa_1}(0,T;L^{p}(\Omega))}}+
\|g\|_{_{L^{\kappa_2}(0,T;L^{q}_{\mu}(\Gamma))}}\right)
\end{equation}
whenever $0<T_0<T$ and $u_0\in L^2(\Omega)$.\\
\indent When a domain is ``bad enough,"
solvability results and a priori estimates for boundary value problems with either Neumann, Robin, or Wentzell boundary conditions, constitute
a much more delicate situation, as some problems of these types may fail to be well-posed. In \cite{AR-WAR03} it is established
a necessary and sufficient condition for a Robin problem to be well-posed. Such condition is closely related to the selection of an appropriate
measure on the boundary, called an {\it admissible measure} (refer to \cite{AR-WAR03} for the definition and more details).\\
\indent Under the above structure, for the case when the differential operator $\mathcal{A}$ given by (\ref{1.01}) is of Laplace-type (or symmetric with bounded coefficients),
the solvability and regularity theory for local and nonlocal elliptic boundary value problems of the kind considered in this paper
(over non-smooth domains) have been considered by many authors;
see for instance \cite{BASS-CHEN08,BIE-WAR,BOU-AVS18,FUK-TOM96,FUK-TOM95,LAD-URAL68,LAN-VELEZ-VER18-1,LAN-VELEZ-VER15-1,LAP-PANG95,LEM-MIL2011,LEM-MIL-SP2013,MAZ68,Str,VELEZ2013-1,ALEJO-PhD10,AVS-WAR,WAR14-L,WAR12,WAR02} (among others). In the case of local non-symmetric operators of type
(\ref{1.01}) and (\ref{1.02}) with bounded coefficients, Daners \cite{DAN} established a priori estimates for weak solution under Robin-type boundary conditions over
arbitrary bounded domains, and finer regularity results were obtained by Nittka \cite{NITTKA,NITTKA2010} over bounded Lipschitz domains, with a nonlocal counterpart obtained in \cite{VELEZ10.1,ALEJO-PhD10}. Now, the situation in which
the coefficients of the lower-order terms in (\ref{1.01}) and (\ref{1.02}) are of unbounded nature is less known, but is motivated in the pioneer works
by Stampacchia \cite{STAM65} and of Ladyzhenskaya and Ural'tseva \cite{LAD-URAL68} (for the Dirichlet problem over Lipschitz-like domains, and domains with the cone condition).
Under similar framework (bounded Lipschitz domains), Nittka \cite{NITTKA2013-1,NITTKA2010} developed fine regularity results for general classes of
Robin-type problems involving unbounded coefficients, and recently in \cite{APU-NAZ-PAL-SOF21}, the authors established generalized solvability results
for a class of Wentzell-type boundary value problem of type (\ref{1.01}).\\
\indent Concerning the realization of inhomogeneous parabolic problems with unbounded coefficients, there is much less literature, and almost non-existence
of literature in the case of non-smooth domains. One can
recall first the results in \cite{LAD-URAL68} over Lipschitz-type domains (for the Dirichlet problem), and their generalization to a broader class of domains recently achieved in \cite{KIM-RYU-WOO22} (for the Dirichlet problem), where they deal with inhomogeneous interior differential equation with homogeneous boundary conditions. For Neumann and Robin boundary value problems,
Nittka \cite{NITTKA2014} developed a very useful procedure whose motivation resides in \cite{LAD-URAL68}, from which he was able to establish the solvability
and global regularity for local boundary value problems of type (\ref{1.01}) and (\ref{1.02}) (for $\mathcal{L}_{\mu}=\mathcal{J}_{_{\Omega}}=\Theta_{_{\Gamma}}=0$)
with unbounded lower-order terms over bounded Lipschitz domains. In \cite{CREO-LAN-NAZ20-1,CREO-LAN-NAZ-VER19-1}, solvability results and some $W^{1,2}$-estimates for weak solutions to an inhomogeneous parabolic equation of Wentzell-type were obtained over
$2$-dimensional piecewise smooth domains, which included a nonlocal boundary Besov operator of type (\ref{Besov-Boundary-Map}). For smoother regions, solvability results for a class of parabolic Wentzell-type boundary value problem of type (\ref{1.06}) were obtained in \cite{APU-NAZ-PAL-SOF22}.\\
\indent In the following paper, we deal with the solvability and global regularity theory for the boundary value problems (\ref{1.05}) and (\ref{1.06})
under their full generality over a large class of irregular regions. The model equations (\ref{1.05}) and (\ref{1.06}) allow us to deal simultaneously
Neumann, Robin, and Wentzell-type boundary conditions, local or nonlocal problems in both the interior and the boundary. The class of nonlocal maps is given
in a general form, which include well-known important examples such as Besov-type maps, and the Dirichlet-to-Neumann map (see section \ref{sec06}).
Furthermore, in the structure of the operators $\mathcal{A}$ and $\mathcal{B}$ given by (\ref{1.01}) and (\ref{1.02}), respectively, we are not assuming symmetry
in the bounded coefficient $\alpha_{ij}$, and the lower-order coefficients are possibly unbounded. These generalities produce substantial complications throughout all
calculations and a priori estimates.\\
\indent To begin with the framework and foundation of our problem, we follow the standard approach employed by Daners \cite{DAN} and Nittka \cite{NITTKA},
which is based in the analysis of the corresponding bilinear forms which give a variational flavor of the problem. In our situation, we consider the (possibly
non-symmetric) form\\[2ex]
$\mathcal{E}_{\mu}(u,v):=$\\
$$\displaystyle\int_{\Omega}\left(\displaystyle\sum^N_{^{i,j=1}}(\alpha_{ij}(x)\partial_{x_j}u+
\hat{a}_i(x)u)\partial_{x_i}v+\displaystyle\sum^N_{^{i=1}}\check{a}_i(x)\partial_{x_i}uv+\lambda(x)uv\right)\,dx+
\Lambda_{_{\Gamma}}(u,v)+\displaystyle\int_{\Gamma}\beta(x)uv\,d\mu+
\left\langle\mathcal{J}_{_{\Omega}}u,v\right\rangle_{s/2}+\left\langle\Theta_{_{\Gamma}}u,v\right\rangle_{r_d/2},$$
which is directly related with the variational formulation of the elliptic problem (\ref{1.05}). The continuity and weak coercivity of this bilinear form provides a well-defined notion of a weak solution of problem (\ref{1.05}), and also highlights the possible environments in which a weak solution may be obtained. Then, by following an approach derived from a combination of
Moser's iteration-type schemes and variants of a modified refined of De Giorgi's technique, we derive general a priori estimates for weak solutions to the elliptic
problem (\ref{1.05}) under minimal assumptions on the interior and boundary data.\\
\indent Turning our attention into the inhomogeneous parabolic problem of type (\ref{1.06}), firstly, from the theory of forms, it follows that the operator associated with the form $\mathcal{E}_{\mu}(\cdot,\cdot)$ generates a compact holomorphic $C_0$-semigroup on $L^2(\Omega)$, which extrapolates to a family of holomorphic semigroups on $L^p(\Omega)$ for all $p\in[1,\infty]$, with the same angle of holomorphy. The trajectories of this semigroup are the unique mild solutions of the homogeneous parabolic problem (\ref{1.06}) (for $f=g\equiv0$; see subsection \ref{subsec04-01}).
From here we transition to the inhomogeneous problem (\ref{1.06}) in its full generality, by following the strategy employed by Nittka \cite{NITTKA2014},
with the necessary modifications and extensions in order to cover all the boundary conditions, generalities in the regions, and possible non-localities.
After deducing existence and uniqueness results for weak solutions to the inhomogeneous heat equation (\ref{1.06}), we develop a generalized method
based on a parabolic De Giorgi's technique (first discovered in \cite{LAD-URAL68} and extended to the Robin problem in \cite{NITTKA2014}) to
obtain local-global a priori $L^{\infty}$-estimates, leading to the fulfillment of inequalities (\ref{P-norm}) and (\ref{P-norm-global}).
Crucial novelties in problems (\ref{1.05}) and (\ref{1.06}) are the presence of a general ``Wentzell-type" boundary operator as in (\ref{1.03}),
the inclusion of nonlocal interior and boundary maps $\mathcal{J}_{_{\Omega}}$ and $\Theta_{_{\Gamma}}$, respectively (which are
defined generally), and the fact that the class of regions include many irregular domains.\\
\indent Concerning the Wentzell problem, we present two major examples. We first consider the case when the boundary is a compact Riemannian manifold, in which one can define a generalized Laplace-Beltrami-type operator $$\mathcal{L}_{\mu}u:=-\displaystyle\sum^{N-1}_{^{i,j=1}}\partial_{x_{\tau_i}}\left(\omega_{ij}(x)\partial_{x_{\tau_j}}u+\hat{b}_i(x)u\right)
+\displaystyle\sum^{N-1}_{i=1}\check{b}_i(x)\partial_{x_{\tau_i}}u$$ in (\ref{1.03}), for $\partial_{x_{\tau_j}}u$ denoting
the directional derivative of $u$ along the tangential directions $\tau_j$ at each point on $\Gamma$. Here again, the lower-order coefficients can be chosen
unbounded. For smooth domains and for $\hat{b}_j\equiv0$, the realization of problems involving a Wentzell boundary condition of the above structure was
recently considered in \cite{APU-NAZ-PAL-SOF22,APU-NAZ-PAL-SOF21}, but no global a priori estimates have been developed. We also consider the realization of an abstract Wentzell problem over the classical Koch snowflake domain (see Example \ref{Ex5.02}), in which the
boundary is no longer a Riemannian manifold. For the case of Laplace-type operators, this problem is known and substantial literature is known, with direct
applications to transmission problems; see for instance \cite{LAN-CEF-DELL12,LAN-VELEZ-VER18-1,LAN-VELEZ-VER15-1,LAN-VER14-2,LAN-VER12-3,LAN-VER10-1} (among others).
However, no literature is known for the realization of this problem involving non-symmetric operators and unbounded coefficients, and moreover, the
inhomogeneous parabolic problem with this boundary condition has never been considered before.\\
\indent The realization of problems (\ref{1.05}) and (\ref{1.06}) admit the presence of a general class of interior and boundary nonlocal maps,
so in this case we provide two main examples of nonlocal maps fulfilling the required properties. The first example corresponds to Besov-type interior
and boundary operators, which are well known and very much related with the transition to fractional differential equations (see \cite{WAR15-1}) and stable-like processes
(probability theory; see \cite{CHEN-KUM03}). We also include the Dirichlet-to-Neumann map, which is a classical nonlocal boundary map known in
smooth regions (see for instance \cite{JONN99,JONS97,SILV-UHLM} and the references therein). Arendt and Elst \cite{AR-ELST11} extended the notion of
a Dirichlet-to-Neumann map to general domains whose boundaries have positive finite $(N-1)$-dimensional Hausdorff measure. In our case, motivated by the
approach in \cite{AR-ELST11}, we develop a generalized notion of a Dirichlet-to-Neumann map involving more general measures at the boundary, and show that this map satisfies all the required properties. Consequently, a priori estimates for boundary value problems
involving the Dirichlet-to-Neumann boundary map can be obtained, which is something not done before. 
At the end, we also present a new realization of boundary value problems of type
(\ref{1.05}) and (\ref{1.06}) with nonlocal data of Besov structure. For the elliptic case
involving the Laplacian, this was introduced in \cite{VELEZ2013-1} (only in the boundary), but the solvability and regularity theory of
an inhomogeneous parabolic problem whose data is of nonlocal nature is a completely new valuable result, presented here for the first time.\\
\indent The paper is organized as follows. Section \ref{sec02} showcases the functional setting, with the
necessary definitions and notations, and also presents known embedding theorems and analytical results that
will be applied consistently in the subsequent sections. At the end
of this part, the first required assumption is stated. Section \ref{sec03} is devoted to the solvability and regularity theory of the elliptic problem (\ref{1.05}). After giving a sense to a proper definition for a weak solution to problem (\ref{1.05}), we proceed to establish a priori estimates for weak solutions to the equation (\ref{1.05}). The general estimates are refined in the case when the form $\mathcal{E}_{\mu}(\cdot,\cdot)$ is coercive. The section ends with some results related with inverse positivity and maximum principles, which showcase the continuous dependence of the weak solutions with the inhomogeneous data.\\
\indent In section \ref{sec04}, we turn our attention to the inhomogeneous parabolic problem (\ref{1.06}). Using the well-known semigroup approach
(coming from the bilinear form $\mathcal{E}_{\mu}(\cdot,\cdot)$), we briefly establish the well-posedness of the homogeneous parabolic problem, with some
basic properties that will be carried out throughout the whole section. From here, by following a similar approach as in \cite{NITTKA2014}, we
develop a way to define and establish the existence of weak solution to the inhomogeneous problem (\ref{1.06}). Several major modifications needed to
be taken, due to the fact that the model equations include nonlocal
maps and possibly Wentzell boundary conditions. We finish this section
by establishing the main results of the section, which consist on a priori $L^{\infty}$-estimates, and the fulfillment of inequalities (\ref{P-norm}) and
(\ref{P-norm-global}).\\
\indent Since the boundary value problems (\ref{1.05}) and (\ref{1.06}) involve several general operators and general domains, it is important to present concrete
examples of regions and the corresponding operators acting in these equations. Consequently, sections \ref{sec05}, \ref{sec06}, and \ref{sec07} will be devoted to
such examples. Section \ref{sec05} presents multiple examples of irregular regions considered by multiple authors, which are the ground for important applications.
It is important to address that all the results established in this paper are under the main assumption that the domains posses the extension property
(e.g. Definition \ref{ext-domain}), but in this section we will present several examples of domains in which the results of the paper may hold, but they are not
extension domains. In section \ref{sec06} we present two major examples of Wentzell-type boundary operators, and in section \ref{sec07} we give two main examples of
nonlocal maps, together with some final results regarding the approximation of weak solutions in terms of their corresponding nonlocal maps, as well
as the solvability and global regularity of boundary value problems (\ref{1.05}) and (\ref{1.06}) in the case when the data is of nonlocal nature.\\

\section{Preliminaries and intermediate results}\label{sec02}

\subsection{Functions spaces and basic definitions}\label{subsec02-01}

\indent In this section we review some fundamental properties of the Sobolev and Besov
spaces on our domains of interest, an we present some measure and analytical results that will be applied
on the subsequent sections.\\
\indent As usual, $L^p_{\mu}(E):=L^p(E;d\mu)$ denotes the $L^p$-space on $E$ with respect to the measure $\mu$; if $\mu$ is the $N$-dimensional Lebesgue measure,
then we write $L^p_{\mu}(E)=L^p(E)$. By $W^{1,p}(\Omega)$ we mean the well-known $L^p$-based Sobolev space. Also, denoting $\Gamma:=\partial\Omega$
the boundary of $\Omega$, and letting $\mu=\mu(\mathfrak{g})$ be a (suitable) finite Borel regular measure supported on $\Gamma$, assume that $\Gamma$ is a compact
Riemannian manifold, with Riemannian metric $\mathfrak{g}$. Then we define the space $W^{1,p}(\Gamma)$
as the completion of the space $C^1(\Gamma)$ with respect to the norm
$$\|u\|^p_{_{W^{1,p}(\Gamma)}}:=\|u\|^p_{_{p,\Gamma}}+\|\nabla_{^{\Gamma}}u\|^p_{_{p,\Gamma}},$$
where $\nabla_{^{\Gamma}}$ denotes the tangential gradient (also called Riemannian gradient) on $\Gamma$. We recall that
in this case we are assuming more regularity in the boundary, such that the above definition makes sense, and such that if $p=2$ and
$\varphi,\,\psi$ are smooth functions, one has
\begin{equation}\label{tangential}
-\displaystyle\int_{\Gamma}(\nabla_{^{\Gamma}}\cdot\nabla_{^{\Gamma}}\varphi)\psi\,d\mu=
\displaystyle\int_{\Gamma}\nabla_{^{\Gamma}}\varphi\nabla_{^{\Gamma}}\psi\,d\mu.
\end{equation}
Here $\mu=\mu(\mathfrak{b})$ stands as the measure induced by the Riemannian metric $\Bbbk$ on $\Gamma$. When $p=2$, we will denote the Sobolev spaces
previously discussed by $H^1(\Omega)$ and $H^1_{\mu}(\Gamma)$, respectively. In addition, in the case when $\Gamma$ is not a Riemannian manifold,
we follow an abstract approach, by considering $(\Lambda_{_{\Gamma}},D(\Lambda_{_{\Gamma}}))$
a bilinear continuous weakly coercive form such that $\left(D(\Lambda_{_{\Gamma}}),\|\cdot\|_{_{D(\Lambda_{_{\Gamma}})}}\right)$
is a Hilbert space, we define the Hilbert space
$$H^1_{\mu}(\Omega;\Gamma):=\left\{u\in H^{1}(\Omega)\mid u|_{_{\Gamma}}\in D(\Lambda_{_{\Gamma}})\right\},$$
endowed with the norm $$\|\cdot\|^2_{_{H^1_{\mu}(\Omega;\Gamma)}}:=\|\cdot\|^2_{_{H^{1}(\Omega)}}+\|\cdot\|^2_{_{D(\Lambda_{_{\Gamma}})}}.$$
In some situations, we will work with the space
$$W_0(\Omega):=\left\{u\in W^{1,p}(\Omega)\mid\int_{\Omega}u\,dx=0\right\}.$$
It is well-known that $W_0(\Omega)$ is a closed subspace of $W^{1,p}(\Omega)$, endowed with the norm
$$\|\cdot\|_{_{W_0(\Omega)}}:=\|\cdot\|_{_{W^{1,p}(\Omega)}}\,.$$
We will also consider the quotient space $W^{1,p}(\Omega)/\mathbb{R\!}\,$, endowed with the norm
$$\|u\|_{_{W^{1,p}(\Omega)/\mathbb{R\!}}}:=\displaystyle\min_{^{v\in W^{1,p}(\Omega),\, u-v=k}}\|v\|_{_{W^{1,p}(\Omega)}}\,,$$
for $u\in W^{1,p}(\Omega)/\mathbb{R\!}$\,, and where $k\in\mathbb{R\!}\,$ denotes a constant. It is known that
$W^{1,p}(\Omega)/\mathbb{R\!}$\, is a Banach space, because it is isomorphic to $W_0(\Omega)$. We recall that two functions
$u,\,v\in W^{1,p}(\Omega)/\mathbb{R\!}\,$ are identical if and only if $u-v=k$, for some constant $k$. Again, when
$p=2$, we will denote $H_0(\Omega):=W_0(\Omega)$, and moreover, in this case, we will also be interested in the space
$$H_{0,\mu}(\Omega;\Gamma):=\left\{u\in H_0(\Omega)\mid u|_{_{\Gamma}}\in D(\Lambda_{_{\Gamma}})\right\}.$$
Also, for $r,\,s\in[1,\infty)$, or $r=s=\infty$, we will sometimes refer to the Banach Space defined in \cite{AVS-WAR}, i.e.,
$$\mathbb{X\!}^{\,r,s}(\Omega;\Gamma):=\left\{(f,g)\,:\,f\in L^r(\Omega),\,\,g\in L^s_{\mu}(\Gamma)\right\},$$
endowed with norm
$$|\|(f,g)\||_{_{r,s}}:=\|f\|_{_{r,\Omega}}+\|g\|_{_{s,\Gamma}}\,,
\indent\,\textrm{if}\,\,\,r,\,s\in[1,\infty),$$
and
$$|\|(f,g)\||_{_{\infty,\infty}}:=
\max\left\{\|f\|_{_{\infty,\Omega}},\,\|g\|_{_{\infty,\Gamma}}\right\}.$$
\indent\\
If $r=s$, then we will write $\mathbb{X\!}^{\,r}(\Omega,\Gamma):=\mathbb{X\!}^{\,r,r}(\Omega,\Gamma)$, and
$|\|(f,g)\||_{_{r}}:=|\|(f,g)\||_{_{r,r}}$. Also if $u\in L^p(\Omega)$ is such
that $u|_{_{\Gamma}}\in L^p_{\mu}(\Gamma)$, we write $\mathbf{u}:=(u,u|_{_{\Gamma}})$.
Finally, given $p\geq1,\,s\in(0,1),\,d\in(0,N)$, let $F\subseteq\mathbb{R\!}^N$ be a compact set, and
$\mu$ a Borel regular measure supported on $F$. Then we define the {\it Besov spaces} $\mathbb{B\!}^{\,p}_{\,s}(\Omega)$
and $\mathbb{B\!}^{\,p}_{_{\,1-\frac{N-d}{p}}}(F,\mu)$ as follows:
$$\mathbb{B\!}^{\,p}_{\,\mathfrak{s}}(\Omega):=\left\{u\in L^p(\Omega)\mid\mathcal{M}^p_{\mathfrak{s}}(u,\Omega)<\infty\right\},$$
and
$$\mathbb{B\!}^{\,p}_{_{\,1-\frac{N-d}{p}}}(F,\mu):=\left\{u\in L^p_{\mu}(F)\mid\mathcal{N}^p_{_{1-\frac{N-d}{p}}}(u,F,\mu)<\infty\right\},$$
where
\begin{equation}\label{nonlocal-interior}
\mathcal{M}^p_{\mathfrak{s}}(u,\Omega):=\left(\displaystyle\int_{\Omega}\int_{\Omega}\left[\frac{|u(x)-u(y)|}
{|x-y|^{\mathfrak{s}+\frac{N}{p}}}\right]^p\,dxdy\right)^{\frac{1}{p}}
\end{equation}
and
\begin{equation}\label{nonlocal-boundary}
\mathcal{N}^p_{_{1-\frac{N-d}{p}}}(u,F,\mu):=\left(\displaystyle\int_{F}\int_{F}\left[\frac{|u(x)-u(y)|}
{|x-y|^{1+\frac{2d-N}{p}}}\right]^p\,d\mu_xd\mu_y\right)^{\frac{1}{p}}.
\end{equation}
These Besov spaces are Banach spaces with respect to their respective norms
$$\|u\|_{_{\mathbb{B\!}^{\,p}_{\,\mathfrak{s}}(\Omega)}}:=\|u\|_{_{p,\Omega}}+\mathcal{M}^p_{\mathfrak{s}}(u,\Omega)\,\,\,\,\,\textrm{and}\,\,\,\,\,
\|u\|_{_{\mathbb{B\!}^{\,p}_{_{\,1-\frac{N-d}{p}}}(F,\mu)}}:=\|u\|_{_{p,F}}+\mathcal{N}^p_{_{1-\frac{N-d}{p}}}(u,F,\mu).$$
If $p=2$, we will write $H^{\mathfrak{s}/2}(\Omega):=\mathbb{B\!}^{\,p}_{\,\mathfrak{s}}(\Omega)$ and
$H^{r_d/2}_{\mu}(F):=\mathbb{B\!}^{\,p}_{\,r_d/2}(F,\mu)$, for $r_d:=2-N+d$, and
we will write $H^{-\mathfrak{s}/2}(\Omega)$ and $H^{-r_d/2}_{\mu}(F)$ for the dual spaces (related to the corresponding Besov spaces).
For more information regarding these spaces, refer to \cite{ADA,JO-WAL,HEB2000,HEB96,JOST,MAZ,MAZ-POB97,NECAS,TAR07,TRIE-97,ZIE89} (among many others).\\
\indent Next, we give the following two standards definitions (e.g. \cite{BIE09,CHEN-KUM03,DLL06,JON}).\\

\begin{definition}\label{ext-domain}
Let $p\in[1,\infty]$. A domain
$\Omega\subseteq\mathbb{R\!}^N$ is called a $W^{1,p}$-{\bf extension domain}, if there exists a bounded linear operator
$S:W^{1,p}(\Omega)\rightarrow W^{1,p}(\mathbb{R\!}^N)$ such that $Su=u$\, a.e. on $\Omega$. If $p=2$ and the latter property holds for
$\Omega$, we will call it simply an {\it extension domain}.
\end{definition}

\begin{definition}\label{Ahlfors}
Let $d\in(0,N)$ and $\mu$ a Borel measure supported on a bounded set $F\subseteq\mathbb{R\!}^N$.
Then $\mu$ is said to be an {\bf upper $d$-Ahlfors (regular) measure}, if there exist constants $M_0,\,R_0>0$ such that
\begin{equation}\label{2.01}
\mu(F\cap B(x,r))\,\leq\,M_0r^d,\indent\,\,\,\textrm{for all}\,\,\,0<r<R_0\,\,\,\textrm{and}\,\,\,x\in F,
\end{equation}
where $B(x,r)$ denotes the ball of radius $r$ centered at $x\in F$. On the other hand, the measure $\mu$ as called a
{\bf lower $d$-Ahlfors measure}, if the reverse inequality in (\ref{2.01}) is fulfilled. Also, if the inequality (\ref{2.01})
(or its reverse) hold, then we call $F$ an {\bf upper (lower) $d$-set} (with respect to the measure $\mu$).
Finally, we call $\mu$ a $d$-{\bf Ahlfors measure},
if $\mu$ is both an upper and lower $d$-Ahlfors measure, and in this case the set $F\subseteq\mathbb{R\!}^N$ will be called a
{\bf $d$-set} (with respect to the measure $\mu$; see for instance \cite{CHEN-KUM03,JO-WAL}).
\end{definition}

\begin{remark}
When $\Omega\subseteq\mathbb{R\!}^N$ is bounded, it is obvious that the inequality
$|B(x,r)\cap\Omega|\leq cr^N$ is valid for some constant $c>0$ and for every $x\in\Omega$ and $r>0$.
If in addition $\Omega$ is a bounded $W^{1,p}$-extension domain, then it is well-known that $\Omega$ satisfies
the so called {\bf measure density condition}, that is, the inequality
$$|B(x,r)\cap\Omega|\,\geq\,c' \,r^N$$
holds some constant $c' >0$, and for all $x\in\Omega$ and $r\in(0,1]$ (e.g. \cite{SHV07}). Thus, one sees that in this
case the set $\Omega$ is a $N$-set with respect tho the Lebesgue measure $|\cdot|$. Moreover, if $\Omega$ is a
$W^{1,p}$-extension domain, then $|\partial\Omega|=0$.
\end{remark}

\subsection{Key embedding results}\label{subsec02-02}

\indent To begin, we collect some key important known embedding results for ``bad" domains, which are taken from
\cite{BIE09,HAJLASZ-KOS-TUO08,HEB2000,HEB96,WAR15-1,ZIE89} (and the references therein). We put them all together in the following result.\\

\begin{theorem}\label{embeddings}
\,(see \cite{BIE09,HAJLASZ-KOS-TUO08,HEB2000,HEB96,WAR15-1,ZIE89}) The following hold:
\begin{enumerate}
\item[$\bullet$]\,\,\,If $\Omega\subseteq\mathbb{R\!}^N$ is a bounded $W^{1,p}$-extension domain,
and $p\in[1,N)$, then there exists a continuous embedding
$W^{1,p}(\Omega)\hookrightarrow L^{^{\frac{Np}{N-p}}}(\Omega)$ and a constant $c_1>0$ such that
\begin{equation}\label{2.03}
\|u\|_{_{\frac{Np}{N-p},\Omega}}\,\leq\,c_1\|u\|_{_{W^{1,p}(\Omega)}},
\indent\,\textrm{for every}\,\,\,u\in W^{1,p}(\Omega).
\end{equation}
Furthermore, for $p\in[1,N)$ and $q\in[1,Np(N-p)^{-1})$, the embedding $W^{1,p}(\Omega)\hookrightarrow L^{q}(\Omega)$ is compact, and
if $p\geq N$, then the embedding $W^{1,p}(\Omega)\hookrightarrow L^{q}(\Omega)$ is compact for all $q\in[1,\infty)$.
\item[$\bullet$]\,\,\,Let $\Omega\subseteq\mathbb{R\!}^N$ be a bounded $W^{1,p}$-extension domain,
and let $\mu$ be an upper $d$-Ahlfors measure supported on $\Gamma$, for $d\in(N-p,N)\cap(0,N)$.
If $p\in(1,N)$, then there exists a linear mapping $W^{1,p}(\Omega)\hookrightarrow
L^{^{\frac{dp}{N-p}}}_{\mu}(\Gamma)$ and a constant $c_2>0$ such that
\begin{equation}\label{2.04}
\|u\|_{_{\frac{dp}{N-p},\Gamma}}\,\leq\,c_2\|u\|_{_{W^{1,p}(\Omega)}},
\indent\,\textrm{for all}\,\,\,u\in W^{1,p}(\Omega).
\end{equation}
Moreover, for $p\in(1,N)$ and $q\in[1,dp(N-p)^{-1})$, the trace map $W^{1,p}(\Omega)\hookrightarrow L^{q}_{\mu}(\Gamma)$ is compact, and
if $p\geq N$, then the trace embedding $W^{1,p}(\Omega)\hookrightarrow L^{q}_{\mu}(\Gamma)$ is compact for all $q\in[1,\infty)$.
\item[$\bullet$]\,\,\,Let $\Omega\subseteq\mathbb{R\!}^N$ be a bounded $W^{1,p}$-extension domain whose boundary is
a compact $(N-1)$-dimensional Riemannian maniforld ($N\geq3$),
and let $\mu=\mu(\mathfrak{g})$ be the measure on $\Gamma$ induced by the Riemannian metric $\mathfrak{g}$.
If $p\in(1,N-1)$, then there exists an embedding $W^{1,p}(\Gamma)\hookrightarrow
L^{^{\frac{(N-1)p}{N-1-p}}}_{\mu(\mathfrak{g})}(\Gamma)$ and a constant $c_3>0$ such that
\begin{equation}\label{2.04a}
\|u\|_{_{\frac{(N-1)p}{N-1-p},\Gamma}}\,\leq\,c_3\|u\|_{_{W^{1,p}(\Gamma)}},
\indent\,\textrm{for all}\,\,\,u\in W^{1,p}(\Gamma).
\end{equation}
Furthermore, for $p\in(1,N-1)$ and $q\in[1,(N-1)(N-1-p)^{-1})$, the embedding $W^{1,p}(\Gamma)\hookrightarrow L^{q}_{\mu(\mathfrak{g})}(\Gamma)$ is compact, and
if $p\geq N-1$, then the embedding $W^{1,p}(\Gamma)\hookrightarrow L^{q}_{\mu(\mathfrak{g})}(\Gamma)$ is compact for all $q\in[1,\infty)$.
\item[$\bullet$]\,\,\,Let $\Omega\subseteq\mathbb{R\!}^N$ be a bounded $W^{1,p}$-extension domain.
If $p\in(1,N)$ and $\mathfrak{s}\in(0,1)$, then there exists a linear mapping $W^{1,p}(\Omega)\hookrightarrow
\mathbb{B\!}^{\,p}_{\,\mathfrak{s}}(\Omega)$ and a constant $c_{4}>0$ such that
\begin{equation}\label{2.04b}
\|u\|_{_{\mathbb{B\!}^{\,p}_{\,\mathfrak{s}}(\Omega)}}\,\leq\,c_{4}\|u\|_{_{W^{1,p}(\Omega)}},
\indent\,\textrm{for all}\,\,\,u\in W^{1,p}(\Omega).
\end{equation}
\item[$\bullet$]\,\,\,Let $\Omega\subseteq\mathbb{R\!}^N$ be a bounded $W^{1,p}$-extension domain,
and let $\mu$ be an upper-$d$-Ahlfors measure supported on $\Gamma$.
If $p\in(1,N)$ and $d\in(N-p,N)$, then there exists a linear mapping $W^{1,p}(\Omega)\hookrightarrow
\mathbb{B\!}^{\,p}_{_{\,1-\frac{N-d}{p}}}(\Gamma,\mu)$ and a constant $c_{5}>0$ such that
\begin{equation}\label{2.05}
\|u\|_{_{\mathbb{B\!}^{\,p}_{_{\,1-\frac{N-d}{p}}}(\Gamma,\mu)}}\,\leq\,c_{5}\|u\|_{_{W^{1,p}(\Omega)}},
\indent\,\textit{for all}\,\,\,u\in W^{1,p}(\Omega).
\end{equation}
\item[$\bullet$]\,\,\,If $\Omega\subseteq\mathbb{R\!}^N$ is a bounded $W^{1,p}$-extension domain,
and $p\in(1,\infty)$, then there exists a constant $c_6>0$ such that
\begin{equation}\label{2.06}
\|u\|_{_{W^{1,p}(\Omega)}}\,\leq\,c_6\|\nabla u\|_{_{p,\Omega}},
\end{equation}
for every $u\in W^{1,p}(\Omega)$ that integrates to zero over a measurable set $E\subseteq\Omega$ with $|E|>0$. In particular,
(\ref{2.06}) holds for all $u\in W_0(\Omega)$.\\
\end{enumerate}
\end{theorem}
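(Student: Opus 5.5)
Theorem \ref{embeddings} collects known embedding results, so the plan is to reduce each item to its classical counterpart on $\mathbb{R\!}^N$ (or on a compact manifold) by means of the extension operator $S$ of Definition \ref{ext-domain}. The common pattern is to extend $u$ by $Su$, apply the whole-space result, restrict back to $\Omega$ or to $\Gamma$, and track the constants through $\|S\|$. Boundedness of $\Omega$ lets us multiply $Su$ by a fixed cutoff $\eta\in C^\infty_c(\mathbb{R\!}^N)$ with $\eta=1$ near $\overline{\Omega}$. Then $\eta Su\in W^{1,p}_0(B)$ for a fixed ball $B$, with $\|\eta Su\|_{W^{1,p}}\le C\|u\|_{W^{1,p}(\Omega)}$.

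For the first bullet, apply the Gagliardo--Nirenberg--Sobolev inequality to $\eta Su$ on $\mathbb{R\!}^N$ and restrict to $\Omega$; this gives (\ref{2.03}). Compactness for $q<Np/(N-p)$ follows from Rellich--Kondrachov on the ball $B$, since restriction to $\Omega$ is continuous. The case $p\ge N$ is handled by embedding into $W^{1,r}$ for $r<N$ with $r$ close to $N$, which is possible because $\Omega$ is bounded.

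For the fourth bullet, estimate the Gagliardo seminorm directly. Split $\int_\Omega\int_\Omega$ into $|x-y|<1$ and $|x-y|\ge 1$. On the first region, write $Su(x)-Su(y)=\int_0^1\nabla Su(x+t(y-x))\cdot(y-x)\,dt$ on $\mathbb{R\!}^N$ and use $\int_{|h|<1}|h|^{p-\mathfrak{s}p-N}dh<\infty$. On the second region, use $|u(x)-u(y)|^p\le 2^{p-1}(|u(x)|^p+|u(y)|^p)$ together with boundedness of $\Omega$. The Poincaré-type inequality (\ref{2.06}) I would prove by the standard contradiction argument. Assume there are $u_n$ with $\int_E u_n=0$, $\|u_n\|_{W^{1,p}}=1$ and $\|\nabla u_n\|_p\to0$. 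Compactness from the first bullet gives $u_n\to u$ in $L^p$, so $u_n\to u$ in $W^{1,p}$ and $\nabla u=0$. Since an extension domain is connected, $u$ is constant; the mean-zero condition on $E$ forces $u=0$, contradicting $\|u\|_{W^{1,p}}=1$.

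The main obstacle is the trace statements (\ref{2.04}) and (\ref{2.05}) for a merely upper $d$-Ahlfors measure on a possibly fractal $\Gamma$, because classical trace theory is unavailable there. I would first define the trace of $Su$ pointwise $\mu$-a.e. as $\lim_{r\to0}$ of its averages over $B(x,r)$. The quasi-continuous representative of $Su$ exists outside a set of $W^{1,p}$-capacity zero. The upper bound $\mu(B(x,r))\le M_0r^d$ with $d>N-p$ shows that such sets are $\mu$-null, so the trace is well defined $\mu$-a.e. For the $L^{dp/(N-p)}_\mu$ bound, represent $|\eta Su(x)|$ by the Riesz potential $I_1(|\nabla(\eta Su)|)(x)$. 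Then apply Adams' trace inequality for $I_1$ with respect to a measure satisfying $\mu(B(x,r))\le Cr^d$; this gives exactly the exponent $dp/(N-p)$. Compactness for smaller $q$ follows by interpolating this bound with an $L^1_\mu$ estimate under mollification. For the Besov bound (\ref{2.05}), estimate $|Su(x)-Su(y)|$ by maximal-function (Hajłasz-type) bounds of the form $|x-y|\,(M_{|x-y|}|\nabla Su|(x)+M_{|x-y|}|\nabla Su|(y))$, adapted to the measure $\mu$. Integrating against $|x-y|^{-p-2d+N}\,d\mu_x\,d\mu_y$ and summing dyadically in $|x-y|$ using the $d$-upper bound should give the result, in the style of Jonsson--Wallin. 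The third bullet is the standard Sobolev embedding on a compact $(N-1)$-manifold, via a partition of unity and charts. Where hypotheses are only upper-Ahlfors rather than Ahlfors regular, I would appeal to the cited works \cite{BIE09,HAJLASZ-KOS-TUO08} rather than reprove the results.
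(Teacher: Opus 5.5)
The paper gives no argument for this theorem beyond citations. Several parts of your proposal are sound and give a self-contained alternative:
\begin{itemize}
\item the extension-operator reductions for the first, fourth and sixth bullets;
\item the manifold Sobolev embedding for the third bullet;
\item the Adams-inequality route to (\ref{2.04}). With $q=\frac{dp}{N-p}$, Adams' growth condition is exactly $\mu(B(x,r))\le Cr^{d}$, and $q>p$ is exactly $d>N-p$.
\end{itemize}
One small repair: for $p\ge N$ in the first bullet, lower the exponent only after extending, i.e.\ use $\eta Su\in W^{1,p}_0(B)\subset W^{1,r}_0(B)$ with $r<N$. You cannot apply the first part to $W^{1,r}(\Omega)$, since $\Omega$ is not assumed to be a $W^{1,r}$-extension domain.

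The genuine gap is your plan for (\ref{2.05}). Insert $|Su(x)-Su(y)|\le C|x-y|\bigl(M_{|x-y|}g(x)+M_{|x-y|}g(y)\bigr)$, with $g=|\nabla Su|$, into $\mathcal{N}^p_{1-\frac{N-d}{p}}(u,\Gamma,\mu)^p$. You are left with $\int_\Gamma\int_\Gamma (M_{|x-y|}g(x))^p\,|x-y|^{N-2d}\,d\mu_y\,d\mu_x$, and this is not bounded by $C\|u\|^p_{W^{1,p}(\Omega)}$, however the dyadic sum is organized. Take $u=\min\{\operatorname{dist}(\cdot,\Gamma),\delta\}$:
\begin{itemize}
\item $|\nabla u|=1$ a.e.\ on $\Omega\cap\{\operatorname{dist}(\cdot,\Gamma)<\delta\}$, so by the measure density condition every average of $g$ over $B(x,r)$ with $x\in\Gamma$ and $r<\min\{\delta,1\}$ is at least some $c>0$.
\item The restricted maximal function still takes the supremum over all small radii, so $M_{|x-y|}g(x)\ge c$ for all $x\neq y$ in $\Gamma$.
\item Hence your integral is at least $c^p\int_\Gamma\int_\Gamma|x-y|^{N-2d}\,d\mu\,d\mu>0$, independently of $\delta$.
\item Meanwhile $\|u\|_{W^{1,p}(\Omega)}\to0$ as $\delta\to0$, because $|u|\le\delta$ and $|\Omega\cap\{\operatorname{dist}(\cdot,\Gamma)<\delta\}|\to0$.
\end{itemize}
The trace of $u$ is $0$, so (\ref{2.05}) itself is not contradicted; your chain of estimates is simply too lossy. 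This already happens for the unit ball with surface measure. It is therefore not an upper-versus-two-sided Ahlfors issue, and deferring to the literature only in the upper-Ahlfors case does not close the gap.

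Fractional maximal functions do not rescue the approach. For Ahlfors-regular $\mu$, $\mu_y$-integrability of $|x-y|^{N-2d-\alpha p}$ forces $\alpha p<N-d$. On the other hand, testing $g=\chi_{B(x_0,\delta)}$ with $x_0\in\Gamma$ shows that $\int_\Gamma (M_\alpha g)^p\,d\mu\le C\|g\|_p^p$ forces $\alpha p\ge N-d$. So pointwise bounds of this kind only reach smoothness $\beta'<1-\frac{N-d}{p}$ on $\Gamma$, never the endpoint.

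The endpoint index needs an argument that exploits orthogonality across scales. Options are Hardy's inequality as in Gagliardo's trace theorem, or a Littlewood--Paley/Besov-space argument in the style of Jonsson--Wallin. For the latter, note that $W^{1,p}(\mathbb{R}^N)\hookrightarrow B^1_{p,p}(\mathbb{R}^N)$ holds only for $p\ge2$, so $1<p<2$ needs extra care. Otherwise you can rely on a citation, which is all the paper does.
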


\begin{remark}\label{R-embedding}
The following consequences are useful for the subsequent sections:
\begin{enumerate}
\item[(a)]\,\,\,If $p=2$, then it is obvious that the inequalities (\ref{2.03}), (\ref{2.04}), (\ref{2.04b}), (\ref{2.05}), and (\ref{2.06}),
are valid if one replace the space $H^{1}(\Omega)$ by the space $H^1_{\mu}(\Omega;\Gamma)$.
\item[(b)]\,\,\,If $\Omega\subseteq\mathbb{R\!}^N$ is a bounded $W^{1,p}$-extension domain
and $\mu$ is an upper-$d$-Ahlfors measure supported on $\Gamma$, then from (\ref{2.04}) together with an argument in
\cite[Theorem 4.24]{BIE10.1}, it follows that the norm
$$|\|u\||^p_{_{\mathcal{W}^{1,p}(\Omega)}}:=\|\nabla u\|^p_{_{p,\Omega}}+\left\|u|_{_{\Gamma}}\right\|^p_{_{p,\Gamma}}$$
is an equivalent norm for $W^{1,p}(\Omega)$.
\item[(c)]\,\,\,From (\ref{2.06}), it follows immediately that there exists a constant $c_{6}>0$ such that
\begin{equation}\label{2.07}
\|u\|_{_{W^{1,p}(\Omega)/\mathbb{R\!}}}\,\leq\,\|u\|_{_{W^{1,p}(\Omega)}}\,\leq\,c_{6}\|u\|_{_{W^{1,p}(\Omega)/\mathbb{R\!}}},
\end{equation}
for every $u\in W_0(\Omega)$.
\item[(d)]\,\,\,Let $p\in(1,N)$. Then, in views of \cite[Lemma 4.3]{GES-MITR09} (see also \cite[Lemma 2.4.7]{NITTKA2010}),
given $\epsilon>0$, there exists constants $C_{\epsilon},\,C'_{\epsilon}>$ such that
\begin{equation}\label{epsilon-interior}
\|u\|_{_{r,\Omega}}\,\leq\,\epsilon\|\nabla u\|_{_{p,\Omega}}+C_{\epsilon}\|u\|_{_{p,\Omega}}
\end{equation}
and
\begin{equation}\label{epsilon-trace}
\|u\|_{_{s,\Gamma}}\,\leq\,\epsilon\|\nabla u\|_{_{p,\Omega}}+C'_{\epsilon}\|u\|_{_{p,\Omega}},
\end{equation}
for each $u\in W^{1,p}(\Omega)$, whenever $1\leq r<Np(N-p)^{-1}$ and $1\leq s<dp(N-p)^{-1}$. Furthermore, by easily extending the results in \cite[Proposition 2.1]{Winkert-2014} and \cite[Theorem 2.3]{Winkert-Zacher-2016}, one sees that
$C_{\epsilon}=c\epsilon^{^{-\frac{Nr-Np}{rp-Nr+Np}}}$ and $C'_{\epsilon}=c'\epsilon^{^{-\frac{Ns-dp}{sp-Ns+dp}}}$ for some positive constants $c,\,c'$.
\end{enumerate}
\end{remark}

\subsection{Intermediate analytical results}\label{subsec02-03}

\indent The following result, which is of analytical nature, will be applied in some key subsequent results.

\begin{lemma}\label{lemma2}\,(see \cite{LAD-URAL68})\,
Let $\{y_n\}_{n\geq0},\,\{z_n\}_{n\geq0}$ be sequences of nonnegative real numbers, and let $c$,\, $b$,\, $\varepsilon$, and $\delta$ be positive constants
with $b\geq1$, such that
$$y_{n+1}\,\leq\,cb^n\left(y^{1+\delta}_n+z^{1+\varepsilon}_ny^{\delta}_n\right)\,\,\,\,\,\,\,\,\textrm{and}
\,\,\,\,\,\,\,\,z_{n+1}\,\leq\,cb^n\left(y_n+z^{1+\varepsilon}_n\right),\,\,\,\,\,\,\,\,\textrm{for all}\,\,n\in\mathbb{N\!}\,.$$
Define $$d:=\min\left\{\delta,\,\displaystyle\frac{\varepsilon}{1+\varepsilon}\right\}\,\,\,\,\,\,\,\,\textrm{and}
\,\,\,\,\,\,\,\,\eta:=\min\left\{(2c)^{^{-\frac{1}{\delta}}}b^{^{-\frac{1}{\delta d}}},\,
(2c)^{^{-\frac{1+\varepsilon}{\varepsilon}}}b^{^{-\frac{1}{\varepsilon d}}}\right\},$$ and assume that
$$y_0\,\leq\,\eta\,\,\,\,\,\,\,\,\textrm{and}\,\,\,\,\,\,\,\,z_0\,\leq\,\eta^{1/(1+\varepsilon)}.$$
Then $$y_n\,\leq\,\eta b^{^{-\frac{n}{b}}}\,\,\,\,\,\,\,\,\textrm{and}\,\,\,\,\,\,\,\,z_n\,\leq\,\left(\eta b^{^{-\frac{n}{b}}}\right)^{^{\frac{1}{1+\varepsilon}}},
\,\,\,\,\,\,\,\textrm{for every integer}\,\,n\geq0.$$
\end{lemma}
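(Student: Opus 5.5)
The plan is to prove by induction a \emph{stronger} geometric decay, with ratio
$$\rho:=b^{-1/d}\in(0,1],$$
and then weaken it to the stated bound. Specifically, I would show
$$y_n\,\leq\,\eta\rho^n\,\,\,\,\,\,\,\,\textrm{and}\,\,\,\,\,\,\,\,z_n\,\leq\,(\eta\rho^n)^{\frac{1}{1+\varepsilon}}\,\,\,\,\,\,\,\,\textrm{for all}\,\,n\geq0.$$
To pass to the statement, note that $d\leq\min\{\delta,\varepsilon/(1+\varepsilon)\}$, so $d<1\leq b$ and hence $1/d\geq1\geq1/b$. Since $b\geq1$, this gives $\rho^n=b^{-n/d}\leq b^{-n/b}$, and the claimed inequalities follow by monotonicity (including of $t\mapsto t^{1/(1+\varepsilon)}$). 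The base case $n=0$ is exactly the hypothesis $y_0\leq\eta$, $z_0\leq\eta^{1/(1+\varepsilon)}$. I assume the recursive inequalities hold for all $n\geq0$.

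\textbf{Induction step for $y$.} Inserting the induction hypothesis, the term $z_n^{1+\varepsilon}y_n^\delta$ is bounded by $\eta\rho^n(\eta\rho^n)^\delta$, which equals the bound for $y_n^{1+\delta}$. Hence
$$y_{n+1}\leq 2c\,b^n\eta^{1+\delta}\rho^{n(1+\delta)}.$$
It therefore suffices to check
$$2c\,\eta^\delta\,\bigl(b\rho^\delta\bigr)^n\leq\rho.$$
The factor $b\rho^\delta=b^{1-\delta/d}$ is at most $1$ because $\delta\geq d$. The first constraint in $\eta$, namely $\eta\leq(2c)^{-1/\delta}b^{-1/(\delta d)}$, gives $2c\,\eta^\delta\leq b^{-1/d}=\rho$.

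\textbf{Induction step for $z$.} Similarly,
$$z_{n+1}\leq cb^n\bigl(\eta\rho^n+\eta\rho^n\bigr)=2c\,b^n\eta\rho^n,$$
and we need this to be at most $(\eta\rho^{n+1})^{1/(1+\varepsilon)}$. Dividing by $(\eta\rho^{n})^{1/(1+\varepsilon)}$, this reduces to
$$2c\,\eta^{\frac{\varepsilon}{1+\varepsilon}}\,\bigl(b\rho^{\frac{\varepsilon}{1+\varepsilon}}\bigr)^n\leq\rho^{\frac{1}{1+\varepsilon}}.$$
Here $b\rho^{\varepsilon/(1+\varepsilon)}=b^{1-\varepsilon/((1+\varepsilon)d)}\leq1$ since $d\leq\varepsilon/(1+\varepsilon)$. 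The second constraint in $\eta$ gives $\eta^{\varepsilon/(1+\varepsilon)}\leq(2c)^{-1}b^{-1/(d(1+\varepsilon))}=(2c)^{-1}\rho^{1/(1+\varepsilon)}$, which closes the induction.

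The only delicate point is choosing the ratio. The stated rate $b^{-n/b}$ is not directly compatible with the constants in $\eta$, which are tuned to $b^{-1/d}$. Working with $\rho=b^{-1/d}$ and weakening at the end avoids this. If the factor $b^n$ is instead read as $b^{n}$ with $b$ replaced by a general $\lambda$, the same computation goes through unchanged.
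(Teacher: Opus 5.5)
Your proof is correct, and it is the standard induction argument from Ladyzhenskaya--Ural'tseva (the paper gives no proof of its own, only the citation): you prove $y_n\le\eta b^{-n/d}$ and $z_n\le(\eta b^{-n/d})^{1/(1+\varepsilon)}$ by induction, using the two constraints defining $\eta$ together with $d\le\min\{\delta,\varepsilon/(1+\varepsilon)\}$. The rate $b^{-n/b}$ in the paper's statement is presumably a misprint for $b^{-n/d}$, and your final weakening step (valid since $1/d>1\ge 1/b$ and $b\ge 1$) covers the statement as written.
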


\subsection{Main assumption}

\indent Here we present the first and main assumptions that will be used throughout the rest of the paper (unless explicitly specified).\\

\begin{assumption}\label{As1}
Given $N\geq2,\,\,\mathfrak{s}\in(0,1),\,\,d\in (N-2,N)\cap(0,N)$, and $r_d:=2-N+d$,
let $\Omega\subseteq\mathbb{R\!}^N$ be a bounded extension domain,
and let $\mu$ be an upper $d$-Ahlfors measure supported on the boundary
$\Gamma:=\partial\Omega$.
\end{assumption}

\section{The elliptic problem}\label{sec03}

\indent The aim of this section is to establish the solvability and global regularity theory for the elliptic boundary value problem
(\ref{1.05}), under all the conditions in Assumption \ref{As1}.

\subsection{The variational setting}\label{subsec03-01}

\indent In this subsection we will state precisely the notion of weak solutions of the elliptic boundary
value problem (\ref{1.05}), and all the fundamental settings for our problem of interest.\\
\indent To begin, consider the bilinear closed form $(\mathcal{E}_{\mu},D(\mathcal{E}_{\mu}))$
formally defined by\\
\begin{equation}\label{3.1.01}
\mathcal{E}_{\mu}(u,v):=\mathcal{E}_{_{\Omega}}(u,v)+\mathcal{E}_{_{\Gamma}}(u,v)+K(u,v),
\indent\textrm{for all}\,\,\,u,\,v\in\mathcal{W}_2(\Omega;\Gamma),
\end{equation}
for
\begin{equation}\label{3.1.02}
\mathcal{W}_2(\Omega;\Gamma):=\left\{
\begin{array}{lcl}
H^1(\Omega),\indent\indent\,\,\,\textrm{if}\,\,\,\Lambda_{_{\Gamma}}\equiv0,\\[1ex]
H^1_{\mu}(\Omega;\Gamma),\indent\,\,\,\,\textrm{otherwise},\\[1ex]
\end{array}
\right.
\end{equation}
where
\begin{equation}\label{3.1.03}
\mathcal{E}_{_{\Omega}}(u,v):=\displaystyle\int_{\Omega}
\left(\displaystyle\sum^N_{^{i,j=1}}(\alpha_{ij}(x)\partial_{x_j}u+
\hat{a}_j(x)u)\partial_{x_i}v+\displaystyle\sum^N_{^{i=1}}\check{a}_i(x)\partial_{x_i}uv+\lambda(x)uv\right)\,dx
\end{equation}
and
\begin{equation}\label{3.1.04}
\mathcal{E}_{_{\Gamma}}(u,v):=\Lambda_{_{\Gamma}}(u,v)+\displaystyle\int_{\Gamma}\beta(x)uv\,d\mu,
\end{equation}
for $\Lambda_{_{\Gamma}}(\cdot,\cdot)$ the bilinear continuous weakly coercive form as in (\ref{1.03}), and
\begin{equation}\label{nonlocal-term}
K(u,v):=\left\langle\mathcal{J}_{_{\Omega}}u,v\right\rangle_{s/2}+\left\langle\Theta_{_{\Gamma}}u,v\right\rangle_{r_d/2}.
\end{equation}\indent\\
Here we recall that $\mathcal{J}_{_{\Omega}}:H^{s/2}(\Omega)\rightarrow H^{-s/2}(\Omega)$ and
$\Theta_{_{\Gamma}}:H^{r_d/2}_{\mu}(\Gamma)\rightarrow H^{-r_d/2}_{\mu}(\Gamma)$ are linear continuous
operators, with $\mathcal{J}_{_{\Omega}}1=\Theta_{_{\Gamma}}1=0$,\, $\langle\mathcal{J}_{_{\Omega}}u,1\rangle_{s/2}=0=
\langle\Theta_{_{\Gamma}}v,1\rangle_{r_d/2}$, \,$\langle\mathcal{J}_{_{\Omega}}u,u\rangle_{s/2}\geq0$,
and $\langle\Theta_{_{\Gamma}}v,v\rangle_{r_d/2}\geq0$, for each $u\in H^{s/2}(\Omega)$ and
$v\in H^{r_d/2}_{\mu}(\Gamma)$ (and thus for $u,\,v\in\mathcal{W}_2(\Omega;\Gamma)$). We also recall that
$\alpha_{ij}\in L^{\infty}(\Omega)$,\,$\hat{a}_i\in L^{r_1}(\Omega)$, \,$\check{a}_i \in L^{r_2}(\Omega)$ ($1\leq i,\,j\leq N$),\,$\lambda
\in L^{r_3}(\Omega)$, and $\beta\in L^{s}_{\mu}(\Gamma)$, where $r_1,\,r_2>N\chi_{_{\{N>2\}}}+\chi_{_{\{N=2\}}}$,\, $r_3>N/2$, and $s>d(d+2-N)^{-1}$.
Also, $\left(D(\Lambda_{_{\Gamma}}),\|\cdot\|_{_{D(\Lambda_{_{\Gamma}})}}\right)$ is assumed to be a Hilbert space,
and we will assume that $\Lambda_{_{\Gamma}}(\gamma_1,\gamma_2)=0$ for $\gamma_1,\,\gamma_2$ constants. In addition, set $\bar{\alpha}:=(\alpha_{11},\ldots,\alpha_{ij},\ldots,\alpha_{NN})$,\,
$\hat{\mathfrak{a}}:=(\hat{a}_1,\ldots,\hat{a}_N)$, and $\check{\mathfrak{a}}:=(\check{a}_1,\ldots,\check{a}_N)$.\\
\indent Now, taking into account the definition of the space $\mathcal{W}_2(\Omega;\Gamma)$ given by (\ref{3.1.02}),
we write the conditions on such definition, as follows:
\begin{enumerate}
\item[{\bf (C1)}]\,\,\,$\Lambda_{_{\Gamma}}\equiv0$;
\item[{\bf (C2)}]\,\,\,$\Lambda_{_{\Gamma}}\neq0$.
\end{enumerate}
Then it is clear that $\mathcal{W}_2(\Omega;\Gamma)$ is a Hilbert space, endowed with the norm
\begin{equation}\label{3.1.05}
\|\cdot\|_{_{\mathcal{W}_2(\Omega;\Gamma)}}:=\left\{\begin{array}{lcl}
\|\cdot\|_{_{H^1(\Omega)}},\indent\indent\indent\,\,\,\,\textrm{if {\bf{(C1)}} holds},\\[1ex]
\|\cdot\|_{_{H^1_{\mu}(\Omega;\Gamma)}},\indent\indent\indent\textrm{if {\bf{(C2)}} holds}.\\[1ex]
\end{array}
\right.
\end{equation}
Then we have the following simple result.

\begin{prop}\label{cont-coercive-form}
The form $(\mathcal{E}_{\mu},\mathcal{W}_2(\Omega;\Gamma))$ is continuous and weakly coercive. Consequently, if
$(f,g)\in\mathbb{X\!}^{\,p,q}(\Omega;\Gamma)$ for $p\geq(2^{\ast}_{_N})'$ and $q\geq(2^{\ast}_{_d})'$, where
\begin{equation}\label{linear-critical-exp}
2^{\ast}_{_N}:=\left\{\begin{array}{lcl}
\frac{2N}{N-2},\indent\indent\,\,\,\textrm{if}\,\,\,N>2,\\[1ex]
r\in[2,\infty),\,\,\,\,\,\,\,\,\textrm{if}\,\,\,N=2,\\[1ex]
\end{array}
\right.\,\,\,\textrm{and}\,\,\,\,\, 2^{\ast}_{_d}:=\left\{\begin{array}{lcl}
\frac{2d}{N-2},\indent\indent\,\,\,\textrm{if}\,\,\,N>2,\\[1ex]
r\in[2,\infty),\,\,\,\,\,\,\,\,\textrm{if}\,\,\,N=2,\\[1ex]
\end{array}
\right.
\end{equation}
then there exists an unique function $u\in\mathcal{W}_2(\Omega;\Gamma))$, and there exists a constant $\delta^{\star}_0\geq0$, such that
\begin{equation}\label{cc00}
\mathcal{E}_{\mu}(u,\varphi)+\delta^{\star}_0\displaystyle\int_{\Omega}u\varphi\,dx=\displaystyle\int_{\Omega}f\varphi\,dx+
\displaystyle\int_{\Gamma}g\varphi\,d\mu\,,\indent\,\,\textrm{for all}\,\,\varphi\in\mathcal{W}_2(\Omega;\Gamma).
\end{equation}
\end{prop}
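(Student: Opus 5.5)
I will prove continuity and weak coercivity term by term in $\mathcal{E}_{\mu}=\mathcal{E}_{_\Omega}+\mathcal{E}_{_\Gamma}+K$, and then obtain existence and uniqueness from the Lax--Milgram lemma.

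\textbf{Continuity.} I bound each term using H\"older's inequality and the embeddings of Theorem \ref{embeddings}.

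The principal part is immediate: $\left|\int_\Omega \alpha_{ij}\partial_j u\,\partial_i v\right|\le \|\alpha_{ij}\|_\infty\|u\|_{H^1}\|v\|_{H^1}$.

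For the convection term $\hat a_j u\,\partial_i v$, write
\[
\left|\int_\Omega \hat a_j u\,\partial_i v\right|\le \|\hat a_j\|_{r_1}\|u\|_{t}\|\nabla v\|_2 ,\qquad \tfrac1{r_1}+\tfrac1t=\tfrac12 .
\]
Since $r_1>N$, we get $t<2N/(N-2)$, so (\ref{2.03}) controls $\|u\|_t$ by $\|u\|_{H^1}$; for $N=2$ any finite $t$ is allowed. The $\check a_i$ term is handled symmetrically.

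For the zero-order term, $\lambda\in L^{r_3}$ with $r_3>N/2$ gives
\[
\left|\int_\Omega\lambda uv\right|\le\|\lambda\|_{r_3}\|u\|_{2r_3'}\|v\|_{2r_3'},
\]
and $2r_3'<2^*_N$, so again (\ref{2.03}) applies.

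For the boundary term, $s>d/(d+2-N)$ is equivalent to $2s'<2d/(N-2)$. The trace estimate (\ref{2.04}) therefore bounds $\int_\Gamma|\beta uv|\,d\mu$ by $\|\beta\|_{s,\Gamma}\|u\|_{H^1}\|v\|_{H^1}$.

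Finally, $\Lambda_{_\Gamma}$ is continuous on $D(\Lambda_{_\Gamma})$. The maps $\mathcal J_{_\Omega}$ and $\Theta_{_\Gamma}$ are bounded, and $H^1$ embeds into $H^{\mathfrak s/2}(\Omega)$ and, through traces, into $H^{r_d/2}_\mu(\Gamma)$ by (\ref{2.04b}) and (\ref{2.05}) with $p=2$. This gives $|K(u,v)|\le C\|u\|_{H^1}\|v\|_{H^1}$, and by Remark \ref{R-embedding}(a) all of the above holds on $\mathcal W_2(\Omega;\Gamma)$.

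\textbf{Weak coercivity.} This is the main step. Take $v=u$ and start from the ellipticity bound (\ref{1.04}), $\int\alpha_{ij}\partial_ju\,\partial_iu\ge c_0\|\nabla u\|_2^2$. The nonlocal part satisfies $K(u,u)\ge0$ by hypothesis, and weak coercivity of $\Lambda_{_\Gamma}$ gives $\Lambda_{_\Gamma}(u,u)\ge c\|u\|^2_{D(\Lambda_\Gamma)}-C\|u\|^2_{2,\Gamma}$. The boundary $L^2$ norm this produces is absorbed by (\ref{epsilon-trace}) with $s=2<2d/(N-2)$.

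The lower-order terms are where the care is needed, since the coefficients are unbounded and cannot simply be bounded by a constant times $\|u\|_2^2$. I use the $\epsilon$-interpolation inequalities of Remark \ref{R-embedding}(d):
\[
\left|\int\hat a_ju\,\partial_iu\right|\le\|\hat a\|_{r_1}\|\nabla u\|_2\|u\|_t\le\|\hat a\|_{r_1}\|\nabla u\|_2\left(\epsilon\|\nabla u\|_2+C_\epsilon\|u\|_2\right),
\]
and then apply Young's inequality. The same approach handles $\lambda u^2$, using $\|u\|_{2r_3'}^2\le(\epsilon\|\nabla u\|_2+C_\epsilon\|u\|_2)^2$, and $\beta u^2$ via (\ref{epsilon-trace}) with exponent $2s'$. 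Choosing $\epsilon$ small absorbs all gradient contributions into $\tfrac{c_0}{2}\|\nabla u\|_2^2$, leaving
\[
\mathcal E_\mu(u,u)\ge c\|u\|^2_{\mathcal W_2}-\delta^\star_0\|u\|_{2,\Omega}^2 .
\]

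\textbf{Existence and uniqueness.} The form $\mathcal E_\mu+\delta^\star_0(\cdot,\cdot)_{L^2}$ is then continuous and coercive on the Hilbert space $\mathcal W_2(\Omega;\Gamma)$. The right-hand side $\varphi\mapsto\int f\varphi+\int_\Gamma g\varphi\,d\mu$ is a bounded functional on this space: since $p\ge(2^*_N)'$, H\"older's inequality and (\ref{2.03}) give $|\int f\varphi|\le\|f\|_p\|\varphi\|_{p'}\le C\|f\|_p\|\varphi\|_{H^1}$, and likewise $q\ge(2^*_d)'$ with (\ref{2.04}) handles the boundary term. The Lax--Milgram lemma now yields a unique $u$ satisfying (\ref{cc00}). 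When $N=2$ the same argument works with an arbitrary finite exponent in place of $2^*_N$ and $2^*_d$.
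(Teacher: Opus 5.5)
Your proposal is correct and follows essentially the same route as the paper. For continuity, both use term-by-term H\"older bounds with the embeddings (\ref{2.03}), (\ref{2.04}), (\ref{2.04b}), (\ref{2.05}). For weak coercivity, both combine ellipticity, $K(u,u)\geq0$, the weak coercivity of $\Lambda_{_{\Gamma}}$, and the $\epsilon$-inequalities of Remark~\ref{R-embedding}(d) with Young's inequality, which absorbs the unbounded lower-order terms and the boundary $L^2$ terms into the gradient and produces $\delta^{\star}_0$. Both then conclude with Lax--Milgram.

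The only cosmetic difference is in the continuity estimate. The paper passes through $\|\hat{\mathfrak{a}}\|_{_{N,\Omega}}\leq|\Omega|^{\frac{1}{N}-\frac{1}{r_1}}\|\hat{\mathfrak{a}}\|_{_{r_1,\Omega}}$ and the critical exponent $2N/(N-2)$, while you use a subcritical exponent $t$ directly, which is immaterial.
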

\indent\\
\begin{proof}
We assume that condition {\bf (C2)} holds (the other cases are simpler), and take $u,\,v\in\mathcal{W}_2(\Omega;\Gamma))$.
Also for simplicity, we prove the result for $N>2$.
Then in views of (\ref{2.03}) together with H\"older's inequality (when necessary),
we get the following calculations over each of the integral terms of the form $\mathcal{E}_{\mu}(\cdot,\cdot)$:\\[2ex]
$\left|\displaystyle\int_{\Omega}\displaystyle\sum^N_{^{i=1}}\hat{a}_i(x)u\partial_{x_i}v\,dx\right|\,\leq\,\|\hat{\mathfrak{a}}\|_{_{N,\Omega}}\|u\nabla v\|_{_{\frac{N}{N-1},\Omega}}$\\
\begin{equation}\label{cc01}
\leq\,|\Omega|^{^{\frac{1}{N}-\frac{1}{r_1}}}\|\hat{\mathfrak{a}}\|_{_{r_1,\Omega}}\|u\|_{_{\frac{2N}{N-2},\Omega}}\|\nabla v\|_{_{2,\Omega}}
\,\leq\,C_1|\Omega|^{^{\frac{1}{N}-\frac{1}{r_1}}}\|\hat{\mathfrak{a}}\|_{_{r_1,\Omega}}\|u\|_{_{\mathcal{W}_2(\Omega;\Gamma)}}\|v\|_{_{\mathcal{W}_2(\Omega;\Gamma)}},
\end{equation}
for some constant $C_1>0$. Similarly, we have
\begin{equation}\label{cc02}
\left|\displaystyle\int_{\Omega}\displaystyle\sum^N_{^{i=1}}\check{a}_i(x)\partial_{x_i}uv\,dx\right|
\,\leq\,C_2|\Omega|^{^{\frac{1}{N}-\frac{1}{r_2}}}\|\check{\mathfrak{a}}\|_{_{r_2,\Omega}}\|u\|_{_{\mathcal{W}_2(\Omega;\Gamma)}}\|v\|_{_{\mathcal{W}_2(\Omega;\Gamma)}},
\end{equation}
for some constant $C_2>0$. In the same way, using (\ref{2.03}) and (\ref{2.04}), we get the existence of constants $C_3,\,C_4>0$, such that\\[2ex]
$\left|\displaystyle\int_{\Omega}\lambda uv\,dx\right|\,\leq\,|\Omega|^{^{1-\frac{1}{r_3}-\frac{N-2}{N}}}\|\lambda\|_{_{r_3,\Omega}}\|uv\|_{_{\frac{N}{N-2},\Omega}}$\\
\begin{equation}\label{cc03}
\leq\,|\Omega|^{^{1-\frac{1}{r_3}-\frac{N-2}{N}}}\|\lambda\|_{_{r_3,\Omega}}\|u\|_{_{\frac{2N}{N-2},\Omega}}\|v\|_{_{\frac{2N}{N-2},\Omega}}\,
\leq\,C_3|\Omega|^{^{1-\frac{1}{r_3}-\frac{N-2}{N}}}\|\lambda\|_{_{r_3,\Omega}}\|u\|_{_{\mathcal{W}_2(\Omega;\Gamma)}}\|v\|_{_{\mathcal{W}_2(\Omega;\Gamma)}},
\end{equation}
and\\[2ex]
$\left|\displaystyle\int_{\Gamma}\beta uv\,d\mu\right|\,\leq\,\mu(\Gamma)^{^{1-\frac{1}{s}-\frac{N-2}{d}}}\|\beta\|_{_{s,\Gamma}}\|uv\|_{_{\frac{d}{N-2},\Gamma}}$\\
\begin{equation}\label{cc04}
\leq\,\mu(\Gamma)^{^{1-\frac{1}{s}-\frac{N-2}{d}}}\|\beta\|_{_{s,\Gamma}}\|u\|_{_{\frac{2d}{N-2},\Omega}}\|v\|_{_{\frac{2d}{N-2},\Omega}}\,
\leq\,C_4\mu(\Gamma)^{^{1-\frac{1}{s}-\frac{N-2}{d}}}\|\beta\|_{_{s,\Gamma}}\|u\|_{_{\mathcal{W}_2(\Omega;\Gamma)}}\|v\|_{_{\mathcal{W}_2(\Omega;\Gamma)}}.
\end{equation}
Now, applying (\ref{2.04b} and (\ref{2.05}), we see that
\begin{equation}\label{cc05}
\left|\left\langle\mathcal{J}_{_{\Omega}}u,v\right\rangle_{s/2}\right|\,\leq\,c\|u\|_{_{H^{s/2}(\Omega)}}\|v\|_{_{H^{s/2}(\Omega)}}\,\leq\,
C_5\|u\|_{_{\mathcal{W}_2(\Omega;\Gamma)}}\|v\|_{_{\mathcal{W}_2(\Omega;\Gamma)}},
\end{equation}
and
\begin{equation}\label{cc06}
\left|\left\langle\Theta_{_{\Gamma}}u,v\right\rangle_{r_d/2}\right|\,\leq\,c'\|u\|_{_{H^{r_d/2}_{\mu}(\Gamma)}}\|v\|_{_{H^{r_d/2}_{\mu}(\Gamma)}}\,\leq\,
C_6\|u\|_{_{\mathcal{W}_2(\Omega;\Gamma)}}\|v\|_{_{\mathcal{W}_2(\Omega;\Gamma)}},
\end{equation}
for some constants $c,\,c',\,C_5,\,C_6>0$. Furthermore, since the ``Wentzell-type" form $\Lambda_{_{\Gamma}}(\cdot,\cdot)$ is assumed to be continuous,
this means that there exists constants $c'',\,C_7>0$, such that
\begin{equation}\label{cc07}
\left|\Lambda_{_{\Gamma}}(u,v)\right|\,\leq\,c''\|u\|_{_{D(\Lambda_{_{\Gamma}})}}\|v\|_{_{D(\Lambda_{_{\Gamma}})}}\,\leq\,
C_7\|u\|_{_{\mathcal{W}_2(\Omega;\Gamma)}}\|v\|_{_{\mathcal{W}_2(\Omega;\Gamma)}}.
\end{equation}
Obviously
\begin{equation}\label{cc08}
\left|\displaystyle\int_{\Omega}\displaystyle\sum^N_{^{i,j=1}}\alpha_{ij}(x)\partial_{x_j}u\partial_{x_i}v\,dx\right|\,\leq\,C_8\|\bar{\alpha}\|_{_{\infty,\Omega}}
\|u\|_{_{\mathcal{W}_2(\Omega;\Gamma)}}\|v\|_{_{\mathcal{W}_2(\Omega;\Gamma)}},
\end{equation}
and therefore (\ref{cc01}), (\ref{cc02}), (\ref{cc03}), (\ref{cc04}), (\ref{cc05}), (\ref{cc06}), (\ref{cc07}),(\ref{cc08}), show that
the form $(\mathcal{E}_{\mu},D(\mathcal{E}_{\mu}))$ is continuous (possibly nonlocal, depending on the selection of the operators $\mathcal{J}_{_{\Omega}}$ and
$\Theta_{_{\Gamma}}$). To show the weak coercivity of $\mathcal{E}_{\mu}(\cdot,\cdot)$, we first recall that $\langle\mathcal{J}_{_{\Omega}}u,u\rangle_{s/2}\geq0$,\,
$\langle\Theta_{_{\Gamma}}v,v\rangle_{r_d/2}\geq0$. Also we recall that $\Lambda_{_{\Gamma}}(\cdot,\cdot)$ is weakly coercive, which
means that there exist constants $c^{\star}_0>0$ and $\gamma_0\in\mathbb{R\!\,}$ such that
\begin{equation}\label{Wentzell-weak-coercive}
c^{\star}_0\|u\|^2_{_{D(\Lambda_{_{\Gamma}})}}\,\leq\,\Lambda_{_{\Gamma}}(u,u)+\gamma_0\|u\|^2_{_{2,\Gamma}}.
\end{equation}
Consequently, taking into account (\ref{1.04}), we get that\\[2ex]
\indent$\min\{c_0,c^{\star}_0\}\|u\|^2_{_{\mathcal{W}_2(\Omega;\Gamma)}}\,\leq\,\mathcal{E}_{\mu}(u,u)+\gamma_0\displaystyle\int_{\Gamma}u^2\,d\mu+
\left|\displaystyle\int_{\Omega}\displaystyle\sum^N_{^{i=1}}\hat{a}_i(x)u\partial_{x_i}u\,dx\right|+$
\begin{equation}\label{cc09}
\indent\indent\indent+\left|\displaystyle\int_{\Omega}\displaystyle\sum^N_{^{i=1}}\check{a}_i(x)\partial_{x_i}u\,u\,dx\right|+
\left|\displaystyle\int_{\Omega}(\lambda+c_0) u^2\,dx\right|+\left|\displaystyle\int_{\Gamma}\beta u^2\,d\mu\right|.
\end{equation}
Now, since $\min\{r_1,r_2\}>N$, we have that the embedding maps $\mathcal{W}_2(\Omega;\Gamma)\hookrightarrow L^{^{\frac{2r_1}{r_1-2}}}(\Omega)$ and
$\mathcal{W}_2(\Omega;\Gamma)\hookrightarrow L^{^{\frac{2r_2}{r_2-2}}}(\Omega)$ are compact, so using this together with
Remark \ref{R-embedding}(d), we apply H\"older's inequality and Young's inequality to produce the following calculations:\\[2ex]
$\left|\displaystyle\int_{\Omega}\displaystyle\sum^N_{^{i=1}}\hat{a}_i(x)u\partial_{x_i}u\,dx\right|\,\leq\,
\|\hat{\mathfrak{a}}\|_{_{r_1,\Omega}}\|u\|_{_{\frac{2r_1}{r_1-2},\Omega}}\|\nabla u\|_{_{2,\Omega}}$\\
$$\leq\,\|\hat{\mathfrak{a}}\|_{_{r_1,\Omega}}\left(\epsilon_1\|\nabla u\|^2_{_{2,\Omega}}+
\epsilon^{-1}_1\|u\|^2_{_{\frac{2r_1}{r_1-2},\Omega}}\right)\,\leq\,\|\hat{\mathfrak{a}}\|_{_{r_1,\Omega}}\left(2\epsilon_1\|\nabla u\|^2_{_{2,\Omega}}+
\frac{C_{\epsilon^2_1}}{\epsilon_1}\|u\|^2_{_{2,\Omega}}\right)$$
\begin{equation}\label{cc10}
\indent\indent\indent\indent\,\,\,\,\,\,\indent\leq\,\|\hat{\mathfrak{a}}\|_{_{r_1,\Omega}}\left(2\epsilon_1
\|u\|^2_{_{\mathcal{W}_2(\Omega;\Gamma)}}+\frac{C_{\epsilon^2_1}}{\epsilon_1}\|u\|^2_{_{2,\Omega}}\right),
\end{equation}
for all $\epsilon_1>0$, and for some constant $C_{\epsilon^2_1}>0$. In the exact way, we get that
\begin{equation}\label{cc11}
\left|\displaystyle\int_{\Omega}\displaystyle\sum^N_{^{i=1}}\check{a}_i(x)\partial_{x_i}u\,u\,dx\right|
\leq\,\|\check{\mathfrak{a}}\|_{_{r_2,\Omega}}\left(2\epsilon_2
\|u\|^2_{_{\mathcal{W}_2(\Omega;\Gamma)}}+\frac{C_{\epsilon^2_2}}{\epsilon_2}\|u\|^2_{_{2,\Omega}}\right),
\end{equation}
for every $\epsilon_2>0$, and for some constant $C_{\epsilon^2_2}>0$. Now, if $r_3>N/2$ and $s>d(d+2-N)^{-1}$, it follows that the maps
$\mathcal{W}_2(\Omega;\Gamma)\hookrightarrow L^{^{\frac{2r_3}{r_3-1}}}(\Omega)$ and $\mathcal{W}_2(\Omega;\Gamma)\hookrightarrow L^{^{\frac{2s}{s-1}}}_{\mu}(\Gamma)$
are both compact. Consequently, continuing as in the above procedures, we find that
\begin{equation}\label{cc12}
\left|\displaystyle\int_{\Omega}\lambda u^2\,dx\right|\,\leq\,\|\lambda\|_{_{r_3,\Omega}}\|u\|^2_{_{\frac{2r_3}{r_3-1},\Omega}}\,
\leq\,\|\lambda\|_{_{r_3,\Omega}}\left(\epsilon_3\|\nabla u\|^2_{_{2,\Omega}}+C_{\epsilon_3}\|u\|^2_{_{2,\Omega}}\right)\,
\leq\,\|\lambda\|_{_{r_3,\Omega}}\left(\epsilon_3\|u\|^2_{_{\mathcal{W}_2(\Omega;\Gamma)}}+C_{\epsilon_3}\|u\|^2_{_{2,\Omega}}\right),
\end{equation}
and
\begin{equation}\label{cc13}
\left|\displaystyle\int_{\Gamma}\beta u^2\,d\mu\right|\,\leq\,\|\beta\|_{_{s,\Gamma}}\|u\|^2_{_{\frac{2s}{s-1},\Gamma}}\,
\leq\,\|\beta\|_{_{s,\Gamma}}\left(\epsilon_4\|\nabla u\|^2_{_{2,\Omega}}+C_{\epsilon_4}\|u\|^2_{_{2,\Omega}}\right)\,
\leq\,\|\beta\|_{_{s,\Gamma}}\left(\epsilon_4\|u\|^2_{_{\mathcal{W}_2(\Omega;\Gamma)}}+C_{\epsilon_4}\|u\|^2_{_{2,\Omega}}\right),
\end{equation}
and
\begin{equation}\label{cc14}
\gamma_0\displaystyle\int_{\Gamma}u^2\,d\mu\,\leq\,|\gamma_0|\left(\epsilon_5\|u\|^2_{_{\mathcal{W}_2(\Omega;\Gamma)}}+C_{\epsilon_5}\|u\|^2_{_{2,\Omega}}\right),
\end{equation}
for all $\epsilon_3,\,\epsilon_4,\epsilon_5>0$, and for some positive constants $C_{\epsilon_3},\,C_{\epsilon_4},\,C_{\epsilon_5}$. Then, if $\gamma_0\neq0$, we select
$$\epsilon_1:=\frac{\min\{c_0,c^{\star}_0\}}{20\|\hat{\mathfrak{a}}\|_{_{r_1,\Omega}}},\,\,\,\,\,\,\,\,\,\,\,\,\,\,\,\,\,
\epsilon_2:=\frac{\min\{c_0,c^{\star}_0\}}{20\|\check{\mathfrak{a}}\|_{_{r_2,\Omega}}},$$
$$\epsilon_3:=\frac{\min\{c_0,c^{\star}_0\}}{10\|\lambda\|_{_{r_3,\Omega}}},\,\,\,\,\,\,\,\,\epsilon_4:=\frac{\min\{c_0,c^{\star}_0\}}{10\|\beta\|_{_{s,\Gamma}}},
\,\,\,\,\,\,\,\,\epsilon_5:=\frac{\min\{c_0,c^{\star}_0\}}{10|\gamma_0|},$$
and put
\begin{equation}\label{delta-star}
\delta^{\star}_0:=c_0+\frac{20\left(C_{\epsilon^2_1}\|\hat{\mathfrak{a}}\|^2_{_{r_1,\Omega}}+
C_{\epsilon^2_2}\|\check{\mathfrak{a}}\|^2_{_{r_2,\Omega}}\right)}{\min\{c_0,c^{\star}_0\}}+C_{\epsilon_3}\|\lambda\|_{_{r_3,\Omega}}+
C_{\epsilon_4}\|\beta\|_{_{s,\Gamma}}+C_{\epsilon_5}|\gamma_0|.
\end{equation}
Then, by combining the inequalities
(\ref{cc09}), (\ref{cc10}), (\ref{cc11}), (\ref{cc12}), (\ref{cc13}), (\ref{cc14}) together with the selections of $\epsilon_i>0$ ($1\leq i\leq5$),
we arrive at the key inequality\\
\begin{equation}
\label{3.1.06}
\frac{\min\{c_0,c^{\star}_0\}}{2}\|u\|^2_{_{\mathcal{W}_2(\Omega;\Gamma)}}\,\leq\,\mathcal{E}_{\mu}(u,u)+\delta^{\star}_0\|u\|^2_{_{2,\Omega}},
\,\,\,\,\,\,\,\textrm{for all}\,\,u\in\mathcal{W}_2(\Omega;\Gamma).
\end{equation}
\indent\\
Inequality (\ref{3.1.06}) shows that the form $\mathcal{E}_{\mu}(\cdot,\cdot)$ is weakly coercive, as asserted.
To complete the proof, let $(f,g)\in\mathbb{X\!}^{\,p,q}(\Omega;\Gamma)$, for $p\geq(2^{\ast}_{_N})'$ and $q\geq(2^{\ast}_{_d})'$, where
$2^{\ast}_{_N},\,2^{\ast}_{_d}$ are given as in (\ref{linear-critical-exp}) (and $r'$ denotes the conjugate of $r\in(1,\infty)$).
Then, using (\ref{2.03}) and (\ref{2.04}),
it follows that the functional $T_{\mu}:\mathcal{W}_2(\Omega;\Gamma)
\rightarrow\mathbb{R\!}$\,, given by $$T_{\mu}v:=\displaystyle\int_{\Omega}fv\,dx+\displaystyle\int_{\Gamma}gv\,d\mu$$
is linear and continuous. Therefore, an application of Lax-Milgram theorem theorem gives the existence of an unique function
$u\in\mathcal{W}_2(\Omega;\Gamma)$ fulfilling (\ref{cc00}), as desired.
\end{proof}

In views of Proposition \ref{cont-coercive-form}, the following definition makes sense.\\

\begin{definition}\label{weak-sol}
A function $u\in\mathcal{W}_2(\Omega;\Gamma)$ is said to be a \textbf{weak solution of (\ref{1.05})}, if
\begin{equation}
\label{3.1.07}\mathcal{E}_{\mu}(u,\varphi)=\displaystyle\int_{\Omega}f\varphi\,dx+
\displaystyle\int_{\Gamma}g\varphi\,d\mu\,,\indent\,\,\textrm{for all}\,\,\varphi\in\mathcal{W}_2(\Omega;\Gamma).
\end{equation}
Also, $u\in\mathcal{W}_2(\Omega;\Gamma)$ is called a \textbf{weak subsolution of (\ref{1.05})}, if
\begin{equation}
\label{3.1.08}\mathcal{E}_{\mu}(u,\varphi)\,\leq\,\displaystyle\int_{\Omega}f\varphi\,dx+
\displaystyle\int_{\Gamma}g\varphi\,d\mu\,\,\,\textrm{for all}\,\,\varphi\in\mathcal{W}_2(\Omega;\Gamma)^+
:=\{w\in\mathcal{W}_2(\Omega;\Gamma)\mid w\geq0\},
\end{equation}
and if the reverse inequality of (\ref{3.1.08}) holds, then
$u\in\mathcal{W}_2(\Omega;\Gamma)$ is called a \textbf{weak supersolution of (\ref{1.05})}.\\
\end{definition}

\subsection{A priori estimates for weak solutions}\label{subsec03-02}

\indent In this part we establish some a priori estimates for weak solutions to problem (\ref{1.05}), as well as other
useful norm estimates for such solutions. For simplicity, all the results will be established for the case $N>2$.\\
\indent To begin, define the functions\, $\psi_{_{t,m}}:\mathbb{R\!}\rightarrow\mathbb{R\!}$\, ,
for each $t\geq 1,\, m\geq 1$,\, by:
\begin{equation}
\label{3.2.01}\psi_{_{t,m}}(x):=\left\{
\begin{array}{lcl}
\,\,\,\,\,0\, ,\,\,\,\,\,\,\,\,\,\,\, x\leq 0\\
\,\,\,\,x^t\, ,\,\,\,\, 0<x<m\\
m^{t-1}x\, ,\,\,\,x\geq m\\
\end{array}
\right.
\end{equation}
Clearly \,$\psi_{_{t,m}}\in C(\mathbb{R\!}\,)$, and is piecewise smooth, with bounded derivative. Thus, given $u\in\mathcal{W}_2(\Omega;\Gamma)$,
using the assumption {\bf (B1)} below, one sees that $\psi_{_{t,m}}\circ\,u\in H^1(\Omega)$. Moreover, as the substitution operator
induced by $\psi_{_{t,m}}$ is continuous on $H^1(\Omega)$, so letting
\begin{equation}\label{3.2.02}
w_{_m}:=\psi_{_{\frac{k}{2},m}}(u),\indent\textrm{and}\indent
v_{_m}:=\psi_{_{k-1,m}}(u),\indent\textrm{for all}\,\,\,k\in[2,\infty)\,\,\,\textrm{and}\,\,\,m\geq 1,
\end{equation}
one has $w_{_m},\,v_{_m}\in H^1(\Omega)^+$.
Also, the following calculations are valid:
\begin{enumerate}
\item[$\bullet$]\,\,\,$\partial_{x_j}w_{_m}\partial_{x_i}w_{_m}=\frac{k^2}{4(k-1)}\partial_{x_j}u\partial_{x_i}v_{_m},\,\,
w_{_m}\partial_{x_i}w_{_m}=\frac{k}{2}v_{_m}\partial_{x_i}u=\frac{k}{2(k-1)}u\partial_{x_i}v_{_m}$ \,and\, $w^2_{_m}=uv_{_m}$,\, if \,$0\leq u\leq m$.\\
\item[$\bullet$]\,\,\,$\partial_{x_j}w_{_m}\partial_{x_i}w_{_m}=\partial_{x_j}u\partial_{x_i}v_{_m},\,\,
w_{_m}\partial_{x_i}w_{_m}=v_{_m}\partial_{x_i}u=u\partial_{x_i}v_{_m},\,\,
w^2_{_m}=uv_{_m}$,\,\,\,\, if \,$u\geq m$,\, for $1\leq i,\,j\leq N$.
\end{enumerate}
Also, for $D$ denoting either $\Omega$, or $\Gamma$, given $k_0\geq0$ a real number, and $k\geq k_0$, we define the following:
\begin{equation}\label{u_k def}
u_k:=(u-k)^+\,\,\,\,\,\,\,\,\textrm{and}\,\,\,\,\,\,\,\,D_k:=\{x\in D\mid u(x)>k\}.
\end{equation}
Clearly $u_k\in H^1(\Omega)$ with $\partial_{x_i}u_k=\partial_{x_i}u\chi_{_{\Omega_k}}$. We also define
an equivalent norm for $\mathcal{W}_2(\Omega;\Gamma)$ by letting
$\|\cdot\|_{_{\mathcal{W}_2(\overline{\Omega})}}$ be the same norm as $\|\cdot\|_{_{\mathcal{W}_2(\Omega;\Gamma)}}$, without
the $L^2$-norm at the boundary (when applicable).\\
\indent Next, under the above notations and assumptions, in order to achieve the validity of the next two results, we need to
assume the following technical conditions on the boundary form $\mathcal{E}_{_{\Gamma}}(\cdot,\cdot)$, as well
as the nonlocal-type maps. Indeed, for each
$u\in\mathcal{W}_2(\Omega;\Gamma)$, we assume the fulfillment of the following conditions:
\begin{enumerate}
\item[{\bf (A1)}]\,\,\,$K(u,v_{_m})+\eta_0\,k^2|\|\mathbf{w}_{_m}\||^2_{2}\geq0$ for some constant $\eta_0\in\mathbb{R\!}\,$, where
$\mathbf{w}_{_m}:=(w_{_m},w_{_m}|_{_{\Gamma}})$;
\item[{\bf (A2)}]\,\,\,$K(u^+,u^-)\leq0$;
\item[{\bf (A3)}]\,\,\,$K(u,u_k)\geq0$ for each $u\in H^1(\Omega)$;
\item[{\bf (B1)}]\,\,\,$D(\Lambda_{_{\Gamma}})$ is a Banach lattice, and $\psi_{_{t,m}}\circ\,u\in D(\Lambda_{_{\Gamma}})$
if $u\in D(\Lambda_{_{\Gamma}})$;
\item[{\bf (B2)}]\,\,\,$\|w_{_m}\|^2_{_{D(\Lambda_{_{\Gamma}})}}\leq c_0\,k^2\left(\Lambda_{_{\Gamma}}(u,v_{_m})+\gamma_0\|w_{_m}\|^2_{_{2,\Gamma}}\right)$
for some constants $c_0>0$ and $\gamma_0\in\mathbb{R\!}\,$;
\item[{\bf (B3)}]\,\,\,$\Lambda_{_{\Gamma}}(u^+,u^-)\leq0$.
\item[{\bf (B4)}]\,\,\,For each $u\in D(\Lambda_{_{\Gamma}})$ one has $u_k\in D(\Lambda_{_{\Gamma}})$ and
$$\Lambda_{_{\Gamma}}(u,u_k)\,\geq\,c^{\star}_1\Lambda_{_{\Gamma}}(u_k,u_k)+
c^{\star}_2\|u_k\|^2_{_{D(\Lambda_{_{\Gamma}})}}-c^{\star}_3\displaystyle\int_{\Gamma_k}\rho(x)(|u_k|^2+k^2)\,d\mu,$$
for some constants $c^{\star}_1,\,c^{\star}_2,\,c^{\star}_3\geq0$, and for some function $\rho\in L^{\theta}_{\mu}(\Gamma)$,
where $\theta\geq1$ is such that the map
$\mathcal{W}_2(\Omega;\Gamma)\hookrightarrow L^{^{\frac{2\theta}{\theta-1}}}_{\mu}(\Gamma)$ is compact.
\end{enumerate}
Here we recall that $K(\cdot,\cdot)$ is defined as in (\ref{nonlocal-term}). Then, under these assumptions, we
clearly see that $w_{_m},\,v_{_m}\in\mathcal{W}_2(\Omega;\Gamma)^+$. In views of {\bf (B4)}, for each $u\in\mathcal{W}_2(\Omega;\Gamma)$, we have $u_k\in\mathcal{W}_2(\Omega;\Gamma)$.
Although these conditions seem to be non-standard and quite technical,
they turn out to be necessary for the achievement of the next results. Examples of bilinear Wentzell-type
forms $\left(\Lambda_{_{\Gamma}},D(\Lambda_{_{\Gamma}})\right)$ fulfilling {\bf (B1)}, {\bf (B2)}, {\bf (B3)}, and {\bf (B4)},
as well as of linear bounded (nonlocal) boundary operators satisfying
{\bf (A1)}, {\bf (A2)}, and  {\bf (A3)}, will be given in the sections \ref{sec06} and \ref{sec07}, respectively.\\
\indent Before establishing the main result of this section, we first prove the following lemma, whose parabolic version is given in section \ref{sec04}. Since we will be giving a
complete proof for the parabolic version (e.g. proof of Lemma \ref{Lem1}), in the result below we will only spell out the main steps of its proof.

\begin{lemma}\label{Lem0}
Let $u\in\mathcal{W}_2(\Omega;\Gamma)$, and assume that $u_k|_{_{\Gamma}}\in D(\Lambda_{_{\Gamma}})$. Given $m_1,\,m_2\geq1$ integers and
$\xi\in\{1,\ldots,m_1\}$, and $\zeta\in\{1,\ldots,m_2\}$, let $s_{1,\xi}\in[2,2N(N-2)^{-1}]$ and $s_{2,\zeta}\in[2,2d(N-2)^{-1}]$.
Assume that there exists constants $\hat{k}_0,\,\gamma_0\geq0$, and positive constants $\theta_{1,\xi},\,\theta_{2,\zeta}$, such that
for all  $k\geq\hat{k}_0$, we have
\begin{equation}\label{Lem0-02}
\|u_k\|^2_{_{\mathcal{W}_2(\overline{\Omega})}}\,\leq\,\gamma_0\left(\|u_k\|^2_{_{2,\Omega_k}}
+k^2\displaystyle\sum^{m_1}_{\xi=1}|\Omega_k|
^{^{\frac{2(1+\theta_{1,\xi})}{s_{1,\xi}}}}+k^2\displaystyle\sum^{m_2}_{\zeta=1}\mu(\Gamma_k)^{^{\frac{2(1+\theta_{2,\zeta})}{s_{2,\zeta}}}}\right).
\end{equation}
Then there exists a constant $C^{\ast}_0>0$ (independent of $u$ and $\hat{k}_0$) such that
\begin{equation}\label{Lem0-03}
\textrm{ess}\displaystyle\sup_{\overline{\Omega}}[u]
\,\leq\,C^{\ast}_0\left(\|u\|^2_{_{2,\Omega}}+\hat{k}^2_0\right)^{1/2}.
\end{equation}
\end{lemma}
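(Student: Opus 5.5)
This is a De Giorgi iteration. We work with the level-set quantities $y(k):=\|u_k\|^2_{2,\Omega_k}$, $a(k):=|\Omega_k|$, $b(k):=\mu(\Gamma_k)$. The hypothesis (\ref{Lem0-02}) will be turned into a nonlinear recursion along an increasing sequence of levels $k_n\uparrow K$, and Lemma \ref{lemma2} will then force $y(k_n)\to0$. That gives $u\le K$ a.e. in $\Omega$ and $\mu$-a.e. on $\Gamma$, with $K$ comparable to $(\|u\|^2_{2,\Omega}+\hat k_0^2)^{1/2}$.

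\textbf{Step 1: passing between levels.} For $h>k\geq\hat k_0$ we have $u_k\geq h-k$ on $\Omega_h$ and on $\Gamma_h$. Chebyshev's inequality together with the embeddings (\ref{2.03}) and (\ref{2.04}) (via Remark \ref{R-embedding}(a),(b), using the norm $\|\cdot\|_{\mathcal W_2(\overline\Omega)}$) gives
$$(h-k)^2|\Omega_h|^{2/s_{1,\xi}}\le\|u_k\|^2_{s_{1,\xi},\Omega}\le C\|u_k\|^2_{\mathcal W_2(\overline\Omega)},$$
and the same bound with $\mu(\Gamma_h)^{2/s_{2,\zeta}}$ on the boundary. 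Since $u_h\le u_k$ and $u_h$ is supported in $\Omega_h$, Hölder's inequality gives
$$\|u_h\|^2_{2,\Omega_h}\le \|u_k\|^2_{2^*,\Omega}\,|\Omega_h|^{1-2/2^*}.$$
Inserting (\ref{Lem0-02}) on the right, each of these quantities at level $h$ is bounded by $\gamma_0C$ times
$$\|u_k\|^2_{2,\Omega_k}+k^2\sum_\xi|\Omega_k|^{2(1+\theta_{1,\xi})/s_{1,\xi}}+k^2\sum_\zeta\mu(\Gamma_k)^{2(1+\theta_{2,\zeta})/s_{2,\zeta}},$$
multiplied by $(h-k)^{-2}$ or by a small power of $|\Omega_h|$.

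\textbf{Step 2: the recursion.} Choose $k_n:=K(2-2^{-n})$ with $K\geq\hat k_0$, so that $h-k=K2^{-n-1}$ and $k\le 2K$. Normalize $y_n:=K^{-2}\|u_{k_n}\|^2_{2,\Omega_{k_n}}$, and let $z_n$ be a suitable power of $|\Omega_{k_n}|$ and $\mu(\Gamma_{k_n})$. The factors $k^2/(h-k)^2\le 16\cdot4^n$ produce geometric growth $b^n$. The exponents $1+\theta$ give the superlinear powers $z_n^{1+\varepsilon}$, and the factor $|\Omega_h|^{1-2/2^*}$ gives $y_n^{1+\delta}$. The aim is exactly the pair of inequalities in Lemma \ref{lemma2}, with $\varepsilon,\delta$ depending only on $\theta$, $s$, $N$, $d$. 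The key normalization trick is to measure everything relative to a single power so that the several sums over $\xi,\zeta$ collapse. Because all measures are bounded by $|\Omega|$ and $\mu(\Gamma)$, the smallest exponent dominates and the others can be absorbed into $c$.

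\textbf{Step 3: the initial smallness.} At $k_0=K$ we have $y_0\le K^{-2}\|u\|^2_{2,\Omega}$. Chebyshev plus one application of Step 1 between $\hat k_0$ (or $K/2$) and $K$ bounds $|\Omega_K|$ and $\mu(\Gamma_K)$ by $CK^{-2}(\|u\|_{2,\Omega}^2+\hat k_0^2)$. So choosing $K:=C_0^*(\|u\|^2_{2,\Omega}+\hat k_0^2)^{1/2}$ with $C_0^*$ large (independent of $u$ and $\hat k_0$) makes $y_0\le\eta$ and $z_0\le\eta^{1/(1+\varepsilon)}$. Lemma \ref{lemma2} then yields $y_n,z_n\to0$. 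Hence $|\Omega_{2K}|=\mu(\Gamma_{2K})=0$, which gives (\ref{Lem0-03}) after adjusting $C_0^*$.

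\textbf{Main obstacle.} The main difficulty is Step 2. The various exponents $s_{1,\xi}$, $s_{2,\zeta}$, $\theta$ must be arranged so that the interior $L^2$ term and the mixed interior/boundary measure terms fit the coupled form $y^{1+\delta}+z^{1+\varepsilon}y^\delta$, $z\lesssim y+z^{1+\varepsilon}$, while keeping the powers of $K$ homogeneous so that the constants do not depend on $K$. I would first try defining $z_n:=K^{-2}\sum$ of the measure terms appearing in (\ref{Lem0-02}) at level $k_n$, raised to the power making them comparable to $y$.
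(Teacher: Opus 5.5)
Your proposal is correct and follows essentially the same route as the paper. The paper also takes levels $k_n=(2-2^{-n})\hat k$, $y_n=\hat k^{-2}\|u_{k_n}\|^2_{2,\Omega}$ and $z_n=\sum_\xi|\Omega_{k_n}|^{2/s_{1,\xi}}+\sum_\zeta\mu(\Gamma_{k_n})^{2/s_{2,\zeta}}$, reduces to the single exponent $\theta=\min\theta_{i,\cdot}$, and derives the recursion of Lemma \ref{lemma2} with $\delta=2/N$, $\varepsilon=\theta$; it gets smallness of $z_0$ by applying the hypothesis at level $\hat k_0$ and then concludes $u\le 2\hat k$.

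The one point to correct is your parenthetical alternative in Step 3. For $z_0$ you must go down to level $\hat k_0$ as the paper does. The ``$K/2$'' variant leaves a term $\mu(\Gamma_{K/2})^{2(1+\theta)/s_{2,\zeta}}$ on the right-hand side that Chebyshev in $L^2(\Omega)$ cannot make small.
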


\begin{proof}
Given $\hat{k}\geq\hat{k}_0$ fixed, without loss of generality, we prove the result for
$$\theta_{1,\xi}=\theta_{2,\zeta}=\theta:=\min\left\{\min\{\theta_{1,\xi}\mid1\leq\xi\leq m_1\}\cup\min\{\theta_{2,\zeta}\mid1\leq\zeta\leq m_2\}\right\}$$
for each $\xi\in\{1,\ldots,m_1\}$ and $\zeta\in\{1,\ldots,m_2\}$. Then,
define the sequences $\{k_n\}_{n\geq0}$,\, $\{y_n\}_{n\geq0}$,\, $\{z_n\}_{n\geq0}$ of positive real numbers, by:
\begin{equation}\label{Lem0-06}
k_n:=(2-2^{-n})\hat{k},\,\,\,\,\,\,\,\,\,\,\,\,\,y_n:=\hat{k}^{-2}\|u_{k_n}\|^2_{_{2,\Omega_k}},
\end{equation}
and
\begin{equation}\label{Lem0-07}
z_n:=\displaystyle\sum^{m_1}_{\xi=1}|\Omega_{k_n}|^{^{\frac{2}{s_{1,\xi}}}}
+\displaystyle\sum^{m_2}_{\zeta=1}\mu(\Gamma_{k_n})^{^{\frac{2}{s_{2,\zeta}}}}.
\end{equation}
Clearly $k_n\geq\hat{k}\geq\hat{k}_0$ for each nonnegative integer $n$. Taking into account
the definitions of the sequences defined above together with H\"older's inequality and (\ref{2.03}), we calculate and get that\\[2ex]
$\hat{k}^2y_{n+1}\,\leq\,|\Omega_{k_{n+1}}|^{2/N}\left\|u_{k_{n+1}}\right\|^2_{_{2^{\ast}_{_N},\Omega}}$\\
\begin{equation}\label{Lem0-08}
\indent\indent\leq\,
c\left\{(k_{n+1}-k_n)^{-2}\hat{k}^2y_n\right\}^{2/N}\|u_{k_{n+1}}\|^2_{_{\mathcal{W}_2(\overline{\Omega})}}
\,\leq\,c\,4^{n+1}y_n^{2/N}\|u_{k_{n+1}}\|^2_{_{\mathcal{W}_2(\overline{\Omega})}},\,\,\,\,\,\,\,\,\,\,\,\,
\end{equation}
where $c>0$ is a constant that varies from line to line (this will be assumed for the remaining of the proof). Similarly, one has
\begin{equation}\label{Lem0-09}
4^{-(n+1)}\hat{k}^2z_{n+1}\,\leq\,(k_{n+1}-k_n)^2z_{n+1}\,\leq\,c\|u_{k_{n}}\|^2_{_{\mathcal{W}_2(\overline{\Omega})}}.
\end{equation}
Applying assumption (\ref{Lem1-02}) yields that
\begin{equation}\label{Lem0-10}
\|u_{k_{n+1}}\|^2_{_{\mathcal{W}_2(\overline{\Omega})}}\,\leq\,\gamma_02^{n+4}\hat{k}^2(y_n+z^{1+\theta}_n),
\end{equation}
and then combining (\ref{Lem0-08}), (\ref{Lem0-09}), and (\ref{Lem0-10}), we arrive at
\begin{equation}\label{Lem0-11}
y_{n+1}\,\leq\,c'\,8^n(y^{1+\delta}_n+z^{1+\theta}_ny^{\delta}_n)\,\,\textrm{and}\,\,
z_{n+1}\,\leq\,c'\,8^n(y_n+z^{1+\theta}_n),\,\,\textrm{for each integer}\,\,n\geq0,
\end{equation}
for some constant $c'>0$, where $\delta:=2/N$. Now, it is clear that
\begin{equation}\label{Lem0-12}
y_0\,\leq\,\hat{k}^{-2}\|u\|^2_{_{2,\Omega_k}}.
\end{equation}
On the other hand, using (\ref{Lem0-02}) and proceeding as above, we find that
$$(\hat{k}-\hat{k}_0)^2z_0\,\leq\,c\gamma_0\left(\|u_{k_0}\|^2_{_{2,\Omega_k}}+k^2_0\displaystyle\sum^{m_1}_{\xi=1}|\Omega|^{^{\frac{2(1+\theta)}{s_{1,\xi}}}}
+k^2_0\displaystyle\sum^{m_2}_{\zeta=1}\mu(\Gamma)^{^{\frac{2(1+\theta)}{s_{2,\zeta}}}}\right).$$
Thus setting
$$c'':=c\gamma_0\max\left\{1,\,\displaystyle\sum^{m_1}_{\xi=1}|\Omega|^{^{\frac{2(1+\theta)}{s_{1,\xi}}}}
+\displaystyle\sum^{m_2}_{\zeta=1}\mu(\Gamma)^{^{\frac{2(1+\theta)}{s_{2,\zeta}}}}\right\},$$
we obtain that
\begin{equation}\label{Lem0-13}
z_0\,\leq\,\displaystyle\frac{c''}{(\hat{k}-\hat{k}_0)^2}\left(\|u_{k_0}\|^2_{_{2,\Omega_{k_0}}}+k^2_0\right),
\,\,\,\,\,\,\,\,\,\,\,\,\textrm{for all}\,\,\,\hat{k}\geq k_0.
\end{equation}
Selecting $$d:=\min\left\{\delta,\,\frac{\theta}{1+\theta}\right\}\,\,\,\,\,\textrm{and}\,\,\,\,\,
\eta:=\min\left\{\frac{1}{(2c')^{^{\frac{1}{\delta}}}8^{^{\frac{1}{\delta d}}}},\,\frac{1}{(2c')^{^{\frac{1+\theta}{\theta}}}\,8^{^{\frac{1}{\theta d}}}}\right\},$$
and choosing
\begin{equation}\label{Lem0-14}
\hat{k}:=\max\left\{\displaystyle\frac{1}{\sqrt{\eta}}\|u\|_{_{2,\Omega}},\,
\displaystyle\frac{\sqrt{c''}}{\eta^{^{\frac{1}{2(1+\theta)}}}}
\left(\|u\|^2_{_{2,\Omega}}+\hat{k}^2_0\right)^{1/2}+\hat{k}_0\right\},
\end{equation}
we see that
\begin{equation}\label{Lem0-15}
\hat{k}\,\leq\,c^{\star}\left(\|u\|^2_{_{2,\Omega}}+\hat{k}^2_0\right)^{1/2}
\end{equation}
for some constant $c^{\star}>0$, and moreover the selection implies that
\begin{equation}\label{Lem0-16}
y_0\,\leq\,\eta\,\,\,\,\,\,\,\,\textrm{and}\,\,\,\,\,\,\,\,z_0\,\leq\,\eta^{^{\frac{1}{1+\theta}}}.
\end{equation}
Therefore, the sequences $\{y_n\}$,\, $\{z_n\}$ satisfy the conditions of Lemma \ref{lemma2}.
Thus, applying Lemma \ref{lemma2} for $\hat{k}$ given by (\ref{Lem0-14})
shows that $\displaystyle\lim_{n\rightarrow\infty}z_n=0$, and consequently
\begin{equation}\label{Lem0-17}
u(t)\,\leq\,\displaystyle\lim_{n\rightarrow\infty}k_n=2\hat{k}\,\,\,\,\,\,\,\,\textrm{a.e. in}\,\,\overline{\Omega}.
\end{equation}
Combining (\ref{Lem0-17}) with (\ref{Lem0-15}), we obtain (\ref{Lem0-03}), as desired.
\end{proof}

\indent Now we are ready to state and prove the first result of this subsection. For simplicity, we will consider the
hardest case, namely, when {\bf (C2)} is valid.\\

\begin{theorem}\label{elliptic-bounded}
Assume all the conditions in Assumption \ref{As1}, let $(f,g)\in\mathbb{X\!}^{\,p,q}(\Omega;\Gamma)$, where $p\geq(2^{\ast}_{_N})'$ and $q\geq(2^{\ast}_{_d})'$, and assume that {\bf (A1)}, {\bf (A3)},
{\bf (B1)}, {\bf (B2)}, and {\bf (B4)} hold. If $u\in\mathcal{W}_2(\Omega;\Gamma)$ is a weak solution of (\ref{1.05}), then there exists a constant
$c^{\ast}=c(p,q,N,d,|\Omega|,\mu(\Gamma))>0$ such that
\begin{equation}\label{3.2.06}
|\|\mathbf{u}\||_{_{m_1(p,q),m_2(q,q)}}\,\leq\,c^{\ast}\left(|\|(f,g)\||_{_{p,q}}+\|u\|_{_{2,\Omega}}\right),
\end{equation}
where
\begin{equation}\label{m1}
m_1(p,q):=\left\{\begin{array}{lcl}
\frac{N}{N-2}\min\left\{\frac{p(N-2)}{N-2p},\,\frac{q(N-2)}{Nq-2q-dq+d}\right\},\,\,\textrm{if}\,\,p<N/2\,\,\textrm{and}\,\,q<\frac{d}{d+2-N},\\[1ex]
\frac{Np}{Np-2p-dp+d},\,\,\textrm{if}\,\,p<N/2<\frac{d}{d+2-N}\geq q,\,\,
\textrm{or}\,\,q<\frac{d}{d+2-N}\,\,\textrm{and}\,\,N/p\leq p<q;\\[1ex]
\frac{Nq}{Nq-2q-dq+d},\,\,\textrm{if}\,\,q<\frac{d}{d+2-N}\,\,\textrm{and}\,\,p\geq\max\{N/2,q\},\\[1ex]
r\in(2^{\ast}_{_N},\infty),\,\,\textrm{if}\,\,p=N/2\,\,\textrm{and}\,\,q\geq\frac{d}{d+2-N},\,\,\textrm{or}
\,\,q=\frac{d}{d+2-N}\,\,\textrm{and}\,\,p\geq N/2,\\[1ex]
\,\,\,\,\,\infty,\,\,\textrm{if}\,\,p>N/2\,\,\,\textrm{and}\,\,\,q>\frac{d}{d+2-N},
\end{array}
\right.
\end{equation}
and
\begin{equation}\label{m2}
m_2(p,q):=\left\{\begin{array}{lcl}
\frac{d}{N-2}\min\left\{\frac{p(N-2)}{N-2p},\,\frac{q(N-2)}{Nq-2q-dq+d}\right\},\,\,\textrm{if}\,\,p<N/2\,\,\textrm{and}\,\,q<\frac{d}{d+2-N},\\[1ex]
\frac{dp}{Np-2p-dp+d},\,\,\textrm{if}\,\,p<N/2<\frac{d}{d+2-N}\geq q,\,\,
\textrm{or}\,\,q<\frac{d}{d+2-N}\,\,\textrm{and}\,\,N/p\leq p<q;\\[1ex]
\frac{dq}{Nq-2q-dq+d},\,\,\textrm{if}\,\,q<\frac{d}{d+2-N}\,\,\textrm{and}\,\,p\geq\max\{N/2,q\},\\[1ex]
s\in(2^{\ast}_{_d},\infty),\,\,\textrm{if}\,\,p=N/2\,\,\textrm{and}\,\,q\geq\frac{d}{d+2-N},\,\,\textrm{or}
\,\,q=\frac{d}{d+2-N}\,\,\textrm{and}\,\,p\geq N/2,\\[1ex]
\,\,\,\,\,\infty,\,\,\textrm{if}\,\,p>N/2\,\,\,\textrm{and}\,\,\,q>\frac{d}{d+2-N},
\end{array}
\right.
\end{equation}
\end{theorem}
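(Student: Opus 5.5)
We argue separately in two regimes: a Moser-type iteration (with the truncations $\psi_{t,m}$) when $p<N/2$ or $q<d(d+2-N)^{-1}$, and a De Giorgi-type truncation (Lemma \ref{Lem0}) when $p>N/2$ and $q>d(d+2-N)^{-1}$. The borderline cases follow by interpolating between these two. By linearity we may apply everything to $u$ and to $-u$, so it suffices to bound $u^+$. Throughout we write the weak formulation (\ref{3.1.07}) with a test function from $\mathcal{W}_2(\Omega;\Gamma)^+$. Assumption \textbf{(B1)} guarantees that the test functions we use are admissible.

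\emph{Bounded case, $p>N/2$, $q>d(d+2-N)^{-1}$.} Test (\ref{3.1.07}) with $\varphi=u_k=(u-k)^+$, which lies in $\mathcal{W}_2(\Omega;\Gamma)$ by \textbf{(B4)}.
\begin{itemize}
\item The nonlocal term is nonnegative by \textbf{(A3)}.
\item The Wentzell term is bounded below by \textbf{(B4)}. This gives control of $\|u_k\|^2_{D(\Lambda_\Gamma)}$, up to $\int_{\Gamma_k}\rho(u_k^2+k^2)\,d\mu$.
\item The principal part gives $c_0\|\nabla u_k\|^2_{2,\Omega}$ by (\ref{1.04}).
\item For the lower-order terms, write $u=u_k+k$ on $\Omega_k$. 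Each term then splits into a piece quadratic in $u_k$ and a piece involving $k$. The quadratic pieces are absorbed exactly as in (\ref{cc10})--(\ref{cc13}), using Remark \ref{R-embedding}(d). The $k$-pieces are handled by H\"older: for example $k\int_{\Omega_k}|\hat{\mathfrak a}||\nabla u_k|\le \epsilon\|\nabla u_k\|^2+C_\epsilon k^2\|\hat{\mathfrak a}\|^2_{r_1}|\Omega_k|^{1-2/r_1}$. Since $r_1>N$, the exponent $1-2/r_1$ can be written as $2(1+\theta)/2^*_N$ with $\theta>0$.
\item The data terms are treated the same way. We bound $\int fu_k\le\|f\|_p\|u_k\|_{2^*_N}|\Omega_k|^{1-1/p-1/2^*_N}$, and take $k\ge\hat k_0:=\|f\|_{p}+\|g\|_{q}$ so that Young gives a contribution of $k^2|\Omega_k|^{2(1+\theta)/2^*_N}$. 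Here $\theta>0$ holds precisely because $p>N/2$.
\item The boundary datum $g$ gives $k^2\mu(\Gamma_k)^{2(1+\theta)/2^*_d}$ in the same way, using (\ref{2.04}); here $\theta>0$ holds precisely because $q>d/(d+2-N)$.
\end{itemize}
This yields hypothesis (\ref{Lem0-02}) for all $k\ge\hat k_0$. Lemma \ref{Lem0} then gives $\operatorname{ess\,sup}u\le C(\|u\|_{2,\Omega}+\|f\|_p+\|g\|_q)$ on $\overline\Omega$, which is the $\infty$ case of (\ref{3.2.06}).

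\emph{Unbounded case.} Test with $v_m=\psi_{k-1,m}(u)$ and use the pointwise identities listed after (\ref{3.2.02}). By (\ref{1.04}), \textbf{(B2)} and \textbf{(A1)}, we get
\[
\frac{c}{k}\bigl(\|\nabla w_m\|_2^2+\|w_m\|^2_{D(\Lambda_\Gamma)}\bigr)\le \int|f|v_m+\int_\Gamma|g|v_m+C k\,\bigl(\text{lower order terms in }w_m\bigr)+\eta_0k|\|\mathbf w_m\||_2^2 .
\]
The lower-order terms are absorbed as in the proof of Proposition \ref{cont-coercive-form}. The constants $C_\epsilon$ there grow polynomially in $\epsilon^{-1}$ (Remark \ref{R-embedding}(d)), which is what keeps the iteration summable. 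For the data we use $v_m\le w_m^{2(k-1)/k}$ and H\"older: $\int|f|v_m\le\|f\|_p\|w_m\|^{2(k-1)/k}_{2(k-1)p'/k}$, and similarly on $\Gamma$ with $q'$. Using (\ref{2.03})--(\ref{2.04}) on the left, then letting $m\to\infty$, we obtain a reverse-H\"older step: $\|u\|_{kN/(N-2)}$ and $\|u\|_{kd/(N-2),\Gamma}$ are controlled by $\|u\|$ in $L^{2(k-1)p'}(\Omega)$ and $L^{2(k-1)q'}_\mu(\Gamma)$.

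We start from $k=2$ and iterate finitely many times. The gain per step is positive exactly when $p<N/2$, resp.\ $q<d/(d+2-N)$. The iteration saturates at the fixed point $r$ determined by $rN/(N-2)=2r p'\cdot\frac{N}{2(N-2)}$ and its boundary analogue, and these fixed points reproduce the exponents $m_1(p,q)$ and $m_2(p,q)$. The interior and boundary chains are coupled through the trace inequality, which is why a $\min$ appears in the first line of (\ref{m1})--(\ref{m2}). In the mixed cases, one of the two data is supercritical and does not limit the exponent. In the critical cases, we obtain every finite $r$ by taking $p$ or $q$ slightly smaller.

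\emph{Main obstacle.} The main difficulty will be the exponent bookkeeping in this Moser step. We must keep track of $k$-dependent constants from \textbf{(A1)}, \textbf{(B2)} and the $\epsilon$-absorption, and verify that the coupled interior/boundary iteration lands exactly on the case distinctions in (\ref{m1})--(\ref{m2}). The De Giorgi part reduces directly to Lemma \ref{Lem0}.
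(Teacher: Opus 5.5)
Your supercritical case (testing with $u_k$, using \textbf{(A3)} and \textbf{(B4)}, splitting $u=u_k+k$ on $\Omega_k$, and feeding (\ref{Lem0-02}) into Lemma \ref{Lem0}) agrees with the paper, and so does your treatment of the critical cases (lowering $p$ or $q$ slightly). The gap is in the subcritical cases, where the theorem asserts the exact endpoint exponents $m_1(p,q)$, $m_2(p,q)$.

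Your reverse-H\"older step actually goes from $L^{(k-1)p'}(\Omega)$ to $L^{kN/(N-2)}(\Omega)$, not from $L^{2(k-1)p'}$. The fixed-point relation you display also collapses to $p'=1$. So the exponent map is $a\mapsto \frac{N}{N-2}\left(\frac{a}{p'}+1\right)$. It is affine with slope $\frac{N}{(N-2)p'}$, which is $<1$ precisely when $p<N/2$, and its fixed point is $a^\ast=\frac{Np}{N-2p}$. Starting below $a^\ast$, the iterates increase strictly towards $a^\ast$ but never reach it in finitely many steps, so nothing ``saturates''. A finite iteration therefore gives every exponent strictly below $m_1(p,q)$, but not $m_1(p,q)$ itself.

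Running infinitely many steps would require uniform control of the accumulated constants, which you do not provide. Worse, the zero-order remainder $\|w_m\|^2_{2,\Omega}$ (from the absorbed lower-order terms, \textbf{(A1)} and \textbf{(B2)}) has the same homogeneity $k$ in $u$ as the left-hand side, so each step can inflate the bound by a fixed factor. Your critical cases are unaffected, since for a finite target $r$ an exponent below the endpoint suffices.

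The missing idea is that no iteration is needed: the truncation lets you choose the fixed-point exponent directly. Take $k=\vartheta_{p,q}$ from (\ref{Case1-01}); in the mixed cases the paper uses $\varsigma_p$, $\varsigma_q$ from (\ref{Case2-01}), (\ref{Case3-01}). This choice ensures $\frac{2(k-1)}{k}p'\le 2^\ast_N$ and $\frac{2(k-1)}{k}q'\le 2^\ast_d$. Then $v_m\le w_m^{2(k-1)/k}$ and H\"older give $\int_\Omega fv_m\,dx+\int_\Gamma gv_m\,d\mu\le C\,|\|(f,g)\||_{p,q}\,\|w_m\|^{2(k-1)/k}_{\mathcal W_2(\Omega;\Gamma)}$.

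For the zero-order remainder, interpolate $\|w_m\|_{2,\Omega}$ between $L^{2^\ast_N}$ and a lower $L^{\zeta}$, using $w_m^2\le u^+w_m^{2(k-1)/k}$. This turns it into $c\|u^+\|_{2,\Omega}\|w_m\|^{2(k-1)/k}_{\mathcal W_2(\Omega;\Gamma)}+\epsilon\|w_m\|^2_{\mathcal W_2(\Omega;\Gamma)}$, as in (\ref{3.2.15})--(\ref{3.2.16}); this is how $\|u\|_{2,\Omega}$ enters linearly.

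Because $\psi_{k/2,m}$ grows only linearly at infinity, $\|w_m\|_{\mathcal W_2(\Omega;\Gamma)}<\infty$ for each fixed $m$. You may therefore divide through by $\|w_m\|^{2(k-1)/k}_{\mathcal W_2(\Omega;\Gamma)}$, which gives $\|w_m^{2/k}\|_{Nk/(N-2),\Omega}+\|w_m^{2/k}\|_{dk/(N-2),\Gamma}\le C\bigl(|\|(f,g)\||_{p,q}+\|u^+\|_{2,\Omega}\bigr)$ uniformly in $m$. Monotone convergence as $m\to\infty$ then yields the endpoint bound. Since $k$ is fixed, the growth of $k$-dependent constants that you identified as the main obstacle never arises.
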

\indent\\
\begin{proof}
Without loss of generality, we will only give the proof for the case when condition {\bf (C2)} holds. Also, for simplicity,
we assume that $N>2$ (the case $N=2$ possesses sharper embedding results; e.g. Theorem \ref{embeddings}).
First of all, it is easy to see that it is enough to show the estimate (\ref{3.2.06}) for $u$ subsolution of (\ref{1.05})
(cf. Definition \ref{weak-sol}). For a supersolution, the assertion will follow as well, since $-u$ will be a subsolution,
and hence it will be valid for solutions by combining the two inequalities. Let $u\in\mathcal{W}_2(\Omega;\Gamma)$ solve the inequality
\begin{equation}
\label{3.2.09}\mathcal{E}_{\mu}(u,\varphi)\,\leq\,\displaystyle\int_{\Omega}f\varphi\,dx+
\displaystyle\int_{\Gamma}g\varphi\,d\mu\,,\,\,\,\,\,\,\,\forall\,\varphi\in\mathcal{W}_2(\Omega;\Gamma)^+.
\end{equation}
Given $k\geq2$ and $m\geq1$, let $v_{_m},\,w_{_m}\in\mathcal{W}_2(\Omega;\Gamma)^+$ be the functions given by (\ref{3.2.02}).
Then, testing (\ref{3.2.09}) with the function $v_{_m}$, using (\ref{1.04}), and the fact that
$\gamma_k:=\frac{k^2}{4(k-1)}\geq 1$, proceeding as in the proof of Proposition \ref{cont-coercive-form},
we take the sum over all $i,\,j$, and then integrate both sides to obtain that\\[2ex]
$c_0\|\nabla w_{_m}\|^2_{_{2,\Omega}}\,$\\
\begin{equation}\label{k-1st-est}
\leq\,\gamma_k\,\mathcal{E}_{_{\Omega}}(u,v_{_m})+\frac{k}{2}\left(|\Omega|^{^{\frac{1}{N}-\frac{1}{r_1}}}\|\hat{\mathfrak{a}}
\|_{_{r_1,\Omega}}+|\Omega|^{^{\frac{1}{N}-\frac{1}{r_2}}}\|\check{\mathfrak{a}}\|_{_{r_2,\Omega}}\right)
\left(\|w_{_m}\|_{_{\frac{2r_1}{r_1-2},\Omega}}+\|w_{_m}\|_{_{\frac{2r_2}{r_2-2},\Omega}}\right)\|\nabla w_{_m}\|_{_{2,\Omega}}+\|\lambda\|_{_{r_3,\Omega}}\|w_{_m}\|^2_{_{\frac{2r_3}{r_3-1},\Omega}},
\end{equation}
where the form $\mathcal{E}_{_{\Omega}}(\cdot,\cdot)$ is given by (\ref{3.1.03}). By virtue of {\bf (B1)} and {\bf (B2)}, we also have
$$\|w_{_m}\|^2_{_{D(\Lambda_{_{\Gamma}})}}\leq \eta'_0\,k^2\left(\mathcal{E}_{_{\Gamma}}(u,v_{_m})+\gamma_0\|w_{_m}\|^2_{_{2,\Gamma}}\right)+
\|\beta\|_{_{s,\Gamma}}\|w_{_m}\|^2_{_{\frac{2s}{s-1},\Gamma}}$$
for some positive constant $\eta'_0$, and for a constant $\gamma_0\in\mathbb{R\!}\,$, where $\mathcal{E}_{_{\Gamma}}(\cdot,\cdot)$
is defined as in (\ref{3.1.04}). Now, taking into account Young's inequality $2\xi\zeta\leq\epsilon^{-1}\xi^2+\epsilon\zeta^2$
(valid for all $\xi,\,\zeta\geq0$ and $\epsilon>0$) together with Remark \ref{R-embedding}(d), we have that\\[2ex]
$\displaystyle\frac{k}{2}\left(|\Omega|^{^{\frac{1}{N}-\frac{1}{r_1}}}\|\hat{\mathfrak{a}}\|_{_{r_1,\Omega}}+|\Omega|^{^{\frac{1}{N}-\frac{1}{r_2}}}
\|\check{\mathfrak{a}}\|_{_{r_2,\Omega}}\right)\left(\|w_{_m}\|_{_{\frac{2r_1}{r_1-2},\Omega}}+\|w_{_m}\|_{_{\frac{2r_2}{r_2-2},\Omega}}\right)\|\nabla w_{_m}\|_{_{2,\Omega}}$\\
$$\leq\,\frac{\epsilon_1}{2}\|\nabla w_{_m}\|^2_{_{2,\Omega}}+\frac{k^2}{2\epsilon_1}\left(|\Omega|^{^{\frac{1}{N}-\frac{1}{r_1}}}\|\hat{\mathfrak{a}}\|_{_{r_1,\Omega}}+
|\Omega|^{^{\frac{1}{N}-\frac{1}{r_2}}}\|\check{\mathfrak{a}}\|_{_{r_2,\Omega}}\right)^2\left(\|w_{_m}\|_{_{\frac{2r_1}{r_1-2},\Omega}}+\|w_{_m}\|_{_{\frac{2r_2}{r_2-2},\Omega}}\right)^2
\indent\indent\indent\indent\indent$$
$$\leq\,\displaystyle\frac{\epsilon_1}{2}\|\nabla w_{_m}\|^2_{_{2,\Omega}}+
\frac{k^2}{2\epsilon_1}\left(|\Omega|^{^{\frac{1}{N}-\frac{1}{r_1}}}\|\hat{\mathfrak{a}}\|_{_{r_1,\Omega}}+
|\Omega|^{^{\frac{1}{N}-\frac{1}{r_2}}}\|\check{\mathfrak{a}}\|_{_{r_2,\Omega}}\right)^2\left(\epsilon_k\|\nabla w_{_m}\|^2_{_{2,\Omega}}+
\frac{C}{\epsilon_k}\|w_{_m}\|^2_{_{2,\Omega}}\right),$$ for all $\epsilon_1,\,\epsilon_k>0$, and for some constant $C>0$. Choosing
$$\epsilon_k:=\frac{\epsilon^2_1}{k^2\left(|\Omega|^{^{\frac{1}{N}-\frac{1}{r_1}}}\|\hat{\mathfrak{a}}\|_{_{r_1,\Omega}}+
|\Omega|^{^{\frac{1}{N}-\frac{1}{r_2}}}\|\check{\mathfrak{a}}\|_{_{r_2,\Omega}}\right)^2}$$ in the above calculation,
we obtain that\\[2ex]
$\displaystyle\frac{k}{2}\left(|\Omega|^{^{\frac{1}{N}-\frac{1}{r_1}}}\|\hat{\mathfrak{a}}\|_{_{r_1,\Omega}}+|\Omega|^{^{\frac{1}{N}-\frac{1}{r_2}}}
\|\check{\mathfrak{a}}\|_{_{r_2,\Omega}}\right)\left(\|w_{_m}\|_{_{\frac{2r_1}{r_1-2},\Omega}}+\|w_{_m}\|_{_{\frac{2r_2}{r_2-2},\Omega}}\right)\|\nabla w_{_m}\|_{_{2,\Omega}}$\\
\begin{equation}\label{k-2nd-est}
\leq\,\epsilon_1\|\nabla w_{_m}\|^2_{_{2,\Omega}}+\frac{k^4C}{2\epsilon^3_1}\left(|\Omega|^{^{\frac{1}{N}-\frac{1}{r_1}}}\|\hat{\mathfrak{a}}\|_{_{r_1,\Omega}}+
|\Omega|^{^{\frac{1}{N}-\frac{1}{r_2}}}\|\check{\mathfrak{a}}\|_{_{r_2,\Omega}}\right)^4\|w_{_m}\|^2_{_{2,\Omega}},
\end{equation}
for all $\epsilon_1>0$. In a similar way, recalling the compactness of the maps
$\mathcal{W}_2(\Omega;\Gamma)\hookrightarrow L^{^{\frac{2r_3}{r_3-1}}}(\Omega)$ and $\mathcal{W}_2(\Omega;\Gamma)\hookrightarrow L^{^{\frac{2s}{s-1}}}_{\mu}(\Gamma)$
together with Young's inequality and Remark \ref{R-embedding}(d), we get that
\begin{equation}\label{k-3rd-est}
\|\lambda\|_{_{r_3,\Omega}}\|w_{_m}\|^2_{_{\frac{2r_3}{r_3-1},\Omega}}\,\leq\,\epsilon_2\|\nabla w_{_m}\|^2_{_{2,\Omega}}+
C_{_{\frac{\epsilon_2}{\|\lambda\|_{_{r_3,\Omega}}}}}\|w_{_m}\|^2_{_{2,\Omega}},
\end{equation}
and
\begin{equation}\label{k-4th-est}
\|\beta\|_{_{s,\Omega}}\|w_{_m}\|^2_{_{\frac{2s}{s-1},\Gamma}}\,\leq\,\epsilon_3\|\nabla w_{_m}\|^2_{_{2,\Omega}}+
C'_{_{\frac{\epsilon_3}{\|\beta\|_{_{s,\Gamma}}}}}\|w_{_m}\|^2_{_{2,\Omega}},
\end{equation}
and
\begin{equation}\label{k-5th-est}
\gamma_0\|w_{_m}\|^2_{_{2,\Gamma}}\,\leq\,\epsilon_4\|\nabla w_{_m}\|^2_{_{2,\Omega}}+
C''_{_{\frac{\epsilon_4}{|\gamma_0|}}}\|w_{_m}\|^2_{_{2,\Omega}}\,\,\,\,\,\,\,\,\,\,\,\,\,\textrm{(if}\,\,\gamma_0\neq0\,\textrm{)},
\end{equation}
for every $\epsilon_2,\,\epsilon_3,\,\epsilon_4>0$, and for some positive constants
$C_{_{\frac{\epsilon_2}{\|\lambda\|_{_{r_3,\Omega}}}}}$, \,$C'_{_{\frac{\epsilon_3}{\|\beta\|_{_{s,\Gamma}}}}}$,\,
$C''_{_{\frac{\epsilon_4}{|\gamma_0|}}}$. Letting $\epsilon_j:=c_0/8$ for $j\in\{1,2,3,4\}$, we put
\begin{equation}\label{3.2.10}
\delta^{\ast}:=\frac{C}{c^3_0}\left(|\Omega|^{^{\frac{1}{N}-\frac{1}{r_1}}}\|\hat{\mathfrak{a}}\|_{_{r_1,\Omega}}+
|\Omega|^{^{\frac{1}{N}-\frac{1}{r_2}}}\|\check{\mathfrak{a}}\|_{_{r_2,\Omega}}\right)^4
+C_{_{\frac{c_0}{8\|\lambda\|_{_{r_3,\Omega}}}}}+C'_{_{\frac{c_0}{8\|\beta\|_{_{s,\Gamma}}}}}+C''_{_{\frac{c_0}{8|\gamma_0|}}},
\end{equation}
We are lead to the following calculation:\\[2ex]
$\min\{c_0,\,1\}\left(\|\nabla w_{_m}\|^2_{_{2,\Omega}}+\|w_{_m}\|^2_{_{D(\Lambda_{_{\Gamma}})}}\right)\,
\leq\,8\left(\gamma_k\,\mathcal{E}_{_{\Omega}}(u,v_{_m})+\eta'_0\,k^2\mathcal{E}_{_{\Gamma}}(u,v_{_m})
+k^4\delta^{\ast}\|w_{_m}\|^2_{_{2,\Omega}}\right)$\\
$$\indent\indent\indent\indent\indent\,\leq\,8k\left(\mathcal{E}_{_{\Omega}}(u,v_{_m})+2\eta'_0\,k\mathcal{E}_{_{\Gamma}}(u,v_{_m})
+2k^3\delta^{\ast}\|w_{_m}\|^2_{_{2,\Omega}}\right).$$\\
Taking the definition of $v_{_m}$ together with the assumption {\bf (A1)}, we combine the previous estimate to arrive at
\begin{equation}\label{3.2.11}
\|w_{_m}\|^2_{_{\mathcal{V}_2(\Omega;\Gamma)}}\,
\leq\,\frac{8\max\{\eta'_0,\,1\}}{\min\{c_0,\,1\}}k^4\left\{\mathcal{E}_{\mu}(u,v_{_m})+
\left(\delta^{\ast}+|\eta_0|+\min\{c_0,\,1\}(1+|\eta_0|)\right)\|w_{_m}\|^2_{_{2,\Omega}}\right\}.
\end{equation}
Letting $$\gamma^{\ast}:=\frac{8(c_1+c_2)\max\{\eta'_0,\,1\}\max\left\{1,\,\delta^{\ast}+|\eta_0|+\min\{c_0,\,1\}(1+|\eta_0|)\right\}}{\min\{c_0,\,1\}},$$\indent\\
and using the fact that $u\in\mathcal{W}_2(\Omega;\Gamma)$ fulfill (\ref{3.2.09}),
we get by means of (\ref{3.2.11}) together with (\ref{2.03}) and (\ref{2.04}), that
\begin{equation}\label{3.2.12}
\|w_{_m}\|^2_{_{\frac{2N}{N-2},\Omega}}+\|w_{_m}\|^2_{_{\frac{2d}{N-2},\Gamma}}\,\leq\,(c_1+c_2)\|w_{_m}\|^2_{_{\mathcal{V}_2(\Omega;\Gamma)}}
\,\leq\,k^4\gamma^{\ast}\left(\displaystyle\int_{\Omega}fv_{_m}\,dx+\displaystyle\int_{\Gamma}gv_{_m}\,d\mu+\|w_{_m}\|^2_{_{2,\Omega}}\right).
\end{equation}
As $v_{_m}\,\leq\,w^{^{2(k-1)/k}}_{_m}$ for each $k\geq2$, we see that
\begin{equation}
\label{3.2.13}\displaystyle\int_{\Omega}fv_{_m}\,dx\,\leq\,
\|f\|_{_{p,\Omega}}\|v_{_m}\|_{_{\frac{p}{p-1},\Omega}}\,\leq\,
\|f\|_{_{p,\Omega}}\|w_{_m}\|^{^{\frac{2(k-1)}{k}}}_{_{\left(\frac{p}{p-1}\right)\frac{2(k-1)}{k},\Omega}}
\end{equation}
and
\begin{equation}
\label{3.2.14}\displaystyle\int_{\Gamma}gv_{_m}\,d\mu\,\leq\,
\|g\|_{_{q,\Gamma}}\|v_{_m}\|_{_{\frac{q}{q-1},\Gamma}}\,\leq\,
\|g\|_{_{q,\Gamma}}\|w_{_m}\|^{^{\frac{2(k-1)}{k}}}_{_{\left(\frac{q}{q-1}\right)\frac{2(k-1)}{k},\Gamma}}.
\end{equation}\\
Furthermore, by a well known interpolation inequality (e.g. \cite{TAR07}, Lemma 8.2), we observe that
\begin{equation}\label{3.2.15}
\|w_{_m}\|^2_{_{2,\Omega}}\,\leq\,\|w_{_m}\|^{^{2(1-\theta)}}_{_{\frac{2N}{N-2},\Omega}}
\|w_{_m}\|^{^{2\theta}}_{_{\zeta,\Omega}}\,,
\end{equation}
where $\theta\in(0,1)$ is chosen such that $2^{-1}=\theta\zeta^{-1}+(1-\theta)(N-2)(2N)^{-1}$, and where
$\zeta:=4p(3p-2)^{-1}\,\leq\,2$. To estimate $\|w_{_m}\|^{^{2\theta}}_{_{\zeta,\Omega}}$, we use the fact that $w^{2/k}_{_m}\leq u^+$ together with
H\"older's inequality to deduce that
$$\|w_{_m}\|^2_{_{\zeta,\Omega}}=\left\|w^{^{\frac{2}{k}+\frac{2}{k}(k-1)}}_{_m}\right\|_{_{\frac{\zeta}{2},\Omega}}\,\leq\,
\|u^+\|_{_{2,\Omega}}\|w_{_m}\|^{^{\frac{2(k-1)}{k}}}_{_{\left(\frac{p}{p-1}\right)\frac{2(k-1)}{k},\Omega}}.$$
Inserting this estimate in (\ref{3.2.15}), we apply Young's inequality in (\ref{3.2.15} to achieve
\begin{equation}
\label{3.2.16}\|w_{_m}\|^2_{_{2,\Omega}}\,\leq\,c^{\ast}\,\|u^+\|_{_{2,\Omega}}
\|w_{_m}\|^{^{\frac{2(k-1)}{k}}}_{_{\left(\frac{p}{p-1}\right)\frac{2(k-1)}{k},\Omega}}+\epsilon\,\|w_{_m}\|^2_{_{\mathcal{W}_2(\Omega;\Gamma)}},
\end{equation}\\
for some constant $c^{\ast}>0$ and for all $\epsilon>0$. We now deal each of the cases (five in total), in accordance to the values of the
constants (\ref{m1}) and (\ref{m2}).\\[2ex]
$\bullet$\,\,\,{\it \underline{\textsc{Case 1:}} $p<N/2$ \,and\, $q<\frac{d}{d+2-N}$}.\\[2ex]
For this case, choose
\begin{equation}\label{Case1-01}
k=\vartheta_{p,q}:=\min\left\{\frac{p(N-2)}{N-2p},\,\frac{q(N-2)}{Nq-2q-dq+d}\right\}.
\end{equation}
Using this selection together with (\ref{2.03}) and (\ref{2.04}), from (\ref{3.2.13}), (\ref{3.2.14}), and (\ref{3.2.16}), we get that
\begin{equation}
\label{Case1-02}\displaystyle\int_{\Omega}fv_{_m}\,dx\,\leq\,
C\|f\|_{_{p,\Omega}}\|w_{_m}\|^{^{\frac{2(\vartheta_{p,q}-1)}{\vartheta_{p,q}}}}_{_{\frac{2N}{N-2},\Omega}}
\leq\,c^{^{\frac{2(\vartheta_{p,q}-1)}{\vartheta_{p,q}}}}_1C\|f\|_{_{p,\Omega}}
\|w_{_m}\|^{^{\frac{2(\vartheta_{p,q}-1)}{\vartheta_{p,q}}}}_{_{\mathcal{W}_2(\Omega;\Gamma)}},
\end{equation}
and
\begin{equation}
\label{Case1-03}\displaystyle\int_{\Gamma}gv_{_m}\,d\mu\,\leq\,
C'\|g\|_{_{q,\Gamma}}\|w_{_m}\|^{^{\frac{2(\vartheta_{p,q}-1)}{\vartheta_{p,q}}}}_{_{\frac{2d}{N-2},\Gamma}}
\leq\,c^{^{\frac{2(\vartheta_{p,q}-1)}{\vartheta_{p,q}}}}_2C'\|g\|_{_{q,\Gamma}}
\|w_{_m}\|^{^{\frac{2(\vartheta_{p,q}-1)}{\vartheta_{p,q}}}}_{_{\mathcal{W}_2(\Omega;\Gamma)}},
\end{equation}
and
\begin{equation}
\label{Case1-04}\|w_{_m}\|^2_{_{2,\Omega}}\,\leq\,c^{^{\frac{2(\vartheta_{p,q}-1)}{\vartheta_{p,q}}}}_1C''\|u^+\|_{_{2,\Omega}}
\|w_{_m}\|^{^{\frac{2(\vartheta_{p,q}-1)}{\vartheta_{p,q}}}}_{_{\mathcal{W}_2(\Omega;\Gamma)}}+\epsilon\,\|w_{_m}\|^2_{_{\mathcal{W}_2(\Omega;\Gamma)}},
\end{equation}
for some constants $C,\,C',\,C''>0$. Selecting $\epsilon>0$ appropriately, and combining (\ref{3.2.12}), (\ref{Case1-02}), (\ref{Case1-03}),
and (\ref{Case1-04}), we get the existence of a constant $C_0>0$, such that
$$\|w_{_m}\|^2_{_{\mathcal{W}_2(\Omega;\Gamma)}}\,\leq\,C_0\left(\|f\|_{_{p,\Omega}}+\|g\|_{_{q,\Gamma}}+\|u^+\|_{_{2,\Omega}}\right)
\|w_{_m}\|^{^{\frac{2(\vartheta_{p,q}-1)}{\vartheta_{p,q}}}}_{_{\mathcal{W}_2(\Omega;\Gamma)}},$$
from where we arrive at
\begin{equation}\label{Case1-05}
\left\|w^{^{\frac{2}{\vartheta_{p,q}}}}_{_m}\right\|_{_{\frac{N\vartheta_{p,q}}{N-2},\Omega}}+
\left\|w^{^{\frac{2}{\vartheta_{p,q}}}}_{_m}\right\|_{_{\frac{d\vartheta_{p,q}}{N-2},\Gamma}}
\,\leq\,C_0\left(\|f\|_{_{p,\Omega}}+\|g\|_{_{q,\Gamma}}+\|u^+\|_{_{2,\Omega}}\right).
\end{equation}
Since the sequence $w^{^{\frac{2}{\vartheta_{p,q}}}}_{_m}:=\left[\psi_{_{\frac{\vartheta_{p,q}}{2},m}}(u)\right]^{^{\frac{2}{\vartheta_{p,q}}}}$
(for the function $\psi_{_{t,m}}(\cdot)$ given by (\ref{3.2.01}))
is increasing with $w^{^{\frac{2}{\vartheta_{p,q}}}}_{_m}\stackrel{m\rightarrow\infty}{\longrightarrow}u^+$, and application of
the Monotone Convergence Theorem of Lebesgue in (\ref{Case1-05}) gives
\begin{equation}\label{Case1-06}
|\|\textbf{u}^+\||_{_{\frac{N\vartheta_{p,q}}{N-2},\frac{d\vartheta_{p,q}}{N-2}}}\,\leq\,C_0\left(|\|(f,g)\||_{_{p,q}}+\|u^+\|_{_{2,\Omega}}\right).
\end{equation}
This completes the proof of the first case of the theorem.\\[2ex]
$\bullet$\,\,\,{\it \underline{\textsc{Case 2:}} $p<N/2<\frac{d}{d+2-N}\geq q$, or\,
$q<\frac{d}{d+2-N}$\, and \,$N/p\leq p<q$}.\\[2ex]
For this case, we put
\begin{equation}\label{Case2-01}
k:=\varsigma_p:=\frac{p(N-2)}{Np-2p-dp+d}.
\end{equation}
Inserting this in (\ref{3.2.13}), (\ref{3.2.14}), and (\ref{3.2.16}), and using the fact that $N-2<d<N$ and $q'<p'$, we get that
$$\displaystyle\int_{\Omega}fv_{_m}\,dx
\,\leq\,\|f\|_{_{p,\Omega}}\|w_{_m}\|^{^{\frac{2(\varsigma_{p}-1)}{\varsigma_{p}}}}_{_{\frac{2d}{N-2},\Omega}}
\leq\,c^{^{\frac{2(\varsigma_{p}-1)}{\varsigma_{p}}}}_1C_1\|f\|_{_{p,\Omega}}
\|w_{_m}\|^{^{\frac{2(\varsigma_{p}-1)}{\varsigma_{p}}}}_{_{\mathcal{W}_2(\Omega;\Gamma)}},$$
and
$$\displaystyle\int_{\Gamma}gv_{_m}\,d\mu\,\leq\,\|g\|_{_{q,\Gamma}}
\|w_{_m}\|^{^{\frac{2(\varsigma_{p}-1)}{\varsigma_{p}}}}_{_{\left(\frac{q}{q-1}\right)\frac{2(\varsigma_p-1)}{\varsigma_p},\Gamma}}
\,\leq\,C'_1\|g\|_{_{q,\Gamma}}\|w_{_m}\|^{^{\frac{2(\varsigma_{p}-1)}{\varsigma_{p}}}}_{_{\frac{2d}{N-2},\Gamma}}
\leq\,c^{^{\frac{2(\varsigma_{p}-1)}{\varsigma_{p}}}}_2C''_1\|g\|_{_{q,\Gamma}}
\|w_{_m}\|^{^{\frac{2(\varsigma_{p}-1)}{\varsigma_{p}}}}_{_{\mathcal{W}_2(\Omega;\Gamma)}},$$
and
$$\|w_{_m}\|^2_{_{2,\Omega}}\,\leq\,c^{^{\frac{2(\varsigma_{p}-1)}{\varsigma_{p}}}}_1\|u^+\|_{_{2,\Omega}}
\|w_{_m}\|c^{^{\frac{2(\varsigma_{p}-1)}{\varsigma_{p}}}}_{_{\mathcal{W}_2(\Omega;\Gamma)}}+\epsilon\,\|w_{_m}\|^2_{_{\mathcal{W}_2(\Omega;\Gamma)}},$$
for some constants $C_1,\,C'_1,\,C''_1>0$. Proceeding in the exact way as the previous case, we have
\begin{equation}\label{Case2-02}
|\|\textbf{u}^+\||_{_{\frac{N\varsigma_{p}}{N-2},\frac{d\varsigma_{p}}{N-2}}}\,\leq\,C'_0\left(|\|(f,g)\||_{_{p,q}}+\|u^+\|_{_{2,\Omega}}\right)
\end{equation}
for some positive constant $C_0>0$, which is the desired conclusion for the second case.\\[2ex]
$\bullet$\,\,\,{\it \underline{\textsc{Case 3:}} $q<\frac{d}{d+2-N}$ \,and\, $p\geq\max\{N/2,q\}$}.\\[2ex]
Letting
\begin{equation}\label{Case3-01}
k:=\varsigma_q:=\frac{q(N-2)}{Nq-2q-dq+d},
\end{equation}
this case follows in the same way as in the previous case; we deduce the existence of a constant $C''_0>0$ such that
\begin{equation}\label{Case3-02}
|\|\textbf{u}^+\||_{_{\frac{N\varsigma_{q}}{N-2},\frac{d\varsigma_{q}}{N-2}}}\,\leq\,C''_0\left(|\|(f,g)\||_{_{p,q}}+\|u^+\|_{_{2,\Omega}}\right).
\end{equation}
$\bullet$\,\,\,{\it \underline{\textsc{Case 4:}} $p=N/2$\, and \,$q\geq\frac{d}{d+2-N}$, or
\,$q=\frac{d}{d+2-N}$\, and \,$p\geq N/2$}.\\[2ex]
We will prove the case when $p=N/2$\, and \,$q\geq\frac{d}{d+2-N}$; the other situation follows in an analogous manner.
Given $r>2^{\ast}_{_N}$, \,$s>2^{\ast}_{_d}$ arbitrarily fixed, put $\tau_{r,s}:=\max\{r,s,N\}$.
Then there exists $p_0\in[(2^{\ast}_{_N})',N/2)$ such that
$$f\in L^{p_0}(\Omega)\indent\indent\textrm{and}\indent\indent\frac{p_0}{N-2p_0}=\frac{\tau_{r,s}}{d}.$$
Given $r>2^{\ast}_{_N}$, \,$s>2^{\ast}_{_d}$ arbitrarily fixed, put $\tau_{r,s}:=\max\{r,s,N\}$.
In the same way, one can find $q_0\in[(2^{\ast}_{_d})',d(d+2-N)^{-1})$ such that
$$g\in L^{q_0}_{\mu}(\Gamma)\indent\indent\textrm{and}\indent\indent\frac{q_0}{Nq_0-2q_0-dq_0+d}=\frac{\tau_{r,s}}{d}.$$
Then, selecting $\vartheta_{p_0,q_0}$ as in (\ref{Case1-01}), and applying multiple times H\"older's inequality,
we end up getting the inequality
\begin{equation}\label{Case4-01}
|\|\textbf{u}^+\||_{_{r,s}}\,\leq\,C_2\left(|\|(f,g)\||_{_{p_0,q_0}}+\|u^+\|_{_{2,\Omega}}\right)
\,\leq\,C'_2\left(|\|(f,g)\||_{_{p,q}}+\|u^+\|_{_{2,\Omega}}\right),
\end{equation}
for some constants $C_2,\,C'_2>0$. This establishes the fourth case.\\[2ex]
$\bullet$\,\,\,{\it \underline{\textsc{Case 5:}} $p>N/2$\, and \,$q>\frac{d}{d+2-N}$}.\\[2ex]
For this part, a complete proof is given for the parabolic case in the proof of Theorem \ref{Thm-Partial-Bounded}, so we will only sketch the main
steps of this corresponding elliptic version.
Let $u\in\mathcal{W}_2(\Omega;\Gamma)$ be a weak solution of problem (\ref{1.05}), and
let $u_k$ be the function given by (\ref{u_k def}). Then we have
\begin{equation}\label{thm0-03}
\mathcal{E}_{\mu}(u,u_k)=\displaystyle\int_{\Omega_k}fu_k\,dx+\displaystyle\int_{\Gamma_k}gu_k\,d\mu,
\end{equation}
where we recall the definition of the form $\mathcal{E}_{\mu}(\cdot,\cdot)$ in (\ref{3.1.01}). Estimating the right hand side in (\ref{thm0-03}) and taking
into account Assumptions {\bf (A3)}, {\bf (B4)}, and (\ref{1.04}), we apply Young's inequality multiple times and use Remark \ref{R-embedding}(d)
to infer (after multiple calculations) that
\begin{equation}\label{thm0-10}
\mathcal{E}_{\mu}(u,u_k)\,\geq\,\displaystyle\frac{c_0}{4}\|\nabla u_k\|^2_{_{2,\Omega_k}}+
c^{\#}\|u_k\|^2_{_{D(\Lambda_{_{\Gamma}})}}-
\displaystyle\int_{\Omega_k}\mathfrak{h}(|u_k|^2+k^2)\,dx-\displaystyle\int_{\Gamma_k}\mathfrak{k}(|u_k|^2+k^2)\,d\mu,
\end{equation}
for some constant $c^{\#}>0$, where
$$\mathfrak{h}:=\frac{1}{c_0}\displaystyle\sum^N_{i=1}|\hat{a}_i|^2+M^{\star}+|\lambda|\in L^{^{\min\left\{\frac{r_1}{2},r_3\right\}}}(\Omega)^+$$
and $$\mathfrak{k}:=\min\{1,c^{\star}_1\}(|c^{\ast}_1|+2|\beta|)+c^{\star}_3|\rho|\in L^{^{\min\{s,\theta\}}}_{\mu}(\Gamma)^+,$$
for some constant $c^{\ast}_1>0$, and where
$$M^{\star}:=\max\left\{\|\hat{\mathfrak{a}}\|_{_{r_1,\Omega}},\|\check{\mathfrak{a}}\|_{_{r_2,\Omega}}\right\}
\left(C_1\|\hat{\mathfrak{a}}\|_{_{r_1,\Omega}}+C_2\|\check{\mathfrak{a}}\|_{_{r_2,\Omega}}\right)/c_0>0,$$
for some constants $C_1,\,C_2>0$. It is also easy to see that
\begin{equation}\label{thm0-09}
\displaystyle\int_{\Omega_k}fu_k\,dx+\displaystyle\int_{\Gamma_k}gu_k\,d\mu\,
\leq\,\displaystyle\frac{1}{k}\displaystyle\int_{\Omega_k}|f|(|u_k|^2+k^2)\,dx+
\displaystyle\frac{1}{k}\displaystyle\int_{\Gamma_k}|g|(|u_k|^2+k^2)\,d\mu.
\end{equation}
Substituting all this into (\ref{thm0-03}) and recalling the equivalence of the norms $\|\cdot\|_{_{\mathcal{W}_2(\Omega;\Gamma)}}$
and $\|\cdot\|_{_{\mathcal{W}_2(\overline{\Omega})}}$, we deduce that
\begin{equation}\label{thm0-12}
\min\left\{1,\displaystyle\frac{c_0}{4},c^{\#}\right\}\|u_k\|^2_{_{\mathcal{W}_2(\overline{\Omega})}}\,\leq\,c\|u_k\|^2_{_{2,\Omega_k}}+
c\left(\displaystyle\int_{\Omega_k}\left(\frac{1}{k}|f|+\mathfrak{h}\right)(|u_k|^2+k^2)\,dx
+\displaystyle\int_{\Gamma_k}\left(\frac{1}{k}|g|+\mathfrak{k}\right)(|u_k|^2+k^2)\,d\mu\right),
\end{equation}
for some constant $c>0$. We now estimate the last two terms in (\ref{thm0-12}). Put
\begin{equation}\label{thm0-13}
\hat{k}^2_0:=\|f\|^2_{_{p,\Omega}}+\|g\|^2_{_{q,\Gamma}},\,\,\,\,\,\textrm{and}\,\,\,\,\,k\geq\hat{k}_0,
\end{equation}
and apply H\"older's inequality together with Young's inequality and Remark \ref{R-embedding}(d) to deduce that
\begin{equation}\label{thm0-14}
\displaystyle\int_{\Omega_k}\frac{1}{k}|f||u_k|^2\,dx\,\leq\,\displaystyle\frac{1}{k}\|f\|_{_{p,\Omega}}\|u_k\|^2_{_{\frac{2p}{p-1},\Omega_k}}
\,\leq\,\displaystyle\frac{k_0}{k}\left(\epsilon_1\|\nabla u_k\|^2_{_{2,\Omega_k}}+C_{\epsilon_1}\|u_k\|^2_{_{2,\Omega_k}}\right)
\,\leq\,\epsilon_1\|u_k\|^2_{_{\mathcal{W}_2(\overline{\Omega})}}+C_{\epsilon_1}\|u_k\|^2_{_{2,\Omega_k}},
\end{equation}
for all $\epsilon_1>0$, and for some constant $C_{\epsilon_1}>0$. Also, since $\mathfrak{h}\in L^{^{\min\{r_1,r_3\}}}(\Omega)$ and
$$\mathfrak{r}:=2\min\{r_1/2,\,r_3\}\left(\min\{r_1/2,\,r_3\}-1\right)^{-1}<
2N(N-2)^{-1},$$ proceeding in the same way as in the derivation of (\ref{thm0-14}), we get that
\begin{equation}\label{thm0-16}
\displaystyle\int_{\Omega_k}\mathfrak{h}|u_k|^2\,dx
\,\leq\,\|\mathfrak{h}\|_{_{\min\left\{\frac{r_1}{2},r_3\right\},\Omega}}\left(\epsilon_2\| u_k\|^2_{_{\mathcal{W}_2(\overline{\Omega})}}+C_{\epsilon_2}\|u_k\|^2_{_{2,\Omega_k}}\right),
\end{equation}
for every $\epsilon_2>0$, and for some constant $C_{\epsilon_2}>0$. Moreover, using the fact that $\mathfrak{k}\in L^{^{\min\{s,\theta\}}}_{\mu}(\Gamma)$ with
$$\mathfrak{s}:=2\min\{s,\theta\}\left(\min\{s,\theta\}-1\right)^{-1}<2d(N-2)^{-1},$$ we proceed in the exact way as above to deduce that
\begin{equation}\label{thm0-17}
\displaystyle\int_{\Gamma_k}\frac{1}{k}|g||u_k|^2\,d\mu
\,\leq\,\epsilon_3\|u_k\|^2_{_{\mathcal{W}_2(\overline{\Omega})}}+C_{\epsilon_3}\|u_k\|^2_{_{2,\Omega_k}},
\end{equation}
for all $\epsilon_3>0$, and for some constant $C_{\epsilon_3}>0$, and
\begin{equation}\label{thm0-18}
\displaystyle\int_{\Gamma_k}\mathfrak{k}|u_k|^2\,d\mu
\,\leq\,\|\mathfrak{k}\|_{_{\min\{s,\theta\},\Gamma}}\left(\epsilon_4\|u_k\|^2_{_{\mathcal{W}_2(\overline{\Omega})}}+C_{\epsilon_4}\|u_k\|^2_{_{2,\Omega_k}}\right),
\end{equation}
for every $\epsilon_4>0$, and for some constant $C_{\epsilon_4}>0$. Choosing
$$\epsilon_1=\epsilon_3=\frac{1}{8}\min\left\{1,\displaystyle\frac{c_0}{4},c^{\#}\right\},\,\,\,\,\,\,\,\,\,\,
\epsilon_2=\frac{1}{8\|\mathfrak{h}\|_{_{\min\left\{\frac{r_1}{2},r_3\right\},\Omega}}}\min\left\{1,\displaystyle\frac{c_0}{4},c^{\#}\right\},
\,\,\,\,\,\,\,\,\,\,\epsilon_4=\frac{1}{8\|\mathfrak{k}\|_{_{\min\{s,\theta\},\Gamma}}}\min\left\{1,\displaystyle\frac{c_0}{4},c^{\#}\right\},$$
we insert (\ref{thm0-14}), (\ref{thm0-16}), (\ref{thm0-17}), and (\ref{thm0-18}) into (\ref{thm0-12}), to arrive at
\begin{equation}\label{thm0-19}
\|u_k\|^2_{_{\mathcal{W}_2(\overline{\Omega})}}\,\leq\,\displaystyle\frac{2c}{\min\{1,c_0/4,c^{\#}\}}\left\{
C^{\star}_0\|u_k(t)\|^2_{_{2,\Omega_k}}\right.+
\left.k^2\displaystyle\int_{\Omega_k}\left(\frac{1}{k}|f|+\mathfrak{h}\right)\,dx\,+\,
k^2\displaystyle\int_{\Gamma_k}\left(\frac{1}{k}|g|+\mathfrak{k}\right)\,d\mu\right\},
\end{equation}
where
$$C^{\star}_0:=1+C_{\epsilon_1}+C_{\epsilon_2}\|\mathfrak{h}\|_{_{\min\{r_1,r_3\},\Omega}}
+C_{\epsilon_3}+C_{\epsilon_4}\|\mathfrak{k}\|_{_{\min\{s,\theta\},\Gamma}}.$$
Having this, we now produce estimates in the last two integral terms in (\ref{thm0-19}). In fact,
recalling that $k\geq k_0$ and applying H\"older's inequality, we see that
\begin{equation}\label{thm0-20}
\int_{\Omega_k}\frac{1}{k}|f|\,dx\,\leq\,|\Omega_k|^{^{1-\frac{1}{p}}},\,\,\,\,\,\displaystyle\int_{\Omega_k}\mathfrak{h}\,dx
\,\leq\,\|\mathfrak{h}\|_{_{\min\left\{\frac{r_1}{2},r_3\right\},\Omega}}|\Omega_k|^{2/\mathfrak{r}},
\end{equation}
and
\begin{equation}\label{thm0-22}
\displaystyle\int_{\Gamma_k}\frac{1}{k}|g|\,d\mu\,\leq\,\mu(\Gamma_k)^{^{1-\frac{1}{q}}},\,\,\,\,\,
\int_{\Gamma_k}\mathfrak{k}\,d\mu\,dt\,\leq\,\|\mathfrak{k}\|_{_{\min\{s,\theta\},\Gamma}}
\mu(\Gamma_k)^{2/\mathfrak{s}}dt.
\end{equation}
Put $$\gamma_0:=\displaystyle\frac{2c\max\left\{1,C^{\star}_0,\|\mathfrak{h}\|_{_{\min\left\{\frac{r_1}{2},r_3\right\},\Omega}},
\|\mathfrak{k}\|_{_{\min\{s,\theta\},\Gamma}}\right\}}{\min\{1,c_0/4,c^{\#}\}},$$
and we choose
$$\theta_{1,1}:=\frac{2p-N}{(N-2)p},\,\,\,\,\,\,\,
\theta_{1,2}:=\frac{2N+2\mathfrak{r}-N\mathfrak{r}}{(N-2)\mathfrak{r}},\,\,\,\,\,\,\,s_{1,1}=s_{1,2}:=2^{\ast}_{_N},$$
and
$$\theta_{2,1}:=\frac{dq+2q-Nq-d}{(N-2)q},\,\,\,\,\,\,\,
\theta_{2,2}:=\frac{2d+2\mathfrak{s}-N\mathfrak{s}}{(N-2)\mathfrak{s}},\,\,\,\,\,\,\,s_{2,1}=s_{2,2}:=2^{\ast}_{_d}.$$
Recalling the assumptions, one clearly sees that the parameters $\theta_{1,\xi},\,\theta_{2,\zeta}$ are all positive.
Using this, we substitute (\ref{thm0-20}), and (\ref{thm0-22}) into (\ref{thm0-19}), to arrive at
\begin{equation}\label{thm0-25}
\|u_k\|^2_{_{\mathcal{W}_2(\overline{\Omega})}}\,\leq\,\gamma_0\left(\|u_k\|^2_{_{2,\Omega_k}}
+k^2\displaystyle\sum^{2}_{\xi=1}|\Omega_k|
^{^{\frac{2(1+\theta_{1,\xi})}{s_{1,\xi}}}}+k^2\displaystyle\sum^{2}_{\zeta=1}\mu(\Gamma_k)^{^{\frac{2(1+\theta_{2,\zeta})}{s_{2,\zeta}}}}\right).
\end{equation}
Therefore, all the conditions of Lemma \ref{Lem1} are fulfilled, and thus
applying Lemma \ref{Lem1} to both $u$ and $-u$ (the latter one being a weak solution of (\ref{1.05}) for $f,\,g$ replaced by
$-f,\,-g$, respectively) gives the desired $L^{\infty}$-bound, completing the proof of the theorem.
\end{proof}

\begin{remark}\label{improved-norms}
Under some additional conditions, the constants $m_1(p,q)$ and $m_2(p,q)$ in Theorem \ref{elliptic-bounded} can be improved.
Here we bring two particular situations. We are assuming that $N>2$.
\begin{enumerate}
\item[(a)]\,\,\,If $p<N/2$ and $q\geq p(N-2)(N-2p)^{-1}$, then it is easy to deduce (as in the proof of Theorem \ref{elliptic-bounded})
that (\ref{3.2.06}) holds for
$$m_1(p,q):=m_1(p)=\frac{Np}{N-2p}\,\,\,\,\,\,\,\,\textrm{and}\,\,\,\,\,\,\,\, m_2(p,q):=m_2(p)=\frac{dp}{N-2p}.$$
\item[(b)]\,\,\,Let $\Omega$ be a bounded Lipschitz domain, and assume that condition {\bf (C2)} holds. Then it is well-known that
$\Gamma$ is a compact $(N-1)$-dimensional Riemannian manifold, and thus (\ref{2.04a}) holds.
Consequently, given $p\geq(2^{\ast}_{_N})'$ and $q\geq(2^{\star}_{_{\Gamma}})':=2(N-1)(N+1)^{-1}$,
if $N>3$, the conclusions of Theorem \ref{elliptic-bounded} hold in this case for
$$m_1(p,q):=\left\{\begin{array}{lcl}
\frac{N}{N-2}\min\left\{\frac{p(N-2)}{N-2p},\,\frac{q(N-3)}{N-2q-1}\right\},\,\,\textrm{if}\,\,p<N/2\,\,\textrm{and}\,\,q<\frac{N-1}{2},\\[1ex]
\frac{Np}{Np-2p},\,\,\textrm{if}\,\,p<N/2\,\,\textrm{and}\,\,\frac{N-1}{2}\leq q,\\[1ex]
\frac{(N-1)q}{N-2q-1},\,\,\textrm{if}\,\,q<\frac{N-1}{2}\,\,\textrm{and}\,\,p\geq N/2,\\[1ex]
r\in(2^{\ast}_{_N},\infty),\,\,\textrm{if}\,\,p=N/2\,\,\textrm{and}\,\,q\geq\frac{N-1}{2},\,\,\textrm{or}
\,\,q=\frac{N-1}{2}\,\,\textrm{and}\,\,p\geq N/2,\\[1ex]
\,\,\,\,\,\infty,\,\,\textrm{if}\,\,p>N/2\,\,\,\textrm{and}\,\,\,q>\frac{N-1}{2},
\end{array}
\right.$$
and
$$m_2(p,q):=\left\{\begin{array}{lcl}
\frac{N-1}{N-2q-1}\min\left\{\frac{p(N-2)}{N-2p},\,\frac{q(N-3)}{N-2q-1}\right\},\,\,\textrm{if}\,\,p<N/2\,\,\textrm{and}\,\,q<\frac{N-1}{2},\\[1ex]
\frac{Np}{Np-2p},\,\,\textrm{if}\,\,p<N/2\,\,\textrm{and}\,\,\frac{N-1}{2}\leq q,\\[1ex]
\frac{(N-1)q}{N-2q-1},\,\,\textrm{if}\,\,q<\frac{N-1}{2}\,\,\textrm{and}\,\,p\geq N/2,\\[1ex]
s\in(2^{\star}_{_\Gamma},\infty),\,\,\textrm{if}\,\,p=N/2\,\,\textrm{and}\,\,q\geq\frac{N-1}{2},\,\,\textrm{or}
\,\,q=\frac{N-1}{2}\,\,\textrm{and}\,\,p\geq N/2,\\[1ex]
\,\,\,\,\,\infty,\,\,\textrm{if}\,\,p>N/2\,\,\,\textrm{and}\,\,\,q>\frac{N-1}{2},
\end{array}
\right.$$
Furthermore, when $N=3$, we obtain that
$$m_1(p,q)=m_p:=\left\{\begin{array}{lcl}
\frac{Np}{N-2p},\,\,\textrm{if}\,\,p<N/2,\\[1ex]
r\in(2^{\ast}_{_N},\infty),\,\,\textrm{if}\,\,p=N/2,\\[1ex]
\,\,\,\,\,\infty,\,\,\textrm{if}\,\,p>N/2,
\end{array}
\right.$$
and
$$m_2(p,q)=m_q:=\left\{\begin{array}{lcl}
s\in(2^{\star}_{_\Gamma},\infty),\,\,\textrm{if}\,\,p\leq N/2,\\[1ex]
\,\,\,\,\,\infty,\,\,\textrm{if}\,\,p>N/2.
\end{array}
\right.$$
These improvements hold in the case when $\Gamma$ is a compact $(N-1)$-dimensional Riemannian manifold,
and $\mu=\mu(\mathfrak{g})$ (for $\mathfrak{g}$ the Riemannian metric on $\Gamma$), provided that all the conditions in Assumption \ref{As1}
are met, with the exception that in this case it is not required that the upper Alhfors condition for the measure $\mu(\mathfrak{g})$.
More details about this are given in subsection \ref{subsec06-01}.
\end{enumerate}
\end{remark}

\begin{remark}\label{coercivity}
We consider the situation when the form $\mathcal{E}_{\mu}(\cdot,\cdot)$ is coercive. In this case the right hand bound in (\ref{3.2.06}) becomes independent on $u$, as we show below.
\begin{enumerate}
\item[(a)]\,\,\,Let $\xi\in\mathbb{R\!}$\,, and assume that
the measurable coefficient $\lambda(x)$ in the differential operator $\mathcal{A}$ in (\ref{1.01}) is of the form
$\lambda(x):=\upsilon(x)+\xi$, where $\upsilon\in L^{r_3}(\Omega)$ ($r_3>N/2$) and $\xi>\delta^{\star}_0$, for
$\delta^{\star}_0>0$ denotes the constant given by (\ref{delta-star}) for $\lambda$ replaced by $\upsilon$ (in (\ref{delta-star})). Then one can deduce that the
bilinear form $\mathcal{E}_{\mu}$ is coercive, and consequently, by virtue of Theorem \ref{elliptic-bounded}, if $u \in \mathcal{W}_2(\Omega;\Gamma)$ solves (\ref{1.05}), we have that\\
\begin{equation}\label{coercive-infinity}
|\|\mathbf{u}\||_{_{m_1(p,q),m_2(p,q)}}\,\leq\,c\,|\|(f,g)\||_{_{p,q}},
\end{equation}
\indent\\
for some constant $c>0$, where the constants $m_1(p,q),\,m_2(p,q)$ are given by (\ref{m1}) and (\ref{m2}),
respectively. To see this, observe that if
$u \in \mathcal{W}_2(\Omega;\Gamma)$ solves (\ref{3.1.07}), then $u$ is a weak solution of the boundary value problem
\begin{equation}
\label{ENBVP2}\left\{
\begin{array}{lcl}
\mathcal{A}u\,=\,f-\xi u\,\,\,\,\,\,\,\,\,\,\textrm{in}\,\,\Omega\\
\mathcal{B}u\,=\,\,\,\,\,g\,\,\,\,\,\,\,\,\,\,\,\,\,\,\,\,\,\,\,\,\,\textrm{on}\,\,\Gamma\\
\end{array}
\right.
\end{equation}
where the differential operators $\mathcal{A}$ and $\mathcal{B}$ are given in (\ref{1.01}) and (\ref{1.02}),
respectively. Hence, following the proof of Proposition \ref{cont-coercive-form}, we notice that\\[2ex]
$\|u\|^2_{_{\mathcal{W}_2(\Omega;\Gamma)}}\,\leq\,\displaystyle\frac{2}{\min\{c_0,c^{\star}_0\}}
\left(\mathcal{E}_{\mu}(u,u)+\delta^{\star}_0\|u\|^2_{_{2,\Omega}}\right)$\\
$$=\displaystyle\frac{2}{\min\{c_0,c^{\star}_0\}}\left(\displaystyle\int_{\Omega}fu\,dx+\displaystyle\int_{\Gamma}gu\,d\mu
-\left(\xi-\delta^{\star}_0)\right)\|u\|^2_{_{2,\Omega}}\right)\,
\leq\,\displaystyle\frac{2c'}{\min\{c_0,c^{\star}_0\}}|\|(f,g)\||_{_{p,q}}\|u\|_{_{\mathcal{W}_2(\Omega;\Gamma)}},$$
for some constant $c'>0$. This proves our claim.
\item[(b)]\,\,\,For bounded extension domains whose boundaries are upper $d$-sets (with respect to $\mu$),
for $d\in(N-2,N)$, one has that Remark \ref{R-embedding}(b)
holds. Then, consider now the case when $\beta(x)=\beta_0(x)+\zeta$, for $\beta_0\in L^{s}_{\mu}(\Gamma)$ ($s>d(d+2-N)^{-1}$)
and $\zeta>\delta^{\star}_0C_{\tau^{\ast}}$, where $\delta^{\star}_0>0$ is given by (\ref{delta-star}) (for $\beta_0$ instead of $\beta$), and for
$$\tau^{\ast}:=\frac{\min\{c_0,\,c^{\star}_0\}}{4\delta^{\star}_0}\,\,\,\,\,\,\,\,\textrm{and}\,\,\,\,\,\,\,\,
C_{\tau^{\ast}}>0\,\,\,\textrm{is a constant}.$$
Given $u\in \mathcal{W}_2(\Omega;\Gamma)$ a weak solution of (\ref{1.05}), $u$ solves the boundary value problem
\begin{equation}
\label{ENBVP2b}\left\{
\begin{array}{lcl}
\mathcal{A}u\,=\,\,\,\,\,f\,\,\,\,\,\,\,\,\,\,\,\,\,\,\,\,\,\,\,\,\,\textrm{in}\,\,\Omega\\
\mathcal{B}u\,=\,g-\zeta u\,\,\,\,\,\,\,\,\,\,\,\,\textrm{on}\,\,\Gamma\\
\end{array}
\right.
\end{equation}
Following as above together with the compactness of the embedding $\mathcal{W}_2(\Omega;\Gamma)\hookrightarrow L^2_{\mu}(\Gamma)$ and
Remark \ref{R-embedding}(d), we see that\\[2ex]
$\|u\|^2_{_{\mathcal{W}_2(\Omega;\Gamma)}}\,\leq\,\displaystyle\frac{2}{\min\{c_0,c^{\star}_0\}}
\left(\mathcal{E}_{\mu}(u,u)+\delta^{\star}_0\|u\|^2_{_{2,\Omega}}\right)$\\
$$\indent\indent\leq\,\displaystyle\frac{2}{\min\{c_0,c^{\star}_0\}}
\left(\mathcal{E}_{\mu}(u,u)+\delta^{\star}_0\left\{\tau^{\ast}
\|\nabla u\|_{_{2,\Omega}}+C_{\tau^{\ast}}\|u\|^2_{_{2,\Gamma}}\right\}\right).$$
Then, as in Case (a), using the above estimate, we get that
$$\|u\|^2_{_{\mathcal{W}_2(\Omega;\Gamma)}}\,\leq\,\displaystyle\frac{4}{\min\{c_0,c^{\star}_0\}}
\left(\displaystyle\int_{\Omega}fu\,dx+\displaystyle\int_{\Gamma}gu\,d\mu
-\left(\zeta-\delta^{\star}_0C_{\tau^{\ast}})\right)\|u\|^2_{_{2,\Gamma}}\right)\,
\leq\,\displaystyle\frac{4c''}{\min\{c_0,c^{\star}_0\}}|\|(f,g)\||_{_{p,q}}\|u\|_{_{\mathcal{W}_2(\Omega;\Gamma)}},$$\\
for some constant $c''>0$, and thus (\ref{coercive-infinity}) holds as well for this case.
\item[(c)]\,\,\,If $\xi\geq\delta^{\star}_0$ in part (a), or if $\zeta\geq\delta^{\star}_0C_{\tau^{\ast}}$ in part (b), then by replacing the space
$H^1(\Omega)$ by the space $H_0(\Omega)$ in the case condition {\bf (C1)} holds, then one gets the coercivity of the form
$\mathcal{E}_{\mu}(\cdot,\cdot)$, and in particular, this shows that under these conditions, problem (\ref{1.04}) possesses a unique
(modulo constant) weak solution $u\in H^1(\Omega)$. Same conclusion happens in the case when condition {\bf (C2)} is fulfilled, by replacing the
set $H^1_{\mu}(\Omega;\Gamma)$ by the closed subspace $H_{0,\mu}(\Omega;\Gamma)$.
\end{enumerate}
\end{remark}

\indent We conclude this section by stating our second main assumption.

\begin{assumption}\label{As2}
Let the conditions {\bf (A1)}, {\bf (A2)}, {\bf (A3)}, {\bf (B1)}, {\bf (B2)}, {\bf (B3)}, and {\bf (B4)} hold.
\end{assumption}

\subsection{Inverse positivity}\label{subsec03-03}

\indent In this part we present an inverse positivity property and dependence result for
for weak solutions of the boundary value problem (\ref{1.05}).\\
\indent Given the operators $\mathcal{A}$ and $\mathcal{B}$ defined by (\ref{1.01}) and (\ref{1.02}),
we are concerned with solutions for the differential inequality
\begin{equation}
\label{Diff-Ineq}\left\{
\begin{array}{lcl}
\mathcal{A}u\,\geq\,0\,\,\,\indent\textrm{in}\,\,\Omega\\
\mathcal{B}u\,\geq\,0\,\,\,\indent\textrm{on}\,\,\Gamma\\
\end{array}
\right.
\end{equation}
The above inequalities can be understood in the weak sense, that is, we consider solutions of the inequality
\begin{equation}
\label{form-Ineq}\mathcal{E}_{\mu}(u,\varphi)\,\geq\,0,\indent\,\,\forall\,\varphi\in\mathcal{W}_2(\Omega;\Gamma)^+,
\end{equation}
where the form $\mathcal{E}_{\mu}$ is given by (\ref{3.1.01}). The following is the main result of this subsection.\\

\begin{theorem}\label{inv-positivity}
Suppose that all the conditions of Assumption \ref{As1}  are fulfilled, suppose that
conditions {\bf (A2)} and {\bf (B3)} hold, and assume that any of the cases in Remark \ref{coercivity} occur.
If $u\in\mathcal{W}_2(\Omega;\Gamma)$ is a solution of (\ref{Diff-Ineq}), then
$\mathbf{u}\geq\mathbf{0}$; that is, $u\geq0$ a.e. in $\Omega$, and $u|_{_{\Gamma}}\geq0$ $\mu$-a.e. in $\Gamma$.\\
\end{theorem}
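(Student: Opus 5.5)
The idea is to test the weak inequality (\ref{form-Ineq}) with the negative part of $u$ and then use coercivity to force $u^-=0$. Write $u=u^+-u^-$ with $u^{\pm}\geq0$. By the lattice property in {\bf (B1)} (and the fact that $H^1(\Omega)$ is a lattice), $u^-\in\mathcal{W}_2(\Omega;\Gamma)^+$, so it is an admissible test function. Taking $\varphi=u^-$ in (\ref{form-Ineq}) gives $0\leq\mathcal{E}_{\mu}(u,u^-)=\mathcal{E}_{\mu}(u^+,u^-)-\mathcal{E}_{\mu}(u^-,u^-)$. Hence
$$\mathcal{E}_{\mu}(u^-,u^-)\,\leq\,\mathcal{E}_{\mu}(u^+,u^-).$$

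The next step is to show $\mathcal{E}_{\mu}(u^+,u^-)\leq0$, splitting the form as in (\ref{3.1.01}).

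\emph{Local interior part.} $\mathcal{E}_{_{\Omega}}(u^+,u^-)=0$, since $\nabla u^+=\nabla u\,\chi_{\{u>0\}}$ and $\nabla u^-=-\nabla u\,\chi_{\{u<0\}}$ have disjoint supports up to null sets, and $u^+u^-=0$. This kills every integrand: the $\alpha_{ij}$, $\hat a_j$, $\check a_i$ and $\lambda$ terms all contain a product of a factor supported on $\{u>0\}$ and one supported on $\{u<0\}$.

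\emph{Robin term.} $\int_\Gamma\beta u^+u^-\,d\mu=0$ for the same reason, using that the trace of $u^{\pm}$ is $(u|_\Gamma)^{\pm}$.

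\emph{Wentzell and nonlocal terms.} {\bf (B3)} gives $\Lambda_{_{\Gamma}}(u^+,u^-)\leq0$ and {\bf (A2)} gives $K(u^+,u^-)\leq0$.

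Together these yield $\mathcal{E}_{\mu}(u^+,u^-)\leq0$, and therefore $\mathcal{E}_{\mu}(u^-,u^-)\leq0$.

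It remains to use coercivity under the cases of Remark \ref{coercivity}.

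\emph{Case (a).} Writing $\lambda=\upsilon+\xi$, (\ref{3.1.06}) applied with $\upsilon$ in place of $\lambda$ gives
$$\mathcal{E}_{\mu}(w,w)\,\geq\,\tfrac{\min\{c_0,c_0^\star\}}{2}\|w\|^2_{\mathcal{W}_2(\Omega;\Gamma)}+(\xi-\delta^\star_0)\|w\|^2_{2,\Omega}.$$

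\emph{Case (b).} Writing $\beta=\beta_0+\zeta$, the chain of inequalities in Remark \ref{coercivity}(b) gives a lower bound $c\|w\|^2_{\mathcal{W}_2}+(\zeta-\delta^\star_0C_{\tau^*})\|w\|^2_{2,\Gamma}$. Here one must absorb the $\tau^*\|\nabla w\|^2$ term, exactly as done there.

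In either case $\mathcal{E}_{\mu}(w,w)\geq c\|w\|^2_{\mathcal{W}_2(\Omega;\Gamma)}$ with $c>0$. Applying this to $w=u^-$ gives $\|u^-\|_{\mathcal{W}_2(\Omega;\Gamma)}=0$. Hence $u^-=0$ a.e.\ in $\Omega$, and by the trace embedding (\ref{2.04}) also $\mu$-a.e.\ on $\Gamma$; that is, $\mathbf{u}\geq\mathbf{0}$.

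\emph{Main obstacle: case (c).} Here only $\xi\geq\delta^\star_0$ (or $\zeta\geq\delta_0^\star C_{\tau^*}$) is assumed, and coercivity holds merely on $H_0(\Omega)$ or $H_{0,\mu}(\Omega;\Gamma)$. Then $u^-$ need not lie in that subspace, and one only gets that $\nabla u^-=0$, i.e.\ $u^-$ is constant. I would first try to pass from $\mathcal{E}_\mu(u^-,u^-)\leq0$ to the conclusion that $u^-$ is a nonnegative constant $\kappa$. Then I would test (\ref{form-Ineq}) with $\varphi\equiv1$, using $\mathcal{J}_{_\Omega}1=\Theta_{_\Gamma}1=0$ and $\Lambda_{_{\Gamma}}(\cdot,1)=0$, to exclude $\kappa>0$, interpreting the statement modulo constants if that fails. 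A secondary technical point is verifying that the trace of $u^{\pm}$ equals $(u|_\Gamma)^{\pm}$ on the irregular boundary; I would obtain this by density of $C(\overline\Omega)\cap H^1(\Omega)$ together with continuity of $v\mapsto v^+$ on $H^1(\Omega)$.
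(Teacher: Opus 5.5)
Your argument is the paper's: test (\ref{form-Ineq}) with $u^-$, use locality of the interior and Robin terms together with {\bf (A2)} and {\bf (B3)} to get $\mathcal{E}_{\mu}(u^+,u^-)\leq0$, and then conclude $\mathbf{u}^-=\mathbf{0}$ from the coercivity bound of Proposition \ref{cont-coercive-form} and Remark \ref{coercivity}. The case (c) you flag is not actually an obstacle. Your own displayed bound $\mathcal{E}_{\mu}(w,w)\geq\frac{\min\{c_0,c_0^\star\}}{2}\|w\|^2_{\mathcal{W}_2(\Omega;\Gamma)}+(\xi-\delta^\star_0)\|w\|^2_{2,\Omega}$ already controls the full $\mathcal{W}_2(\Omega;\Gamma)$-norm, including its $L^2$ part, as soon as $\xi\geq\delta^\star_0$, and likewise for $\zeta\geq\delta^\star_0C_{\tau^{\ast}}$ in case (b). This is how the paper's single chain of inequalities covers all cases at once (it only needs $\mathfrak{C}^\star\geq0$), so no passage to constants and no testing with $\varphi\equiv1$ is needed.
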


\begin{proof}
Let $u\in\mathcal{W}_2(\Omega;\Gamma)$ be a solution of (\ref{Diff-Ineq}), and assume that Remark \ref{coercivity}(a)
holds (the other case follows in a similar way). By {\bf (A2)} and {\bf (B3)} we have
$K(u^+,u^-)\leq0$, and $\Lambda_{_{\Gamma}}(u^+,u^-)\leq0$, and consequently $\mathcal{E}_{\mu}(u^+,u^-)\leq0$. Write
$$\mathfrak{C}^{\star}:=\left\{
\begin{array}{lcl}
\xi-\delta^{\star}_0,\,\,\,\,\,\,\,\,\,\,\,\,\,\,\,\,\,\textrm{if Remark \ref{coercivity}(a) (or Remark \ref{coercivity}(c) related to (a)) holds};\\
\zeta-\delta^{\star}_0C_{\tau^{\ast}},\,\,\,\,\,\,\textrm{if Remark \ref{coercivity}(b) (or Remark \ref{coercivity}(c) related to (b)) holds};\\
\end{array}
\right.$$
Then $\mathfrak{C}^{\star}>0$. Since $u\in\mathcal{W}_2(\Omega;\Gamma)$ satisfies the inequality
(\ref{form-Ineq}), proceeding as in Proposition \ref{cont-coercive-form} and Remark \ref{coercivity}, we obtain that
\begin{equation}
\label{non-neg}|\|\mathbf{u}^-\||^2_{_{2}}
\,\leq\,c\left(\mathcal{E}_{\mu}(u^-,u^-)-\mathfrak{C}^{\star}
\|u^-\|^2_{_{2,\Omega}}\right)\,\leq\,-c\,\mathcal{E}_{\mu}(u,u^-)\,\leq0,
\end{equation}
for some constant $c>0$. Thus $\mathbf{u}^-=\mathbf{0}$, which shows that $\mathbf{u}\geq\mathbf{0}$ a.e., as desired.
\end{proof}

\indent From here one has the following immediate consequence.

\begin{corollary}
Under the assumptions in Theorem \ref{inv-positivity}, let $u\in\mathcal{W}_2(\Omega;\Gamma)$ be a weak solution of problem (\ref{1.05}).
If $f\geq0$ a.e. in $\Omega$ and $g\geq0$ $\mu$-a.e. on $\Gamma$, then $\mathbf{u}\geq\mathbf{0}$.
\end{corollary}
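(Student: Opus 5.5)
The corollary should follow by feeding $u$ into Theorem \ref{inv-positivity}. The only thing to check is that a weak solution with nonnegative data satisfies the weak differential inequality (\ref{form-Ineq}).

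Let $u\in\mathcal{W}_2(\Omega;\Gamma)$ be a weak solution of (\ref{1.05}). By Definition \ref{weak-sol}, the identity (\ref{3.1.07}) holds for every $\varphi\in\mathcal{W}_2(\Omega;\Gamma)$. In particular it holds for every $\varphi\in\mathcal{W}_2(\Omega;\Gamma)^+$. For such $\varphi$ we have $\varphi\geq0$ a.e. in $\Omega$. Its trace $\varphi|_{_{\Gamma}}$ is also nonnegative $\mu$-a.e. on $\Gamma$, since the trace of a nonnegative Sobolev function on an extension domain is nonnegative (it is obtained as a limit of averages of the nonnegative extension). Because $f\geq0$ and $g\geq0$, both $\int_{\Omega}f\varphi\,dx$ and $\int_{\Gamma}g\varphi\,d\mu$ are nonnegative. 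The integrals are well defined by the integrability $(f,g)\in\mathbb{X\!}^{\,p,q}(\Omega;\Gamma)$ with $p\geq(2^{\ast}_{_N})'$, $q\geq(2^{\ast}_{_d})'$ together with (\ref{2.03}) and (\ref{2.04}). Hence
$$\mathcal{E}_{\mu}(u,\varphi)=\int_{\Omega}f\varphi\,dx+\int_{\Gamma}g\varphi\,d\mu\geq0\qquad\textrm{for all }\varphi\in\mathcal{W}_2(\Omega;\Gamma)^+,$$
which is exactly (\ref{form-Ineq}), so $u$ solves (\ref{Diff-Ineq}) in the weak sense.

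All the remaining hypotheses of Theorem \ref{inv-positivity} are assumed in the corollary: Assumption \ref{As1}, {\bf (A2)}, {\bf (B3)}, and one of the coercivity situations of Remark \ref{coercivity}. The theorem therefore gives $\mathbf{u}\geq\mathbf{0}$, that is, $u\geq0$ a.e. in $\Omega$ and $u|_{_{\Gamma}}\geq0$ $\mu$-a.e. on $\Gamma$.

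There is no genuine obstacle here. The one point that needs a remark is the nonnegativity of traces of nonnegative test functions on the irregular boundary, and I would justify it as indicated above.
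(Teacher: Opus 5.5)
Your proposal is correct and is the argument the paper has in mind when it calls this an ``immediate consequence'' of Theorem \ref{inv-positivity}: nonnegative data turn the weak formulation into (\ref{form-Ineq}), and that theorem then gives $\mathbf{u}\geq\mathbf{0}$. One small correction to your trace remark: an extension operator need not preserve positivity, so justify $\varphi|_{_{\Gamma}}\geq0$ instead through the identity $\textrm{tr}\,|\varphi|=|\textrm{tr}\,\varphi|$, which follows from the density of $C(\overline{\Omega})\cap H^1(\Omega)$ in $H^1(\Omega)$ together with the continuity of the trace and of $v\mapsto|v|$ on $H^1(\Omega)$.
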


\section{The parabolic problem}\label{sec04}

\indent In this section we turn our attention to the solvability and regularity theory for the parabolic problem (\ref{1.06}),
under the conditions in Assumption \ref{As1}. Also, as before, we will concentrate in the case $N>2$.

\subsection{The homogeneous problem}\label{subsec04-01}

\indent We begin by discussing the solvability of the time-dependent problem (\ref{1.06}) for $f=g=0$. To begin, recall the bilinear form
$\mathcal{E}_{\mu}(\cdot,\cdot)$ given by (\ref{3.1.01}).
Since the form  $\mathcal{E}_{\mu}$ is closed in $L^2(\Omega),$ there exists a unique operator $A_{\mu}$
with domain  $D(A_{\mu})\subseteq \mathcal{W}_2(\Omega;\Gamma)$ such that $D(A_{\mu})$ is dense in $L^2(\Omega)$, and
$A_{\mu}$ is the  generator of the form $\mathcal{E}_{\mu}(\cdot,\cdot)$ in the sense that
$$\mathcal{E}_{\mu}(u,v)=\langle A_{\mu}u,v\rangle_{_{L^2(\Omega)}},\,\,\,\,\,\,\,
\textrm{for each}\,\,u\in D(A_{\mu})\,\,\,\textrm{and for all}\,\, v\in \mathcal{W}_2(\Omega;\Gamma).$$
Now let $\{T_{\mu}(t)\}_{t\geq 0}$ denote the $C_0$-semigroup generated by $-A_{\mu}$ in $L^2(\Omega)$. By the theory of forms, in views of Proposition
\ref{cont-coercive-form}, it follows that $\{T_{\mu}(t)\}_{t\geq 0}$ is analytic  and compact (Cf. \cite[Chapter 17, Section 6]{DautrayLions}).
Consequently, for each $u_0\in L^2(\Omega)$, the function $u(t):=T_{\mu}(t)u_0$ is the unique mild solution of the Cauchy problem
\begin{equation}\label{parabolicpbomog}
\left\{
     \begin{array}{ll}
       u_t= A_{\mu}u\,\,\,\,\,\,\,\,\,\,\,\,\,\,\,\,\,\,\,\,\,\,\textrm{for}\,\,t\in(0,\infty);\\
       u(0)=u_0\,\,\,\,\,\,\,\,\,\,\,\,\,\,\,\,\,\,\,\,\,\,\textrm{in}\,\,\Omega
     \end{array}
   \right.
\end{equation}
(e.g. Definition \ref{classical sol}(b)), and problem (\ref{parabolicpbomog}) is precisely the abstract formulation of problem (\ref{1.06}) for $f=g=0$.\\
\indent To proceed, we first make an assumption over the Wentzell-type space $D(\Lambda_{_{\Gamma}})$, which is the following:
\begin{enumerate}
\item[{\bf (B5)}]\,\,\,For every bounded function $\phi\in C^\infty(\mathbb{R\!}^N)$ with bounded gradient, one has
$e^\phi u\in D(\Lambda_{_{\Gamma}})$ for each $u\in D(\Lambda_{_{\Gamma}})$.
\end{enumerate}
Then, given $\phi$ as in {\bf (B5)}, one clearly have that $e^\phi u\in\mathcal{W}_2(\Omega;\Gamma)$ whenever $u\in\mathcal{W}_2(\Omega;\Gamma)$.\\
\indent  Continuing with the discussion, by recalling (\ref{2.03}) and condition
{\bf (B5)}, one proceeds as in \cite[proof of Theorem 6.8]{OUHA05} to get the existence of a constant $c>0$ such that
\begin{equation}\label{ultracontractivity}
\|T_{\mu}(t)\|_{_{\mathcal{L}(L^2(\Omega);L^\infty(\Omega))}}\,\leq\, c t^{-N/4}e^{-\delta_0 t},\,\,\,\,\,\,\textrm{for all}\,\,t>0,
\end{equation}
where $\delta_0>$ is a constant depending on the constant $\delta^{\star}_0$ given by (\ref{delta-star}). In particular, the above inequality
shows that the semigroup $\{T_{\mu}(t)\}_{t\geq 0}$ is ultracontractive, that is, $T_{\mu}(t)$ maps $L^2(\Omega)$ into $L^{\infty}(\Omega)$.
In addition, by working with the adjoint semigroup  $\{T^{\ast}_{\mu}(t)\}_{t \geq 0}$ and performing a duality argument,
we can give a corresponding $L^1-L^2$ estimate norm for $\{T_{\mu}(t)\}_{t \geq 0}$ in the sense that
$$\|T_{\mu}(t)\|_{\mathcal{L}(L^1(\Omega);L^\infty(\Omega))}\leq c t^{-N/2}e^{-\delta_0 t},\,\,\,\,\,\,\textrm{for each}\,\, t>0.$$
Therefore, by Dunford Pettis's Theorem, the semigroup $\{T_{\mu}(t)\}_{t\geq 0}$
admits an integral representation with kernel $K_{\mu}(x,y,t)\in L^\infty(\Omega\times \Omega)$ (e.g. \cite[Theorem 1.3]{Arendt-Bukl94}).
By proceeding as in \cite[Theorem 6.1]{Daners}, taking into account {\bf (B5)}, and using perturbation techniques due to Davies \cite{DAV},
we can establish the existence of a constant $c>0$ such that
$$|K_{\mu}(t,x,y)|\,\leq\,c t^{-N/2} e^{\epsilon t+\delta_0\sigma t}e^{-\frac{|x-y|^2}{4\delta_0 t}},$$
for all $\epsilon>0$, where $\sigma>0$ denotes a constant depending on the coefficients in the definition of the constant $\delta^{\star}_0$ in (\ref{delta-star})
(Cf. \cite[Theorem 6.10]{OUHA05}).\\
\indent Finally, by the above inequality the kernel $K_{\mu}$ satisfies Gaussian estimates, and consequently in views of \cite{AR-ELST97},
it follows that $\{T_{\mu}(t)\}_{t\geq 0}$ extrapolates
to a family of compact analytic semigroups over $L^p(\Omega)$ for all $p\in[1,\infty]$, which are strongly continuous whenever $p\in[1,\infty)$,
and all have the same angle of analyticity. Furthermore, one has that $\{T_{\mu}(z)\}_{_{\textrm{Re}(z)>0}}$ are kernel operators satisfying Gaussian estimates.
In particular, for each $u_0\in L^p(\Omega)$ with $1\leq p<\infty$, the function $u(t):=T_{\mu}(t)u_0\in L^p(\Omega)$ is the unique mild solution
to problem (\ref{parabolicpbomog}) (and consequently the homogeneous parabolic problem (\ref{1.06}) in $L^p(\Omega)$).
Moreover, following the approach as in \cite[Proposition 7.1]{Daners}, we find the existence of constants
$M\geq 0$ and $\omega\in \mathbb{R\!}\,$ (depending on $N$, $|\Omega|$, and the coefficients in the definition of the constant $\delta^{\ast}$ in (\ref{3.2.10})),
fulfilling
\begin{equation}\label{infinity-semigroup}
\|u(t)\|_{_{\infty,\Omega}}\,\leq\, Me^{\omega t}\|u_0\|_{_{\infty,\Omega}}
\end{equation}
for each $u_0\in L^{\infty}(\Omega)$.

\subsection{Solvability of the inhomogeneous problem}\label{subsec04-02}

\indent We now turn our attention to the realization of the inhomogeneous boundary value problem (\ref{1.06}). Due to the fact that one is considering inhomogeneous
boundary conditions, the method relying in semigroup theory is not applicable anymore, so we will proceed with an analogous procedure as in \cite{NITTKA2014},
with some modifications in order to incorporate nonlocal operators and Wentzell-type maps.\\
\indent We start right away by introducing the notion of weak solutions to problem (\ref{1.06}).

\begin{definition}\label{weak-sol-parabolic}
 Let $T>0$ be a fixed real number. Then we say that a function $u\in C([0,T];L^2(\Omega))\cap L^2((0,T), \mathcal{W}_2(\Omega;\Gamma))$ is a \textbf{weak solution of the
 parabolic equation (\ref{1.06}) on $[0,T]$}, if\\[2ex]
\,\,\,\,\,\,$-\displaystyle\int_{0}^{T}\displaystyle\int_{\Omega} u(\xi)\psi_t(\xi)\, dx d\xi +\int_{0}^{T}\mathcal{E}_{\mu}(u(\xi),\psi(\xi))\, d\xi$\\
\begin{equation}
=\displaystyle\int_{\Omega} u_0 \psi(0) dx+ \int_{0}^{T}\int_{\Omega} f(\xi,x)\psi(\xi)\,dx d\xi +\displaystyle\int_{0}^{T}\int_{\Gamma} g(\xi,x) \psi(\xi) d\mu d\xi,
\end{equation}
 \noindent for all $\psi\in H^1(0,T;\mathcal{W}_2(\Omega;\Gamma))$ such that $\psi(T)=0$, where $u_0:=u(0)$ and where $\mathcal{E}_{\mu}(\cdot,\cdot)$
 denotes the bilinear form given by (\ref{3.1.01}).
 Similarly, we say that a function $u:[0, \infty)\to L^2(\Omega)$ is a weak solution of (\ref{1.06}) on $[0,\infty)$ if for every $T>0$
 its restriction to $[0,T]$ is a weak solution on $[0,T].$
 \end{definition}

\indent We now define an operator $\mathbf{A}$ acting on $\mathbb{X\!}^{\,2}(\Omega,\Gamma)$, which is directly related with the notion of weak solutions
of problem (\ref{1.05}), and will allow to transition from the inhomogeneous elliptic problem to the inhomogeneous parabolic problem.
As commented by Nittka \cite{NITTKA2014}, this operator $\mathbf{A}$ is not densely defined and hence it is not the
generator of a strongly continuous semigroup. In fact, it turns out that $\mathbf{A}$ does not even satisfy the Hille-Yosida estimates.
However, one will show that the operator is resolvent positive and
hence generates a once integrated semigroup, which will imply existence and uniqueness of solutions for regular data and gives information about the
asymptotic behavior of solutions.

\begin{definition}\label{a-normal-a}
Given $u\in \mathcal{W}_2(\Omega;\Gamma)$, let $\mathcal{A}$ and $\mathcal{B}$ be the linear operators defined by (\ref{1.01}) and (\ref{1.02}), respectively,
and define $$\mathcal{A}_0=\mathcal{A}-\mathcal{J}_{_{\Omega}}\,\,\,\,\,\,\,\,\textrm{and}\,\,\,\,\,\,\,\,
\mathcal{N}^{\ast}_{\mu}u:=\displaystyle\sum^N_{^{i,j=1}}(\alpha_{ij}(x)\partial_{x_j}u+\hat{a}_j(x)u)\nu_{\mu_i}.$$
\begin{enumerate}
\item[(a)]\,\, We say that $\mathcal{A}u\in L^{2}(\Omega)$ if there exists a $f\in L^{2}(\Omega)$ such that
$$\mathcal{E}_{_{\Omega}}(u,v)+(\mathcal{J}_{_{\Omega}}u)v=-\langle f,v\rangle_{2},\,\,\,\,\,\,\,\,\,\,\,\textrm{for all}\,\,v\in H^1_0(\Omega),$$
where $\mathcal{E}_{_{\Omega}}(\cdot,\cdot)$ is defined by (\ref{3.1.03}), and for $\langle\cdot,\cdot\rangle_{2}$ denoting the inner product on $L^{2}(\Omega)$.
\item[(b)]\,\, Assume that $\mathcal{A}_0u\in L^{2}(\Omega)$. Then we say that $\mathcal{N}^{\ast}_{\mu}u\in L^{2}_{\mu}(\Gamma)$,
if there exists a unique function $g\in L^{2}_{\mu}(\Gamma)$ such that
$$\langle-\mathcal{A}_0u, v\rangle_{2}+\langle g, v\rangle_{_{2,\Gamma}}=\mathcal{E}_{_{\Omega}}(u,v),\,\,\,\,\,\,\,\,\,\,\textrm{for every}\,\,v\in H^1(\Omega),$$
where $\langle\cdot,\cdot\rangle_{_{2,\Gamma}}$ denotes the inner product over $L^{2}_{\mu}(\Gamma)$.
\item[(c)]\,\, We say that $\mathcal{B}_0u:=\left(\mathcal{B}-\mathcal{N}^{\ast}_{\mu}\right)u\in L^{2}_{\mu}(\Gamma)$ if there exists a $g'\in L^{2}_{\mu}(\Gamma)$ such that
$$\mathcal{E}_{\mu}(u,v)=-\langle g',v\rangle_{_{2,\Gamma}},\,\,\,\,\,\,\,\,\,\,\,\textrm{for all}\,\,v\in \mathcal{W}_2(\Omega;\Gamma),$$
where $\mathcal{E}_{\mu}(\cdot,\cdot)$ is defined by (\ref{3.1.01}).
\item[(d)]\,\, We define the operator $\mathbf{A}:\mathbb{X\!}^{\,2}(\Omega,\Gamma)\rightarrow\mathbb{X\!}^{\,2}(\Omega,\Gamma)$ by:
$$\left\{
\begin{array}{lcl}
D(\mathbf{A}):=\left\{(u,0)\mid u \in \mathcal{W}_2(\Omega;\Gamma),\,\,\,\mathcal{A}u\in L^{2}(\Omega),\,\,\,\mathcal{B}u\in
L^2_{\mu}(\Gamma)\right\};\\[1ex]
\mathbf{A}(u,0):=(\mathcal{A}u,-\mathcal{B}u)=\left(\mathcal{A}u,\,-\mathcal{N}^{\ast}_{\mu}u-\mathcal{L}_{\mu}u-\beta u-\Theta_{_{\Gamma}}u\right).
\end{array}
\right.$$
 \end{enumerate}
 \end{definition}
\indent\\
 \begin{remark}\label{caratt- D-A2}
 In views of Definition \ref{a-normal-a}(d), it is easily verified that $(u,0)\in D(\mathbf{A})$ with  $-\mathbf{A}(u,0)=(f,g)$ if and only if
 $$\mathcal{E}_{\mu}(u,v) =\langle f,v\rangle_{2}+ \langle g,v\rangle_{_{2,\Gamma}}$$
 for every $v\in \mathcal{W}_2(\Omega;\Gamma)$. In particular if $(f,g)\in\mathbb{X\!}^{\,2}(\Omega,\Gamma)$, then
$(u,0)\in D(\mathbf{A})$ with  $-\mathbf{A}(u,0)=(f,g)$ if and only if $u\in\mathcal{W}_2(\Omega;\Gamma)$ is a weak solution of (\ref{1.05}).
 \end{remark}

 \indent For the rest of the section, we will assume that $\mathcal{L}_{\mu}\neq0$ (otherwise the proofs of the results become simpler).
 We begin by establishing some basic properties for the operator $\mathbf{A}$.\\

 \begin{prop}\label{A-operator-prop}
 The operator $\mathbf{A}$ is resolvent positive. More precisely, the operator
 $\lambda-\mathbf{A}$ on $D(\mathbf{A})$ is invertible for all $\gamma>\mathfrak{C}^{\star}_0$, where
 $$\mathfrak{C}^{\star}_0:=\left\{
\begin{array}{lcl}
\delta^{\star}_0,\,\,\,\,\,\,\,\,\,\,\,\,\,\,\,\,\,\,\textrm{if Remark \ref{coercivity}(a) holds},\\
\delta^{\star}_0C_{\tau^{\ast}},\,\,\,\,\,\,\,\,\textrm{if Remark \ref{coercivity}(b) holds}
\end{array}
\right.$$
(where we recall the definition of $\delta^{\star}_0>0$ given in (\ref{delta-star})),
 and if $\mathbf{A}(u,0)=(f,g)$ for $(f,g)\in\mathbb{X\!}^{\,2}(\Omega,\Gamma)$ are such that
 $f\geq0$ and $g\geq0$, then $u\geq 0$ a.e. Moreover,
if $D(\mathbf{A})$ is equipped with the graph norm, then $D(\mathbf{A})$ is continuously embedded into $\mathcal{W}_2(\Omega;\Gamma)\times \{0\}.$
 \end{prop}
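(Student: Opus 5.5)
The proof reduces all three assertions to the variational characterization in Remark \ref{caratt- D-A2} together with the weak coercivity estimate (\ref{3.1.06}) from Proposition \ref{cont-coercive-form}. Throughout, the sign convention is the one fixed by Definition \ref{a-normal-a}(d) and Remark \ref{caratt- D-A2}: $-\mathbf{A}(u,0)=(f,g)$ exactly when $\mathcal{E}_{\mu}(u,v)=\langle f,v\rangle_2+\langle g,v\rangle_{_{2,\Gamma}}$ for all $v$. The positivity statement is read for the resolvent, namely $(\gamma-\mathbf{A})(u,0)=(f,g)$ with $f,g\geq0$ forces $u\geq0$.

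\emph{Invertibility.} Fix $\gamma>\mathfrak{C}^{\star}_0$ and $(f,g)\in\mathbb{X\!}^{\,2}(\Omega,\Gamma)$. By Remark \ref{caratt- D-A2}, solving $(\gamma-\mathbf{A})(u,0)=(f,g)$ is equivalent to finding $u\in\mathcal{W}_2(\Omega;\Gamma)$ with
$$\mathcal{E}_{\mu}(u,v)+\gamma\int_{\Omega}uv\,dx=\int_{\Omega}fv\,dx+\int_{\Gamma}gv\,d\mu\quad\textrm{for all }v\in\mathcal{W}_2(\Omega;\Gamma).$$
\begin{itemize}
\item In case (a) of Remark \ref{coercivity}, inequality (\ref{3.1.06}) shows that $\mathcal{E}_{\mu}+\gamma\langle\cdot,\cdot\rangle_2$ is coercive on $\mathcal{W}_2(\Omega;\Gamma)$ as soon as $\gamma>\delta^{\star}_0$.
\item In case (b), combining (\ref{3.1.06}) with Remark \ref{R-embedding}(d) and the computation in Remark \ref{coercivity}(b) gives coercivity once $\gamma>\delta^{\star}_0C_{\tau^{\ast}}$.
\end{itemize}
The right-hand side is a bounded functional by (\ref{2.03}) and (\ref{2.04}), since $L^2\subset L^{(2^\ast_N)'}$ and $L^2_\mu\subset L^{(2^\ast_d)'}$ on sets of finite measure. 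Lax--Milgram then gives a unique $u$, and uniqueness transfers to $(u,0)\in D(\mathbf{A})$. The same coercivity, tested with $v=u$, yields
$$\|u\|^2_{_{\mathcal{W}_2(\Omega;\Gamma)}}\leq C\,|\|(f,g)\||_{2}\,\|u\|_{_{\mathcal{W}_2(\Omega;\Gamma)}},$$
so the resolvent is bounded from $\mathbb{X\!}^{\,2}$ into $\mathcal{W}_2(\Omega;\Gamma)\times\{0\}$.

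\emph{Positivity.} Let $f,g\geq0$ and let $u$ be the solution above. Test the identity with $v=u^-\in\mathcal{W}_2(\Omega;\Gamma)^+$, which is admissible by {\bf (B1)} and {\bf (B4)}. Writing $u=u^+-u^-$, the local parts of $\mathcal{E}_{\mu}$ and the $\gamma$-term split pointwise, while the nonlocal and Wentzell cross terms satisfy $K(u^+,u^-)\leq0$ and $\Lambda_{_\Gamma}(u^+,u^-)\leq0$ by {\bf (A2)} and {\bf (B3)}. This gives
$$\mathcal{E}_{\mu}(u^-,u^-)+\gamma\|u^-\|^2_{2,\Omega}\leq-\int_\Omega fu^-\,dx-\int_\Gamma gu^-\,d\mu\leq0.$$
By the coercivity just established, $u^-=0$. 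This is essentially the argument of Theorem \ref{inv-positivity} with $\gamma$ playing the role of $\xi$ or $\zeta$.

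\emph{Embedding of $D(\mathbf{A})$.} Take $(u,0)\in D(\mathbf{A})$ with $\mathbf{A}(u,0)=(f,g)$, and set $(\tilde f,\tilde g):=(\gamma-\mathbf{A})(u,0)=(\gamma u-f,\,-g)$. The estimate from the invertibility step gives
$$\|u\|_{_{\mathcal{W}_2(\Omega;\Gamma)}}\leq C\left(\gamma\|u\|_{2,\Omega}+|\|\mathbf{A}(u,0)\||_{2}\right),$$
which is bounded by a multiple of the graph norm.

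\emph{Main obstacle.} The delicate step is the positivity test with $u^-$ in the nonlocal and Wentzell parts. It requires both that $u^{\pm}$ stay in $D(\Lambda_{_\Gamma})$ (the lattice property in {\bf (B1)}) and that the cross terms have the correct sign. The latter is exactly what {\bf (A2)} and {\bf (B3)} supply, so the plan is to invoke them directly rather than prove any further structure.
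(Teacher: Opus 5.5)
Your proof is correct. For invertibility and positivity it follows the paper's argument: Lax--Milgram for $\mathcal{E}_{\mu}+\gamma\langle\cdot,\cdot\rangle_{2}$ via (\ref{3.1.06}), then testing with $u^-$ and using {\bf (A2)}, {\bf (B3)} exactly as in Theorem~\ref{inv-positivity}. The paper likewise reads the positivity claim as positivity of $(\gamma-\mathbf{A})^{-1}$, and it also needs {\bf (A2)}, {\bf (B3)} even though the statement does not list them.

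The genuine difference is in how you get boundedness of the resolvent and the embedding of $D(\mathbf{A})$. The paper argues in three soft steps:
\begin{itemize}
\item it gets continuity of $(\gamma-\mathbf{A})^{-1}$ from automatic continuity of positive operators on Banach lattices;
\item it asserts without proof that $\mathbf{A}$ is closed;
\item it applies the closed graph theorem to the inclusion $D(\mathbf{A})\subseteq\mathcal{W}_2(\Omega;\Gamma)\times\{0\}$, using only that both spaces embed continuously into $L^2(\Omega)\times L^2_{\mu}(\Gamma)$.
\end{itemize}
You instead read off from the coercivity the quantitative bound $\|u\|_{_{\mathcal{W}_2(\Omega;\Gamma)}}\leq C\,|\|(\gamma-\mathbf{A})(u,0)\||_{2}$. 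This gives a bounded resolvent with values in $\mathcal{W}_2(\Omega;\Gamma)$, and hence closedness of $\mathbf{A}$, since its resolvent set is nonempty; the paper never proves this closedness. It also gives the explicit graph-norm estimate $\|u\|_{_{\mathcal{W}_2(\Omega;\Gamma)}}\leq C\bigl(\gamma\|u\|_{_{2,\Omega}}+|\|\mathbf{A}(u,0)\||_{2}\bigr)$.

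Your route is more elementary, self-contained and quantitative. The paper's closed-graph route needs no constants, only closedness and the set-theoretic inclusion, so it would carry over if invertibility were obtained by non-variational means. Here the coercivity is already available, so using it directly costs nothing.
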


 \begin{proof}
 If  $\gamma>\mathfrak{C}^{\ast}_0$, then by virtue of Remark \ref{coercivity} one sees that (\ref{3.1.06}) holds (for $\gamma^{\ast}=\gamma>0$),
 and consequently by Proposition \ref{cont-coercive-form} together with Lax-Milgram's Lemma,
for each $(f,g)\in\mathbb{X\!}^{\,2}(\Omega,\Gamma)$, there exists a unique function $u\in {\mathcal{W}_2(\Omega;\Gamma)}$ fulfilling
$$\gamma\int_{\Omega}uv\,dx+ \mathcal{E}_\mu (u,v)=\langle f,v\rangle_{2} + \langle g,v\rangle_{_{2,\Gamma}},
\,\,\,\,\,\,\,\,\,\,\,\,\,\,\,\textrm{for all}\,\,v\in \mathcal{W}_2(\Omega;\Gamma).$$
This together with Remark \ref{caratt- D-A2} we have that $(\gamma-\mathbf{A})(u,0)=(\gamma u, 0)-\mathbf{A}(u,0)=(f,g)$, which implies
that the operator $\gamma-\mathbf{A}: D(\mathbf{A})\rightarrow\mathbb{X\!}^{\,2}(\Omega,\Gamma)$ is a bijection for each
$\gamma>\mathfrak{C}^{\ast}_0$. Assume now $f\geq 0$ and $g\geq 0.$ Then this means that
$\langle f,v\rangle_{2}\geq0$ for each $v\in H^1(\Omega)^+$, and
$\langle g,w|_{_{\Gamma}}\rangle_{_{2,\Gamma}}\geq0$ for all $w\in H^1(\Omega)^+.$
Letting $(u,0):=(\gamma-\mathbf{A})^{-1}(f,g)$ and proceeding in the exact way as in the proof of Theorem \ref{inv-positivity}, we get that
$$\|u^-\|^2_{_{2^{\ast}_{_N},\Omega}}\,\leq\,c\left(\gamma\displaystyle\int_{\Omega}(u^-)^2\,dx+ \mathcal{E}_\mu (u^-,u^-)\right)\,
\leq\,-c\left(\langle f,u^-\rangle_{\mathfrak{s}/2} + \langle g,u^-\rangle_{_{D(\Lambda_{_{\Gamma}})^{\ast},D(\Lambda_{_{\Gamma}})}}\right)\,\leq\,0,$$
for some constant $c>0$.
Therefore $u^-=0$ a.e. implying that $u\geq 0$ a.e., as asserted. Hence the resolvent $(\gamma-\mathbf{A})^{-1}$ is a positive operator.
Since every positive operator is continuous we deduce that $\gamma-\mathbf{A}$ is invertible. Moreover, since the operator $\mathbf{A}$ is clearly closed,
we have that $D(\mathbf{A})$ is a Banach space with respect to the graph norm. Since the embedding $D(\mathbf{A})\hookrightarrow\mathbb{X\!}^{\,2}(\Omega,\Gamma)$
and the embedding $\mathcal{W}_2(\Omega;\Gamma)\times \{0\}\hookrightarrow\mathbb{X\!}^{\,2}(\Omega,\Gamma)$ are both bounded, with
$D(\mathbf{A})\subseteq\mathcal{W}_2(\Omega;\Gamma)\times \{0\}$, an application of the Closed Graph Theorem implies that the embedding
$D(\mathbf{A})\hookrightarrow\mathcal{W}_2(\Omega;\Gamma)\times \{0\}$ is continuous, completing the proof.
 \end{proof}

\indent We now introduce the notion of mild and classical solution of (\ref{1.06}), according to \cite{Lunardi}.
Form now on we assume that $f\in L^2(0,T; L^2(\Omega))$ and $g\in L^2(0,T,L^2_{\mu}(\Gamma))$.\\

 \begin{definition}\label{classical sol}
Let $I=[0,T]$ or $I=[0,\infty]$.
\begin{enumerate}
\item[(a)]\,\, We say that function $u$ is a \textbf{classical solution  of problem (\ref{1.06})}, if
$u\in C^1((0,T], L^2(\Omega))$ with  $u(0)=u_0,$  the mapping $t\to (u(t),0)$ lies in $C(I,D(\mathbf{A}))$, and
$$(u_t(\xi),0)- \mathbf{A}(u(\xi),0)=(f(\xi,x), g(\xi,x)),\,\,\,\,\,\,\,\,\textrm{for all}\,\,\xi\in I.$$
\item[(b)]\,\,We call $u$ is a \textbf{mild solution of problem (\ref{1.06})},
if $u\in  C(I, L^2(\Omega)))$ with  $(\int_{0}^{t}u(\xi)\,d\xi, 0) \in D(\mathbf{A})$ for all $t\geq 0$, and
\begin{center}
$(u(t)-u_0, 0)-\mathbf{A}(\int_{0}^{t}u(\xi)\,d\xi, 0)=(\int_{0}^{t} f(\xi,x)\,d\xi,\int_{0}^{t} g(\xi,x)\,d\xi)$.
\end{center}
\end{enumerate}
 \end{definition}

\indent The next two results can be established in the exact way as in \cite[proof of Theorem 2.6 and Proposition 2.7]{NITTKA2014}, and thus
their proofs will be omitted.

\begin{theorem}\label{clas-stron-weak-mild}
Let $I=[0,T]$ or $I=[0,\infty).$ Then every classical solution of problem (\ref{1.06}) on $I$ is a weak solution of problem (\ref{1.06}) on $I$, and
every weak solution of equation (\ref{1.06}) on $I$ is a mild solution of the heat equation (\ref{1.06}) on $I.$
\end{theorem}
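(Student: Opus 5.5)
I would prove the two implications separately, following the scheme of \cite{NITTKA2014}. Throughout I would use Remark \ref{caratt- D-A2}, which identifies $-\mathbf{A}(u,0)=(f,g)$ with the variational identity $\mathcal{E}_{\mu}(u,v)=\langle f,v\rangle_2+\langle g,v\rangle_{_{2,\Gamma}}$ for all $v\in\mathcal{W}_2(\Omega;\Gamma)$. I would also use the continuity of $\mathcal{E}_{\mu}$ on $\mathcal{W}_2(\Omega;\Gamma)$ from Proposition \ref{cont-coercive-form}. Note that Definition \ref{classical sol} writes the equation as $(u_t,0)-\mathbf{A}(u,0)=(f,g)$ with the opposite sign convention to the remark. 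I would fix signs so that the classical equation reads, for each $\xi$ and each $v\in\mathcal{W}_2(\Omega;\Gamma)$,
\[
\langle u_t(\xi),v\rangle_2+\mathcal{E}_{\mu}(u(\xi),v)=\langle f(\xi),v\rangle_2+\langle g(\xi),v\rangle_{_{2,\Gamma}}.
\]

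\emph{Classical $\Rightarrow$ weak.} Let $u$ be classical. By Proposition \ref{A-operator-prop}, $D(\mathbf{A})$ embeds continuously into $\mathcal{W}_2(\Omega;\Gamma)\times\{0\}$. Since $t\mapsto(u(t),0)\in C(I,D(\mathbf{A}))$, it follows that $u\in C(I,\mathcal{W}_2(\Omega;\Gamma))\subset C([0,T];L^2(\Omega))\cap L^2(0,T;\mathcal{W}_2(\Omega;\Gamma))$. Take a test function $\psi\in H^1(0,T;\mathcal{W}_2(\Omega;\Gamma))$ with $\psi(T)=0$. Put $v=\psi(\xi)$ in the pointwise identity above and integrate over $(0,T)$. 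The term $\int_0^T\langle u_t,\psi\rangle_2\,d\xi$ is handled by the product rule for $\xi\mapsto\langle u(\xi),\psi(\xi)\rangle_2$. This map is absolutely continuous because $u\in C^1((0,T];L^2)\cap C([0,T];L^2)$ and $\psi\in H^1(0,T;L^2)$. Integrating by parts gives $-\int_0^T\langle u,\psi_t\rangle_2\,d\xi-\langle u_0,\psi(0)\rangle_2$, and this yields exactly the weak formulation. The only delicate point is the lack of differentiability at $\xi=0$. I would integrate over $[\varepsilon,T]$ and let $\varepsilon\to0$, using the continuity of $u$ at $0$ in $L^2$ and the integrability of $u_t$ inherited from the equation.

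\emph{Weak $\Rightarrow$ mild.} Let $u$ be weak and fix $t\in(0,T)$. I would insert the test functions $\psi(\xi)=\chi_n(\xi)v$ with $v\in\mathcal{W}_2(\Omega;\Gamma)$ fixed. Here $\chi_n$ is the piecewise-linear cutoff equal to $1$ on $[0,t]$, decreasing linearly to $0$ on $[t,t+1/n]$, and equal to $0$ afterwards. Then
\[
-\int_0^T\langle u,\psi_t\rangle_2\,d\xi = n\int_t^{t+1/n}\langle u(\xi),v\rangle_2\,d\xi \longrightarrow \langle u(t),v\rangle_2
\]
by $u\in C([0,T];L^2(\Omega))$. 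The remaining terms converge to $\mathcal{E}_{\mu}\bigl(\int_0^t u,v\bigr)$ and $\bigl\langle \int_0^t f,v\bigr\rangle_2+\bigl\langle \int_0^t g,v\bigr\rangle_{_{2,\Gamma}}$ respectively. For the form term I use bilinearity and continuity of $\mathcal{E}_{\mu}$ to pull the Bochner integral $\int_0^t u\in\mathcal{W}_2(\Omega;\Gamma)$ inside, which is legitimate since $u\in L^2(0,T;\mathcal{W}_2)$. This gives
\[
\mathcal{E}_{\mu}\Bigl(\int_0^t u\,d\xi,v\Bigr)=\Bigl\langle u_0-u(t)+\int_0^t f\,d\xi,\,v\Bigr\rangle_2+\Bigl\langle\int_0^t g\,d\xi,\,v\Bigr\rangle_{_{2,\Gamma}}
\]
for all $v$. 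Since $u_0-u(t)+\int_0^t f\in L^2(\Omega)$ and $\int_0^t g\in L^2_\mu(\Gamma)$, Remark \ref{caratt- D-A2} yields $(\int_0^t u,0)\in D(\mathbf{A})$ together with the mild identity of Definition \ref{classical sol}(b). The cases $t=0$ and $t=T$ are trivial or follow by continuity, and the case $I=[0,\infty)$ follows by restriction to each $[0,T]$.

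The main obstacle I expect is bookkeeping rather than analysis. First, one has to reconcile the sign conventions between Definition \ref{classical sol}, Definition \ref{a-normal-a}(d) and Remark \ref{caratt- D-A2}. Second, one has to check that membership in $D(\mathbf{A})$ is decided purely variationally, which is what Remark \ref{caratt- D-A2} provides. The nonlocal terms $\mathcal{J}_{_\Omega}$, $\Theta_{_\Gamma}$ and the Wentzell form $\Lambda_{_\Gamma}$ enter only through the continuity of $\mathcal{E}_{\mu}$, so no properties such as {\bf (A1)}--{\bf (B5)} should be needed.
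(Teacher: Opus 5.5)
Your proposal is correct and is essentially the argument the paper defers to (Nittka's proof of his Theorem 2.6): integration by parts in time for classical $\Rightarrow$ weak, and cut-off test functions $\chi_n v$ combined with the variational characterization of $D(\mathbf{A})$ in Remark \ref{caratt- D-A2} for weak $\Rightarrow$ mild. One minor point: no sign reconciliation is needed between Definition \ref{classical sol} and Remark \ref{caratt- D-A2}, because writing $-\mathbf{A}(u,0)=(f-u_t,g)$ and applying the remark gives your pointwise identity directly.
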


\begin{prop}\label{reg-sol-class}
Let $u_0\in L^2(\Omega)$,\, $f\in L^2(0,T; L^2(\Omega))$ and $g\in L^2(0,T,L^2_{\mu}(\Gamma)),$ for some $T>0$.
Then problem  (\ref{1.06}) has a unique mild solution. Furthermore, if $(u_0,0)\in D(\mathbf{A}),$\,  $\mathbf{A}(u_0,0)+(f(0,x),g(0,x)) \in D(\mathbf{A})$,\,
$f\in C^2([0,T], L^2(\Omega))$, and $g\in C^2([0,T], L^2_{\mu}(\Gamma))$, then equation (\ref{1.06}) has a classical solution.
Moreover, if $u_0\geq 0$,\, $f(t,x)\geq 0$, and $g(t,x)\geq 0$ almost everywhere for each $t\in (0,T)$, then if $u$ is a mild solution  of
problem (\ref{1.06}), then $u(t)\geq 0$ almost everywhere for every $t\in (0,T)$.
\end{prop}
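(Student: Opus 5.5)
The plan is to obtain the mild solution as a weak solution, built through the variational (Lions) theory, rather than directly from the integrated-semigroup calculus. Uniqueness will come from the resolvent information in Proposition \ref{A-operator-prop}. Theorem \ref{clas-stron-weak-mild} then links the three solution concepts.

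\emph{Existence of a mild solution.} Set $V:=\mathcal{W}_2(\Omega;\Gamma)$ and $H:=L^2(\Omega)$. Since $\Omega$ has the extension property, $V\hookrightarrow H\hookrightarrow V'$ is a Gelfand triple. By (\ref{2.04}) and Remark \ref{R-embedding}(a), for a.e. $t$ the functional
$$F(t)v:=\int_{\Omega}f(t)v\,dx+\int_{\Gamma}g(t)v\,d\mu$$
lies in $V'$, with $F\in L^2(0,T;V')$. By Proposition \ref{cont-coercive-form}, the form $\mathcal{E}_{\mu}$ is continuous and satisfies the G\aa rding inequality (\ref{3.1.06}). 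J.-L. Lions' theorem therefore gives a unique $u\in L^2(0,T;V)\cap H^1(0,T;V')\subseteq C([0,T];H)$ with $u(0)=u_0$ and $\langle u_t,v\rangle+\mathcal{E}_{\mu}(u,v)=F(t)v$. One applies this to $e^{-\delta^{\star}_0 t}u$ so that the form becomes coercive. Integrating by parts in time against $\psi\in H^1(0,T;V)$ with $\psi(T)=0$ shows that $u$ is a weak solution in the sense of Definition \ref{weak-sol-parabolic}. Theorem \ref{clas-stron-weak-mild} then shows it is a mild solution.

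\emph{Uniqueness.} If $u_1,u_2$ are mild solutions, then $w:=u_1-u_2$ satisfies $(w(t),0)=\mathbf{A}(\int_0^t w,0)$. Because $\gamma-\mathbf{A}$ is invertible for large $\gamma$ (Proposition \ref{A-operator-prop}), the standard argument for operators with nonempty resolvent set applies. One considers $W(t):=\int_0^t w$, takes Laplace transforms, so that $\hat W(\gamma)=(\gamma-\mathbf{A})^{-1}0=0$, and concludes $W\equiv0$, hence $w\equiv0$.

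\emph{Classical solutions.} Under the compatibility hypotheses, I would let $v$ be the mild solution with data $(f_t,g_t)$ and initial value $\mathbf{A}(u_0,0)+(f(0),g(0))$; this initial value lies in $D(\mathbf{A})$ by assumption. The $C^2$ regularity of the data makes $(f_t,g_t)$ admissible. Next define $u(t):=u_0+\int_0^t v$. Using the mild equation for $v$, one shows $(u(t),0)\in D(\mathbf{A})$ and $(u_t,0)-\mathbf{A}(u,0)=(f,g)$. Since $v$ is continuous and $\mathbf{A}$ is closed, $t\mapsto(u(t),0)$ lies in $C(I;D(\mathbf{A}))$. By uniqueness this $u$ coincides with the mild solution.

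\emph{Positivity.} By uniqueness, the mild solution is the Lions weak solution $u$. I would test with $u^-(t)\in V$; condition {\bf (B1)} gives the lattice property. The Lions--Magenes chain rule, $\tfrac{d}{dt}\tfrac12\|u^-\|_{2,\Omega}^2=-\langle u_t,u^-\rangle$, yields
$$\tfrac12\tfrac{d}{dt}\|u^-\|^2_{2,\Omega}+\mathcal{E}_{\mu}(u^-,u^-)=\mathcal{E}_{\mu}(u^+,u^-)-\int_{\Omega}fu^-\,dx-\int_{\Gamma}gu^-\,d\mu\le0 .$$
Here $\mathcal{E}_{\mu}(u^+,u^-)\le0$ by {\bf (A2)} and {\bf (B3)}, the local terms vanish on disjoint supports, and $f,g\ge0$. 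Combining this with (\ref{3.1.06}) gives $\tfrac{d}{dt}\|u^-\|^2_{2,\Omega}\le2\delta^{\star}_0\|u^-\|^2_{2,\Omega}$. Since $u_0^-=0$, Gronwall's inequality gives $u^-\equiv0$.

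The main obstacle I expect is justifying this chain rule for $u^-$ in the present nonlocal and Wentzell setting. It requires that $u\mapsto u^-$ maps $V$ into $V$ and that the $V'$-duality formula holds. I would prove it by Steklov averaging in time, or by approximating $u$ with smooth-in-time $V$-valued functions, using {\bf (B1)} together with the continuity of $u\mapsto u^-$ on $H^1(\Omega)$.
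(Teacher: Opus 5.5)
Your uniqueness step does not work as written. A mild solution lives only on $[0,T]$, so $W(t)=\int_0^t(u_1-u_2)\,d\xi$ has no Laplace transform. Moreover, a nonempty resolvent set does not by itself give uniqueness for $W'=\mathbf{A}W$, $W(0)=0$ on a finite interval, so there is no ``standard argument'' of that kind. For instance, $d/dx$ on $L^2(0,1)$ with domain $\{h\in H^1(0,1): h(0)=0\}$ has resolvent set $\mathbb{C}$. Yet $W(t,x)=\phi(x+t)$, with $\phi$ smooth, $\phi=0$ on $(-\infty,1]$ and $\phi\not\equiv0$ on $(1,2)$, is a nonzero solution on $[0,1]$ with $W(0)=0$. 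Ljubich's theorem needs, in addition, a growth bound on the resolvent.

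Your classical-solution and positivity steps both identify solutions ``by uniqueness'', so this must be repaired. Here the repair is easy. By the characterization of $D(\mathbf{A})$ following Definition \ref{a-normal-a}, the relation $(W'(t),0)=\mathbf{A}(W(t),0)$ means that $\mathcal{E}_{\mu}(W(t),v)=-\int_{\Omega}W'(t)v\,dx$ for all $v\in\mathcal{W}_2(\Omega;\Gamma)$. Choose $v=W(t)$; this is allowed because $W\in C^1([0,T];L^2(\Omega))$ takes values in $\mathcal{W}_2(\Omega;\Gamma)$. Then (\ref{3.1.06}) gives $\frac{d}{dt}\|W(t)\|^2_{2,\Omega}\le2\delta^{\star}_0\|W(t)\|^2_{2,\Omega}$, and $W\equiv0$ by Gronwall. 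Alternatively, the positivity in Proposition \ref{A-operator-prop} and the resolvent identity give $0\le(\gamma-\mathbf{A})^{-1}\le(\gamma_0-\mathbf{A})^{-1}$ for $\gamma\ge\gamma_0$. The resolvent is therefore bounded on a half-line, and Ljubich's theorem applies.

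With this repair your proof is correct, and it takes a genuinely different route from the paper. The paper gives no argument of its own and defers to Nittka. There $\mathbf{A}$ is resolvent positive on the Banach lattice $L^2(\Omega)\times L^2_{\mu}(\Gamma)$, which has order continuous norm, so it generates a positive once integrated semigroup. Uniqueness, classical solutions under the stated compatibility and $C^2$ hypotheses, and positivity are then read off from integrated-semigroup theory.

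Your variational approach is more self-contained. It uses Lions' theorem, weak $\Rightarrow$ mild via Theorem \ref{clas-stron-weak-mild}, differentiation of the equation to build classical solutions, and testing with $u^-$. It produces a mild solution for arbitrary $L^2$ data at once; integrated-semigroup theory does not give this by itself, and general data must then be handled by approximation as in Theorem \ref{exist-uniq-weak-soln}. It also needs only $f,g\in C^1$ for the classical solution.

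The cost of your route is the chain rule for $\frac12\|u^-(t)\|^2_{2,\Omega}$. This follows from the standard chain rule for convex functionals along $L^2(0,T;\mathcal{W}_2(\Omega;\Gamma))\cap H^1(0,T;\mathcal{W}_2(\Omega;\Gamma)')$, since the subgradient $-u^-$ lies in $L^2(0,T;\mathcal{W}_2(\Omega;\Gamma))$ by {\bf (B1)}.
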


\indent In order to prove that there exists a unique weak solution for given data $u_0,\,f,\,g$ square
summable we need to prove some a priori bounds for the classical $L^2$-solutions.

\begin{prop}\label{apriori estimates}
Let $u$ be a classical solution of (\ref{1.06}) on $[0,T],$ for some $T>0$. Then\\[2ex]
$\displaystyle\sup_{t\in [0,T]}\displaystyle\int_{\Omega} |u(t)|^2 dx +\displaystyle\int_{0}^{T}\|\nabla u(\xi)\|^2_{_{2,\Omega}}\,d\xi+
\displaystyle\int^T_{0}\|u(\xi)\|^2_{_{D(\Lambda_{_{\Gamma}})}}\,d\xi$\\
\begin{equation}\label{aux-estimate}
\leq\,c\left(\|u_0\|^2_{_{2,\Omega}}+\displaystyle\int_{0}^{T}\|f(\xi)\|^2_{_{2,\Omega}}d\xi+\displaystyle\int_{0}^{T}\|g(\xi)\|^2_{_{2,\Gamma}}d\xi\right),
\end{equation}
where $c$ is a positive constant depending on $T,\,|\Omega|,\,\gamma,\,\bar{\alpha}.$
\end{prop}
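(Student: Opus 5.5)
The natural route is the standard energy method for classical solutions: test the equation with $u(t)$ itself, use the weak coercivity inequality (\ref{3.1.06}), and close the estimate with Gronwall's lemma. Since $u$ is a classical solution, $t\mapsto(u(t),0)\in C([0,T];D(\mathbf{A}))$ and $u\in C^1((0,T];L^2(\Omega))$. By Remark \ref{caratt- D-A2}, for every $\xi\in(0,T]$ and every $v\in\mathcal{W}_2(\Omega;\Gamma)$ we then have
$$\langle u_t(\xi),v\rangle_2+\mathcal{E}_{\mu}(u(\xi),v)=\langle f(\xi),v\rangle_2+\langle g(\xi),v\rangle_{_{2,\Gamma}}.$$
Here I read the sign convention of $\mathbf{A}$ consistently with the weak formulation in Definition \ref{weak-sol-parabolic}. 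Choosing $v=u(\xi)$ is admissible, because $u(\xi)\in\mathcal{W}_2(\Omega;\Gamma)$ by Proposition \ref{A-operator-prop}. This gives
$$\tfrac12\tfrac{d}{d\xi}\|u(\xi)\|^2_{_{2,\Omega}}+\mathcal{E}_{\mu}(u(\xi),u(\xi))=\int_\Omega f u\,dx+\int_\Gamma g u\,d\mu.$$

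For the lower bound on the form, I apply the key inequality (\ref{3.1.06}) from the proof of Proposition \ref{cont-coercive-form}:
$$\mathcal{E}_{\mu}(u,u)\geq \tfrac{\min\{c_0,c_0^\star\}}{2}\|u\|^2_{_{\mathcal{W}_2(\Omega;\Gamma)}}-\delta_0^\star\|u\|^2_{_{2,\Omega}}.$$
Under {\bf (C2)}, the norm $\|\cdot\|_{\mathcal{W}_2(\Omega;\Gamma)}$ dominates both $\|\nabla u\|_{2,\Omega}$ and $\|u\|_{D(\Lambda_\Gamma)}$. For the right-hand side:
\begin{itemize}
\item I bound $\int_\Omega fu\le \tfrac12\|f\|^2_{2,\Omega}+\tfrac12\|u\|^2_{2,\Omega}$.
\item For the boundary term I use Young's inequality and then the trace inequality (\ref{epsilon-trace}) with $p=s=2$, which is admissible since $2<2d/(N-2)$:
$$\int_\Gamma gu\,d\mu\le \tfrac{1}{2\epsilon}\|g\|^2_{2,\Gamma}+\tfrac{\epsilon}{2}\|u\|^2_{2,\Gamma},\qquad \|u\|^2_{2,\Gamma}\le 2\epsilon'\|\nabla u\|^2_{2,\Omega}+2C_{\epsilon'}\|u\|^2_{2,\Omega}.$$
I then choose $\epsilon,\epsilon'$ so that the gradient contribution is absorbed into $\tfrac{\min\{c_0,c_0^\star\}}{4}\|u\|^2_{\mathcal{W}_2}$.
\end{itemize}

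Combining these bounds yields a differential inequality of the form
$$\tfrac{d}{d\xi}\|u\|^2_{_{2,\Omega}}+\kappa\left(\|\nabla u\|^2_{_{2,\Omega}}+\|u\|^2_{_{D(\Lambda_{_{\Gamma}})}}\right)\le \omega\|u\|^2_{_{2,\Omega}}+C\left(\|f\|^2_{_{2,\Omega}}+\|g\|^2_{_{2,\Gamma}}\right),$$
with $\kappa>0$ and $\omega$ depending on $\delta_0^\star$ and the embedding constants. Multiplying by $e^{-\omega\xi}$ and integrating over $(0,t)$ bounds $\sup_t\|u(t)\|^2_{2,\Omega}$ by $e^{\omega T}$ times the data. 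Integrating once more over $(0,T)$ and inserting the sup bound controls $\int_0^T(\|\nabla u\|^2+\|u\|^2_{D(\Lambda_\Gamma)})\,d\xi$, which gives (\ref{aux-estimate}) with $c$ depending on $T$ and the structural constants.

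The main technical point is justifying the identity $\tfrac12\tfrac{d}{d\xi}\|u\|^2=\langle u_t,u\rangle_2$ together with the integration down to $\xi=0$, since $u\in C^1$ only on $(0,T]$. My first approach is to integrate on $[\varepsilon,t]$ and let $\varepsilon\to0$, using $u\in C([0,T];L^2)$. I also need the map $\xi\mapsto\|u(\xi)\|^2_{\mathcal{W}_2}$ to be measurable and integrable. This follows from the continuity of $(u(\cdot),0)$ into $D(\mathbf{A})$, combined with the continuous embedding $D(\mathbf{A})\hookrightarrow\mathcal{W}_2(\Omega;\Gamma)\times\{0\}$ from Proposition \ref{A-operator-prop}.
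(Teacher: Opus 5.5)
Your proposal is correct and follows the same energy method as the paper: you test with $u(\xi)$, bound $\mathcal{E}_{\mu}(u,u)$ from below by the weak coercivity inequality (\ref{3.1.06}), and control the data terms by Young's inequality together with a trace estimate. The only difference is the closing step. You use Gronwall with the factor $e^{-\omega\xi}$, whereas the paper integrates in time, absorbs the resulting $t\sup_{[0,t]}\|u\|^2_{_{2,\Omega}}$ term for $t\leq t^{\ast}$, and then iterates over a partition of $[0,T]$ into intervals of length at most $t^{\ast}$. Your version is slightly cleaner and gives an explicit $e^{\omega T}$ dependence of the constant.
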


\begin{proof}
The proof runs in a similar manner as in \cite[proof of Lemma 2.9]{NITTKA2014}. However, since there are some modifications, we provide a complete proof.
Given $t\in [0,T]$, we apply Young's inequality together with (\ref{2.04}) and (\ref{3.1.06}) to calculate that\\[2ex]
$\displaystyle\frac{1}{2}\left(\displaystyle\int_{\Omega} |u(t)|^2\,dx-\displaystyle\int_{\Omega}|u_0|^2\,dx\right) =
\displaystyle\int_{0}^{t}\displaystyle\int_{\Omega} u(\xi)u_t(\xi)\,dx d\xi=$\\[2ex]
$=\displaystyle\int_{0}^{t}\langle\mathcal{A}u(\xi),u(\xi)\rangle_{\mathfrak{s}/2}\,d\xi+\displaystyle\int_{0}^{t}
\displaystyle\int_{\Omega}f(\xi,x)u(\xi)\,dxd\xi=\displaystyle\int_{0}^{t}\langle\mathcal{N}^{\ast}_{\mu}u(\xi),u(\xi)\rangle_{r_d/2}\,d\xi+$\\
$$\,\,\,\,\,\,\,\,\,\,\,\,\,\,+\displaystyle\int_{0}^{t}\displaystyle\int_{\Omega}f(\xi,x)u(\xi)\,dxd\xi
-\int_{0}^{t}\left\{\mathcal{E}_{_{\Omega}}(u(\xi),u(\xi))+\left(\mathcal{J}_{_{\Omega}}u(\xi)\right)u(\xi)\right\}\,d\xi$$\\
$=\displaystyle\int_{0}^{t}\displaystyle\int_{\Omega}f(\xi,x)u(\xi)\,dxd\xi+\displaystyle\int_{0}^{t}
\displaystyle\int_{\Gamma}g(\xi,x)u(\xi)\,d\mu d\xi-\displaystyle\int_{0}^{t}\mathcal{E}_{\mu}(u(\xi),u(\xi))\, d\xi$\\[2ex]
$\leq\,\displaystyle\frac{1}{2}\displaystyle\int_{0}^{t} \|f(\xi)\|^2_{_{2,\Omega}}\,d\xi+\displaystyle\frac{1}{4\epsilon}
\displaystyle\int_{0}^{t}\|g(\xi)\|^2_{_{2,\Gamma}}\,d\xi$\\
$$-(\eta-\epsilon c_2^2)\displaystyle\int_{0}^{t}\left\{\|\nabla u(\xi)\|^2_{_{2,\Omega}}+\|u(\xi)\|^2_{_{D(\Lambda_{_{\Gamma}})}}\right\}d\xi+
\left(\gamma +\frac{1}{2} +\epsilon c_2^2\right)\displaystyle\int_{0}^{t}\|u(\xi)\|^2_{_{2,\Omega}}\,d\xi.$$
By choosing $\epsilon:=\eta/2c_2^2$, and letting $c':=2\min\{\eta,1\}^{-1}>0$, we deduce that\\[2ex]
$\displaystyle\sup_{\xi\in [0,t]}\displaystyle\int_{\Omega}|u(\xi)|^2 dx+\displaystyle\int_{0}^{t}\|\nabla u(\xi)\|^2_{_{2,\Omega}}\,d\xi+
\displaystyle\int^t_{0}\|u(\xi)\|^2_{_{D(\Lambda_{_{\Gamma}})}}\,d\xi$\\
$$\leq\,\displaystyle\frac{c'}{2}\left(\displaystyle\int_{\Omega}|u_0|^2 dx+\displaystyle\int_{0}^{t}\displaystyle\int_{\Omega}|f(\xi)|^2\,dxd\xi+
\displaystyle\frac{c^2_2}{\eta}\displaystyle\int_{0}^{t}\int_{\Gamma}|g(\xi)|^2\,dxd\xi\,
+\,t(2\gamma+1+\eta)\displaystyle\sup_{\xi\in [0,t]}\int_\Omega |u(\xi)|^2\,dx\right).$$
Since the above inequality holds for all $t\in[0,T]$, by choosing $t^{\ast}:=(2c'\gamma+c'+c'\eta)^{-1}>0$ and putting $c'':=c'\max\{1,c^2_2\eta^{-1}\}>0$,
we obtain that\\[2ex]
$$\displaystyle\sup_{\xi\in [0,t]}\displaystyle\int_{\Omega} |u(\xi)|^2 dx +\displaystyle\int_{0}^{t}\|\nabla u(\xi)\|^2_{_{2,\Omega}}\,d\xi+
\displaystyle\int^t_{0}\|u(\xi)\|^2_{_{D(\Lambda_{_{\Gamma}})}}\,d\xi
\,\leq\,c''\left(\|u_0\|^2_{_{2,\Omega}}+\displaystyle\int_{0}^{t}\|f(\xi)\|^2_{_{2,\Omega}}d\xi+\displaystyle\int_{0}^{t}\|g(\xi)\|^2_{_{2,\Gamma}}d\xi\right),$$
for all $t\in[0,t^{\ast}]$. To complete the proof, given $T>0$ arbitrary fixed, we can write $[0,T]:=\displaystyle\bigcup^m_{j=1}[y_{j-1},y_j]$ for some $m\in\mathbb{N\!}\,$,
where $\displaystyle\max_{1\leq j\leq m}\{y_j-y_{j-1}\}\leq t^{\ast}$, $y_0=0$, and $y_m=T$. Applying the previous inequality in each of the subintervals
$[y_{j-1},y_j]$, we are lead into the fulfillment of (\ref{aux-estimate}), as desired.
\end{proof}

\begin{remark}\label{approx-classical}
By suitable approximation by classical solutions and Proposition \ref{reg-sol-class} (e.g. \cite[proof of Theorem 2.11]{NITTKA2014})
we can deduce that Proposition \ref{apriori estimates} is valid for weak solutions $u\in C(0,T;L^2(\Omega))\cap L^2(0,T;\mathcal{W}_2(\Omega;\Gamma))$
of problem (\ref{1.06}).
\end{remark}

\indent From here, we are ready to establish the existence and uniqueness of a weak solution for problem (\ref{1.06}). Again their proofs mimic
the procedures in \cite[proof of Theorem 2.11]{NITTKA2014}, and consequently they will be omitted.

\begin{theorem}\label{exist-uniq-weak-soln}
Given $T>0$ arbitrary fixed, let $u_0\in L^2(\Omega)$,\, $f\in L^2(0,T; L^2(\Omega))$ and $g\in L^2(0,T,L^2_{\mu}(\Gamma))$.
Then there exists a weak solution of (\ref{1.06}) in $[0,T]$ which is unique, even in the class of mild solutions.
\end{theorem}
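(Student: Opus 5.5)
Uniqueness comes first. By Theorem \ref{clas-stron-weak-mild}, every weak solution on $[0,T]$ is a mild solution. By Proposition \ref{reg-sol-class}, problem (\ref{1.06}) has a unique mild solution for data $u_0\in L^2(\Omega)$, $f\in L^2(0,T;L^2(\Omega))$, $g\in L^2(0,T;L^2_{\mu}(\Gamma))$. So two weak solutions, or a weak and a mild solution, with the same data must coincide. This gives uniqueness "even in the class of mild solutions", and the rest of the plan concerns existence.

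For existence I will use approximation by classical solutions, following \cite[proof of Theorem 2.11]{NITTKA2014}. I pick regular data $(u_{0,n},f_n,g_n)$ such that:
\begin{itemize}
\item $f_n\in C^2([0,T];L^2(\Omega))$ and $g_n\in C^2([0,T];L^2_{\mu}(\Gamma))$ converge to $f,g$ in the $L^2$-in-time norms;
\item $(u_{0,n},0)\in D(\mathbf{A})$ converges to $u_0$ in $L^2(\Omega)$;
\item the compatibility condition $\mathbf{A}(u_{0,n},0)+(f_n(0),g_n(0))\in D(\mathbf{A})$ holds.
\end{itemize}
To meet the compatibility condition cheaply, I first take $f_n,g_n$ vanishing near $t=0$ (smooth cutoffs in time, then mollification), so the condition reduces to $u_{0,n}\in D(\mathbf{A}^2)$. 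I then choose $u_{0,n}:=(\gamma(\gamma-\mathbf{A})^{-1})^2$ applied to suitable approximants, with $\gamma\to\infty$ and $\gamma>\mathfrak{C}^{\star}_0$ as in Proposition \ref{A-operator-prop}. For these to converge to $u_0$, I need $\gamma(\gamma-\mathbf{A})^{-1}(v,0)\to(v,0)$ for $v$ in a dense subset of $L^2(\Omega)$. I expect to get this from the analytic semigroup $T_\mu$ on $L^2(\Omega)$ of subsection \ref{subsec04-01}, whose generator is the part of $\mathbf{A}$ in $L^2(\Omega)\times\{0\}$. Its domain is dense and its resolvent agrees with that of $\mathbf{A}$ there, so the standard resolvent convergence applies. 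Proposition \ref{reg-sol-class} then yields classical solutions $u_n$, which are also weak solutions by Theorem \ref{clas-stron-weak-mild}.

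Next comes compactness. By linearity, $u_n-u_m$ is the classical solution with data given by the differences. The a priori estimate (\ref{aux-estimate}) of Proposition \ref{apriori estimates} then shows that $(u_n)$ is Cauchy in $C([0,T];L^2(\Omega))\cap L^2(0,T;\mathcal{W}_2(\Omega;\Gamma))$; the $D(\Lambda_{_{\Gamma}})$ part of the norm is controlled by the last term on the left of (\ref{aux-estimate}). Let $u$ denote the limit. Passing to the limit in the weak formulation of Definition \ref{weak-sol-parabolic} is straightforward: for fixed $\psi\in H^1(0,T;\mathcal{W}_2(\Omega;\Gamma))$ with $\psi(T)=0$, the term $\int_0^T\mathcal{E}_\mu(u_n,\psi)\,d\xi$ converges because $\mathcal{E}_\mu$ is continuous on $\mathcal{W}_2(\Omega;\Gamma)$ (Proposition \ref{cont-coercive-form}). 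The data terms converge by Cauchy--Schwarz, using the trace bound (\ref{2.04}) for the $g$-term. Finally $u(0)=\lim u_{0,n}=u_0$, so $u$ is a weak solution.

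The main obstacle is the construction of compatible initial data. The operator $\mathbf{A}$ is not densely defined on $\mathbb{X\!}^{\,2}(\Omega,\Gamma)$, so the second component $0$ of $D(\mathbf{A})$ must be handled carefully. I would argue via the $L^2(\Omega)$ part of $\mathbf{A}$, i.e.\ the operator $-A_\mu$, for which density and resolvent convergence are available. A secondary point is to check that (\ref{aux-estimate}) applies to differences of classical solutions with constants uniform in $n$; this holds since the constants depend only on $T$, $|\Omega|$, $\gamma$ and $\bar{\alpha}$.
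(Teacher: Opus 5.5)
Your proposal is correct and follows essentially the route the paper intends; the paper omits the proof and defers to \cite[proof of Theorem 2.11]{NITTKA2014}. That route is: uniqueness from Theorem \ref{clas-stron-weak-mild} together with uniqueness of mild solutions in Proposition \ref{reg-sol-class}, and existence by approximating with regular compatible data, taking the classical solutions, showing they are Cauchy via Proposition \ref{apriori estimates}, and passing to the limit in the weak formulation. The resolvent construction of compatible initial data is unnecessary: since $A_{\mu}$ generates a $C_0$-semigroup on $L^2(\Omega)$, $D(A^2_{\mu})$ is dense, and taking $u_{0,n}\in D(A^2_{\mu})$ with $f_n(0)=g_n(0)=0$ is exactly what the paper does in Step 3 of the proof of Theorem \ref{Thm-Partial-Bounded}.
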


\begin{corollary}\label{coincidence}
For problem (\ref{1.06}), the notions of weak and mild solution coincide.
\end{corollary}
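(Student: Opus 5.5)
The statement to prove is Corollary \ref{coincidence}: weak and mild solutions of (\ref{1.06}) coincide. It follows from Theorem \ref{clas-stron-weak-mild} and Theorem \ref{exist-uniq-weak-soln}, so the plan is short.

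\emph{Weak implies mild.} Let $u$ be a weak solution of (\ref{1.06}) on $I=[0,T]$, or on $[0,\infty)$, meaning its restriction to every $[0,T]$ is weak. The second half of Theorem \ref{clas-stron-weak-mild} gives directly that $u$ is a mild solution on $I$ in the sense of Definition \ref{classical sol}(b).

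\emph{Mild implies weak.} Let $u$ be a mild solution on $[0,T]$ with data $u_0\in L^2(\Omega)$, $f\in L^2(0,T;L^2(\Omega))$, $g\in L^2(0,T;L^2_{\mu}(\Gamma))$. By Theorem \ref{exist-uniq-weak-soln} there is a weak solution $w$ with the same data. By the first step $w$ is also mild. Theorem \ref{exist-uniq-weak-soln} states that the weak solution is unique even within the class of mild solutions, and Proposition \ref{reg-sol-class} gives uniqueness of mild solutions. Hence $u=w$, so $u$ is weak. For $I=[0,\infty)$, apply this on each $[0,T]$: the restriction of a mild solution on $[0,\infty)$ is mild on $[0,T]$, as is clear from Definition \ref{classical sol}(b).

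\emph{Where the real content lies.} The only substantive ingredient is the uniqueness of mild solutions. If one does not want to quote it, the argument is as follows. The difference $z$ of two mild solutions with the same data satisfies $(z(t),0)=\mathbf{A}\bigl(\int_0^t z,0\bigr)$ with $z(0)=0$. By Proposition \ref{A-operator-prop}, $\mathbf{A}$ is resolvent positive, hence generates a once integrated semigroup. Uniqueness for that abstract Cauchy problem, as in Arendt's theory used in \cite{NITTKA2014}, then forces $z=0$.

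Since this is exactly the part of Theorem \ref{exist-uniq-weak-soln} imported from \cite{NITTKA2014}, I expect no genuine obstacle. The one thing to check is that the solution classes match: both notions are formulated with data in $L^2$, and the weak solutions lie in $C([0,T];L^2(\Omega))$, which is contained in the mild class.
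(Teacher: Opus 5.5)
Your proposal is correct and follows the route the paper intends. The paper omits the proof and obtains the corollary, following \cite{NITTKA2014}, from Theorem \ref{clas-stron-weak-mild} for the implication weak $\Rightarrow$ mild, and from the existence of a weak solution that is unique even within the class of mild solutions (Theorem \ref{exist-uniq-weak-soln}) for the implication mild $\Rightarrow$ weak. You also correctly identify the imported ingredient: uniqueness of mild solutions ultimately rests on $\mathbf{A}$ being resolvent positive, which is what yields a once integrated semigroup.
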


\subsection{Time-dependent a priori estimates}\label{subsec04-03}

\indent In this part we develop $L^{\infty}$-estimates for weak solutions of the parabolic problem under minimal assumptions. As always, we will assume
that Assumption \ref{As1} holds.\\
\indent To begin, following the same approach as in the end of subsection \ref{subsec03-02},
given $u\in L^{\infty}(-T,0;L^2(\Omega))\cap L^2(-T,0;H^1(\Omega))$ and $k\geq0$ a fixed real number, we put
\begin{equation}\label{2.16}
u_k(t):=(u(t)-k)^+.
\end{equation}
Clearly $u_k(t)\in L^{\infty}(-T,0;L^2(\Omega))\cap L^2(-T,0;H^1(\Omega))$,
with $\partial_{x_i}u_k(t)=(\partial_{x_i}u(t))\chi_{_{\Omega_k(t)}}$ for each $i\in\{1,\ldots,N\}$, where
\begin{equation}\label{2.17}
E_k(t):=\{x\in\overline{\Omega}\mid u(t)\geq k\}\,\,\,\,\,\,\,\,\textrm{and}\,\,\,\,\,\,\,\,D_k(t):=D\cap E_k(t),
\end{equation}
for each $D\subseteq\mathbb{R}^N$ such that $D\cap\overline{\Omega}\neq\emptyset$. Then, we have the following assumption:\\
\begin{enumerate}
\item[{\bf (A3)'}]\,\,\,$K(u(t),u_k(t))\geq0$ for each $u(t)\in L^{\infty}(-T,0;L^2(\Omega))\cap L^2(-T,0;H^1(\Omega))$;
\item[{\bf (B4)'}]\,\,\,For each $u(t)\in L^{\infty}(-T,0;L^2(\Omega))\cap L^2(-T,0;D(\Lambda_{_{\Gamma}}))$ one has
$u_k(t)\in L^{\infty}(-T,0;L^2(\Omega))\cap L^2(-T,0;D(\Lambda_{_{\Gamma}}))$ and\\[2ex]
$\Lambda_{_{\Gamma}}(u(t),u_k(t))\,\geq\,c^{\star}_1\Lambda_{_{\Gamma}}(u_k(t),u_k(t))+$\\
$$+c^{\star}_2\|u_k(\xi)\|^2_{_{D(\Lambda_{_{\Gamma}})}}-c^{\star}_3\displaystyle\int_{\Gamma_k(t)}\rho(x)(|u_k(t)|^2+k^2)\,d\mu,$$
for some constants $c^{\star}_1,\,c^{\star}_2,\,c^{\star}_3\geq0$, and for some function $\rho\in L^{\theta}_{\mu}(\Gamma)$,
where $\theta\geq1$ is such that the map
$\mathcal{W}_2(\Omega;\Gamma)\hookrightarrow L^{^{\frac{2\theta}{\theta-1}}}_{\mu}(\Gamma)$ is compact.\\
\end{enumerate}
In views of {\bf (B4)'}, for each $u(t)\in L^{\infty}(-T,0;L^2(\Omega))\cap L^2(-T,0;\mathcal{W}_2(\Omega;\Gamma))$,
we have $u_k(t)\in L^{\infty}(-T,0;L^2(\Omega))\cap L^2(-T,0;\mathcal{W}_2(\Omega;\Gamma))$. Then for convenience, given $\varsigma>0$, we write
\begin{equation}\label{Special-Norm}
\Upsilon^2_{\varsigma}(u):=\displaystyle\sup_{-\varsigma\leq t\leq0}\displaystyle\int_{\Omega}|u(t)|^2\,dx+
\displaystyle\int^0_{-\varsigma}\displaystyle\int_{\Omega}|\nabla u(t)|^2\,dxdt+\displaystyle\int^0_{-\varsigma}\|u(t)\|^2_{_{D(\Lambda_{_{\Gamma}})}}\,dt.
\end{equation}
By virtue of (\ref{2.03}) and (\ref{2.04}) (Cf. \cite[II.3]{LAD-URAL68} in the case of $\Omega$ bounded Lipschitz), we get that
\begin{equation}\label{Sobolev-parabolic}
\|u\|_{_{L^{r_1}(-\varsigma,0;L^{s_1}(\Omega))}}+\|u\|_{_{L^{r_2}(-\varsigma,0;L^{s_2}_{\mu}(\Gamma))}}\,\leq\,C_0\,\Upsilon^2_{\varsigma}(u)
\end{equation}
for all $u(t)\in L^{\infty}(-T,0;L^2(\Omega))\cap L^2(-T,0;\mathcal{W}_2(\Omega;\Gamma))$ and for some constant $C_0>0$, where
$r_{1}\in[2,\infty)$, \,$s_{1}\in[2,2N(N-2)^{-1}]$,\, $r_{2}\in[2,\infty)$, \,$s_{2}\in[2,2d(N-2)^{-1}]$, are such that
$$\displaystyle\frac{1}{r_{1}}+\displaystyle\frac{N}{2s_{1}}=\displaystyle\frac{N}{4}\,\,\,\,\,\textrm{and}
\,\,\,\,\,\displaystyle\frac{1}{r_{2}}+\displaystyle\frac{d}{2s_{2}}=\displaystyle\frac{N}{4}.$$
\indent\\
\begin{lemma}\label{Lem1}
Given $T>0$ fixed, let $u\in L^{\infty}(-T,0;L^2(\Omega))\cap L^2(-T,0;\mathcal{W}_2(\Omega;\Gamma))$, and assume that
$u_k(t)\in L^{\infty}(-T,0;L^2(\Omega))\cap L^2(-T,0;D(\Lambda_{_{\Gamma}}))$. Given $K_1,\,K_2\geq1$ integers and
$\xi\in\{1,\ldots,K_1\}$, and $\zeta\in\{1,\ldots,K_2\}$, let $r_{1,\xi}\in[2,\infty)$, \,$s_{1,\xi}\in[2,2N(N-2)^{-1}]$,\,
$r_{2,\zeta}\in[2,\infty)$, \,$s_{2,\zeta}\in[2,2d(N-2)^{-1}]$, such that
\begin{equation}\label{Lem1-01}
\displaystyle\frac{1}{r_{1,\xi}}+\displaystyle\frac{N}{2s_{1,\xi}}=\displaystyle\frac{N}{4}\,\,\,\,\,\textrm{and}
\,\,\,\,\,\displaystyle\frac{1}{r_{2,\zeta}}+\displaystyle\frac{d}{2s_{2,\zeta}}=\displaystyle\frac{N}{4}.
\end{equation}
Assume that there exists constants $\hat{k}_0,\,\gamma_0\geq0$ and positive numbers $\theta_{1,\xi},\,\theta_{2,\zeta}$, such that
for all $\varsigma\in(0,T]$,\, $\varepsilon\in(0,1/2)$, and $k\geq\hat{k}_0$, we have\\[2ex]
$\Upsilon^2_{(1-\varepsilon)\varsigma}(u_k)\,\leq\,\displaystyle\frac{\gamma_0}{\varepsilon\varsigma}
\displaystyle\int^0_{-\varsigma}\displaystyle\int_{\Omega}|u_k(t)|^2\,dx\,dt+$\\
\begin{equation}\label{Lem1-02}
+\gamma_0k^2\displaystyle\sum^{K_1}_{\xi=1}\left(\displaystyle\int^0_{-\varsigma}|\Omega_k(t)|
^{^{\frac{r_{1,\xi}}{s_{1,\xi}}}}\,dt\right)^{^{\frac{2(1+\theta_{1,\xi})}{r_{1,\xi}}}}
+\gamma_0k^2\displaystyle\sum^{K_2}_{\zeta=1}\left(\displaystyle\int^0_{-\varsigma}\mu(\Gamma_k(t))
^{^{\frac{r_{2,\zeta}}{s_{2,\zeta}}}}\,dt\right)^{^{\frac{2(1+\theta_{2,\zeta})}{r_{2,\zeta}}}}.
\end{equation}
Then there exists a constant $C^{\ast}_0>0$ (independent of $u$ and $\hat{k}_0$) such that
\begin{equation}\label{Lem1-03}
\textrm{ess}\displaystyle\sup_{\left[-\frac{T}{2},0\right]\times\overline{\Omega}}[u]
\,\leq\,C^{\ast}_0\left(\|u\|^2_{_{L^{2}(-T,0;L^{2}(\Omega))}}+\hat{k}^2_0\right)^{1/2}.
\end{equation}
Moreover, if instead of (\ref{Lem1-02}) one has\\[2ex]
$\Upsilon^2_{_T}(u_k)\,\leq\,\gamma_0\displaystyle\int^0_{-T}\displaystyle\int_{\Omega}|u_k(t)|^2\,dx\,dt+$\\
\begin{equation}\label{Lem1-04}
+\gamma_0k^2\displaystyle\sum^{K_1}_{\xi=1}\left(\displaystyle\int^0_{-T}|\Omega_k(t)|
^{^{\frac{r_{1,\xi}}{s_{1,\xi}}}}\,dt\right)^{^{\frac{2(1+\theta_{1,\xi})}{r_{1,\xi}}}}
+\gamma_0k^2\displaystyle\sum^{K_2}_{\zeta=1}\left(\displaystyle\int^0_{-T}\mu(\Gamma_k(t))
^{^{\frac{r_{2,\zeta}}{s_{2,\zeta}}}}\,dt\right)^{^{\frac{2(1+\theta_{2,\zeta})}{r_{2,\zeta}}}},
\end{equation}
Then there exists a constant $C^{\star}>0$ (independent of $u$ and $k_0$) such that
\begin{equation}\label{Lem1-05}
\textrm{ess}\displaystyle\sup_{\left[-T,0\right]\times\overline{\Omega}}[u]
\,\leq\,C^{\ast}_0\left(\|u\|^2_{_{L^{2}(-T,0;L^{2}(\Omega))}}+\hat{k}^2_0\right)^{1/2}.
\end{equation}
\end{lemma}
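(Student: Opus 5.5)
The plan is a parabolic De Giorgi iteration on shrinking cylinders and increasing levels, modelled on the elliptic argument of Lemma~\ref{Lem0} but now also shrinking the time interval. Fix $\hat{k}\geq\hat{k}_0$ and assume, as in Lemma~\ref{Lem0}, that all $\theta_{1,\xi},\theta_{2,\zeta}$ equal $\theta:=\min$ of them; this is legitimate because the measures of the level sets are bounded by $|\Omega|$, $\mu(\Gamma)$ and $T$. Put
$$k_n:=(2-2^{-n})\hat{k},\qquad \varsigma_n:=\frac{T}{2}\left(1+2^{-n}\right),\qquad \varepsilon_n:=1-\frac{\varsigma_{n+1}}{\varsigma_n}\in(0,1/2),$$
so that $\varepsilon_n\sim 2^{-n-2}$ and $\varsigma_n\downarrow T/2$. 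Then define
$$y_n:=\hat{k}^{-2}\int_{-\varsigma_n}^0\!\!\int_\Omega |u_{k_n}|^2\,dx\,dt,\qquad z_n:=\sum_{\xi}\Big(\int_{-\varsigma_n}^0|\Omega_{k_n}(t)|^{r_{1,\xi}/s_{1,\xi}}dt\Big)^{2/r_{1,\xi}}+\sum_{\zeta}\Big(\int_{-\varsigma_n}^0\mu(\Gamma_{k_n}(t))^{r_{2,\zeta}/s_{2,\zeta}}dt\Big)^{2/r_{2,\zeta}}.$$

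The key step is to derive the recursions of Lemma~\ref{lemma2}. For $y_{n+1}$, apply H\"older in space and time to $u_{k_{n+1}}$ on $[-\varsigma_{n+1},0]\times\Omega$. Use the parabolic embedding (\ref{Sobolev-parabolic}) with the admissible pair $(r,s)=(2(N+2)/N,\,2(N+2)/N)$, which satisfies $1/r+N/(2s)=N/4$. This gives
$$\hat{k}^2 y_{n+1}\le \|u_{k_{n+1}}\|^2_{L^{r}L^{s}}\,\big|\{u>k_{n+1}\}\cap Q_{n+1}\big|^{1-2/r}\le C\,\Upsilon^2_{\varsigma_{n+1}}(u_{k_{n+1}})\,|A_{n+1}|^{2/(N+2)}.$$
The level set is then controlled Chebyshev-style: $(k_{n+1}-k_n)^2|A_{n+1}|\le \hat{k}^2y_n$, so $|A_{n+1}|\le 4^{n+1}y_n$. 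Next bound $\Upsilon^2_{\varsigma_{n+1}}(u_{k_{n+1}})=\Upsilon^2_{(1-\varepsilon_n)\varsigma_n}(u_{k_{n+1}})$ by hypothesis (\ref{Lem1-02}) applied with $k=k_{n+1}$, $\varsigma=\varsigma_n$, $\varepsilon=\varepsilon_n$. The factor $1/(\varepsilon_n\varsigma_n)\le c2^n/T$, and since $u_{k_{n+1}}\le u_{k_n}$ and $\Omega_{k_{n+1}}\subseteq\Omega_{k_n}$, this yields
$$\Upsilon^2_{\varsigma_{n+1}}(u_{k_{n+1}})\le c\,2^n\hat{k}^2\big(y_n+z_n^{1+\theta}\big),$$
using $k_{n+1}\le 2\hat{k}$. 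Together these give $y_{n+1}\le cb^n(y_n^{1+\delta}+z_n^{1+\theta}y_n^{\delta})$ with $\delta=2/(N+2)$.

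For $z_{n+1}$, use $(k_{n+1}-k_n)\chi_{\Omega_{k_{n+1}}}\le u_{k_n}$ pointwise (on $\Gamma$ as well, via traces). Then each mixed norm $\big(\int|\Omega_{k_{n+1}}(t)|^{r/s}dt\big)^{2/r}$ is at most $(k_{n+1}-k_n)^{-2}\|u_{k_n}\|^2_{L^{r}L^{s}}$, which by (\ref{Sobolev-parabolic}) is at most $c4^{n}\hat{k}^{-2}\Upsilon^2_{\varsigma_{n+1}}(u_{k_n})$. A subtlety arises here: one needs $\Upsilon$ of $u_{k_n}$ on a slightly smaller cylinder, so I would insert an intermediate level $\tilde k_n=(k_n+k_{n+1})/2$ and an intermediate time to apply (\ref{Lem1-02}) once more. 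This gives $z_{n+1}\le cb^n(y_n+z_n^{1+\theta})$.

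Next choose $\hat{k}$ large enough that $y_0\le\eta$ and $z_0\le\eta^{1/(1+\theta)}$, with $\eta$ from Lemma~\ref{lemma2}. The condition on $y_0$ holds once $\hat k^2\ge \eta^{-1}\|u\|^2_{L^2L^2}$. For $z_0$, use Chebyshev at level $\hat{k}_0$ and (\ref{Lem1-02}) on the full interval, as in (\ref{Lem0-13}), which forces $\hat{k}\ge \hat{k}_0+c(\|u\|^2_{L^2L^2}+\hat{k}_0^2)^{1/2}$. Lemma~\ref{lemma2} then gives $y_n\to0$, so $u\le 2\hat{k}$ a.e. on $[-T/2,0]\times\overline{\Omega}$. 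The trace part follows from $z_n\to0$, and this proves (\ref{Lem1-03}). For (\ref{Lem1-05}), repeat the argument with $\varsigma_n\equiv T$ and no time cutoff, using (\ref{Lem1-04}) in place of (\ref{Lem1-02}).

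I expect the main obstacle to be the bookkeeping for the boundary sum in $z_n$. The exponents $r_{2,\zeta},s_{2,\zeta}$ satisfy the nonstandard relation $1/r+d/(2s)=N/4$, and one must verify that the trace embedding in (\ref{Sobolev-parabolic}) together with H\"older produces exactly the power $1+\theta$ and geometric growth $b^n$ that Lemma~\ref{lemma2} requires. I would first check this in the model case $K_1=K_2=1$ and then sum.
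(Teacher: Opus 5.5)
Your iteration is the paper's: levels $k_n=(2-2^{-n})\hat k$, shrinking cylinders, the same $y_n$, $z_n$, and Lemma~\ref{lemma2}. Both recursions are correct. The pair $r=s=2(N+2)/N$, giving $\delta=2/(N+2)$, is a good substitute for the paper's $\delta=1/N$.

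The intermediate level you propose for $z_{n+1}$ is not needed. (\ref{Lem1-02}) with $k=k_n$, $\varsigma=\varsigma_n$, $\varepsilon=\varepsilon_n$ already bounds $\Upsilon^2_{\varsigma_{n+1}}(u_{k_n})$ by $c\,2^n\hat k^2(y_n+z_n^{1+\theta})$, which is how the paper closes that step. The boundary sums are also no extra difficulty once (\ref{Sobolev-parabolic}) is granted, since the power $1+\theta$ comes from the hypothesis, not from the embedding.

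The genuine gap is the initial estimate for $z_0$.
\begin{itemize}
\item With $\varsigma_n=\frac T2(1+2^{-n})$ you have $\varsigma_0=T$. So $z_0$ measures the level sets $\Omega_{\hat k}(t)$, $\Gamma_{\hat k}(t)$ over all of $[-T,0]$, and Chebyshev at level $\hat k_0$ only gives $(\hat k-\hat k_0)^2z_0\le c\,\Upsilon^2_T(u_{\hat k_0})$.
\item (\ref{Lem1-02}) never bounds $\Upsilon_T$. For every admissible $\varsigma\le T$ and $\varepsilon\in(0,1/2)$ it controls only $\Upsilon_{(1-\varepsilon)\varsigma}$, a strictly shorter interval, with a constant that blows up as $\varepsilon\to0$.
\item Nor can $\Upsilon_T(u_{\hat k_0})$ be bounded by $\|u\|_{L^2(-T,0;L^2(\Omega))}$ and $\hat k_0$ some other way. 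It contains $\sup_t\|u(t)\|_{2,\Omega}$ and gradient terms. The boundary part of $z_0$ involves traces, so an $L^2$-Chebyshev argument fails in general.
\end{itemize}
Thus your step ``(\ref{Lem1-02}) on the full interval'' fails as written, and $z_0\le\eta^{1/(1+\theta)}$ is not established.

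The fix is to start strictly inside $[-T,0]$. Take $\varsigma_n=\frac T2(1+2^{-(n+1)})$, as the paper does, so that $\varsigma_0=3T/4$. Then (\ref{Lem1-02}) with $\varsigma=T$, $\varepsilon=1/4$, $k=\hat k_0$ gives $(\hat k-\hat k_0)^2z_0\le c''(\|u\|^2_{L^2(-T,0;L^2(\Omega))}+\hat k_0^2)$. Your recursions are unaffected, since $\varepsilon_n\in(0,1/2)$ and $1/(\varepsilon_n\varsigma_n)=2^{n+3}/T$. Under (\ref{Lem1-04}) the issue does not arise, because $\Upsilon_T$ is controlled directly. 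So your treatment of (\ref{Lem1-05}) with $\varsigma_n\equiv T$ is fine.
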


\begin{proof}
The proof is a direct modification of \cite[Theorem A.2 and Corollary A.3]{NITTKA2014} to the case of irregular domains. However, for the sake of completeness we provide a complete proof here.
We begin by observing that since $|\Omega_k(t)|\leq|\Omega|$ and $\mu(\Gamma_k(t))\leq\mu(\Gamma)$, it suffices to consider
$$\theta_{1,\xi}=\theta_{2,\zeta}=\theta:=\min\left\{\min\{\theta_{1,\xi}\mid1\leq\xi\leq K_1\}\cup\min\{\theta_{2,\zeta}\mid1\leq\zeta\leq K_2\}\right\}$$
for each $\xi\in\{1,\ldots,K_1\}$ and $\zeta\in\{1,\ldots,K_2\}$, provided that $\gamma_0$ is replaced by a larger constant $\gamma'_0$
(depending on $r_{1,\xi}$,\, $s_{1,\xi}$,\, $\theta_{1,\xi}$,\, $r_{2,\zeta}$,\, $s_{2,\zeta}$,\, $\theta_{2,\zeta}$,\, $T$,\, $\gamma_0$,\, $|\Omega|$,
$\mu(\Gamma)$), since by performing this selection, both inequalities (\ref{Lem1-02}) and (\ref{Lem1-04}) still hold. Then, given $\hat{k}>\hat{k}_0\geq0$ fixed,
define the sequences $\{\varsigma_n\}_{n\geq0}$,\, $\{k_n\}_{n\geq0}$,\, $\{y_n\}_{n\geq0}$,\, $\{z_n\}_{n\geq0}$ of positive real numbers, by:
\begin{equation}\label{Lem1-06}
\varsigma_n:=\displaystyle\frac{T}{2}(1+2^{-(n+1)}),\,\,\,\,\,\,k_n:=(2-2^{-n})\hat{k},\,\,\,\,\,\,
y_n:=\displaystyle\frac{1}{\hat{k}^2}\displaystyle\int^0_{-\varsigma_n}\displaystyle\int_{\Omega}|u_{k_n}(t)|^2\,dx\,dt,
\end{equation}
and
\begin{equation}\label{Lem1-07}
z_n:=\displaystyle\sum^{K_1}_{\xi=1}\left(\displaystyle\int^0_{-\varsigma_n}|\Omega_{k_n}(t)|
^{^{\frac{r_{1,\xi}}{s_{1,\xi}}}}\,dt\right)^{^{\frac{2}{r_{1,\xi}}}}
+\displaystyle\sum^{K_2}_{\zeta=1}\left(\displaystyle\int^0_{-\varsigma_n}\mu(\Gamma_{k_n}(t))
^{^{\frac{r_{2,\zeta}}{s_{2,\zeta}}}}\,dt\right)^{^{\frac{2}{r_{2,\zeta}}}}.
\end{equation}
Clearly $\varsigma_n\in[T/2,T]$ and $k_n\geq\hat{k}_0$ for each nonnegative integer $n$. Moreover, it is clear that
$$|u_{k_n}(t)|\geq(k_{n+1}-k_n)\chi_{_{\overline{\Omega}_{k_{n+1}}(t)}},$$ so taking into account
the definitions of the sequences defined above together with H\"older's inequality and (\ref{Sobolev-parabolic}), we calculate and get that\\[2ex]
$\hat{k}^2y_{n+1}\,\leq\,\left(\displaystyle\int^0_{-\varsigma_{n+1}}|\Omega_{k_{n+1}}(t)|\,dt\right)^{1/N}
\left(\displaystyle\int^0_{-\varsigma_{n+1}}\left\{\displaystyle\int_{\Omega}\left|u_{k_{n+1}}(t)\right|^{^{\frac{2N}{N-1}}}\,dx\right\}^{^{\frac{N-1}{N}}}dt\right)$\\
\begin{equation}\label{Lem1-08}
\leq\,c\left(\displaystyle\int^0_{-\varsigma_{n+1}}|\Omega_{k_{n+1}}(t)|\,dt\right)^{1/N}\Upsilon^2_{\varsigma_{n+1}}(u_{k_{n+1}})
\,\leq\,c\left\{(k_{n+1}-k_n)^{-2}\hat{k}^2y_n\right\}^{1/N}\Upsilon^2_{\varsigma_{n+1}}(u_{k_{n+1}})
\,\leq\,c\,4^{n+1}y_n^{1/N}\Upsilon^2_{\varsigma_{n+1}}(u_{k_{n+1}}),
\end{equation}
where $c>0$ is a constant that varies from line to line (this will be assumed for the remaining of the proof). In the same way, we get that\\[2ex]
$4^{-(n+1)}\hat{k}^2z_{n+1}\,=\,(k_{n+1}-k_n)^2z_{n+1}$\\
\begin{equation}\label{Lem1-09}\leq\,\displaystyle\sum^{K_1}_{\xi=1}\left(\displaystyle\int^0_{-\varsigma_{n+1}}\left\{\displaystyle\int_{\Omega}|u_{k_n}(t)|^{s_{1,\xi}}\,dx\right\}
^{^{\frac{r_{1,\xi}}{s_{1,\xi}}}}\,dt\right)^{^{\frac{2}{r_{1,\xi}}}}
+\displaystyle\sum^{K_2}_{\zeta=1}\left(\displaystyle\int^0_{-\varsigma_{n+1}}\left\{\displaystyle\int_{\Gamma}|u_{k_n}(t)|^{s_{2,\zeta}}\,d\mu\right\}
^{^{\frac{r_{2,\zeta}}{s_{2,\zeta}}}}\,dt\right)^{^{\frac{2}{r_{2,\zeta}}}}\,\leq\,c\,\Upsilon^2_{\varsigma_{n+1}}(u_{k_{n}}).
\end{equation}
Now, taking $\varsigma=\varsigma_n$ and $\varepsilon=1-\varsigma_{n+1}/\varsigma_n\geq2^{-(n+3)}$, we apply the assumption (\ref{Lem1-02})
to obtain that
\begin{equation}\label{Lem1-10}
\Upsilon^2_{\varsigma_{n+1}}(u_{k_{n+1}})\,\leq\,\Upsilon^2_{\varsigma_{n+1}}(u_{k_{n}})\,\leq\,
\frac{\gamma_0}{\varepsilon\varsigma_n}\hat{k}^2y_n+\gamma_0k^2_nz^{1+\theta}_n
\,\leq\,\gamma_02^{n+4}\hat{k}^2(T^{-1}+1)(y_n+z^{1+\theta}_n),
\end{equation}
and then combining (\ref{Lem1-08}), (\ref{Lem1-09}), and (\ref{Lem1-10}), we arrive at
\begin{equation}\label{Lem1-11}
y_{n+1}\,\leq\,c'\,8^n(y^{1+\delta}_n+z^{1+\theta}_ny^{\delta}_n)\,\,\textrm{and}\,\,
z_{n+1}\,\leq\,c'\,8^n(y_n+z^{1+\theta}_n),\,\,\textrm{for each integer}\,\,n\geq0,
\end{equation}
for some constant $c'>0$, where $\delta:=1/N$. To complete the arguments, we seek estimations for $y_0$ and $z_0$. In fact, we easy see that
\begin{equation}\label{Lem1-12}
y_0=\displaystyle\frac{1}{\hat{k}^2}\displaystyle\int^0_{-\frac{3T}{4}}\displaystyle\int_{\Omega}|u_{k_0}(t)|^2\,dx\,dt\,\leq\,
\displaystyle\frac{1}{\hat{k}^2}\displaystyle\int^0_{-T}\displaystyle\int_{\Omega}|u(t)|^2\,dx\,dt.
\end{equation}
On the other hand, using (\ref{Lem1-02}) and following the approach as in (\ref{Lem1-08}) and (\ref{Lem1-09}), we have that\\[2ex]
$(\hat{k}-\hat{k}_0)^2z_0\,\leq\,\displaystyle\sum^{K_1}_{\xi=1}\left(\displaystyle\int^0_{-\varsigma_{0}}\left\{\displaystyle\int_{\Omega}|u_{k_0}(t)|^{s_{1,\xi}}\,dx\right\}
^{^{\frac{r_{1,\xi}}{s_{1,\xi}}}}\,dt\right)^{^{\frac{2}{r_{1,\xi}}}}+
\displaystyle\sum^{K_2}_{\zeta=1}\left(\displaystyle\int^0_{-\varsigma_{0}}\left\{\displaystyle\int_{\Gamma}|u_{k_0}(t)|^{s_{2,\zeta}}\,d\mu\right\}
^{^{\frac{r_{2,\zeta}}{s_{2,\zeta}}}}\,dt\right)^{^{\frac{2}{r_{2,\zeta}}}}$\\
$$\leq\,c\,\Upsilon^2_{\varsigma_{0}}(u_{k_{0}})
\leq\,\displaystyle\frac{4c\gamma_0}{T}\displaystyle\int^0_{-T}\displaystyle\int_{\Omega}|u_{k_0}(t)|^2\,dx\,dt+
c\gamma_0k^2_0\displaystyle\sum^{K_1}_{\xi=1}\left[T|\Omega|^{^{\frac{r_{1,\xi}}{s_{1,\xi}}}}\right]^{^{\frac{2(1+\theta)}{r_{1,\xi}}}}
+\,c\gamma_0k^2_0\displaystyle\sum^{K_2}_{\zeta=1}\left[T\mu(\Gamma)^{^{\frac{r_{2,\zeta}}{s_{2,\zeta}}}}\right]^{^{\frac{2(1+\theta)}{r_{2,\zeta}}}}.$$
Thus letting
$$c'':=c\gamma_0\max\left\{\frac{4}{T}\,,\,\displaystyle\sum^{K_1}_{\xi=1}\left[T|\Omega|^{^{\frac{r_{1,\xi}}{s_{1,\xi}}}}\right]^{^{\frac{2(1+\theta)}{r_{1,\xi}}}}
+\displaystyle\sum^{K_2}_{\zeta=1}\left[T\mu(\Gamma)^{^{\frac{r_{2,\zeta}}{s_{2,\zeta}}}}\right]^{^{\frac{2(1+\theta)}{r_{2,\zeta}}}}\right\},$$
we arrive at
\begin{equation}\label{Lem1-13}
z_0\,\leq\,\displaystyle\frac{c''}{(\hat{k}-\hat{k}_0)^2}\left(\displaystyle\int^0_{-T}\displaystyle\int_{\Omega}|u_{k_0}(t)|^2\,dx\,dt+k^2_0\right),
\,\,\,\,\,\,\,\,\,\,\,\,\textrm{for all}\,\,\,\hat{k}\geq k_0.
\end{equation}
Then, we define $$d:=\min\left\{\delta,\,\frac{\theta}{1+\theta}\right\}\,\,\,\,\,\textrm{and}\,\,\,\,\,
\eta:=\min\left\{\frac{1}{(2c')^{^{\frac{1}{\delta}}}8^{^{\frac{1}{\delta d}}}},\,\frac{1}{(2c')^{^{\frac{1+\theta}{\theta}}}\,8^{^{\frac{1}{\theta d}}}}\right\},$$
and then choose
\begin{equation}\label{Lem1-14}
\hat{k}:=\max\left\{\displaystyle\frac{1}{\sqrt{\eta}}\left(\displaystyle\int^0_{-T}\displaystyle\int_{\Omega}|u(t)|^2\,dx\,dt\right)^{1/2},\,
\displaystyle\frac{\sqrt{c_1}}{\eta^{^{\frac{1}{2(1+\theta)}}}}
\left(\displaystyle\int^0_{-T}\displaystyle\int_{\Omega}|u(t)|^2\,dx\,dt+\hat{k}^2_0\right)^{1/2}+\hat{k}_0\right\}.
\end{equation}
Then one sees that
\begin{equation}\label{Lem1-15}
\hat{k}\,\leq\,c^{\star}\left(\displaystyle\int^0_{-T}\displaystyle\int_{\Omega}|u(t)|^2\,dx\,dt+\hat{k}^2_0\right)^{1/2}
\end{equation}
for some constant $c^{\star}>0$, and moreover the selection implies that
\begin{equation}\label{Lem1-16}
y_0\,\leq\,\eta\,\,\,\,\,\,\,\,\textrm{and}\,\,\,\,\,\,\,\,z_0\,\leq\,\eta^{^{\frac{1}{1+\theta}}}.
\end{equation}
In views of (\ref{Lem1-11}) and (\ref{Lem1-16}), we see that the sequences $\{y_n\}$,\, $\{z_n\}$ satisfy the conditions of Lemma \ref{lemma2}.
Thus, applying Lemma \ref{lemma2} for $\hat{k}$ given by (\ref{Lem1-14})
shows that $\displaystyle\lim_{n\rightarrow\infty}z_n=0$, and consequently
\begin{equation}\label{Lem1-17}
u(t)\,\leq\,\displaystyle\lim_{n\rightarrow\infty}k_n=2\hat{k}\,\,\,\,\textrm{a.e. in}\,\,\overline{\Omega},\,\,\,\,\,\textrm{for a.e.}\,\,
t\in\displaystyle\bigcap_{n\in\mathbb{N\!}}[-\varsigma_n,0]=[-T/2,0].
\end{equation}
Combining (\ref{Lem1-17}) with (\ref{Lem1-15}), we get that
\begin{equation}\label{Lem1-18}
\displaystyle\textrm{ess}\hspace{-0.70cm}\sup_{(t,x)\in\left[-\frac{T}{2},0\right]\times\Omega}\{u(t,x)\}+
\displaystyle\textrm{ess}\hspace{-0.60cm}\sup_{(t,x)\in\left[-\frac{T}{2},0\right]\times\Gamma}\{u(t,x)\}
\,\leq\,C^{\ast}\left(\displaystyle\int^0_{-T}\displaystyle\int_{\Omega}
|u(t)|^2\,dx\,dt+\hat{k}_0\right)^{1/2}.
\end{equation}
This gives (\ref{Lem1-03}). To complete the proof,
assume now that (\ref{Lem1-04}) is valid. Then by selecting $\varsigma_n:=T$ in the previous arguments, the proof runs in the exact way as in the previous case.
In particular, one has that (\ref{Lem1-17}) is valid for almost all $t\in\displaystyle\bigcap_{n\in\mathbb{N\!}}[-\varsigma_n,0]=[-T,0]$, leading to
the inequality
\begin{equation}\label{Lem1-19}
\displaystyle\textrm{ess}\hspace{-0.63cm}\sup_{(t,x)\in\left[-T,0\right]\times\Omega}\{u(t,x)\}+
\displaystyle\textrm{ess}\hspace{-0.60cm}\sup_{(t,x)\in\left[-T,0\right]\times\Gamma}\{u(t,x)\}
\,\leq\,C^{\star}\left(\displaystyle\int^T_{0}\displaystyle\int_{\Omega}
|u(t)|^2\,dx\,dt+\hat{k}_0\right)^{1/2}.
\end{equation}
This completes the proof.
\end{proof}

\indent We now establish a key result concerning the global regularity (in space) of weak solutions of problem (\ref{1.06}).
In time, we also derive local estimates. The proof is motivated by the procedures in \cite{NITTKA2014}, but as our problem contains
multiple generalizations and additional nonlocal maps and multiple boundary conditions, complete proofs will be provided in order to
deal with the corresponding modifications.

\begin{theorem}\label{Thm-Partial-Bounded}
Assume that conditions {\bf (A3)'} and {\bf (B4)'} hold.
Given $T>0$ fixed, let $\kappa_1,\,\kappa_2,\,p,\,q\in[2,\infty)$ be such that
\begin{equation}\label{Thm1-00}
\displaystyle\frac{1}{\kappa_1}+\displaystyle\frac{N}{2p}<1\,\,\,\,\,\,\,\,\textrm{and}
\,\,\,\,\,\,\,\,\displaystyle\frac{1}{\kappa_2}+\displaystyle\frac{d}{2q(d+2-N)}<\frac{1}{2}.
\end{equation}
Given $u_0\in L^2(\Omega)$, let $f\in L^{\kappa_1}(0,T;L^p(\Omega))$ and $g\in L^{\kappa_2}(0,T;L^q_{\mu}(\Gamma))$, and
let $u\in C(0,T;L^2(\Omega))\cap L^2(0,T;\mathcal{W}_2(\Omega;\Gamma))$ be a weak solution of problem (\ref{1.06}).
Then there exists a constant $C^{\ast}_1=C^{\ast}_1(T,N,d,\kappa_1,\kappa_2,p,q,|\Omega|,\mu(\Gamma))>0$ such that
\begin{equation}\label{Thm1-01}
|\|\mathbf{u}\||_{_{L^{\infty}(T/2,T;\mathbb{X\!}^{\,\infty}(\Omega;\Gamma))}}
\,\leq\,C^{\ast}_1\left(\|u\|_{_{L^{2}(0,T;L^{2}(\Omega))}}+\|f\|_{_{L^{\kappa_1}(0,T;L^{p}(\Omega))}}+
\|g\|_{_{L^{\kappa_2}(0,T;L^{q}_{\mu}(\Gamma))}}\right).
\end{equation}
If in addition $u_0=0$, then we get the global estimate
\begin{equation}\label{Thm1-02}
|\|\mathbf{u}\||_{_{L^{\infty}(0,T;\mathbb{X\!}^{\,\infty}(\Omega;\Gamma))}}
\,\leq\,C^{\ast}_1\left(\|u\|_{_{L^{2}(0,T;L^{2}(\Omega))}}+\|f\|_{_{L^{\kappa_1}(0,T;L^{p}(\Omega))}}+
\|g\|_{_{L^{\kappa_2}(0,T;L^{q}_{\mu}(\Gamma))}}\right).
\end{equation}
\end{theorem}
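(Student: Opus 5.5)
The plan is to verify the De Giorgi-type energy inequality (\ref{Lem1-02}) (or (\ref{Lem1-04}) when $u_0=0$) for the truncations $u_k=(u-k)^+$, and then invoke Lemma \ref{Lem1} for $u$ and for $-u$. By shifting time, $[0,T]$ is identified with $[-T,0]$.

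First I derive a local energy inequality. Fix $\varsigma\in(0,T]$ and $\varepsilon\in(0,1/2)$, and take a Lipschitz cutoff $\chi(t)$ with $\chi=0$ for $t\le-\varsigma$, $\chi=1$ on $[-(1-\varepsilon)\varsigma,0]$, and $|\chi'|\le(\varepsilon\varsigma)^{-1}$. I test the weak formulation with $\chi^2 u_k$; this requires a Steklov averaging in time, as in \cite{LAD-URAL68,NITTKA2014}, since $u_t$ is only distributional. For a.e. $t_1$ this gives
$$\tfrac12\int_\Omega \chi^2u_k(t_1)^2dx+\int_{-\varsigma}^{t_1}\chi^2\mathcal{E}_\mu(u,u_k)\,dt\le\int_{-\varsigma}^{t_1}\chi|\chi'|\|u_k\|_{2,\Omega}^2dt+\int_{-\varsigma}^{t_1}\chi^2\Big(\int_{\Omega_k}fu_k\,dx+\int_{\Gamma_k}gu_k\,d\mu\Big)dt.$$
Next, the coercivity bound (\ref{thm0-10}) from the elliptic proof applies for each fixed $t$, using {\bf (A3)'}, {\bf (B4)'}, (\ref{1.04}) and Remark \ref{R-embedding}(d). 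It bounds $\mathcal{E}_\mu(u,u_k)$ from below by $\frac{c_0}{4}\|\nabla u_k\|^2+c^\#\|u_k\|^2_{D(\Lambda_\Gamma)}$ minus the integrals of $\mathfrak h(|u_k|^2+k^2)$ over $\Omega_k(t)$ and of $\mathfrak k(|u_k|^2+k^2)$ over $\Gamma_k(t)$. The $\mathfrak h|u_k|^2$ and $\mathfrak k|u_k|^2$ pieces are absorbed by $\epsilon$-inequalities into the gradient term plus $C\|u_k\|_2^2$. The $k^2\int_{\Omega_k}\mathfrak h$ term is bounded by $k^2\|\mathfrak h\|\,|\Omega_k(t)|^{2/\mathfrak r}$, which is a term of the form in (\ref{Lem1-02}) with $r=\infty$-type exponents. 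Since the time-independent measures can be rewritten with admissible pairs $(r_{1,\xi},s_{1,\xi})$, this yields some $\theta>0$ precisely because $r_3>N/2$, $r_1>N$ and $s,\theta>d/(d+2-N)$.

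The forcing terms are the main obstacle. Here the time integrability of $f,g$ must be traded against the parabolic embedding (\ref{Sobolev-parabolic}). I will write $fu_k\le |f|u_k$ and apply Hölder in space-time with exponents $\kappa_1,p$ for $f$. The remaining factor is then $\|u_k\chi_{\Omega_k}\|$ in a mixed norm $L^{\kappa_1'}L^{p'}$. I split this as $\|u_k\|_{L^{r}L^{s}}\cdot\big(\int|\Omega_k(t)|^{r_1/s_1}dt\big)^{1/r_1}$ with $(r,s)$ and $(r_1,s_1)$ admissible in the sense of (\ref{Lem1-01}). Young's inequality then gives
$$\epsilon\Upsilon^2_\varsigma(u_k)+C_\epsilon\|f\|^2_{L^{\kappa_1}L^p}\Big(\int|\Omega_k(t)|^{r_1/s_1}dt\Big)^{2/r_1}.$$
Condition (\ref{Thm1-00}) is exactly what guarantees that the resulting exponent exceeds $2/r_1$, i.e. equals $2(1+\theta_1)/r_1$ with $\theta_1>0$. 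The boundary term is treated identically, using the trace part of (\ref{Sobolev-parabolic}) and the second condition in (\ref{Thm1-00}) with $d$ in place of $N$. Choosing $\hat k_0:=\|f\|_{L^{\kappa_1}L^p}+\|g\|_{L^{\kappa_2}L^q}$ and $k\ge\hat k_0$ converts $\|f\|^2$ into $k^2$. Absorbing the $\epsilon\Upsilon^2$ terms and taking the supremum over $t_1$ then yields (\ref{Lem1-02}).

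Applying Lemma \ref{Lem1} gives an upper bound on $[T/2,T]$ of the form $C(\|u\|_{L^2L^2}+\hat k_0)$, and the same applies to $-u$, which solves the problem with data $-f,-g,-u_0$. This gives (\ref{Thm1-01}), noting that Lemma \ref{Lem1} controls the supremum over $\overline\Omega$, boundary included. If $u_0=0$, I instead take $\chi\equiv1$ starting from the initial time. The initial term $\int u_k(0)^2=0$ vanishes for $k\ge0$, so the $\varepsilon$-dependent term disappears and (\ref{Lem1-04}) holds on the whole interval. The second part of Lemma \ref{Lem1} then gives (\ref{Thm1-02}).
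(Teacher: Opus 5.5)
Your proposal is correct in structure, subject to the boundary-exponent caveat below, which the paper shares. It reaches (\ref{Lem1-02}) by a different route from the paper.

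The paper never tests the weak formulation directly. Step~1 assumes $u$ is a classical solution, so differentiating $\int\psi^2u_k^2\,dx$ in time and using $u_k(t)$ as a test function are immediate. Step~3 then passes to weak solutions by approximating with the classical solutions of Proposition~\ref{reg-sol-class} and applying the estimate to $u_n-u_m$. More substantially, the paper splits $fu_k\le\frac1k|f|(u_k^2+k^2)$. It absorbs the quadratic pieces $\frac1k\int\int|f|\psi^2u_k^2$ and $\int\int\mathfrak h\psi^2u_k^2$ (and their boundary analogues) into $\Upsilon^2_\varsigma(\psi u_k)$ by taking $\varsigma$ small. This is what forces $T\le T_0$ and the covering of $[T/2,T]$ by short intervals.

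You proceed differently in three ways:
\begin{itemize}
\item You absorb $\mathfrak h u_k^2$ and $\mathfrak k u_k^2$ pointwise in time via Remark~\ref{R-embedding}(d).
\item You put the remainder $C\int\int u_k^2$ into $\frac{\gamma_0}{\varepsilon\varsigma}\int\int u_k^2$. This is legitimate because $\varepsilon\varsigma<T$, at the cost of $\gamma_0$ depending on $T$.
\item You keep the forcing linear in $u_k$ and treat it by space--time H\"older and Young.
\end{itemize}
This gives (\ref{Lem1-02}) for every $\varsigma\le T$ at once, with no smallness condition and no patching. What you pay is the Steklov step. Passing to the limit $h\to0$ in the Wentzell part of $\mathcal{E}_\mu(u_h,\chi^2(u_h-k)^+)$ needs some stability of $v\mapsto(v-k)^+$ in $D(\Lambda_{\Gamma})$. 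A bound on $\|(v-k)^+\|_{D(\Lambda_{\Gamma})}$ can be extracted from \textbf{(B4)'} and weak coercivity when $c^\star_1$ or $c^\star_2$ is positive, but it is an extra step. The paper avoids it by using only the identity for classical solutions.

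Two bookkeeping points.

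First, in the H\"older split only $(r,s)$ should be admissible. If $(r_1,s_1)$ were admissible too, matching exponents would force $\frac1{\kappa_1}+\frac N{2p}=1$. The leftover pair $(\rho,\sigma)$ satisfies $\frac1\rho+\frac N{2\sigma}=\frac N4+\delta$, with $\delta:=1-\frac1{\kappa_1}-\frac N{2p}>0$. Then $(r_1,s_1):=\bigl((1+\theta_1)\rho,(1+\theta_1)\sigma\bigr)$ with $\theta_1:=4\delta/N$ is admissible and produces exactly the factor $\bigl(\int|\Omega_k(t)|^{r_1/s_1}dt\bigr)^{(1+\theta_1)/r_1}$.

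Second, on the boundary the same computation with $\frac1r+\frac d{2s}=\frac N4$ gives $\theta_2>0$ if and only if $\frac1{\kappa_2}+\frac d{2q}<\frac{d+2-N}2$. This is not the interior condition with $d$ in place of $N$. It coincides with the second condition in (\ref{Thm1-00}) only for $d=N-1$. It is implied by that condition when $d\ge N-1$, but not when $d<N-1$; for example, take $N=3$, $d=3/2$, $\kappa_2=3$ and $q$ large. The paper's $\theta_2$ has exactly the same positivity condition. So this is a defect of the hypothesis as stated, not of your argument relative to the paper's.
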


\begin{proof}
Let $u\in C(0,T;L^2(\Omega))\cap L^2(0,T;\mathcal{W}_2(\Omega;\Gamma))$ be a weak solution of problem (\ref{1.06}).
The proof of this result is technical and long, and will be divided into three main steps.\\
$\bullet$\,\, {\it \underline{Step 1}}. Assume in addition that $u\in C(0,T;L^2(\Omega))\cap L^2(0,T;\mathcal{W}_2(\Omega;\Gamma))$
is a classical solution of (\ref{1.06}), and that $T\leq T_0$ for some constant $T_0>0$ (depending only in $N$, $d$, $\kappa_1$,
$\kappa_2$, $p$, $q$, $|\Omega|$, $\mu(\Gamma)$, and the coefficients of the operators $\mathcal{A}$ and $\mathcal{B}$). By a linear
translation in time, we may assume that problem (\ref{1.06}) is solved over $[-T,0]$, with initial value $u_0=u(-T)$.
Let $u_k(t)$ be the function given by (\ref{2.16}), fix $\varsigma\in[-T,0]$, and select a function $\psi\in H^1(-\varsigma,0)$
such that $0\leq\psi(t)\leq1$ for all $t\in[-\varsigma,0]$. Assume in addition that either $\psi(-\varsigma)=0$, or $\varsigma=T$ and
$u_k(-T)=0$. Then for each $t\in[-\varsigma,0]$, observe that\\[2ex]
$\displaystyle\frac{\psi(t)^2}{2}\displaystyle\int_{\Omega_k(t)}|u_k(t)|^2\,dx=\displaystyle\int^t_{-\varsigma}
\displaystyle\frac{d}{d\xi}\left(\displaystyle\frac{\psi(\xi)^2}{2}\displaystyle\int_{\Omega_k(\xi)}|u_k(\xi)|^2\,dx\right)d\xi$\\
\begin{equation}\label{thm1-02}
=\displaystyle\int^t_{-\varsigma}\psi(\xi)\psi'(\xi)\left(\displaystyle\int_{\Omega_k(\xi)}|u_k(\xi)|^2\,dx\right)d\xi+
\displaystyle\int^t_{-\varsigma}\psi(\xi)^2\left(\displaystyle\int_{\Omega_k(\xi)}u_k(\xi)\frac{ du_k(\xi)}{d\xi}\,dx\right)d\xi.
\end{equation}
Now, using the fact that $u\in C(0,T;L^2(\Omega))\cap L^2(0,T;\mathcal{W}_2(\Omega;\Gamma))$
is a classical solution of (\ref{1.06}), we see that
\begin{equation}\label{thm1-03}
\displaystyle\int_{\Omega_k(\xi)}u_k(\xi)\frac{ du_k(\xi)}{d\xi}\,dx=\displaystyle\int_{\Omega_k(\xi)}u_k(\xi)\frac{ du(\xi)}{d\xi}\,dx
=\displaystyle\int_{\Omega_k(\xi)}f(\xi,x)u_k(\xi)\,dx+\displaystyle\int_{\Gamma_k(\xi)}g(\xi,x)u_k(\xi)\,d\mu-\mathcal{E}_{\mu}(u(\xi),u_k(\xi)),
\end{equation}
where we recall the definition of the form $\mathcal{E}_{\mu}(\cdot,\cdot)$ in (\ref{3.1.01}). Estimating the right hand side in (\ref{thm1-03}) and taking
into account Assumptions {\bf (A3)'} and {\bf (B4)'}, we see that\\[2ex]
$\mathcal{E}_{\mu}(u(\xi),u_k(\xi))\,\geq\,\mathcal{E}_{_{\Omega}}(u_k(\xi),u_k(\xi))+\min\{1,c^{\star}_1\}\mathcal{E}_{_{\Gamma}}(u_k(\xi),u_k(\xi))+$\\
$$+\displaystyle\int_{\Omega_k(\xi)}\displaystyle\sum^N_{i=1}\hat{a}_ik\partial_{x_i}u_k(\xi)\,dx+\displaystyle\int_{\Omega_k(\xi)}\lambda ku_k(\xi)\,dx
+\min\{1,c^{\star}_1\}\displaystyle\int_{\Gamma_k(\xi)}\beta ku_k(\xi)\,d\mu+$$
\begin{equation}\label{thm1-04}
+c^{\star}_2\|u_k(\xi)\|^2_{_{D(\Lambda_{_{\Gamma}})}}-c^{\star}_3\displaystyle\int_{\Gamma_k(t)}\rho(x)(|u_k(t)|^2+k^2)\,d\mu,
\end{equation}
where the constants $c^{\star}_m$ ($m\in\{1,2,3\}$) and the function $\rho$ are described in condition {\bf (B4)'}.
Recalling (\ref{1.04}), we apply Young's inequality multiple times and use Remark \ref{R-embedding}(d) to produce the following estimates.\\[2ex]
$c_0\|\nabla u_k(\xi)\|^2_{_{2,\Omega_k(\xi)}}\,\leq\,\mathcal{E}_{_{\Omega}}(u_k(\xi),u_k(\xi))+\|\hat{\mathfrak{a}}\|_{_{r_1,\Omega}}
\|u_k(\xi)\|_{_{\frac{2r_1}{r_1-2},\Omega_k(\xi)}}\|\nabla u_k(\xi)\|_{_{2,\Omega_k(\xi)}}+$\\
$$\,\,\,\,\,\,\,\,\,\,\,\,\,\,\,\,\,\,\,\,+\|\check{\mathfrak{a}}\|_{_{r_2,\Omega}}
\|u_k(\xi)\|_{_{\frac{2r_2}{r_2-2},\Omega_k(\xi)}}\|\nabla u_k(\xi)\|_{_{2,\Omega_k(\xi)}}+
\displaystyle\int_{\Omega_k(\xi)}|\lambda||u_k(\xi)|^2\,dx$$
$$\leq\,\mathcal{E}_{_{\Omega}}(u_k(\xi),u_k(\xi))+\|\hat{\mathfrak{a}}\|_{_{r_1,\Omega}}\left(2\epsilon_1\|\nabla u_k(\xi)\|^2_{_{2,\Omega_k(\xi)}}+
\frac{C_{\epsilon^2_1}}{\epsilon_1}\|u_k(\xi)\|^2_{_{2,\Omega_k(\xi)}}\right)+\,\,\,\,\,\,\,\,\,\,\,\,\,\,\,\,\,\,\,$$
\begin{equation}\label{thm1-05}
+\|\check{\mathfrak{a}}\|_{_{r_2,\Omega}}\left(2\epsilon_1\|\nabla u_k(\xi)\|^2_{_{2,\Omega_k(\xi)}}+
\frac{C'_{\epsilon^2_1}}{\epsilon_1}\|u_k(\xi)\|^2_{_{2,\Omega_k(\xi)}}\right)+\displaystyle\int_{\Omega_k(\xi)}|\lambda||u_k(\xi)|^2\,dx,
\end{equation}
for all $\epsilon_1>0$, for some constant $C_{\epsilon^2_1},\,C'_{\epsilon^2_1}>0$. Writing $C_1:=C_{\epsilon^2_1}$,\, $C_2=C'_{\epsilon^2_1}$,
and $$M^{\star}:=\max\left\{\|\hat{\mathfrak{a}}\|_{_{r_1,\Omega}},\|\check{\mathfrak{a}}\|_{_{r_2,\Omega}}\right\}
\left(C_1\|\hat{\mathfrak{a}}\|_{_{r_1,\Omega}}+C_2\|\check{\mathfrak{a}}\|_{_{r_2,\Omega}}\right)/c_0>0,$$
we insert $\epsilon_1:=c_0\left(8\max\left\{\|\hat{\mathfrak{a}}\|_{_{r_1,\Omega}},\|\check{\mathfrak{a}}\|_{_{r_2,\Omega}}\right\}\right)^{-1}$
into (\ref{thm1-05}) to get that
\begin{equation}\label{thm1-06}
\mathcal{E}_{_{\Omega}}(u_k(\xi),u_k(\xi))\,\geq\,\displaystyle\frac{c_0}{2}\|\nabla u_k(\xi)\|^2_{_{2,\Omega_k(\xi)}}-
\displaystyle\int_{\Omega_k(\xi)}(M^{\star}+|\lambda|)|u_k(\xi)|^2\,dx.
\end{equation}
Also, recalling that the ``Wentzell-type" boundary form $(\Lambda_{_{\Gamma}},D(\Lambda_{_{\Gamma}}))$ is assumed to be weakly coercive,
we get the existence of constants $c^{\ast}_1>0$ and $c^{\ast}_2\in\mathbb{R\!}\,$ such that
\begin{equation}\label{thm1-07}
\mathcal{E}_{_{\Gamma}}(u_k(\xi),u_k(\xi))\,\geq\,c^{\ast}_1\|u_k(\xi)\|^2_{_{D(\Lambda_{_{\Gamma}})}}-
\displaystyle\int_{\Gamma_k(\xi)}(|c^{\ast}_2|+|\beta|)|u_k(\xi)|^2\,d\mu.
\end{equation}
It is also easy to see that
\begin{equation}\label{thm1-08}
\displaystyle\int_{\Omega_k(\xi)}\displaystyle\sum^N_{i=1}\hat{a}_ik\partial_{x_i}u_k(\xi)\,dx\,\geq\,-\displaystyle
\frac{k^2}{c_0}\displaystyle\int_{\Omega_k(\xi)}\displaystyle\sum^N_{i=1}|\hat{a}_i|^2\,dx-\displaystyle
\frac{c_0}{4}\|\nabla u_k(\xi)\|^2_{_{2,\Omega_k(\xi)}},
\end{equation}
and\\[2ex]
\indent$\displaystyle\int_{\Omega_k(\xi)}f(\xi,x)u_k(\xi)\,dx+\displaystyle\int_{\Gamma_k(\xi)}g(\xi,x)u_k(\xi)\,d\mu$\\
\begin{equation}\label{thm1-09}
\leq\,\displaystyle\frac{1}{k}\displaystyle\int_{\Omega_k(\xi)}|f(\xi,x)|(|u_k(\xi)|^2+k^2)\,dx+
\displaystyle\frac{1}{k}\displaystyle\int_{\Gamma_k(\xi)}|g(\xi,x)|(|u_k(\xi)|^2+k^2)\,d\mu.
\end{equation}
Letting $$\mathfrak{h}:=\frac{1}{c_0}\displaystyle\sum^N_{i=1}|\hat{a}_i|^2+M^{\star}+|\lambda|\in L^{^{\min\left\{\frac{r_1}{2},r_3\right\}}}(\Omega)^+$$
and $$\mathfrak{k}:=\min\{1,c^{\star}_1\}(|c^{\ast}_2|+2|\beta|)+c^{\star}_3|\rho|\in L^{^{\min\{s,\theta\}}}_{\mu}(\Gamma)^+,$$
we insert (\ref{thm1-06}), (\ref{thm1-07}), and (\ref{thm1-08}) into (\ref{thm1-04}), to deduce that\\[2ex]
$\mathcal{E}_{\mu}(u(\xi),u_k(\xi))\,\geq\,\displaystyle\frac{c_0}{4}\|\nabla u_k(\xi)\|^2_{_{2,\Omega_k(\xi)}}+
c^{\ast\star}\|u_k(\xi)\|^2_{_{D(\Lambda_{_{\Gamma}})}}$\\
\begin{equation}\label{thm1-10}
-\displaystyle\int_{\Omega_k(\xi)}\mathfrak{h}(|u_k(\xi)|^2+k^2)\,dx-\displaystyle\int_{\Gamma_k(\xi)}\mathfrak{k}(|u_k(\xi)|^2+k^2)\,d\mu,
\end{equation}
where $c^{\ast\star}:=c^{\ast}_1\min\{1,c^{\star}_1\}+c^{\star}_2$.
Substituting this into (\ref{thm1-03}) and recalling (\ref{thm1-09}) gives that\\[2ex]
$\displaystyle\int_{\Omega_k(\xi)}u_k(\xi)\frac{ du_k(\xi)}{d\xi}\,dx\,\leq\,-\displaystyle\frac{c_0}{4}\|\nabla u_k(\xi)\|^2_{_{2,\Omega_k(\xi)}}
-c^{\ast\star}\|u_k(\xi)\|^2_{_{D(\Lambda_{_{\Gamma}})}}$\\
\begin{equation}\label{thm1-11}
+\displaystyle\int_{\Omega_k(\xi)}\left(\frac{1}{k}|f(\xi,x)|+\mathfrak{h}\right)(|u_k(\xi)|^2+k^2)\,dx+
\displaystyle\int_{\Gamma_k(\xi)}\left(\frac{1}{k}|g(\xi,x)|+\mathfrak{k}\right)(|u_k(\xi)|^2+k^2)\,d\mu.
\end{equation}
From here, we insert (\ref{thm1-11}) into (\ref{thm1-02}) to get that\\[2ex]
$\min\left\{\displaystyle\frac{1}{2},\displaystyle\frac{c_0}{4},c^{\ast\star}\right\}\Upsilon^2_{\varsigma}(\psi u_k)
\,\leq\,\displaystyle\sup_{-\varsigma\leq t\leq0}\left(\displaystyle\frac{\psi(t)^2}{2}\|u_k(t)\|^2_{_{2,\Omega_k(t)}}\right)+
\frac{c_0}{4}\displaystyle\int^0_{-\varsigma}\psi(t)^2\|\nabla u_k(t)\|^2_{_{2,\Omega_k(t)}}\,dt\,+\,c^{\ast\star}
\displaystyle\int^0_{-\varsigma}\psi(t)^2\|u_k(t)\|^2_{_{D(\Lambda_{_{\Gamma}})}}\,dt$\\
$$\leq\,L_{\psi'}\displaystyle\int^0_{-\varsigma}\|u_k(t)\|^2_{_{2,\Omega_k(t)}}\,dt
+\displaystyle\int^0_{-\varsigma}\displaystyle\int_{\Omega_k(t)}\left(\frac{1}{k}|f(t,x)|+\mathfrak{h}\right)(\psi(t)^2|u_k(t)|^2+k^2)\,dx\,dt+\,\,\,\,\,$$
\begin{equation}\label{thm1-12}
\indent\indent\indent\indent
+\displaystyle\int^0_{-\varsigma}\displaystyle\int_{\Gamma_k(t)}\left(\frac{1}{k}|g(t,x)|+\mathfrak{k}\right)(\psi(t)^2|u_k(t)|^2+k^2)\,d\mu\,dt,
\end{equation}
where $L_{\psi'}:=\displaystyle\sup_{-\varsigma\leq t\leq0}|\psi'(t)|$. We now seek to estimate the last two terms in (\ref{thm1-12}). For this,
we choose $$\theta_1:=\frac{4\tilde{p}+2\tilde{\kappa}_1N-\tilde{p}\tilde{\kappa}_1N}{\tilde{p}\tilde{\kappa}_1N}>0\,\,\,\,\,
\textrm{and}\,\,\,\,\,\theta_2:=\frac{4\tilde{q}+2\tilde{\kappa}_2d-\tilde{q}\tilde{\kappa}_2N}{\tilde{q}\tilde{\kappa}_2N}>0,$$
where $\tilde{\tau}:=2\tau(\tau-1)^{-1}$ for $\tau\in(1,\infty)$. Then
$$\frac{1}{\tilde{\kappa}_1(1+\theta_1)}+\frac{N}{2\tilde{p}(1+\theta_1)}=\frac{1}{\tilde{\kappa}_2(1+\theta_2)}+\frac{d}{2\tilde{q}(1+\theta_2)}=\frac{N}{4}.$$
Put
\begin{equation}\label{thm1-13}
\hat{k}^2_0:=\|f\|^2_{_{L^{\kappa_1}(0,T;L^{p}(\Omega))}}+\|g\|^2_{_{L^{\kappa_2}(0,T;L^{q}_{\mu}(\Gamma))}},
\end{equation}
and apply H\"older's inequality to deduce that
$$\displaystyle\int^0_{-\varsigma}\displaystyle\int_{\Omega_k(t)}\frac{1}{k}|f(t,x)|\psi(t)^2|u_k(t)|^2\,dx\,dt
\,\leq\,\displaystyle\frac{1}{k}\|f\|_{_{L^{\kappa_1}(-\varsigma,0;L^{p}(\Omega))}}\|\psi u_k\|^2_{_{L^{\tilde{\kappa}_1}(-\varsigma,0;L^{\tilde{p}}(\Omega))}}$$
\begin{equation}\label{thm1-14}
\leq\,\displaystyle\frac{\hat{k}_0}{k}\left(\displaystyle\int^0_{-\varsigma}|\Omega_k(t)|^{\tilde{\kappa}_1/\tilde{p}}dt\right)^{^{\frac{2\kappa_1\theta_1}{1+\theta_1}}}
\|\psi u_k\|^2_{_{L^{\tilde{\kappa}_1(1+\theta_1)}(-\varsigma,0;L^{\tilde{p}(1+\theta_1)}(\Omega))}}.
\end{equation}
Now observe that $$\displaystyle\lim_{\varsigma\rightarrow0^+}\left\{\left(\displaystyle\int^0_{-\varsigma}|\Omega_k(t)|^{\tilde{\kappa}_1/\tilde{p}}dt\right)
^{^{\frac{2\kappa_1\theta_1}{1+\theta_1}}}\|\psi u_k\|^2_{_{L^{\tilde{\kappa}_1(1+\theta_1)}(-\varsigma,0;L^{\tilde{p}(1+\theta_1)}(\Omega))}}\right\}=0,$$
so using this together with the selection of $\theta_1$ and (\ref{Sobolev-parabolic}) entail the existence of a constant
$T_1=T_1(N,\kappa_1,\theta_1,p,|\Omega|)>0$ such that
\begin{equation}\label{thm1-15}
\displaystyle\int^0_{-\varsigma}\displaystyle\int_{\Omega_k(t)}\frac{1}{k}|f(t,x)|\psi(t)^2|u_k(t)|^2\,dx\,dt
\,\leq\,\displaystyle\frac{\hat{k}_0}{8k}\min\left\{\displaystyle\frac{1}{2},\displaystyle\frac{c_0}{4},c^{\ast\star}\right\}\Upsilon^2_{\varsigma}(\psi u_k)
\end{equation}
whenever $\varsigma\leq T_1$. Also, since $\mathfrak{h}\in L^{^{\min\left\{\frac{r_1}{2},r_3\right\}}}(\Omega)$ and
$$\mathfrak{r}:=2\min\{r_1/2,\,r_3\}\left(\min\{r_1/2,\,r_3\}-1\right)^{-1}<
2N(N-2)^{-1},$$ proceeding in the same way as in the derivation of (\ref{thm1-15}), we get the existence of a constant
$T_2=T_2(N,\min\{r_1/2,\,r_3\},\mathfrak{h},|\Omega|)>0$ fulfilling
\begin{equation}\label{thm1-16}
\displaystyle\int^0_{-\varsigma}\displaystyle\int_{\Omega_k(t)}\mathfrak{h}\psi(t)^2|u_k(t)|^2\,dx\,dt\,
\leq\,\|\mathfrak{h}\|_{_{\min\left\{\frac{r_1}{2},r_3\right\},\Omega}}\|\psi u_k\|^2_{_{L^{2}(-\varsigma,0;L^{\mathfrak{r}}(\Omega))}}
\,\leq\,\displaystyle\frac{1}{8}\min\left\{\displaystyle\frac{1}{2},\displaystyle\frac{c_0}{4},c^{\ast\star}\right\}\Upsilon^2_{\varsigma}(\psi u_k)
\end{equation}
for $\varsigma\leq T_2$. Moreover, recalling the selection of $\theta_2$, and using the fact that $\mathfrak{k}\in L^{^{\min\{s,\theta\}}}_{\mu}(\Gamma)$ with
$$\mathfrak{s}:=2\min\{s,\theta\}\left(\min\{s,\theta\}-1\right)^{-1}<2d(N-2)^{-1},$$ we proceed in the exact way as above to deduce the existence
of positive constants $T_3=T_3(N,d,\kappa_2,\theta_2,q,\mu(\Gamma))$ and $T_4=T_4(N,d,\min\{s,\theta\},\mathfrak{k},\mu(\Gamma))$ such that
\begin{equation}\label{thm1-17}
\displaystyle\int^0_{-\varsigma}\displaystyle\int_{\Gamma_k(t)}\frac{1}{k}|g(t,x)|\psi(t)^2|u_k(t)|^2\,d\mu\,dt
\,\leq\,\displaystyle\frac{\hat{k}_0}{8k}\min\left\{\displaystyle\frac{1}{2},\displaystyle\frac{c_0}{4},c^{\ast\star}\right\}\Upsilon^2_{\varsigma}(\psi u_k),
\end{equation}
if $\varsigma\leq T_3$, and
\begin{equation}\label{thm1-18}
\displaystyle\int^0_{-\varsigma}\displaystyle\int_{\Gamma_k(t)}\mathfrak{k}\psi(t)^2|u_k(t)|^2\,d\mu\,dt\,
\leq\,\|\mathfrak{k}\|_{_{\min\{s,\theta\},\Gamma}}\|\psi u_k\|^2_{_{L^{2}(-\varsigma,0;L^{\mathfrak{s}}_{\mu}(\Gamma))}}
\,\leq\,\displaystyle\frac{1}{8}\min\left\{\displaystyle\frac{1}{2},\displaystyle\frac{c_0}{4},c^{\ast\star}\right\}\Upsilon^2_{\varsigma}(\psi u_k),
\end{equation}
whenever $\varsigma\leq T_4$. Letting $T_0:=\min\{T_1,T_2,T_3,T_4\}$, and taking $\varsigma\in(0,T_0]$ and $k\geq\hat{k}_0$, we insert
(\ref{thm1-15}), (\ref{thm1-16}), (\ref{thm1-17}), and (\ref{thm1-18}) into (\ref{thm1-12}), to arrive at\\[2ex]
$\Upsilon^2_{\varsigma}(\psi u_k)\,\leq\,\displaystyle\frac{2}{\min\{1/2,c_0/4,c^{\ast\star}\}}\left\{
L_{\psi'}\displaystyle\int^0_{-\varsigma}\|u_k(t)\|^2_{_{2,\Omega_k(t)}}\,dt\right.+$\\
\begin{equation}\label{thm1-19}
+\left.k^2\displaystyle\int^0_{-\varsigma}\displaystyle\int_{\Omega_k(t)}\left(\frac{1}{k}|f(t,x)|+\mathfrak{h}\right)\,dx\,dt\,+\,
k^2\displaystyle\int^0_{-\varsigma}\displaystyle\int_{\Gamma_k(t)}\left(\frac{1}{k}|g(t,x)|+\mathfrak{k}\right)\,d\mu\,dt\right\}.
\end{equation}
Having this, we now produce estimates in the last two integral terms in (\ref{thm1-19}). In fact,
recalling that $k\geq\hat{k}_0$ and applying H\"older's inequality, we see that
\begin{equation}\label{thm1-20}
\displaystyle\int^0_{-\varsigma}\displaystyle\int_{\Omega_k(t)}\frac{1}{k}|f(t,x)|\,dx\,dt\,\leq\,\displaystyle\frac{1}{k}
\|f\|_{_{L^{\kappa_1}(-\varsigma,0;L^{p}(\Omega))}}\left(\displaystyle\int^0_{-\varsigma}|\Omega_k(t)|^{^{\frac{\kappa_1(p-1)}{p(\kappa_1-1)}}}dt\right)
^{^{\frac{\kappa_1-1}{\kappa_1}}}
\,\leq\,\left(\displaystyle\int^0_{-\varsigma}|\Omega_k(t)|^{^{\frac{\kappa_1(p-1)}{p(\kappa_1-1)}}}dt\right)^{^{\frac{\kappa_1-1}{\kappa_1}}}.
\end{equation}
Proceeding in the exact way, we find that
\begin{equation}\label{thm1-21}
\displaystyle\int^0_{-\varsigma}\displaystyle\int_{\Omega_k(t)}\mathfrak{h}\,dx\,dt\,\leq\,\|\mathfrak{h}\|_{_{\min\left\{\frac{r_1}{2},r_3\right\},\Omega}}
\displaystyle\int^0_{-\varsigma}|\Omega_k(t)|^{2/\mathfrak{r}}dt,
\end{equation}
and
\begin{equation}\label{thm1-22}
\displaystyle\int^0_{-\varsigma}\displaystyle\int_{\Gamma_k(t)}\frac{1}{k}|g(t,x)|\,d\mu\,dt\,\leq\,
\left(\displaystyle\int^0_{-\varsigma}\mu(\Gamma_k(t))^{^{\frac{\kappa_2(q-1)}{q(\kappa_2-1)}}}dt\right)
^{^{\frac{\kappa_2-1}{\kappa_2}}},
\end{equation}
and
\begin{equation}\label{thm1-23}
\displaystyle\int^0_{-\varsigma}\displaystyle\int_{\Gamma_k(t)}\mathfrak{k}\,d\mu\,dt\,\leq\,\|\mathfrak{k}\|_{_{\min\{s,\theta\},\Gamma}}
\displaystyle\int^0_{-\varsigma}\mu(\Gamma_k(t))^{2/\mathfrak{s}}dt.
\end{equation}
Put $$\mathfrak{c}_1:=\displaystyle\frac{2}{\min\{1/2,c_0/4,c^{\ast\star}\}}\,\,\,\textrm{and}\,\,\,
\mathfrak{c}_2:=\displaystyle\frac{2\max\left\{1,\|\mathfrak{h}\|_{_{\min\{r_1,r_3\},\Omega}},
\|\mathfrak{k}\|_{_{\min\{s,\theta\},\Gamma}}\right\}}{\min\{1/2,c_0/4,c^{\ast\star}\}},$$
and we choose
$$\theta_{1,1}:=\frac{4\tilde{p}+2\tilde{\kappa}_1N-\tilde{p}\tilde{\kappa}_1N}{\tilde{p}\tilde{\kappa}_1N},\,\,\,\,\,\,\,
r_{1,1}:=(1+\theta_{1,1})\tilde{\kappa}_1,\,\,\,\,\,\,\,s_{1,1}:=(1+\theta_{1,1})\tilde{p},$$
$$\theta_{1,2}:=\frac{2N+2\mathfrak{r}-N\mathfrak{r}}{N\mathfrak{r}},\,\,\,\,\,\,\,
r_{1,2}:=2(1+\theta_{1,2}),\,\,\,\,\,\,\,s_{1,2}:=(1+\theta_{1,2})\mathfrak{r},$$
$$\theta_{2,1}:=\frac{4\tilde{q}+2\tilde{\kappa}_2d-\tilde{q}\tilde{\kappa}_2N}{\tilde{q}\tilde{\kappa}_2N},\,\,\,\,\,\,\,
r_{2,1}:=(1+\theta_{2,1})\tilde{\kappa}_2,\,\,\,\,\,\,\,s_{2,1}:=(1+\theta_{2,1})\tilde{q},$$
$$\theta_{2,2}:=\frac{2d+2\mathfrak{s}-N\mathfrak{s}}{N\mathfrak{s}},\,\,\,\,\,\,\,
r_{2,2}:=2(1+\theta_{2,2}),\,\,\,\,\,\,\,s_{2,2}:=(1+\theta_{2,2})\mathfrak{s}.$$
A straightforward calculation shows that the parameters $\theta_{1,\xi},\,\theta_{2,\zeta},\,r_{1,\xi},\,r_{2,\zeta},\,s_{1,\xi},\,s_{2,\zeta}$
($\xi,\,\zeta\in\{1,2\}$) fulfill (\ref{Lem1-01}).
Then using all these notations, and substituting (\ref{thm1-20}), (\ref{thm1-21}), (\ref{thm1-22}), and (\ref{thm1-23}) into (\ref{thm1-19}), we arrive at\\[2ex]
$\Upsilon^2_{\varsigma}(\psi u)\,\leq\,\mathfrak{c}_1L_{\psi'}\displaystyle\int^0_{-\varsigma}\|u_k(t)\|^2_{_{2,\Omega_k(t)}}\,dt+$\\
\begin{equation}\label{thm1-24}
+\mathfrak{c}_2k^2\displaystyle\sum^{2}_{\xi=1}\left(\displaystyle\int^0_{-\varsigma}|\Omega_k(t)|
^{^{\frac{r_{1,\xi}}{s_{1,\xi}}}}\,dt\right)^{^{\frac{2(1+\theta_{1,\xi})}{r_{1,\xi}}}}
+\mathfrak{c}_2k^2\displaystyle\sum^{2}_{\zeta=1}\left(\displaystyle\int^0_{-\varsigma}\mu(\Gamma_k(t))
^{^{\frac{r_{2,\zeta}}{s_{2,\zeta}}}}\,dt\right)^{^{\frac{2(1+\theta_{2,\zeta})}{r_{2,\zeta}}}}.
\end{equation}
Next, given $\varepsilon\in(0,1/2)$ fixed, choose $$\psi_{\varepsilon}(t):=\left\{
\begin{array}{lcl}
\displaystyle\frac{t+\varsigma}{\varepsilon\varsigma},\,\,\,\,\,\,\,\,\,\,\,\,\,\,\,\,\,\,\textrm{if}\,\,t\in[-\varsigma,-(1-\varepsilon)\varsigma],\\[2ex]
\,\,\,\,1,\,\,\,\,\,\,\,\,\,\,\,\,\,\,\,\,\,\,\,\,\,\,\,\,\,\textrm{if}\,\,t\in[-(1-\varepsilon)\varsigma,0].\\
\end{array}
\right.$$
Clearly $\psi_{\varepsilon}\in H^1(-\varsigma,0)$ with $0\leq\psi_{\varepsilon}\leq1$,\, $\psi_{\varepsilon}(-\varsigma)=0$, and
$L_{\psi'_{\varepsilon}}:=\displaystyle\sup_{-\varsigma\leq t\leq0}|\psi'_{\varepsilon}(t)|\leq(\varepsilon\varsigma)^{-1}$. Moreover, we
obtain that $$\Upsilon^2_{_{(1-\varepsilon)\varsigma}}(u)\,\leq\,\Upsilon^2_{\varsigma}(\psi_{\varepsilon}u),$$
and thus letting $\gamma_0:=\max\{\mathfrak{c}_1,\mathfrak{c}_2\}$ and replacing $\psi=\psi_{\varepsilon}$ in (\ref{thm1-24}), we obtain that\\[2ex]
$\Upsilon^2_{_{(1-\varepsilon)\varsigma}}(u)\,\leq\,\displaystyle
\frac{\gamma_0}{\varepsilon\varsigma}\displaystyle\int^0_{-\varsigma}\|u_k(t)\|^2_{_{2,\Omega_k(t)}}\,dt+$\\
\begin{equation}\label{thm1-25}
+\gamma_0k^2\displaystyle\sum^{2}_{\xi=1}\left(\displaystyle\int^0_{-\varsigma}|\Omega_k(t)|
^{^{\frac{r_{1,\xi}}{s_{1,\xi}}}}\,dt\right)^{^{\frac{2(1+\theta_{1,\xi})}{r_{1,\xi}}}}
+\gamma_0k^2\displaystyle\sum^{2}_{\zeta=1}\left(\displaystyle\int^0_{-\varsigma}\mu(\Gamma_k(t))
^{^{\frac{r_{2,\zeta}}{s_{2,\zeta}}}}\,dt\right)^{^{\frac{2(1+\theta_{2,\zeta})}{r_{2,\zeta}}}}.
\end{equation}
Therefore, (\ref{thm1-25}) shows that (\ref{Lem1-02}) and all the conditions of Lemma \ref{Lem1} are fulfilled, and thus
applying Lemma \ref{Lem1} to both $u$ and $-u$ (the latter one being a weak solution of (\ref{1.06}) for $f,\,g$ replaced by
$-f,\,-g$, respectively), we establish (\ref{Thm1-01}) in the case $T\leq T_0$.\\
$\bullet$\,\, {\it \underline{Step 2}}. Assume now in addition that $u(-T)=u_0=0$. Then by choosing $\varsigma:=T$ and
and $\psi(t):=1$ for al $t\in[-T,0]$, equality (\ref{thm1-02}) becomes
\begin{equation}\label{thm1-26}
\displaystyle\frac{1}{2}\displaystyle\int_{\Omega_k(t)}|u_k(t)|^2\,dx
=\displaystyle\int^t_{-T}\left(\displaystyle\int_{\Omega_k(\xi)}u_k(\xi)\frac{ du_k(\xi)}{d\xi}\,dx\right)d\xi.
\end{equation}
From here, the calculations follow in the exact way as in the previous step, and in particular (\ref{thm1-12}) is transformed into\\[2ex]
$\min\left\{\displaystyle\frac{1}{2},\displaystyle\frac{c_0}{4},c^{\ast\star}\right\}\Upsilon^2_{_T}(u)
\,\leq\,\displaystyle\sup_{-T\leq t\leq0}\left(\displaystyle\frac{1}{2}\|u_k(t)\|^2_{_{2,\Omega_k(t)}}\right)+
\frac{c_0}{4}\displaystyle\int^0_{-T}\|\nabla u_k(t)\|^2_{_{2,\Omega_k(t)}}\,dt\,+\,c^{\ast\star}
\displaystyle\int^0_{-T}\|u_k(t)\|^2_{_{D(\Lambda_{_{\Gamma}})}}\,dt$\\
\begin{equation}\label{thm1-27}
\leq\,\displaystyle\int^0_{-T}\left(\|u_k(t)\|^2_{_{2,\Omega_k(t)}}
+\displaystyle\int_{\Omega_k(t)}\left(\frac{1}{k}|f(t,x)|+\mathfrak{h}\right)(|u_k(t)|^2+k^2)\,dx
+\displaystyle\int_{\Gamma_k(t)}\left(\frac{1}{k}|g(t,x)|+\mathfrak{k}\right)(|u_k(t)|^2+k^2)\,d\mu\right)\,dt.
\end{equation}
Continuing analogously as in the previous case, we arrive a the inequality\\[2ex]
$\Upsilon^2_{_T}(u)\,\leq\,\gamma_0\displaystyle\int^0_{-T}\|u_k(t)\|^2_{_{2,\Omega_k(t)}}\,dt+$\\
\begin{equation}\label{thm1-28}
+\gamma_0k^2\displaystyle\sum^{2}_{\xi=1}\left(\displaystyle\int^0_{-T}|\Omega_k(t)|
^{^{\frac{r_{1,\xi}}{s_{1,\xi}}}}\,dt\right)^{^{\frac{2(1+\theta_{1,\xi})}{r_{1,\xi}}}}
+\gamma_0k^2\displaystyle\sum^{2}_{\zeta=1}\left(\displaystyle\int^0_{-T}\mu(\Gamma_k(t))
^{^{\frac{r_{2,\zeta}}{s_{2,\zeta}}}}\,dt\right)^{^{\frac{2(1+\theta_{2,\zeta})}{r_{2,\zeta}}}},
\end{equation}
where all the constants and parameters appearing in (\ref{thm1-28}) are defined as in the previous step. Consequently,
(\ref{Lem1-04}) is valid for this case, and another application of Lemma \ref{Lem1} gives (\ref{Thm1-02}).
This completes the proof of Theorem \ref{Thm-Partial-Bounded} in the case when the weak solution
$u\in C(0,T;L^2(\Omega))\cap L^2(0,T;\mathcal{W}_2(\Omega;\Gamma))$ of (\ref{1.06}) is also a
classical solution of (\ref{1.06}), and for $T\leq T_0$.\\
$\bullet$\,\, {\it \underline{Step 3}}. We now prove the general case. Since the proof runs in a similar way as in
\cite[proof of Proposition 3.1]{NITTKA2014}, we will only sketch the main ingredients of the proof. Let
$u\in C(0,T;L^2(\Omega))\cap L^2(0,T;\mathcal{W}_2(\Omega;\Gamma))$ be a weak solution of problem (\ref{1.06}). Since $D(A_{\mu})$ is dense
in $L^2(\Omega)$, we pick $\{u_{0,n}\}\subseteq D(A^2_{\mu})$ such that $u_{0,n}\stackrel{n\rightarrow\infty}{\longrightarrow}u_0$ in $L^2(\Omega)$.
We also take sequences $\{f_n\}\subseteq C^2([0,T];L^{\infty}(\Omega))$ and $\{g_n\}\subseteq C^2([0,T];L^{\infty}_{\mu}(\Gamma))$
fulfilling $f_{n}\stackrel{n\rightarrow\infty}{\longrightarrow}f$ in $L^{\kappa_1}(0,T;L^p(\Omega))$ and
$g_{n}\stackrel{n\rightarrow\infty}{\longrightarrow}g$ in $L^{\kappa_2}(0,T;L^q_{\mu}(\Gamma))$, and $f_n(0,x)=g_n(0,x)=0$ for every $n\in\mathbb{N\!}\,$.
Using $f_n,\,g_n$ as the data functions in problem (\ref{1.06}), from an application of Theorem \ref{reg-sol-class}, Theorem \ref{exist-uniq-weak-soln},
and Corollary \ref{coincidence}, we get that problem (\ref{1.06}) (with data $f_n,\,g_n$) admits a unique classical solution
$u_n$, and $u_{n}\stackrel{n\rightarrow\infty}{\longrightarrow}u$ in $C([0,T];L^2(\Omega))\cap L^2(0,T;\mathcal{W}_2(\Omega;\Gamma))$.
Then, given $T>0$ arbitrary, put $T'_0:=\min\{T_0,T\}>0$, for $T_0>0$ the parameter given in the previous two steps, and let $I'\subseteq[T'_0/2,T'_0]$ be an interval.
Then the inequality (\ref{Thm1-01}) is valid (over $I'$) for the function $u_n$ (with data $f_n,\,g_n$), and moreover an application of (\ref{Thm1-01}) to
the function $u_n-u_m$ ($n,\,m\in\mathbb{N\!}\,$) shows that $\{u_n\}$ is a Cauchy sequence in $L^{\infty}(I';L^{\infty}(\Omega))$. Thus
$u_{n}\stackrel{n\rightarrow\infty}{\longrightarrow}u$ over $L^{\infty}(I';L^{\infty}(\Omega))$. Therefore, passing to the limit and
using the fact that $[T/2,T]\subseteq\displaystyle\bigcup^m_{j=1}I'_i$ (for some $m\in\mathbb{N\!}\,$) with $\ell(I'_i)\leq L'_0/2$, we are lead into the fulfillment
of the inequality (\ref{Thm1-01}). If in addition $u_0=0$, then the same procedure is repeated over the inequality (\ref{Thm1-02}), and this
completes the proof of the theorem.
\end{proof}

\indent Now we close this section by providing a priori estimates depending in the initial condition of problem (\ref{1.06}). We begin
with an estimate global in time. In this case, since the initial condition is only assumed to be bounded in $\Omega$, if $u_0\neq0$,
then it is still an open problem to obtain global $L^{\infty}$-estimates in the sense that in this case weak solutions to equation (\ref{1.06}) are only known to be bounded
over $\Omega$ (in fact, they are globally bounded whenever $t>0$, but $u_0$ may fail to be bounded unless specified otherwise).
To be more precise, we have the following result whose proof runs as in \cite[proof of Theorem 3.2]{NITTKA2014}.

\begin{theorem}\label{infitiny-u_0-infinity}
Suppose that conditions {\bf (A3)'} and {\bf (B4)'} hold.
Given $T>0$ arbitrary fixed, let $\kappa_1,\,\kappa_2,\,p,\,q\in[2,\infty)$ be such that
\begin{equation}\label{Thm2-00}
\displaystyle\frac{1}{\kappa_1}+\displaystyle\frac{N}{2p}<1\,\,\,\,\,\,\,\,\textrm{and}
\,\,\,\,\,\,\,\,\displaystyle\frac{1}{\kappa_2}+\displaystyle\frac{d}{2q(d+2-N)}<\frac{1}{2}.
\end{equation}
Given $u_0\in L^{\infty}(\Omega)$, let $f\in L^{\kappa_1}(0,T;L^p(\Omega))$ and $g\in L^{\kappa_2}(0,T;L^q_{\mu}(\Gamma))$, and
let $u\in C(0,T;L^2(\Omega))\cap L^2(0,T;\mathcal{W}_2(\Omega;\Gamma))$ be a weak solution of problem (\ref{1.06}).
Then there exists a constant $C^{\ast}_2=C^{\ast}_2(T,N,d,\kappa_1,\kappa_2,p,q,|\Omega|,\mu(\Gamma))>0$ such that
\begin{equation}\label{Thm2-01}
\|u\|_{_{L^{\infty}(0,T;L^{\infty}(\Omega))}}
\,\leq\,C^{\ast}_2\left(\|u_0\|_{_{\infty,\Omega}}+\|f\|_{_{L^{\kappa_1}(0,T;L^{p}(\Omega))}}+
\|g\|_{_{L^{\kappa_2}(0,T;L^{q}_{\mu}(\Gamma))}}\right).
\end{equation}
\end{theorem}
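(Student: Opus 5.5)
The plan is to split the solution by linearity and treat the initial datum and the forcing separately. Let $v(t):=T_{\mu}(t)u_0$ be the mild solution of the homogeneous problem (\ref{parabolicpbomog}) with initial value $u_0$. Let $w$ be the weak solution of (\ref{1.06}) with data $f,\,g$ and zero initial value, which exists and is unique by Theorem \ref{exist-uniq-weak-soln}. By uniqueness in the class of mild solutions (Corollary \ref{coincidence}), $u=v+w$. The contribution of $v$ is controlled directly by (\ref{infinity-semigroup}):
$$\|v(t)\|_{_{\infty,\Omega}}\leq Me^{\omega T}\|u_0\|_{_{\infty,\Omega}}\quad\textrm{for all } t\in[0,T].$$
It therefore remains to bound $w$ in $L^{\infty}(0,T;L^{\infty}(\Omega))$ by the norms of $f$ and $g$ alone.

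For $w$, I will use the global estimate (\ref{Thm1-02}) of Theorem \ref{Thm-Partial-Bounded}, which applies because $w(0)=0$. It gives
$$\|w\|_{_{L^{\infty}(0,T;L^{\infty}(\Omega))}}\leq C^{\ast}_1\left(\|w\|_{_{L^{2}(0,T;L^{2}(\Omega))}}+\|f\|_{_{L^{\kappa_1}(0,T;L^{p}(\Omega))}}+\|g\|_{_{L^{\kappa_2}(0,T;L^{q}_{\mu}(\Gamma))}}\right).$$
The term $\|w\|_{L^2(0,T;L^2(\Omega))}$ must now be removed. Proposition \ref{apriori estimates}, extended to weak solutions via Remark \ref{approx-classical}, with $w(0)=0$ yields
$$\|w\|^2_{_{L^2(0,T;L^2(\Omega))}}\leq c\left(\|f\|^2_{_{L^2(0,T;L^2(\Omega))}}+\|g\|^2_{_{L^2(0,T;L^2_{\mu}(\Gamma))}}\right).$$
Since $\kappa_1,\kappa_2,p,q\geq2$ and both $\Omega$ and $\mu(\Gamma)$ are finite, H\"older's inequality in $x$ and in $t$ bounds these $L^2$ norms by $C(T,|\Omega|,\mu(\Gamma))$ times the $L^{\kappa_1}(0,T;L^p(\Omega))$ and $L^{\kappa_2}(0,T;L^q_{\mu}(\Gamma))$ norms. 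Combining the three estimates gives (\ref{Thm2-01}).

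The main obstacle is that Proposition \ref{apriori estimates} and Theorem \ref{Thm-Partial-Bounded} are proved first for classical solutions on short intervals $T\leq T_0$, followed by approximation. I must make sure that the constant in (\ref{Thm1-02}) holds on the full interval $[0,T]$ for zero initial data. If it is only available on $[0,T_0]$, I will iterate over subintervals $[jT_0,(j+1)T_0]$. On each later subinterval the initial value is no longer zero, so I will split again into a semigroup part and a zero-initial-data part. The semigroup part is controlled by (\ref{infinity-semigroup}) using the $L^{\infty}$ bound obtained at the previous step. Since only finitely many ($\sim T/T_0$) steps are needed, the constants multiply to a finite $C^{\ast}_2$.

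A second point to check is that the superposition $u=v+w$ is legitimate at the level of weak solutions. For $u_0\in L^{\infty}(\Omega)\subseteq L^2(\Omega)$, the trajectory $v$ is a mild solution with $f=g=0$ (subsection \ref{subsec04-01}), and the coincidence of weak and mild solutions then justifies the splitting.
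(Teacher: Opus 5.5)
Your proposal is correct and follows the paper's proof: split $u=T_{\mu}(t)u_0+\tilde u$, bound the semigroup part by (\ref{infinity-semigroup}) and the zero-initial-data part by the global estimate (\ref{Thm1-02}), then absorb $\|\tilde u\|_{L^2(0,T;L^2(\Omega))}$ using Proposition \ref{apriori estimates}, Remark \ref{approx-classical} and H\"older. Your iteration over subintervals of length $T_0$ is only a safeguard, since the paper takes (\ref{Thm1-02}) as valid on all of $[0,T]$. Using $w(0)=0$ in the energy estimate also correctly avoids the stray $\|u_0\|^2_{\infty,\Omega}$ term that appears in the paper's (\ref{Thm2-03}).
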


\begin{proof}
If $u\in C(0,T;L^2(\Omega))\cap L^2(0,T;\mathcal{W}_2(\Omega;\Gamma))$ solves (\ref{1.06}), then by linearity one can write $u(t):=T_{\mu}(t)u_0+\tilde{u}(t)$,
where $T_{\mu}(t)u_0$ denotes the weak solution of the homogeneous problem (\ref{1.06}) for $f=g=0$ (recall Corollary \ref{coincidence}),
and $\tilde{u}(t)$ solves (\ref{1.06}) for $u_0=0$. Then an application of (\ref{infinity-semigroup}) together with (\ref{Thm1-02}) entail that\\[2ex]
$\|u\|^2_{_{L^{\infty}(0,T;L^{\infty}(\Omega))}}\,\leq\,2\left(\displaystyle\sup_{t\in[0,T]}\|T_{\mu}(t)u_0\|^2_{_{\infty,\Omega}}
+\|\tilde{u}\|^2_{_{L^{\infty}(0,T;L^{\infty}(\Omega))}}\right)$\\
\begin{equation}\label{Thm2-02}
\leq\,2\left\{M^2e^{2|\omega|T}\|u_0\|^2_{_{\infty,\Omega}}+C^{\ast}_1\left(\|\tilde{u}\|_{_{L^{2}(0,T;L^{2}(\Omega))}}+
\|f\|_{_{L^{\kappa_1}(0,T;L^{p}(\Omega))}}+\|g\|_{_{L^{\kappa_2}(0,T;L^{q}_{\mu}(\Gamma))}}\right)\right\}.
\end{equation}
Furthermore, an application of Proposition \ref{apriori estimates} with Remark \ref{approx-classical} and H\"older's inequality gives that
\begin{equation}\label{Thm2-03}
\|\tilde{u}\|_{_{L^{2}(0,T;L^{2}(\Omega))}}
\leq\,cT\left(\|u_0\|^2_{_{\infty,\Omega}}+
\|f\|_{_{L^{\kappa_1}(0,T;L^{p}(\Omega))}}+\|g\|_{_{L^{\kappa_2}(0,T;L^{q}_{\mu}(\Gamma))}}\right),
\end{equation}
for some constant $c>0$.
Combining (\ref{Thm2-02}) and (\ref{Thm2-03}) yield the inequality (\ref{Thm2-01}), as asserted.
\end{proof}

\indent Finally, we show that if problem (\ref{1.06}) is local in time, then we obtain global boundedness in space.

\begin{theorem}\label{infitiny-u_0-L2}
Suppose that conditions {\bf (A3)'} and {\bf (B4)'} hold.
Given $T_2>T_1>0$ arbitrary fixed, let $\kappa_1,\,\kappa_2,\,p,\,q\in[2,\infty)$ be as in Theorem \ref{infitiny-u_0-infinity}.
Given $u_0\in L^{2}(\Omega)$, let $f\in L^{\kappa_1}(0,T;L^p(\Omega))$ and $g\in L^{\kappa_2}(0,T;L^q_{\mu}(\Gamma))$, and
let $u\in C(0,T;L^2(\Omega))\cap L^2(0,T;\mathcal{W}_2(\Omega;\Gamma))$ be a weak solution of problem (\ref{1.06}).
Then there exists a constant $C^{\ast}_3=C^{\ast}_3(T_1,T_2,N,d,\kappa_1,\kappa_2,p,q,|\Omega|,\mu(\Gamma))>0$ such that
\begin{equation}\label{Thm3-01}
|\|\mathbf{u}\||_{_{L^{\infty}(T_1,T_2;\mathbb{X\!}^{\,\infty}(\Omega;\Gamma))}}
\,\leq\,C^{\ast}_3\left(\|u_0\|_{_{2,\Omega}}+\|f\|_{_{L^{\kappa_1}(0,T;L^{p}(\Omega))}}+
\|g\|_{_{L^{\kappa_2}(0,T;L^{q}_{\mu}(\Gamma))}}\right).
\end{equation}
\end{theorem}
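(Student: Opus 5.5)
The plan is to reduce Theorem \ref{infitiny-u_0-L2} to the local-in-time estimate (\ref{Thm1-01}) of Theorem \ref{Thm-Partial-Bounded}, combined with the energy estimate of Proposition \ref{apriori estimates} (valid for weak solutions by Remark \ref{approx-classical}).

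First, using a time shift, I treat the solution on $[0,T_2]$ (with $T:=T_2$). Proposition \ref{apriori estimates} and Remark \ref{approx-classical} give
$$\|u\|^2_{_{L^2(0,T_2;L^2(\Omega))}}\,\leq\,T_2\sup_{t\in[0,T_2]}\|u(t)\|^2_{_{2,\Omega}}\,\leq\,cT_2\left(\|u_0\|^2_{_{2,\Omega}}+\|f\|^2_{_{L^2(0,T_2;L^2(\Omega))}}+\|g\|^2_{_{L^2(0,T_2;L^2_{\mu}(\Gamma))}}\right).$$
Since $\kappa_1,\kappa_2,p,q\geq2$ and $\Omega$, $\Gamma$ and $(0,T_2)$ have finite measure, H\"older's inequality bounds the $L^2(0,T_2;L^2)$-norms of $f$ and $g$ by constants (depending on $T_2,|\Omega|,\mu(\Gamma)$) times their $L^{\kappa_1}(0,T;L^p(\Omega))$- and $L^{\kappa_2}(0,T;L^q_{\mu}(\Gamma))$-norms. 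Thus $\|u\|_{_{L^2(0,T_2;L^2(\Omega))}}$ is controlled by the right-hand side of (\ref{Thm3-01}).

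Next I cover $[T_1,T_2]$ by finitely many windows on which (\ref{Thm1-01}) applies. I set $\tau:=\min\{T_1,T_2-T_1\}$ (or simply $\tau:=T_1$). For each $t_j\in[T_1,T_2]$ from a finite grid of mesh $\leq\tau/2$, I consider $u$ restricted to $[t_j-\tau,t_j]\subseteq[0,T_2]$. This restriction is again a weak solution of (\ref{1.06}) on that interval, with initial datum $u(t_j-\tau)\in L^2(\Omega)$ (by uniqueness and Theorem \ref{exist-uniq-weak-soln}/Corollary \ref{coincidence}, restriction and translation preserve weak solutions). Applying (\ref{Thm1-01}) on an interval of length $\tau$ yields an $L^\infty$ bound for $\mathbf{u}$ on $[t_j-\tau/2,t_j]$ in terms of $\|u\|_{_{L^2(t_j-\tau,t_j;L^2(\Omega))}}$ plus the data norms on that window, which are bounded by the global data norms. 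The union of the windows $[t_j-\tau/2,t_j]$ covers $[T_1,T_2]$, and only finitely many are needed, their number depending on $T_1,T_2$. Combining this with the first step gives (\ref{Thm3-01}), with $C^{\ast}_3$ depending on $T_1,T_2$ and the listed parameters.

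The main obstacle is the legitimacy of applying Theorem \ref{Thm-Partial-Bounded} to a time-shifted restriction. This requires checking that the constant $C^{\ast}_1$ depends only on the window length $\tau$ and not on its position, which holds because the coefficients are time-independent. It also requires that the hypotheses {\bf (A3)'} and {\bf (B4)'} are invariant under the shift, which is immediate. A secondary point is the boundary part of $\mathbf{u}$: (\ref{Thm1-01}) already controls the $\mathbb{X}^{\infty}$-norm, that is, the traces on $\Gamma$ as well, so no additional argument is needed there.
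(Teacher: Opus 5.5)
Your proposal is correct and follows the paper's proof. In both, $\|u\|_{L^2(0,T;L^2(\Omega))}$ is bounded by the data via Proposition~\ref{apriori estimates}, Remark~\ref{approx-classical} and H\"older's inequality, and $[T_1,T_2]$ is covered by finitely many intervals on which (\ref{Thm1-01}) of Theorem~\ref{Thm-Partial-Bounded} applies.

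The only difference is cosmetic. The paper uses intervals $[T^{\ast}_j/2,T^{\ast}_j]$ and applies Theorem~\ref{Thm-Partial-Bounded} on $[0,T^{\ast}_j]$, so no time translation is needed: restriction to $[0,T^{\ast}_j]$ follows directly from Definition~\ref{weak-sol-parabolic}, since test functions vanishing at $T^{\ast}_j$ extend by zero. Your translated windows of fixed length instead need the restriction/uniqueness argument you sketch via mild solutions, and that argument is valid.
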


\begin{proof}
If $u\in C(0,T;L^2(\Omega))\cap L^2(0,T;\mathcal{W}_2(\Omega;\Gamma))$ is a weak solution of (\ref{1.06}), then by using the fact that $[T_1,T_2]\subseteq\displaystyle\bigcup^m_{j=1}[T^{\ast}_j/2,T^{\ast}_j]$
with $T^{\ast}_1<T^{\ast}_2<\ldots<T^{\ast}_m$ (for some $m\in\mathbb{N\!}\,$), and $T^{\ast}_1=T_1$, \,$T^{\ast}_m=T_2$,
one can the full strength of Theorem \ref{Thm-Partial-Bounded} to get that
\begin{equation}\label{Thm3-02}
|\|\mathbf{u}\||_{_{L^{\infty}(T_1,T_2;\mathbb{X\!}^{\,\infty}(\Omega;\Gamma))}}
\,\leq\,C^{\ast}_1\left(\|u\|_{_{L^{2}(0,T;L^{2}(\Omega))}}+\|f\|_{_{L^{\kappa_1}(0,T;L^{p}(\Omega))}}+
\|g\|_{_{L^{\kappa_2}(0,T;L^{q}_{\mu}(\Gamma))}}\right).
\end{equation}
Moreover, an application of Proposition \ref{apriori estimates} with Remark \ref{approx-classical} and H\"older's inequality gives that
\begin{equation}\label{Thm3-03}
\|u\|_{_{L^{2}(0,T;L^{2}(\Omega))}}
\leq\,c'\left(\|u_0\|^2_{_{2,\Omega}}+
\|f\|_{_{L^{\kappa_1}(0,T;L^{p}(\Omega))}}+\|g\|_{_{L^{\kappa_2}(0,T;L^{q}_{\mu}(\Gamma))}}\right).
\end{equation}
for some constant $c'>0$.
Combining (\ref{Thm3-02}) and (\ref{Thm3-03}) gives (\ref{Thm3-01}), as desired.
\end{proof}

\section{Examples: domains}\label{sec05}

\indent In the following section we provide multiple examples of bounded connected non-smooth and irregular domains. Most of the domains
will have the extension property, but at the end we will briefly discuss the situation when the domain fails to be an extension domain.

\subsection{$(\epsilon,\delta)$-domains}\label{subsec5-01}

\begin{definition}\label{ep-del-dom}
A domain $\Omega\subseteq\mathbb{R\!}^N$ is said to be an\, \textbf{($\mathbf{\epsilon,\,\delta}$)}-\textbf{domain},
if there exists $\delta\in (0,+\infty]$ and there exists $\epsilon\in (0,1]$, such that for each two points $x,\,y\in\Omega$
with $|x-y|\leq\delta$, there exists a continuous rectifiable curve $\gamma :[0,t]\rightarrow\Omega$ such that $\gamma(0)=x$ and
$\gamma(t)=y$, with the following properties:
\begin{enumerate}
\item[(a)]\,\,\,\,$\ell(\{\gamma\})\leq\frac{1}{\epsilon}|x-y|$.
\item[(b)]\,\,\, $\textrm{dist}(z,\partial\Omega)\geq\,\epsilon\,\min
\{|x-z|,|y-z|\},\,\,\,\textrm{for all}\,\,z\in\{\gamma\}$.
\end{enumerate}
Also, an $(\epsilon,\,\infty)$-domain is called an \textbf{uniform domain}.
\end{definition}

\indent For this class of domains, it was established by Jones in a well-known result (see \cite[Theorem 1]{JON}) that
$(\epsilon,\delta)$-domains are extension domains for all $p\in[1,\infty]$. Since we are assuming that the domains are connected, then again
in \cite{JON}, it was shown that extension domains are equivalent to $(\epsilon,\delta)$-domains when $N=2$. For $N>2$ however, there exist
extension domains who fail to be $(\epsilon,\delta)$-domains (see for instance \cite{JON80,YANG}).
Furthermore, we recall that if $\Omega$ is a bounded $(\epsilon,\delta)$-domain whose boundary is a
$d$-set with respect to a measure $\mu$, then it was obtained in \cite{JO-WAL} that this condition is equivalent to say that
the $d$-dimensional Hausdorff measure $\mathcal{H}^d$ is a $d$-Ahlfors measure on $\Gamma$.\\
\indent Now we present several more concrete examples of $(\epsilon,\delta)$-domains.

\begin{example}\label{Ex5.01}
If $\Omega\subseteq\mathbb{R\!}^N$ is a bounded Lipschitz domain,
then it is well known that $\Omega$ is an extension domain, and the classical surface measure
$\mathcal{H}^{N-1}$ is an upper $(N-1)$-Ahlfors measure on $\Gamma$.
\end{example}

\begin{example}\label{Ex5.02}
Let $\Omega\subseteq\mathbb{R\!}^{\,2}$ be the classical Koch snowflake domain (see the image below).
\begin{center}
\begin{tikzpicture}
\shadedraw[shading=color wheel]
[l-system={rule set={F -> F-F++F-F}, step=1.5pt, angle=60,
   axiom=F++F++F, order=4}] lindenmayer system -- cycle;
\end{tikzpicture}
\end{center}
Then it is well known that $\Omega$ is an extension domain, and the $d$-dimensional Hausdorff measure
$\mathcal{H}^{d}$ restricted to $\Gamma:=\partial\Omega$ is an upper $d$-Ahlfors measure on $\Gamma$, for $d:=\log(4)/\log(3)$ (e.g. \cite{WAL}).
\end{example}

\begin{example}\label{Ex5.03}
Motivated by some recent results in \cite{FERR-VELEZ18-1}, given $m >2$ such that $m \not\equiv 0 \,(\bmod\,\, 3)$, we define the \textbf{Koch $\mathbf{m}$-Crystal} $\Omega_m\subseteq\mathbb{R\!}^{\,3}$ as the interior of the closed set enclosed by four congruent Koch $m$-Surfaces,
each pair of which intersect at precisely one edge (see Figure 1, and refer to \cite{FERR-VELEZ18-1} for more details of this construction).
We then define $\Gamma_{m}$ as the boundary of $\Omega_{m}$.
\begin{figure}[h]
    \centering
    \includegraphics[scale=0.20]{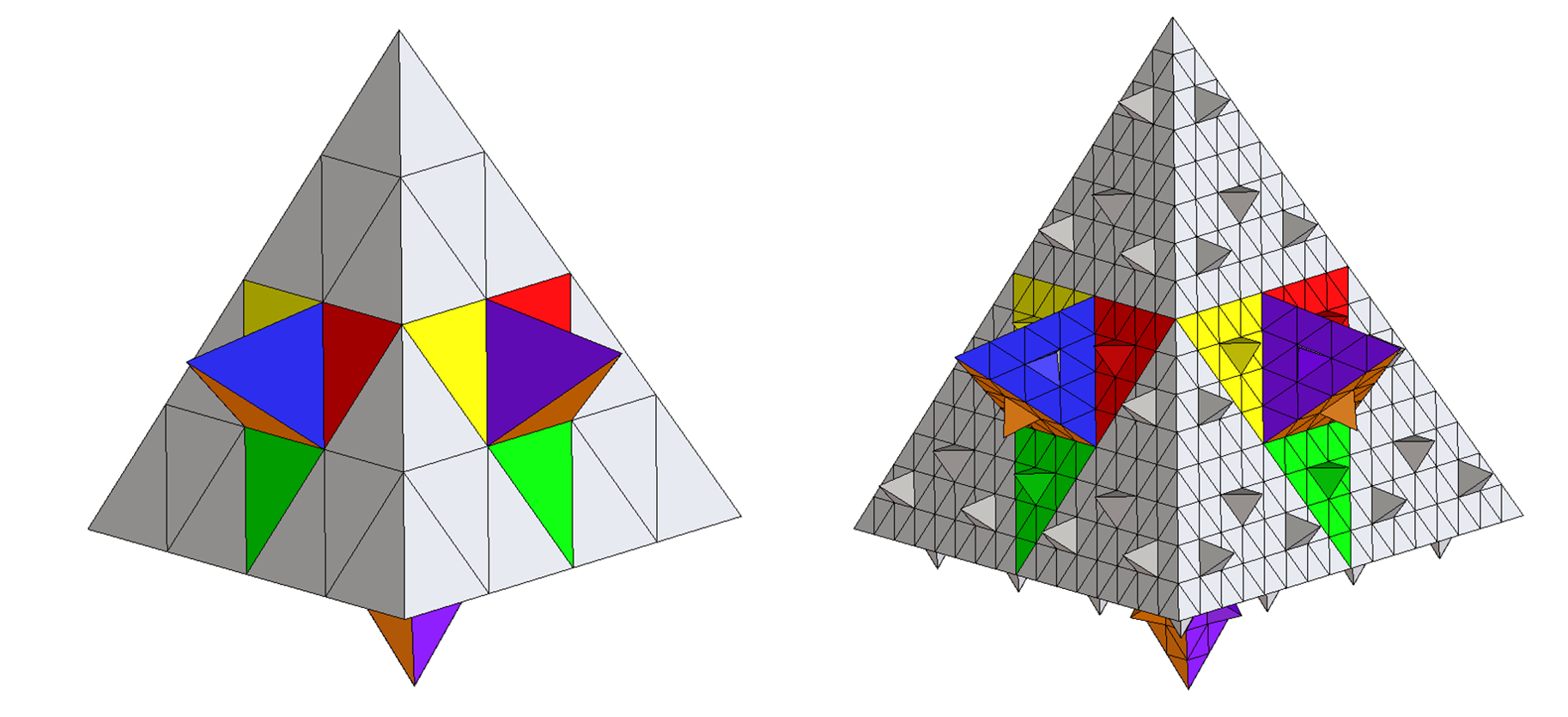}

    \includegraphics[scale=0.10]{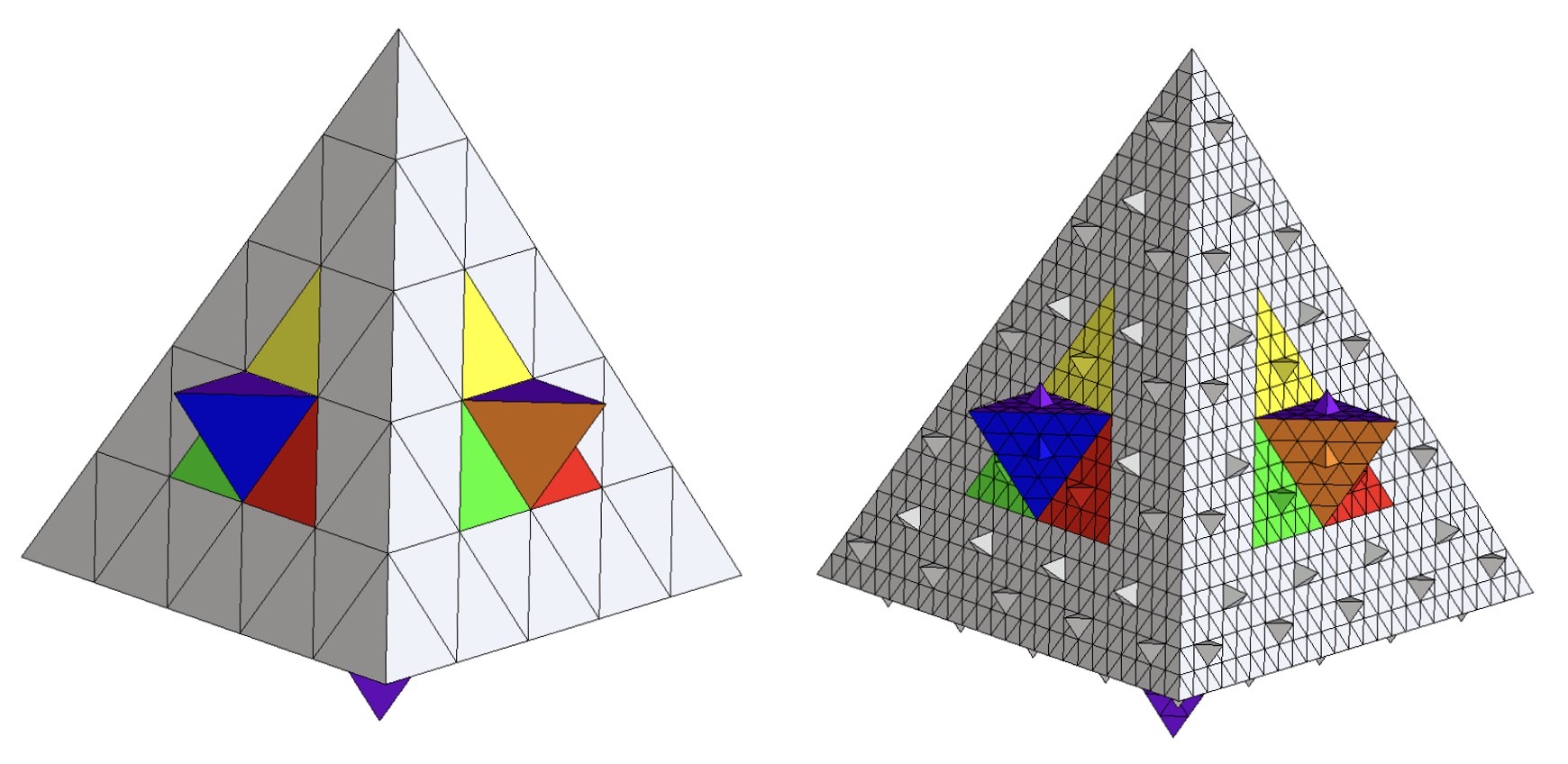}
    \caption{Koch 4-Crystal and 5-Crystal,  First and Second Iterations}
    \label{fig:my_label}
\end{figure}
Then it is shown in \cite{FERR-VELEZ18-1} that $\Omega_m$ is an extension domain, and the $d$-dimensional Hausdorff measure
$\mathcal{H}^{d}$ restricted to $\Gamma:=\partial\Omega$ is an upper $d$-Ahlfors measure on $\Gamma$, for $d:=\log (m^2 + 2) / \log(m)$.
\end{example}

\subsection{$\delta$-Reifenberg flat domains}\label{subsec5-02}

\indent In this subsection we present a class of rough domains who, under certain conditions, fulfill the required properties required for the
validity of all the results in previous sections. We begin with the following definition, taken from \cite{KENIG-TORO03,LEM-MIL-SP}.

\begin{definition}
A domain $\Omega\subseteq\mathbb{R\!}^N$ is said to be {\bf $(\delta,\rho_0)$-Reifenberg flat}, if the following conditions hold:
\begin{enumerate}
\item[(a)]\,\, For each $x_0\in\Gamma:=\partial\Omega$ and $\rho\in(0,\rho_0)$, there exits an hyperplane $\Upsilon$ containing $x_0$,  such that
$$d_{_{\mathcal{H}}}\left(\Gamma\cap B(x_0,\rho),\,\Upsilon\cap B(x_0,\rho)\right)\,\leq\,\delta\rho,$$
where $d_{_{\mathcal{H}}}(\cdot,\cdot)$ denotes the Hausdorff distance function.
\item[(b)]\,\, The set $\left\{x\in\Omega\cap B(x_0,\rho)\mid d(x,\Gamma)\geq 2\delta\rho\right\}$ is contained in one component of $\mathbb{R\!}^N\setminus\Upsilon$.
\end{enumerate}
Furthermore, if in addition the boundary $\Gamma$ is a $(N-1)$-set wit respect to the $(N-1)$-dimensional Hausdorff measure, then $\Omega$
is called a {\bf $(\rho_0,\delta)$-Reifenberg flat chord arc domain}.
\end{definition}

\indent The following key result was established by Lemenant, Milakis, and Spinolo \cite{LEM-MIL-SP}.

\begin{theorem}\label{Reifenberg-extension}\,(see \cite{LEM-MIL-SP})\, If $\Omega\subseteq\mathbb{R\!}^N$ is a $(\delta,\rho_0)$-Reifenberg flat domain for
$\delta\leq1/600$ and $\rho_0>0$, then $\Omega$ is an $(\epsilon,\,\delta')$-domain for $\epsilon:=1/450$ and $\delta':=\rho_0/7$.
\end{theorem}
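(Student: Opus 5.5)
The plan is to verify the two conditions in Definition \ref{ep-del-dom} directly, with $\epsilon:=1/450$ and $\delta':=\rho_0/7$, using only the local flatness of $\Gamma$ at scales below $\rho_0$. The approach is to build, for two points $x,y\in\Omega$ with $|x-y|\le\delta'$, an explicit ``cigar'' curve. First I will establish a corkscrew-type property: for every $x_0\in\Gamma$ and $\rho<\rho_0$, the plane $\Upsilon=\Upsilon(x_0,\rho)$ from condition (a) has a unit normal $\nu$ such that $x_0+\frac{\rho}{2}\nu\in\Omega$ and $\mathrm{dist}(x_0+\frac{\rho}{2}\nu,\Gamma)\ge(\frac12-\delta)\rho$. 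This follows because (a) forces $\Gamma\cap B(x_0,\rho)$ into the slab of width $\delta\rho$ around $\Upsilon$. Condition (b), together with the fact that $x_0\in\partial\Omega$, then determines on which side $\Omega$ lies.

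The second step is a two-scale comparison lemma. For $\rho/2\le\rho'\le\rho<\rho_0$, the planes $\Upsilon(x_0,\rho)$ and $\Upsilon(x_0,\rho')$ make an angle of size $O(\delta)$, and their ``interior'' normals agree. Similarly, for nearby base points $x_0,x_1$ with $|x_0-x_1|\le\rho/4$, the planes at scale $\rho$ are close. This is the standard Reifenberg tilting estimate obtained from the Hausdorff-distance bound, and $\delta\le 1/600$ gives enough room.

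Third comes the construction of the curve. Given $x,y$ with $r:=|x-y|\le\delta'$, I distinguish two cases. If $\mathrm{dist}(x,\Gamma)$ and $\mathrm{dist}(y,\Gamma)$ are both $\gtrsim r$, the segment $[x,y]$ (or a slight bend) works. Otherwise, let $x_0,y_0\in\Gamma$ be nearest points. From $x$ I move along the interior normal at dyadic scales $\rho_j=2^j\,\mathrm{dist}(x,\Gamma)$, jumping from one corkscrew point to the next, until the scale reaches about $7r<\rho_0$. I do the same from $y$. The two top corkscrew points are joined by a segment that stays at distance $\gtrsim r$ from $\Gamma$, by the comparison lemma at scale $\sim 7r$. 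The total length is a geometric sum, bounded by $C r$, which gives (a). Along the $j$-th piece, points $z$ satisfy $\mathrm{dist}(z,\Gamma)\gtrsim\rho_j\gtrsim\min\{|x-z|,|y-z|\}$, which gives (b). The factor $7$ in $\delta'$ ensures that all scales used stay below $\rho_0$.

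The main obstacle is the bookkeeping of constants, namely showing that the accumulated tilting errors stay bounded by a small multiple of $\delta$ so that the explicit values $1/450$ and $1/7$ come out. Since this is a quoted result of \cite{LEM-MIL-SP}, in the paper I would simply cite it, and the sketch above indicates the mechanism.
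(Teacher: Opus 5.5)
Your proposal matches the paper, which gives no argument of its own and simply quotes the result from \cite{LEM-MIL-SP}; your sketch (corkscrew points, comparison of the approximating planes across scales and nearby base points, and a geometric chain of corkscrew points joined at scale $\sim 7|x-y|$) is the standard multiscale mechanism for results of this kind. The one step to repair if you write it out is Step 1: with (b) as formulated here, (b) only rules out $\Omega$ lying on both sides of $\Upsilon$, and $x_0\in\partial\Omega$ does not by itself put $\Omega$ on either side, since points of $\Omega$ near $x_0$ may lie inside the slab. So the fact that one side at every scale $\rho<\rho_0$ is contained in $\Omega$ must be obtained by climbing from a small interior ball near $x_0$ using your Step 2, which should therefore come first.
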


\indent As a direct consequence of the above theorem, one sees that if $\Omega\subseteq\mathbb{R\!}^N$ is a $(\delta,r_0)$-Reifenberg flat chord arc domain for
$\delta\leq1/600$, then the domain satisfy all the conditions in Assumption \ref{As1}.

\subsection{Ramified domains}\label{subsec5-03}

\indent We now consider a class of domains with ramified fractal-like boundaries. Such domains are of particular interest in applications to
bronchial trees (see for instance \cite{ACH-SAB06-01,ACH08,Silva-Perez-Velez-Santiago-2024} among others).\\
\indent In fact, we consider the following two particular domains $\Omega_1$ and $\Omega_2$ investigated in
\cite{ACH-SAB06-01,ACH-SAB06-02,ACH10,ACH08} (among others).
Let $\{F_1,F_2\}$ and $\{G_1,G_2\}$ denote two sets, each one consisting of a pair of similitudes on $\mathbb{R\!}^{\,2}$ defined by

$$F_i(x_1,x_2):=\left(\begin{array}{lcl}
(-1)^i\displaystyle\frac{3}{2}+\displaystyle\frac{x_1}{2}\\
\indent\\
\,\,\,\,\,\,\,\,\,3+\displaystyle\frac{x_2}{2}\\
\end{array}
\right)$$
and
$$G_i(x_1,x_2):=\left(\begin{array}{lcl}
(-1)^i\left[1-\displaystyle\frac{\tau}{\sqrt{2}}\right]+\displaystyle\frac{\tau}{\sqrt{2}}[x_1+(-1)^ix_2]\\
\indent\\
1+\displaystyle\frac{\tau}{\sqrt{2}}+\displaystyle\frac{\tau}{\sqrt{2}}[x_2+(-1)^{i+1}x_1]\\
\end{array}
\right)$$

\begin{figure}[h!]
    \centering
    \includegraphics[scale=0.21]{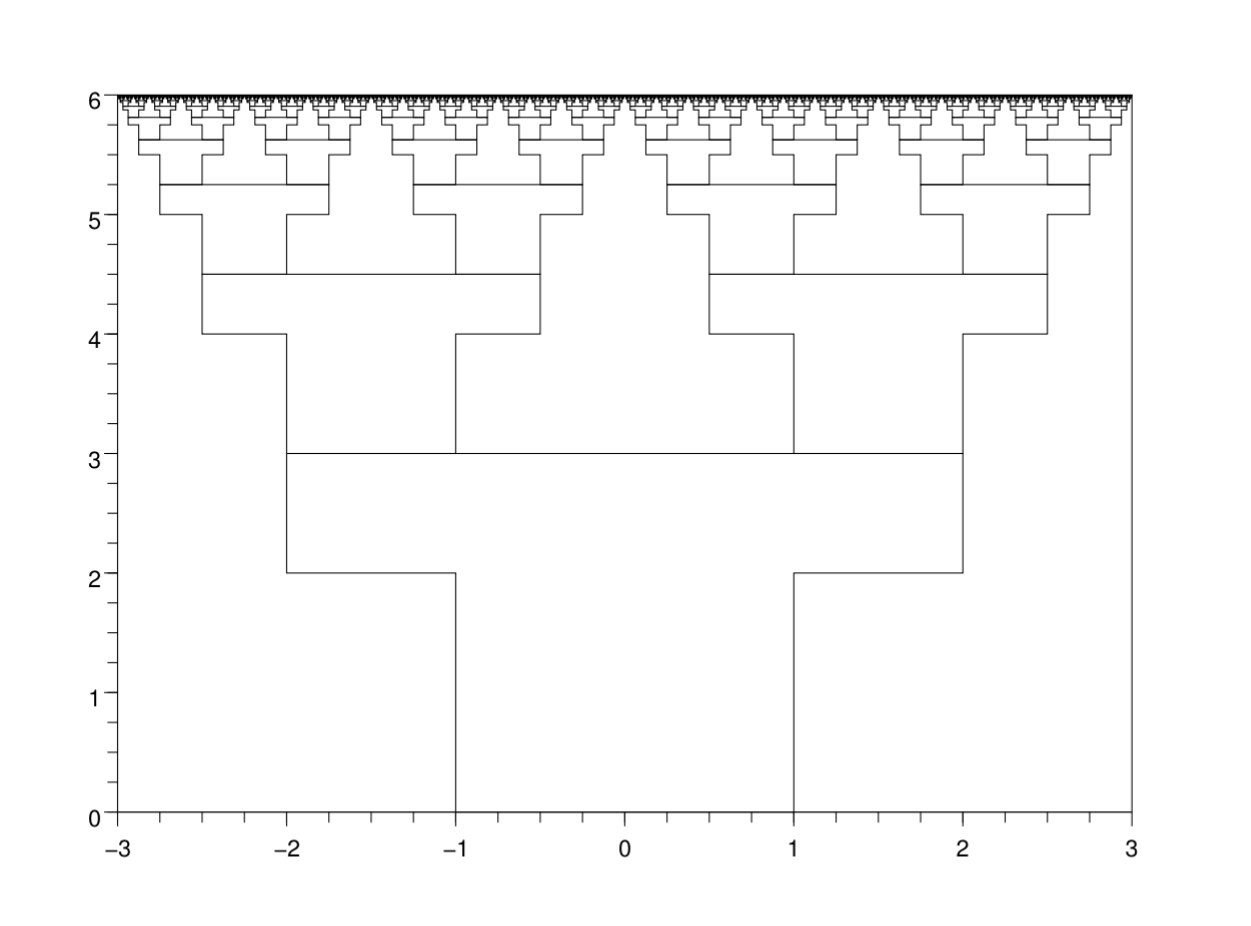}

    \includegraphics[scale=0.21]{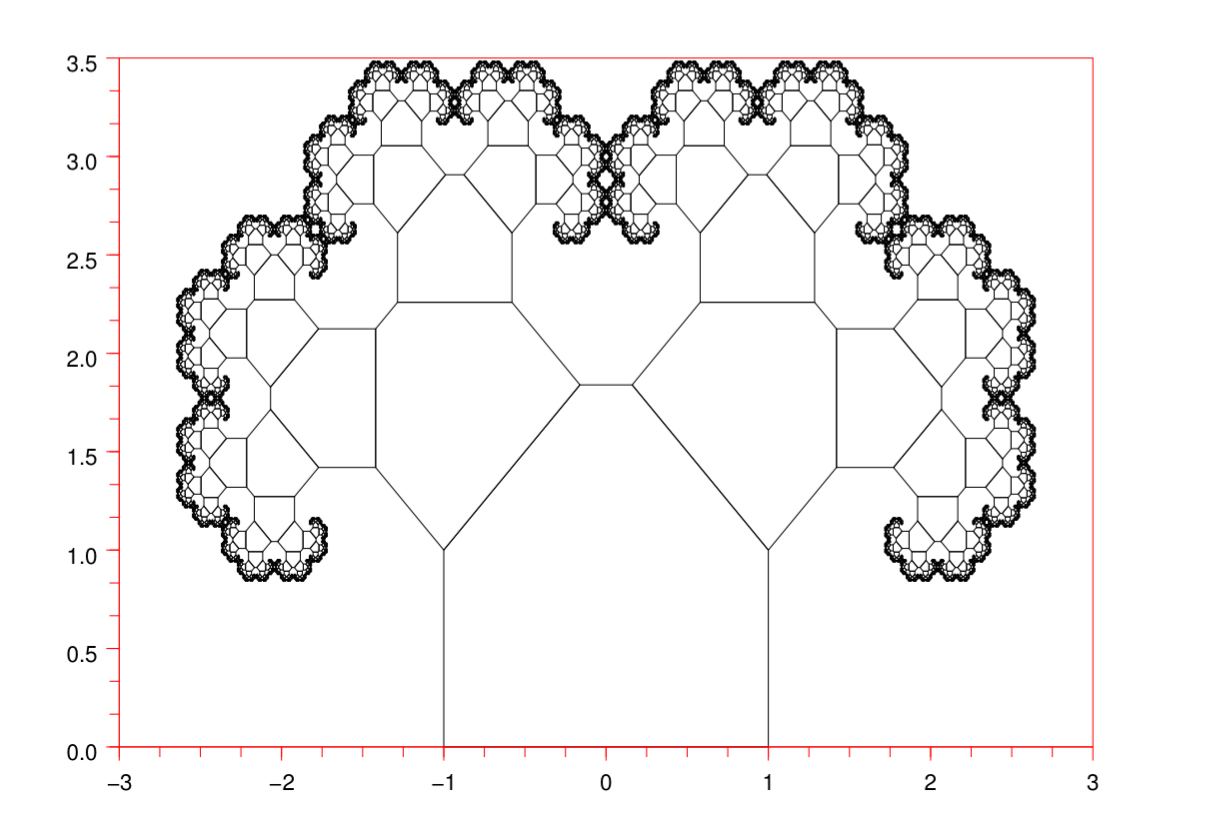}
    \caption{\underline{Figure above}: the domain $\Omega_1$ generated by the maps $F_i$ (taken from \cite{ACH-SAB06-01});
     \,\underline{Figure below}: the domain $\Omega_2$ generated by the maps $G_i$ when $\tau=\tau^{\ast}$ (taken from \cite{ACH08}).}
    \label{fig:my_label}
\end{figure}

\noindent ($i\in\{1,2\}$) for $1/2\leq\tau<\tau^{\ast}:=0.593465$.
Let $\mathfrak{A}_n$ be the set containing all the maps from
$\{1,\ldots,n\}$ to $\{1,2\}$. Then for $\sigma\in\mathfrak{A}_n$, we define the affine map
$$\mathcal{M}_{\sigma}(H_1,H_2):=H_{\sigma(1)}\circ\cdot\cdot\cdot\circ H_{\sigma(n)}\,\,\,\,\,\,\,\,\,\,\,\,\,\,\,\,\,\,\,
\,\,\,\,\textrm{(}\,H\,\,\textrm{stands as either}\,\,F,\,\,\textrm{or}\,\,G\,\textrm{)}.$$
Then, letting $V_1$ be the interior of the set $([-1,1]\times[0,2])\cup([-2,2]\times[2,3])$, and $V_2$ be the interior of the convex hull of the
points $P_1=(-1,0)$,\, $P_2=(1,0)$,\, $P_3=G_1(P_1)$,\, $P_4=G_2(P_2)$,\, $P_5=G_1(P_2)$, and $P_6=G_2(P_1)$
we define
$$\Omega_1:=\textrm{Interior}\left(\overline{V_1}\cup\left(\displaystyle\bigcup^{\infty}_{n=1}\displaystyle\bigcup_{\sigma\in\mathfrak{A}_n}
\mathcal{M}_{\sigma}(F_1,F_2)(\overline{V_1})\right)\right),$$
and
$$\Omega_2:=\textrm{Interior}\left(\overline{V_2}\cup\left(\displaystyle\bigcup^{\infty}_{n=1}\displaystyle\bigcup_{\sigma\in\mathfrak{A}_n}
\mathcal{M}_{\sigma}(G_1,G_2)(\overline{V_2})\right)\right).$$
It follows that the ramified boundary $\Gamma^{\infty}_i$ ($i\in\{1,2\}$) corresponds to the unique compact self-similar set in $\mathbb{R\!}^{\,2}$ fulfilling
$$\Gamma^{\infty}_1=F_1(\Gamma^{\infty}_1)\cup F_2(\Gamma^{\infty}_1)\,\,\,\,\,\,\,\,\,\,\,\,\,\,\,\,\textrm{and}
\,\,\,\,\,\,\,\,\,\,\,\,\,\,\,\,\Gamma^{\infty}_2=G_1(\Gamma^{\infty}_2)\cup G_2(\Gamma^{\infty}_2).$$
For more information and properties of these sets, refer to \cite{ACH-SAB06-01,ACH-SAB06-02,ACH10,ACH08} (among others). In particular, one can summarize
several of the main properties in the following result.

\begin{theorem}\label{prop-ramified}\,(see \cite{ACH-SAB06-02,ACH-SAB06-01,ACH10,ACH08})\,The following properties are valid for the sets
$\Omega_1$ and $\Omega_2$.
\begin{enumerate}
\item[(a)]\,\, $\Omega_1$ and $\Omega_2$ have the open set condition.
\item[(b)]\,\, $\Omega_1$ is not an extension domain ($q=2$), but it is a $W^{1,q}$- extension domain for all $q\in[1,2)$.
\item[(c)]\,\, $\Omega_2$ is an $(\epsilon,\delta)$-domain if $1/2\leq\tau<\tau^{\ast}$.
\item[(d)]\,\, $\Omega_2$ is not an extension domain when $\tau=\tau^{\ast}$, but it is a $W^{1,q}$- extension domain for each $q\in[1,2)$.
\item[(e)]\,\, The ramified boundary $\Gamma^{\infty}_i$ ($1\leq i\leq2$) is a $d_i$-set with respect to the self-similar
measure supported on $\Gamma^{\infty}_i$, where
$$d_1=1\,\,\,\,\,\,\,\,\,\,\,\,\,\,\,\,\textrm{and}
\,\,\,\,\,\,\,\,\,\,\,\,\,\,\,\, d_2:=-\log(2)/\log(\tau).$$
\end{enumerate}
\end{theorem}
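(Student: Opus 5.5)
This theorem is a compilation of results from the cited literature, so the plan is to verify each item by reduction to known facts and the self-similar structure, not to build anything new. For (a), the open set condition for $\{F_1,F_2\}$ and $\{G_1,G_2\}$, I take as open set the interior of the convex hull of the prefractal trunk together with its images; for instance $V_1$ union the region above it for the $F_i$. I then check directly, from the explicit formulas, that $F_1(U)$ and $F_2(U)$ are disjoint subsets of $U$. The translations $\mp 3/2$ and ratio $1/2$ place the two copies in the half-planes $x_1<0$ and $x_1>0$. For the $G_i$ the rotation by $\pm\pi/4$ with ratio $\tau<\tau^{\ast}$ keeps the images separated, and this is precisely how $\tau^{\ast}$ is defined in \cite{ACH08}: it is the threshold at which the branches first touch.

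For (e), the ramified boundary is the attractor of a two-map IFS of similitudes with common ratio $r$ ($r=1/2$ for $F$, $r=\tau$ for $G$). Hutchinson's theorem together with the open set condition from (a) gives that the normalized $d$-dimensional Hausdorff measure, equivalently the self-similar measure, is a $d$-Ahlfors measure with $2r^{d}=1$. This yields $d_1=\log 2/\log 2=1$ and $d_2=-\log 2/\log\tau$. The Ahlfors upper and lower bounds follow by the standard covering argument: count the cylinders $\mathcal{M}_\sigma(\Gamma^\infty)$ of diameter comparable to a given radius that can meet a ball.

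For (b)--(d), the extension properties, I intend to rely on the cited works rather than reprove them. For $\tau<\tau^{\ast}$ I would show the $(\epsilon,\delta)$ condition of Definition \ref{ep-del-dom} directly. The construction joins two points by going down the tree to their common ancestor cell and back up, and uses self-similarity to scale the cigar condition uniformly across generations. The strict separation of branches when $\tau<\tau^{\ast}$ is what gives a uniform $\epsilon$. Jones' theorem then gives extension for all $p$.

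The negative statements for $\Omega_1$ and for $\Omega_2$ with $\tau=\tau^{\ast}$ come from the touching of branches (or the cusp-like accumulation) at the boundary. There, I would exhibit $H^1$ functions equal to different constants on neighbouring subtrees whose extension energy blows up logarithmically; this is the $p=2$ capacity obstruction. For $q<2$, the $W^{1,q}$ extension is obtained by the Hajłasz–Koskela–Tuominen measure-density criterion together with a weighted reflection across generations, as in \cite{ACH-SAB06-02,ACH10}. I expect this $W^{1,q}$ construction at the critical geometry to be the main obstacle. I would treat it by citation and only verify the measure density condition, which follows from self-similarity.
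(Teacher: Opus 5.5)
The paper gives no argument of its own for this theorem; it is quoted from the cited works. So what can be assessed is what you propose to verify yourself. Your open-set-condition/Hutchinson argument for (e), the common-ancestor paths for (c), and the logarithmic capacity obstruction for the negative halves of (b) and (d) are sound.

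The genuine gap is in the positive halves of (b) and (d). The Haj\l{}asz--Koskela--Tuominen measure density condition is \emph{necessary} for $W^{1,q}$-extension, not sufficient. Their characterization for $1<q<\infty$ also requires $W^{1,q}(\Omega)=M^{1,q}(\Omega)$ with equivalent norms, where $M^{1,q}$ is the Haj\l{}asz--Sobolev space, and that is where all the work lies.

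Here the point is decisive, not technical. The condition is independent of $q$, and both $\Omega_1$ and $\Omega_2$ at $\tau=\tau^{\ast}$ satisfy it: every ball of radius $r\le 1$ centred in $\overline{\Omega_i}$ captures area $\gtrsim r^2$, either from a polygonal cell or from an entire subtree of diameter comparable to $r$. This same density, applied to each of the two touching subtrees in annuli around a contact point, is exactly what drives your obstruction at $q=2$. A property that holds equally at $q=2$, where extension fails, cannot carry any part of the claim for $q<2$.

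What distinguishes $q<2$ is that points have zero $q$-capacity in the plane: your annulus lower bound becomes $r^{2-q}$, which is summable. Turning this into a bounded extension operator, uniformly over contact points at all scales, needs an explicit construction. So your positive halves rest entirely on citation, just as the paper's do, and the measure density check should not be offered as part of the proof. Note also that $q=1$ lies outside the range of that characterization.

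A smaller point concerns (a) for the $G_i$. You justify the open set condition by the images staying separated, which holds only for $\tau<\tau^{\ast}$. At $\tau=\tau^{\ast}$ (the case of (d), which (e) should also cover) the closures of $G_1(\Omega_2)$ and $G_2(\Omega_2)$ touch. You therefore need an open set whose images are disjoint but may share boundary points. The choice $U=\Omega_2$ works for every $\tau\in[1/2,\tau^{\ast}]$: $G_i(\Omega_2)\subset\Omega_2$ by construction, and at the self-contact threshold the two subtrees touch without overlapping. With this choice your argument for (e) covers $\tau^{\ast}$ as well.
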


\indent For each $i\in\{1,2\}$, define now the function space $$\widetilde{\mathcal{W}}_2(\Omega_i;\Gamma^{\infty}_i):=\left\{w\in\mathcal{W}_2(\Omega_i;\Gamma_i)\mid
w|_{_{\Gamma_i\setminus\Gamma^{\infty}_i}}=0\right\}.$$ Clearly $\widetilde{\mathcal{W}}_2(\Omega_i;\Gamma^{\infty}_i)$ is a closed subspace of
$\mathcal{W}_2(\Omega_i;\Gamma_i)$. Then, combining this fact with a consequence of Theorem \ref{prop-ramified} , we see that the maps
$\widetilde{\mathcal{W}}_2(\Omega_i;\Gamma^{\infty}_i)\hookrightarrow L^r(\Omega_i)$ and
$\widetilde{\mathcal{W}}_2(\Omega_i;\Gamma^{\infty}_i)\hookrightarrow L^s_{\mu}(\Gamma^{\infty}_i)$ are linear and compact for each $r,\,s\in[1,\infty)$.\\
\indent In this case, we then consider the solvability and regularity theory for the mixed-type inhomogeneous boundary elliptic problem
\begin{equation}
\label{1.05m}\left\{
\begin{array}{lcl}
\mathcal{A}u\,=\,f(x)\indent\,\textrm{in}\,\,\Omega_i\\
\,\,\,u=0\,\,\,\,\,\,\,\,\,\,\,\,\,\,\,\,\,\,\,\,\textrm{in}\,\,\Gamma_i\setminus\Gamma^{\infty}_i\\
\mathcal{B}u\,=\,g(x)\indent\,\textrm{on}\,\,\Gamma^{\infty}_i\\
\end{array}
\right.
\end{equation}
and the corresponding inhomogeneous parabolic problem
\begin{equation}
\label{1.06m}\left\{
\begin{array}{lcl}
u_t-\mathcal{A}u\,=\,f(t,x)\indent\,\textrm{in}\,\,(0,\infty)\times\Omega_i\\
\,\,\,u=0\,\,\,\,\,\,\,\,\,\,\,\,\,\,\,\,\,\,\,\,\,\,\,\,\,\,\,\,\,\,\,\,\,\,\,\,\,\,\,\textrm{in}\,\,(0,\infty)\times\Gamma_i\setminus\Gamma^{\infty}_i\\
\,\,\,\,\mathcal{B}u\,\,\,\,\,\,\,\,=\,g(t,x)\indent\,\textrm{on}\,\,(0,\infty)\times\Gamma^{\infty}_i\\
\,u(0,x)=u_0\,\,\,\,\,\,\,\,\,\,\,\,\,\,\,\,\,\,\,\,\,\,\,\,\,\,\,\textrm{in}\,\,\Omega_i
\end{array}
\right.
\end{equation}
for $i\in\{1,2\}$. Then, one proceeds in the exact way as in the derivation of the general results,to achieve the following consequence.

\begin{theorem}\label{Ramified-Result}
Given $i\in\{1,2\}$ and $T>0$ a fixed constant, let $f\in L^2(\Omega_i)$, \,$g\in L^2_{\mu}(\Gamma^{\infty}_i)$,\, $\tilde{f}\in L^2(0,T;L^2(\Omega_i))$, and
$\tilde{g}\in L^2(0,T;L^2_{\mu}(\Gamma^{\infty}_i))$. Then the following hold.
\begin{enumerate}
\item[(a)]\,\,\,Problem (\ref{1.05}) is uniquely solvable over $\widetilde{\mathcal{W}}_2(\Omega_i;\Gamma^{\infty}_i)$.
Moreover, if $f\in L^p(\Omega_i)$ for $p>1$, and $g\in L^q_{\mu}(\Gamma^{\infty}_i)$ for $q>1$, then $u$ is globally bounded with
$$|\|\mathbf{u}\||_{_{\infty}}\,\leq\,C\left(\|f\|_{_{p,\Omega_i}}+
\|g\|_{_{q,\Gamma^{\infty}_i}}+\|u\|_{_{2,\Omega_i}}\right),$$
for some constant $C>0$. Furthermore, if any of the conditions in Remark \ref{R-embedding} holds, then one has that
$$|\|\mathbf{u}\||_{_{\infty}}\,\leq\,C\left(\|f\|_{_{p,\Omega_i}}+\|g\|_{_{q,\Gamma^{\infty}_i}}\right).$$
\item[(b)]\,\,\,Problem (\ref{1.06}) is uniquely solvable over $C([0,T];L^2(\Omega_i))\cap L^2((0,T),\widetilde{\mathcal{W}}_2(\Omega_i;\Gamma^{\infty}_i))$. Moreover,
given $\tilde{p},\,\tilde{q}\in[2,\infty)$ fulfilling
$$\displaystyle\frac{1}{\kappa_{\tilde{p}}}+\displaystyle\frac{1}{\tilde{p}}<1,\,\,\,\,\,\,\,\,\,\,\,\,\,\,\,\,\textrm{and}\,\,\,\,\,\,\,\,
\,\,\,\,\,\,\,\,\displaystyle\frac{1}{\kappa_{\tilde{q}}}+\displaystyle\frac{1}{\tilde{q}}<1,$$
if $\tilde{f}\in L^{\kappa_{\tilde{p}}}(0,T;L^{\tilde{p}}(\Omega_i))$,\, $\tilde{g}\in L^{\kappa_{\tilde{q}}}
(0,T;L^{\tilde{q}}_{\mu}(\Gamma^{\infty}_i))$, and $u_0\in L^{\infty}(\Omega_i)$, then
$$\|u\|_{_{L^{\infty}(0,T;L^{\infty}(\Omega_i))}}
\,\leq\,C\left(\|u_0\|_{_{\infty,\Omega_i}}+\|\tilde{f}\|_{_{L^{\kappa_{\tilde{p}}}(0,T;L^{\tilde{p}}(\Omega_i))}}+
\|\tilde{g}\|_{_{L^{\kappa_{\tilde{q}}}(0,T;L^{\tilde{q}}_{\mu}(\Gamma^{\infty}_i))}}\right),$$
for some constant $C>0$. Additionally, all the results in subsections \ref{subsec04-02} and \ref{subsec04-03} are valid for this given situation.
\end{enumerate}
\end{theorem}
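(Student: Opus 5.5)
The plan is to rerun the general machinery of Sections~\ref{sec03} and \ref{sec04} with $\mathcal{W}_2(\Omega;\Gamma)$ replaced by the closed subspace $\widetilde{\mathcal{W}}_2(\Omega_i;\Gamma^{\infty}_i)$. Everything hinges on the embedding facts recorded just before the statement. By Theorem~\ref{prop-ramified}(b),(d), $\Omega_i$ is a $W^{1,r}$-extension domain for every $r\in[1,2)$. Hence, taking $r$ close to $2$ in Theorem~\ref{embeddings}, $H^1(\Omega_i)\hookrightarrow W^{1,r}(\Omega_i)$ embeds compactly into $L^{\sigma}(\Omega_i)$ for every $\sigma<\infty$. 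Likewise, since $\Gamma^{\infty}_i$ is a $d_i$-set (Theorem~\ref{prop-ramified}(e)) with $d_i>2-r$, the trace maps compactly into $L^{\sigma}_{\mu}(\Gamma^{\infty}_i)$ for every $\sigma<\infty$. Thus the ramified setting behaves like the case ``$N=2$'' with critical exponents $2^\ast_N=2^\ast_d=\infty$. The first thing I would verify is that Remark~\ref{R-embedding}(d), the $\epsilon$-interpolation inequalities, still hold with these exponents. They should, by the same compactness-plus-Ehrling argument applied on $W^{1,r}$.

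For part (a), I would redo the proof of Proposition~\ref{cont-coercive-form} verbatim on $\widetilde{\mathcal{W}}_2$. Continuity and weak coercivity use only Hölder's inequality, these embeddings, and Remark~\ref{R-embedding}(d), so Lax--Milgram gives unique solvability, after adding the shift $\delta_0^\star$ or invoking Remark~\ref{coercivity} for strict uniqueness. For the $L^\infty$ bound I would follow Case~5 of Theorem~\ref{elliptic-bounded}. Test with $u_k=(u-k)^+$, which stays in $\widetilde{\mathcal{W}}_2$ because for $k\ge0$ it vanishes where $u=0$ on $\Gamma_i\setminus\Gamma^\infty_i$. Derive (\ref{thm0-12}), then estimate $\int_{\Omega_k}|f|/k$ and $\int_{\Gamma_k}|g|/k$ by $|\Omega_k|^{1-1/p}$ and $\mu(\Gamma_k)^{1-1/q}$. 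Because $s_{1,\xi},s_{2,\zeta}$ may now be taken arbitrarily large, any $p,q>1$ give exponents of the form $2(1+\theta)/s$ with $\theta>0$. This yields (\ref{Lem0-02}), and Lemma~\ref{Lem0} applied to $\pm u$ gives the bound with $\|u\|_{2,\Omega_i}$. The coercive version then follows exactly as in Remark~\ref{coercivity}(a),(b), by bounding $\|u\|_{2}$ through the coercivity estimate.

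For part (b), the semigroup and form arguments of Subsections~\ref{subsec04-01}--\ref{subsec04-02} transfer directly, since they only use closedness and weak coercivity of the form on the Hilbert space $\widetilde{\mathcal{W}}_2$. This gives existence and uniqueness as in Theorem~\ref{exist-uniq-weak-soln}. For the bound, I would reprove Theorem~\ref{Thm-Partial-Bounded} with the parabolic embedding (\ref{Sobolev-parabolic}) now valid for any pair $1/r+1/(2\sigma)\cdot(\text{dimension})$ with spatial exponent arbitrarily large. Then condition (\ref{Thm1-00}) relaxes to $1/\kappa+1/\tilde p<1$, which still allows choosing $\theta_{1,1},\theta_{2,1}>0$ in (\ref{thm1-24}). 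Lemma~\ref{Lem1} then applies, and the decomposition $u=T_\mu(t)u_0+\tilde u$ of Theorem~\ref{infitiny-u_0-infinity} gives the final estimate.

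The main obstacle is the ultracontractivity/$L^\infty$-stability estimate (\ref{infinity-semigroup}) for the homogeneous part on a non-extension domain. There the Davies perturbation argument needs a Nash- or Sobolev-type inequality, and condition {\bf (B5)} must hold on $\widetilde{\mathcal{W}}_2$. I would try to obtain it from the $W^{1,r}$-Sobolev inequality with $r<2$ (giving some effective dimension) combined with the invariance of $\widetilde{\mathcal{W}}_2$ under multiplication by $e^{\phi}$. Alternatively, one can bypass it by applying the Moser/De Giorgi argument of Lemma~\ref{Lem1} directly with the truncation level $k\ge\|u_0\|_\infty$, so that $u_k(0)=0$ and the second alternative (\ref{Lem1-04}) applies.
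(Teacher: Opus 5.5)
Your route is the paper's own. Its entire justification is to rerun Sections~\ref{sec03}--\ref{sec04} on the closed subspace $\widetilde{\mathcal{W}}_2(\Omega_i;\Gamma^{\infty}_i)$, using compactness of the embeddings into $L^r(\Omega_i)$ and $L^s_\mu(\Gamma^\infty_i)$ for all finite $r,s$, i.e.\ the case $N=2$ with infinite critical exponents. Part (a) goes through as you describe, since the thresholds $p>N/2$ and $q>d/(d+2-N)$ both become $>1$. Your treatment of the semigroup $L^\infty$ bound, via an effective Sobolev dimension obtained from $W^{1,r}$ with $r<2$, is sound and more careful than the paper's.

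The step that fails is your claim that in part (b) the boundary condition relaxes to $\frac{1}{\kappa_{\tilde q}}+\frac{1}{\tilde q}<1$ and ``still allows $\theta_{2,1}>0$''. Large spatial exponents buy no time integrability: for $N=2$ the boundary critical line in (\ref{Sobolev-parabolic}) is $\frac1r+\frac{d_i}{2s}=\frac12$, and $s\to\infty$ forces $r\to2$. Write $\kappa=\kappa_{\tilde q}$, $q=\tilde q$, and use primes for conjugate exponents. The boundary analogue of (\ref{thm1-14}) is
\[
\frac1k\int_{-\varsigma}^0\!\!\int_{\Gamma_k(t)}|\tilde g|\,\psi^2u_k^2\,d\mu\,dt\;\le\;\frac1k\,\|\tilde g\|_{L^{\kappa}(L^{q}_\mu)}\,\|\psi u_k\|^2_{L^{2\kappa'}(L^{2q'}_\mu)}.
\]
Absorbing this into $\Upsilon^2_\varsigma(\psi u_k)$ for small $\varsigma$ requires $(2\kappa',2q')$ to be strictly subcritical, $\frac{1}{2\kappa'}+\frac{d_i}{4q'}>\frac12$, that is
\[
\frac{2}{\kappa}+\frac{d_i}{q}<d_i .
\]
This is exactly positivity of the paper's $\theta_{2,1}$ at $N=2$. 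The same inequality is needed to bring the term $k\int\!\!\int_{\Gamma_k}|\tilde g|$ from (\ref{thm1-22}) into the form (\ref{Lem1-02}). Since $d_i<2$, it is strictly stronger than $\frac1\kappa+\frac1q<1$. For instance, on $\Omega_1$ ($d_1=1$) no $\kappa\le 2$ is admissible, whatever $q$, while the stated hypothesis allows such $\kappa$. So under the stated boundary hypothesis Lemma~\ref{Lem1} cannot be invoked.

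This is not an artifact of the method. Already for a flat piece of boundary in the plane, the heat equation with Neumann flux $g(t)$ and zero initial data gives $u(t,0)=c\int_0^t g(s)(t-s)^{-1/2}\,ds$. This is unbounded for suitable $g\in L^2(0,T)$, so boundary data with $q=\infty$ genuinely require $\kappa>2$ there.

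The paper's own (\ref{Thm1-00}) at $N=2$ reads $\frac{2}{\kappa_2}+\frac1{q}<1$, not the condition in the statement. Its one-line ``proceed in the exact way'' therefore has the same mismatch. What your argument, and the paper's, actually establishes is part (b) with the boundary hypothesis replaced by the display above. That display is implied by the $N=2$ case of (\ref{Thm1-00}), since $d_i\ge 1$. The interior hypothesis $\frac1{\kappa_{\tilde p}}+\frac1{\tilde p}<1$ is correct as stated.
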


\indent Recently, new fine global regularity results for the local version of problems \eqref{1.05m} and \eqref{1.06m} have been obtained in \cite{Silva-Perez-Velez-Santiago-2024}.

\subsection{General bounded domains}\label{subsec5-04}

\indent To complete this section, let us consider the general case in which $\Omega\subseteq\mathbb{R\!}^N$ is an arbitrary bounded domain.
For this case, we assume that $\Gamma:=\partial\Omega$ has positive and finite $(N-1)$-dimensional Hausdorff measure ($0<\mathcal{H}^{N-1}(\Gamma)<\infty$),
and we assume in addition that $\mathcal{H}^{N-1}$ is an admissible measure in the sense of Arendt and Warma \cite{AR-WAR03}. Observe that this situation excludes
classes of domains such as the snowflake domain given in Example \ref{Ex5.02} (since the Koch curve has infinite $(N-1)$-Hausdorff measure), but may include other
general regions lacking the extension property, and such that their boundaries are not upper $(N-1)$-sets with respect to $\mathcal{H}^{N-1}$. For this case, it is
clear that Sobolev functions may lack a well-defined trace, and if such trace exists, it may not be $p$-integrable ($p\in[1,\infty)$). Consequently, following the approach by
Maz'ya \cite{MAZ}, we define the {\it Maz'ya's space} $W^1_p(\overline{\Omega})$ as the abstract completion of the space
$$V^1_p(\overline{\Omega}):=\left\{u\in W^{1,p}(\Omega)\cap C_c(\overline{\Omega})\mid u|_{_{\Gamma}}\in L^p_{_{\mathcal{H}^{N-1}}}(\Gamma)\right\}$$
with respect to the norm $$\|u\|_{_{W^1_p(\overline{\Omega})}}:=\left(\|\nabla u\|^p_{_{p,\Omega}}+\|u|_{_{\Gamma}}\|^p_{_{p,\Gamma}}\right)^{1/p}.$$
A key result by Maz'ya \cite{MAZ} establishes the existence of a constant $c>0$ such that
\begin{equation}\label{Mazya-Ineq}
\|u\|_{_{\frac{Np}{N-1}}}\,\leq\,c\,\|u\|_{_{W^1_p(\overline{\Omega})}}\,\,\,\,\,\,\,\,\,\,\,\,\,\,\,\,\,\,\textrm{for all}\,\,\,u\in W^1_p(\overline{\Omega}),
\end{equation}
and moreover the linear map $S:W^1_p(\overline{\Omega})\rightarrow L^r(\Omega)$ is compact whenever $1\leq r<Np(N-1)^{-1}$.
Inequality (\ref{Mazya-Ineq}) is optimal unless one assumes additional regularity properties over $\Omega$ (see for instance \cite[Example 2.11]{MAZ-POB97}).\\
\indent To consider nonlocal-type boundary value problems with structure similar as in (\ref{1.05}), we need to restrict the problem to a subspace of
$W^1_2(\overline{\Omega})$. In fact, define the space
$$\mathbb{W}^{\,1}_{\,p}(\overline{\Omega}):=\left\{w\in W^1_p(\overline{\Omega})\cap\mathbb{B\!}^{\,p}_{\,\mathfrak{s}}(\Omega)\mid w|_{_{\Gamma}}\in
\mathbb{B\!}^{\,p}_{_{\,1-\frac{N-d}{p}}}(\Gamma,\mathcal{H}^{N-1})\right\},$$ endowed with the norm
\begin{equation}\label{Mazya-Besov_Norm}
\|w\|_{_{\mathbb{W}^{\,1}_{\,p}(\overline{\Omega})}}:=\|w\|_{_{W^1_p(\overline{\Omega})}}+\mathcal{M}^p_{\mathfrak{s}}(w,\Omega)+
\mathcal{N}^p_{_{1-\frac{N-d}{p}}}(w,\Gamma,\mathcal{H}^{N-1}),
\end{equation}
where the expressions $\mathcal{M}^p_{\mathfrak{s}}(w,\Omega)$ and $\mathcal{N}^p_{_{1-\frac{N-d}{p}}}(w,\Gamma,\mathcal{H}^{N-1})$ are given by
(\ref{nonlocal-interior}) and (\ref{nonlocal-boundary}), respectively. Then one clearly deduce that the inequality (\ref{Mazya-Ineq}) remain valid if
one replaces the space $W^1_2(\overline{\Omega})$ by the subspace $\mathbb{W}^{\,1}_{\,p}(\overline{\Omega})$. Furthermore, since inequality (\ref{Mazya-Ineq})
holds for $p=1$, proceeding as in Remark \ref{R-embedding}(e), for each $\epsilon>0$, one gets the existence of a constant $c'>0$ such that
\begin{equation}\label{Mazya-Besov-Epsilon}
\|u\|^2_{_{\frac{2N}{N-1},\Omega}}\,\leq\,\epsilon\|\nabla u\|^2_{_{2,\Omega}}+\displaystyle\frac{c'}{\epsilon}\left(|\|\mathbf{u}\||^2_{_2}+
[\mathcal{M}^2_{\mathfrak{s}}(u,\Omega)]^2+[\mathcal{N}^2_{_{1-\frac{N-d}{2}}}(u,\Gamma,\mathcal{H}^{N-1})]^2\right),
\end{equation}
for each $u\in\mathbb{W}^{\,1}_{\,2}(\overline{\Omega})$.\\
\indent We now turn the solvability (over arbitrary bounded domains) of the boundary value problems
\begin{equation}
\label{arbitrary-1.05}\left\{
\begin{array}{lcl}
\,\,\,\,\,\mathcal{A}u\,=\,f(x)\indent\,\,\,\,\,\,\,\,\,\,\,\,\,\,\textrm{in}\,\,\Omega\\
(\mathcal{B}-\mathcal{L}_{\mu})u\,=\,0\indent\,\,\textrm{on}\,\,\Gamma\\
\end{array}
\right.
\end{equation}
and
\begin{equation}
\label{arbitrary-1.06}\left\{
\begin{array}{lcl}
u_t-\mathcal{A}u\,=\,f(t,x)\indent\,\textrm{in}\,\,(0,\times)\times\Omega\\
(\mathcal{B}-\mathcal{L}_{\mu})u\,=\,0\indent\,\textrm{on}\,\,(0,\infty)\times\Gamma\\
\,u(x,0)=u_0\,\,\,\,\,\,\,\,\,\,\,\,\,\,\,\,\,\,\,\,\,\,\,\,\,\,\,\,\textrm{in}\,\,\Omega,
\end{array}
\right.
\end{equation}
where we recall that the operator $\mathcal{L}_{\mu}$ is defined in (\ref{1.03}), and we are assuming the fulfillment of (\ref{1.04}).
In this case, we assume that $\alpha_{ij}\in L^{\infty}(\Omega)$,\,$\hat{a}_i\in L^{r_1}(\Omega)$, \,$\check{a}_i \in L^{r_2}(\Omega)$ ($1\leq i,\,j\leq N$),\,$\lambda
\in L^{r_3}(\Omega)$, and $\beta\in L^{\infty}_{_{\mathcal{H}^{N-1}}}(\Gamma)$, where $r_1,\,r_2\geq2N$, and $r_3\geq N$. Then, by virtue of (\ref{Mazya-Ineq}) and
(\ref{Mazya-Besov-Epsilon}), one proceeds in the same way as in the proofs of Theorem 3, Theorem 4, and Theorem 8, we can deduce the following result.

\begin{theorem}\label{Arbitrary-Result}
Given $T_2>T_1>0$ fixed constants, let $f\in L^2(\Omega)$ and $\tilde{f}\in L^2(0,T;L^2(\Omega))$, and assume that $\beta\in L^{\infty}_{\mu}(\Gamma)$ ($\mu(\cdot)=\mathcal{H}^{N-1}(\cdot)$) is such that $\textrm{ess}\displaystyle\inf_{\Gamma}[\beta]\geq C_{_{\Gamma}}$ for a sufficient large positive constant $C_{_{\Gamma}}$. Then the following hold.
\begin{enumerate}
\item[(a)]\,\,\,Problem (\ref{arbitrary-1.05}) is uniquely solvable over $\mathbb{W}^{\,1}_{\,2}(\overline{\Omega})$.
Moreover, one gets that
$$|\|\mathbf{u}\||_{_{m_1(p),m_2(p)}}\,\leq\,C\left(\|f\|_{_{p,\Omega}}+\|u\|_{_{2,\Omega}}\right),$$
for some constant $C>0$, where
\begin{equation}\label{m1-p}
m_1(p):=\left\{\begin{array}{lcl}
\frac{Np}{N-p},\,\,\textrm{if}\,\,p<N,\\[1ex]
r\in(2^{\ast}_{_N},\infty),\,\,\textrm{if}\,\,p=N,\\[1ex]
\,\,\,\,\,\infty,\,\,\textrm{if}\,\,p>N,
\end{array}
\right.
\,\,\,\,\,\,\,\,\,\,\textrm{and}\,\,\,\,\,\,\,\,\,\,
m_2(p):=\left\{\begin{array}{lcl}
\frac{(N-1)p}{N-p},\,\,\textrm{if}\,\,p<N;\\[1ex]
s\in(2,\infty),\,\,\textrm{if}\,\,p\geq N,\\[1ex]
\end{array}
\right.
\end{equation}
(for the infinity norm, it is only known in the interior for this case).
\item[(b)]\,\,\,Problem (\ref{arbitrary-1.06}) is uniquely solvable over $C([0,T];L^2(\Omega))\cap L^2((0,T),\mathbb{W}^{\,1}_{\,2}(\overline{\Omega}))$. Moreover,
given $\tilde{p}\in[2,\infty)$ fulfilling
$$\displaystyle\frac{1}{\kappa_{\tilde{p}}}+\displaystyle\frac{N}{\tilde{p}}<1,$$
if $\tilde{f}\in L^{\kappa_{\tilde{p}}}(0,T;L^{\tilde{p}}(\Omega))$, and $u_0\in L^{\infty}(\Omega)$, then
$$\|u\|_{_{L^{\infty}(T_1,T_2;L^{\infty}(\Omega))}}
\,\leq\,C\left(\|u\|_{_{2,\Omega}}+\|\tilde{f}\|_{_{L^{\kappa_{\tilde{p}}}(0,T;L^{\tilde{p}}(\Omega))}}\right),$$
for some constant $C>0$.
\end{enumerate}
\end{theorem}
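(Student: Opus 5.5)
The plan is to transplant the proofs of Proposition \ref{cont-coercive-form}, Theorem \ref{elliptic-bounded}, Lemma \ref{Lem1}, and Theorems \ref{Thm-Partial-Bounded}--\ref{infitiny-u_0-L2} to the Maz'ya-type space $\mathbb{W}^{\,1}_{\,2}(\overline{\Omega})$. The Sobolev inequality (\ref{2.03}) and the trace inequality (\ref{2.04}) are replaced everywhere by the Maz'ya inequality (\ref{Mazya-Ineq}) with $p=2$, i.e.\ the embedding into $L^{2N/(N-1)}(\Omega)$, and by its $\epsilon$-version (\ref{Mazya-Besov-Epsilon}). The boundary enters only through $\|u\|_{2,\Gamma}$, which is part of the norm. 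This loss of integrability (the exponent $2N/(N-1)$ instead of $2N/(N-2)$) is exactly why the hypotheses change to $r_1,r_2\ge 2N$, $r_3\ge N$, $\beta\in L^\infty$, and why the thresholds become $p>N$ in part (a) and $1/\kappa+N/\tilde p<1$ in part (b). Formally this is the earlier machinery with $N$ replaced by $N+1$ as the effective dimension.

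\textbf{Step 1 (well-posedness).} I will show that the form $\mathcal{E}_{\mu}$, with $\Lambda_{\Gamma}$ dropped and $K$ kept, is continuous on $\mathbb{W}^{\,1}_{\,2}(\overline{\Omega})$.
\begin{itemize}
\item For the drift terms, Hölder gives $\int|\hat{a}||u||\nabla v|\le\|\hat{\mathfrak a}\|_{2N}\|u\|_{2N/(N-1)}\|\nabla v\|_2$, and similarly for $\check{a}$.
\item For $\lambda$, Hölder gives $\int|\lambda|u^2\le\|\lambda\|_N\|u\|^2_{2N/(N-1)}$.
\item The boundary term $\int_\Gamma\beta uv$ is bounded since $\beta\in L^\infty$.
\item The nonlocal terms are controlled by the Besov seminorms in (\ref{Mazya-Besov_Norm}).
\end{itemize}
For coercivity, (\ref{Mazya-Besov-Epsilon}) absorbs the lower-order terms into $\epsilon\|\nabla u\|_2^2$. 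The price is a multiple of $|\|\mathbf u\||_2^2$ plus the Besov terms, and this is where I must deviate from the earlier proof. The $L^2(\Omega)$ part is removed by the Maz'ya inequality itself: $\|u\|_{2,\Omega}\lesssim\|\nabla u\|_2+\|u\|_{2,\Gamma}$. The $\Gamma$ part is dominated by $\int_\Gamma\beta u^2$ once $\operatorname{ess\,inf}\beta\ge C_\Gamma$ is large. The Besov terms are where I expect the main obstacle: they are not controlled by $\|\nabla u\|_2$ on arbitrary domains. I plan to absorb them using $\langle\mathcal{J}_\Omega u,u\rangle\ge0$ and $\langle\Theta_\Gamma u,u\rangle\ge0$. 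Failing that, I will simply include them in the norm and assume they are coercively represented in $K$ (as for Besov-type maps), i.e.\ $K(u,u)\ge c[\mathcal M^2_{\mathfrak s}]^2+c[\mathcal N^2]^2$. Lax–Milgram then gives unique solvability in (a). The semigroup/once-integrated-semigroup arguments of subsections \ref{subsec04-01}--\ref{subsec04-02} apply verbatim, giving unique solvability in (b).

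\textbf{Step 2 (elliptic estimate).} Test with $v_m=\psi_{k-1,m}(u)$ exactly as in Theorem \ref{elliptic-bounded} to get the analogue of (\ref{3.2.12}), with $\|w_m\|^2_{2N/(N-1),\Omega}+\|w_m\|^2_{2,\Gamma}$ on the left. Choosing $k=p(N-1)/(N-p)$ for $p<N$ and using Monotone Convergence yields $u^+\in L^{Np/(N-p)}(\Omega)$ with trace in $L^{(N-1)p/(N-p)}$, which is $m_1(p),m_2(p)$. The cases $p=N$ and $p>N$ follow by the Case 4 and Case 5 arguments. For Case 5 the De Giorgi inequality (\ref{Lem0-02}) holds with $s_1=2N/(N-1)$ and $\theta=(p-N)/((N-1)p)>0$. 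Lemma \ref{Lem0} then gives the interior $L^\infty$ bound. On the boundary we only have $L^2$-type control, so only finite exponents survive there, matching the statement.

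\textbf{Step 3 (parabolic estimate).} The parabolic Sobolev inequality (\ref{Sobolev-parabolic}) now holds with $1/r+N/(2s)=N/2\cdot\frac{N}{N+1}$ scaling (interpolating $L^\infty L^2$ with $L^2L^{2N/(N-1)}$). The energy inequality of Step 1 of Theorem \ref{Thm-Partial-Bounded}, with $g=0$ and $\beta\ge0$ large, gives (\ref{Lem1-02}) with the interior terms only; $\theta_{1,1}>0$ is exactly the condition $1/\kappa+N/\tilde p<1$. Lemma \ref{Lem1} and the covering argument of Theorem \ref{infitiny-u_0-L2} then give the bound on $[T_1,T_2]$.
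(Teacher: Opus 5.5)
Your overall route is the paper's own: its proof of this theorem only says that the earlier elliptic and parabolic arguments are rerun with (\ref{Mazya-Ineq}) and (\ref{Mazya-Besov-Epsilon}) in place of (\ref{2.03})--(\ref{2.04}). However, the step that is supposed to give unique solvability in (a) fails. After absorbing the lower-order terms you have $\mathcal{E}_{\mu}(u,u)+\delta\|u\|^2_{2,\Omega}\ge c\,(\|\nabla u\|^2_{2,\Omega}+\|u\|^2_{2,\Gamma})$ with $\delta$ large. Bounding $\delta\|u\|^2_{2,\Omega}$ by Maz'ya's inequality produces $-c'\delta\|\nabla u\|^2_{2,\Omega}$, which cannot be absorbed; enlarging $\beta$ only helps the boundary part.

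In fact no $C_{\Gamma}$ makes $\mathcal{E}_{\mu}$ coercive in general. For $u\in C^{\infty}_c(\Omega)$ the boundary integral vanishes. Take $\alpha_{ij}=\delta_{ij}$, $\hat{a}_i=\check{a}_i=0$, $K=0$, $\lambda\equiv-M$ with $M>\lambda^D_1(\Omega)$ (the first Dirichlet eigenvalue of $-\Delta$). Then a near-minimizer of the Dirichlet Rayleigh quotient gives $\mathcal{E}_{\mu}(u,u)<0$. The same test functions show that the inequality $\|u\|^2_{2,\Omega}\le\tau\|\nabla u\|^2_{2,\Omega}+C_{\tau}\|u\|^2_{2,\Gamma}$ used in Remark \ref{coercivity}(b) can only hold for $\tau\ge1/\lambda^D_1(\Omega)$, so that remark does not rescue the step.

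A large $\beta$ can legitimately absorb only boundary contributions. An example is the fixed constant in front of $\|u\|^2_{2,\Gamma}$ in the critical-exponent case of (\ref{Mazya-Besov-Epsilon}); this is what makes $r_1=2N$, $r_3=N$ harmless for \emph{weak} coercivity. For Lax--Milgram you need something extra: a smallness condition on the interior lower-order terms relative to $\lambda^D_1(\Omega)$, a large positive constant in $\lambda$ as in Remark \ref{coercivity}(a), or an injectivity argument plus the Fredholm alternative (the embedding $W^1_2(\overline{\Omega})\hookrightarrow L^2(\Omega)$ is compact). Solvability in part (b) is unaffected, since the resolvent argument only uses weak coercivity.

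The Besov terms are not needed in (\ref{Mazya-Besov-Epsilon}) at all. The case $p=1$ of (\ref{Mazya-Ineq}) applied to $u^2$ gives $\|u\|^2_{2N/(N-1),\Omega}\le c\,(2\|u\|_{2,\Omega}\|\nabla u\|_{2,\Omega}+\|u\|^2_{2,\Gamma})$. The only Besov issue is coercivity in the norm (\ref{Mazya-Besov_Norm}), where $K\ge0$ does not suffice. So your fallback $K(u,u)\gtrsim[\mathcal{M}^2_{\mathfrak{s}}(u,\Omega)]^2+[\mathcal{N}^2_{1-\frac{N-d}{2}}(u,\Gamma,\mathcal{H}^{N-1})]^2$ is an extra hypothesis, not a consequence of the statement.

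There are also two problems in the estimates. First, the effective dimension is $2N$, not $N+1$: the relation $2M/(M-2)=2N/(N-1)$ forces $M=2N$. Your exponents $k=p(N-1)/(N-p)$, the threshold $p>N$, and the condition $1/\kappa_{\tilde p}+N/\tilde p<1$ are the $M=2N$ ones, but the parabolic relation you wrote is not. Interpolating $L^{\infty}(L^2)$ with $L^2(L^{2N/(N-1)})$ gives $\frac1r+\frac Ns=\frac N2$ as the replacement for the relation in (\ref{Sobolev-parabolic}). Only with this relation does $\theta_{1,1}>0$ reduce to $\frac{1}{\kappa_{\tilde p}}+\frac{N}{\tilde p}<1$.

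Second, and more seriously, in Case 5 and in Step 3 you check the De Giorgi exponent only for the $f$-term. The energy inequality also contains $k^2\int_{\Omega_k}\mathfrak{h}\,dx$, with $\mathfrak{h}=c_0^{-1}\sum_i|\hat{a}_i|^2+M^{\star}+|\lambda|\in L^{m}(\Omega)$ and $m=\min\{r_1/2,r_3\}$. This term is bounded by $k^2\|\mathfrak{h}\|_{m,\Omega}|\Omega_k|^{1-1/m}$. It therefore contributes $\theta=\frac{m-N}{(N-1)m}$ in (\ref{Lem0-02}) and $\theta=\frac{m-N}{Nm}$ in the parabolic analogue. Both vanish at the endpoints $r_1=2N$ or $r_3=N$, which the hypotheses allow, whereas Lemmas \ref{Lem0} and \ref{Lem1} require $\theta>0$. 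Since the term is purely interior, a large $\beta$ does not help. As written, your argument yields the $L^{\infty}$ parts only under $r_1>2N$ and $r_3>N$; the endpoint would need a different argument.
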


\indent Theorem \ref{Arbitrary-Result}(a) was established by Daners \cite{DAN} for the local case with bounded measurable coefficients, and a nonlocal model
involving the boundary nonlocal boundary operator given in subsection \ref{subsec07-01} has been investigated in \cite{ALEJO-PhD10,WAR12}. The homogeneous local parabolic problem
of type (\ref{arbitrary-1.06}) with bounded coefficients has been investigated by Daners \cite{Daners,DAN}. Therefore, Theorem \ref{Arbitrary-Result} generalizes all
the regularity results in \cite{Daners,DAN,ALEJO-PhD10,WAR12} to the case of unbounded interior coefficients. On the other hand, a priori estimates for the inhomogeneous parabolic problem over arbitrary domains are developed for the first time here.

\begin{remark}\label{arbitrary-remark}
In general domains, the embedding $\mathbb{W}^{\,1}_{\,2}(\overline{\Omega})\hookrightarrow L^2_{_{\mathcal{H}^{N-1}}}(\Gamma)$ cannot be improved, and
consequently, although existence and uniqueness of a solution for problems (\ref{arbitrary-1.05}) and (\ref{arbitrary-1.06}) with inhomogeneous boundary conditions
can be achieved whenever $g\in L^q_{_{\mathcal{H}^{N-1}}}(\Gamma)$ for $q\geq2$, it is still an open problem to deduce $L^{\infty}$-estimates for weak solutions
of equations (\ref{arbitrary-1.05}) and (\ref{arbitrary-1.06}) with inhomogeneous boundary conditions, even if the boundary data $g$ is assumed to be smooth.
If the map $\mathbb{W}^{\,1}_{\,2}(\overline{\Omega})\hookrightarrow L^r_{_{\mathcal{H}^{N-1}}}(\Gamma)$ is continuous for some $r>2$, then one could obtain
a priori estimates for the case of inhomogeneous boundary conditions (under the suitable assumption on the boundary data $g$), but clearly there are multiple
domains with ``bad" boundaries in which such embedding fails to be satisfied.
\end{remark}

\subsection{Importance of boundary value problems over irregular domains}

\indent In the real world, many of the regions and surfaces where phenomena occur posses rough and irregular structure. In fact, when dealing with objects such as
clouds, trees, leaves, corals, snowflakes, blood vessels, and rocks (among many others), one cannot expect that such objects may have a structure compatible
with smooth manifolds (e.g. \cite{MAND82}). The interest in analysis and modeling of boundary value problems over domains with irregular boundaries such as
of fractal-like structure, began with the goal of understanding transmission problems, which, in electrostatics and magnetostatics, describe heat transfer
through a fractal-like interface (see for instance \cite{LAN-VER12-3,LAN-VER10-1} and the references therein). Furthermore, the regions showcased in
subsection \ref{subsec5-03} have been used to simulate the diffusion of medical sprays in the bronchial tree (e.g. \cite{ACH-DEH14,ACH-DEH12,ACH-SAB06-01,
ACH-SAB06-02,ACH10,ACH08,ACH07,DEH16,Silva-Perez-Velez-Santiago-2024}). It follows that in order to get an accurate model of diffusion, inhomogeneous Neumann or Robin boundary value problems
need to be considered (Cf. \cite{ACH08,Silva-Perez-Velez-Santiago-2024}). Other applications include plant leaf identification \cite{BCNM13}, oceanography \cite{MAND82}, and
probability theory \cite{BASS-CHEN08}. Further applications can be found in \cite{STRIC06}.

\section{Examples: Wentzell-type energy forms}\label{sec06}

\indent In this section, we will present several Wentzell-type bilinear forms, which give rise to several generalized Wentzell
boundary value problems in which all the results in the previous sections hold. We will assume that all the conditions in
Assumption \ref{As1} are satisfied.

\subsection{Wentzell energy over compact Riemannian manifolds}\label{subsec06-01}

\indent Let us assume that the boundary $\Gamma$ is a compact $(N-1)$-dimensional Riemannian manifold with Riemannian metric $\mathfrak{g}$,
for $N\geq3$. Then, we consider
\begin{equation}\label{Riemannian-Energy}
\Lambda_{_{\Gamma}}(u,v):=\displaystyle\int_{\Gamma}
\left(\displaystyle\sum^{N-1}_{^{i,j=1}}(\omega_{ij}(x)\partial_{x_{\tau_j}}u+
\hat{b}_i(x)u)\partial_{x_{\tau_i}}v+\displaystyle\sum^{N-1}_{^{i=1}}\check{b}_i(x)\partial_{x_{\tau_i}}uv\right)\,d\mu(\mathfrak{g}),
\end{equation}
for each $u,\,v\in D(\Lambda_{_{\Gamma}}):=H^1_{\mu(\mathfrak{g})}(\Gamma)$, where
$\partial_{x_{\tau_j}}u$ denotes the directional derivative of $u$ along the tangential directions $\tau_j$ at each point on $\Gamma$
($j\in\{1,\ldots,N-1\}$). Here $\omega_{ij}\in L^{\infty}_{\mu(\mathfrak{g})}(\Gamma)$,\,$\hat{b}_i\in L^{\hat{s}}_{\mu(\mathfrak{g})}(\Gamma)$,
\,$\check{b}_i\in L^{\check{s}}_{\mu(\mathfrak{g})}(\Gamma)$ ($i,\,j\in\{1,\ldots,N-1\}$), where
$\hat{s},\,\check{s}>(N-1)\chi_{_{\{N>3\}}}+\chi_{_{N=3}}$. We also assume that
a strongly ellipticity property holds, in the sense that there exists a constant $\eta'_0>$ such that
\begin{equation}
\label{ellipticity}\indent\indent\displaystyle\sum^{N-1}_{^{i,j=1}}\omega_{ij}(x)\xi_i\xi_j\,\geq\,\eta'_0\,|\xi|^2,
\,\,\,\,\,\forall\,x\in\Gamma,\,\,\,\forall\,\xi=(\xi_1,\ldots,\xi_{_{N-1}})\in\mathbb{R\!}^{N-1}.
\end{equation}
Observe that in general the form $\Lambda_{_{\Gamma}}(\cdot,\cdot)$ is non-symmetric. In fact, it follows that the bilinear form (\ref{3.1.04})
has the same structure as the interior form given in (\ref{3.1.03}). Then clearly conditions {\bf (B1)},
{\bf (B2)}, and {\bf (B3)}, are fulfilled. Moreover, since the embedding $H^1_{\mu(\mathfrak{g})}(\Gamma)\hookrightarrow L^{\bar{s}}_{\mu(\mathfrak{g})}(\Gamma)$
is compact whenever $1\leq\bar{s}<2(N-1)(N-3)^{-1}\chi_{_{\{N>3\}}}+\infty\chi_{_{\{N=3\}}}$, in the same way as in the derivation of Remark \ref{R-embedding}(d),
we get that $$\|u\|^2_{_{\bar{s},\Gamma}}\,\leq\,\epsilon\left\|\nabla_{_{\Gamma}}u\right\|^2_{_{2,\Gamma}}+C_{\epsilon}\|u\|^2_{_{2,\Gamma}}$$
for each $u\in H^1_{\mu(\mathfrak{g})}(\Gamma)$, for every $\epsilon>0$, and for some constant $C_{\epsilon}>0$. Consequently, proceeding in an analogous way
as in the proof of Proposition \ref{cont-coercive-form}, we deduce that the form $(\Lambda_{_{\Gamma}},D(\Lambda_{_{\Gamma}}))$ is continuous and weakly coercive.
Moreover, note that the operator $\mathcal{L}_{\mu}$ abstractly given by (\ref{1.03}) can be written explicitly by
\begin{equation}\label{L-explicit}
\mathcal{L}_{\mu}u:=-\displaystyle\sum^{N-1}_{^{i,j=1}}\partial_{x_{\tau_i}}\left(\omega_{ij}(x)\partial_{x_{\tau_j}}u+\hat{b}_i(x)u\right)
+\displaystyle\sum^{N-1}_{i=1}\check{b}_i(x)\partial_{x_{\tau_i}}u,
\end{equation}
for each $u\in H^1_{\mu(\mathfrak{g})}(\Gamma)$. We also recall that in this case, the bound improvements given in Remark \ref{improved-norms} hold, and in addition,
for this case we can assume that $\beta\in L^s_{\mu(\mathfrak{g})}(\Gamma)$ for $s>\frac{N-1}{2}$.\\
\indent A particular example of the above situation occurs when $\Omega$ is at least a bounded Lipschitz domain (see Example \ref{Ex5.01}). In this case,
it is well-known that the boundary $\Gamma$ is a compact Riemannian manifold, and $\mu(\mathfrak{g})=\mathcal{H}^{N-1}$, the
$(N-1)$-dimensional Hausdorff measure restricted to $\Gamma$, which in this case coincide $\mathcal{H}^{N-1}$-a.e. with the usual
Surface measure on $\Gamma$. Classical Wentzell-type problems have been investigated mostly over Lipschitz domains, but a boundary value problem including a boundary
operator of type (\ref{L-explicit}) is practically unknown in the literature. Recently in \cite{APU-NAZ-PAL-SOF22,APU-NAZ-PAL-SOF21}, the authors considered a Wentzell-type
problem (also known as Venttsel' problem) involving the operator $\mathcal{L}_{\mu}$ given by (\ref{L-explicit}) over a bounded domain of class $C^{1,1}$.
However, in their problem they assume that the coefficient $\omega_{ij}$ is symmetric, and that $\hat{b}_i\equiv0$. In our knowledge, these are the only
papers considering Wentzell boundary value problems whose structure are very similar to the one considered in this paper
(but less general in some sense than the one considered in (\ref{L-explicit})). A big generalization in \cite{APU-NAZ-PAL-SOF22,APU-NAZ-PAL-SOF21} comes from the fact that
in their problems they are considering the principal matrix coefficients $a_{ij}$ and $\omega_{ij}$ to be VMO (which include some unbounded cases). We will not go into further details here.

\subsection{The Wentzell problem on the Koch snowflake domain}\label{subsec06-02}

\indent We denote by $\vert P-P_0 \vert$ the Euclidean distance in $\mathbb{R}^2$ and the Euclidean balls by $B(P_0,r)=\lbrace P\in \mathbb{R}^2: |P-P_0|<r \rbrace$, $P_0 \in \mathbb{R}^2, r>0$. By the snowflake $\Gamma:=\partial\Omega$ we denote the union of three coplanar
Koch curves $K_i$ (see Example \ref{Ex5.02}, and \cite{FAL90}).
We assume that the junction points $A_1, A_3, A_5$ are the vertices of a regular triangle with unit side length, that is $|A_1-A_3|=|A_1-A_5|=|A_3-A_5|=1.$
$K_1$ is the uniquely determined self-similar set with respect to a family $\Psi^{1}$ of four suitable contractions $\psi_{1}^{(1)},...,\psi_{4}^{(1)}$, with respect to the same ratio $\frac{1}{3}$. Let $V_0^{(1)}:=\lbrace A_1,A_3 \rbrace$, $V_{j_1...j_h}^{(1)}:=\psi_{j_1}^{(1)}\circ ...\circ \psi_{j_h}^{(1)}(V_0^{(1)})$ and
\begin{center} $V_h^{(1)}:=\bigcup\limits_{j_1...j_h=1}^4 V_{j_1...j_h}^{(1)}$. \end{center}
We set $V_{\star}^{(1)}:=\cup_{h\geq 0}V_h^{(1)}$. It holds that $K_1=\overline{V_{\star}^{(1)}}$.\\
\indent In a similar way, it is possible to approximate $K_2,K_3$ by the sequences $(V_h^{(2)})_{h\geq 0}$, $(V_h^{(3)})_{h\geq 0}$, and denote their limits by $V_{\star}^{(2)},V_{\star}^{(3)}$. In order to approximate $\Gamma$, we define the increasing sequence of finite sets of points $\mathcal{V}_h:=\cup_{i=1}^3 V_h^{(i)}, h\geq 1$ and $\mathcal{V}_{\star}:=\cup_{h\geq 1}\mathcal{V}_h$. It holds that $\mathcal{V}_{\star}=\cup_{i=1}^3 V_{\star}^{(i)}$ and $\Gamma=\overline{V_{\star}}$.\\
\indent Moreover, on each $K_i$ there exists a unique Borel probability measure $\mu_i$ which is self-similar with respect to the family
$\Psi^{(i)}$. Therefore, one can define, in a natural way, a finite Borel measure $\mu_{_{\Gamma}}$ supported on $\Gamma$ by
\begin{equation} \label{eq1} \mu_{_{\Gamma}}:=\mu_1+\mu_2+\mu_3. \end{equation}
The measure $\mu_{_{\Gamma}}$ is a $d$-measure (see \cite{JO-WAL}), in the sense that there exist two positive constants $c_1,c_2$
$$\label{dmeasure}c_1 r^{d}\leq \mu_{_{\Gamma}}(B(P,r)\cap \Gamma)\leq c_2 r^d,\indent\textrm{for all}\,\,\,P\in \Gamma,$$
where $d=d_f=\frac{\log(4)}{\log(3)}.$\\
\indent In \cite{FRE-LAN04} the energy form $(\Lambda_{_{\Gamma}}^{},D(\Lambda_{_{\Gamma}}))$ on the Koch snowflake has been constructed.
$(\Lambda_{_{\Gamma}}^{},D(\Lambda_{_{\Gamma}}))$ is a nonnegative  energy functional in $L^2_{\mu}(\Gamma)$ and the
following result holds.

\begin{theorem}\,(see \cite{CapAcca40}) \label{fractals01}{\it The following properties hold.}
\begin{enumerate}\label{T5.1.2}
\item[(1)]\,\,\, $D(\Lambda_{_{\Gamma}})$ {\it is complete in the norm} $\|u\|_{_{D(\Lambda_{_{\Gamma}})}}:=
\|u\|_{_{2,\Gamma}}+\Lambda_{_{\Gamma}}(u,u)^{1/2}$.
\item[(2)]\,\,\, $D(\Lambda_{_{\Gamma}})$ {\it is dense in} $L^2_{\mu}(\Gamma)$.
\end{enumerate}
\end{theorem}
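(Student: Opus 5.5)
The plan is to reduce everything to the construction of $(\Lambda_{_{\Gamma}},D(\Lambda_{_{\Gamma}}))$ in \cite{FRE-LAN04}. There the energy is a monotone limit of renormalized discrete energies:
$$\Lambda_{_{\Gamma}}(u,u)=\displaystyle\sum^3_{i=1}\lim_{h\rightarrow\infty}E^{(i)}_h(u),\qquad E^{(i)}_h(u):=4^{h}\displaystyle\sum_{P,Q\in V^{(i)}_h,\,P\sim Q}(u(P)-u(Q))^2,$$
where $P\sim Q$ means $P,Q$ lie in a common $h$-cell $\psi^{(i)}_{j_1}\circ\cdots\circ\psi^{(i)}_{j_h}(K_i)$. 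The domain $D(\Lambda_{_{\Gamma}})$ consists of those $u\in C(\Gamma)$ for which this limit is finite. Three facts will be used throughout. The sequence $h\mapsto E^{(i)}_h(u)$ is nondecreasing. Each $E^{(i)}_h$ is continuous for pointwise convergence on $\mathcal{V}_\star$. Hence $\Lambda_{_{\Gamma}}(u,u)=\sup_h\sum_iE^{(i)}_h(u)$ is lower semicontinuous under pointwise convergence on $\mathcal{V}_\star$.

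The key step, and the one I expect to be the main obstacle, is a Morrey-type oscillation estimate:
$$\textrm{osc}_{C}\,u\,\leq\,c\,2^{-h}\Lambda_{_{\Gamma}}(u,u)^{1/2}\qquad\textrm{for every $h$-cell $C$ and every $u\in D(\Lambda_{_{\Gamma}})$.}$$
To prove it, fix $x\in V^{(i)}_m\cap C$ with $m>h$. Connect $x$ to a vertex of $C$ by a chain that descends through nested cells of levels $h,h+1,\ldots,m$. At level $n$, the jump inside one $n$-cell is bounded by $(4^{-n}E_n(u))^{1/2}\leq 2^{-n}\Lambda_{_{\Gamma}}(u,u)^{1/2}$. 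Summing this geometric series gives the claim on $\mathcal{V}_\star$, and continuity extends it to all of $C$. As a byproduct, combining the estimate with $\mu_{_{\Gamma}}(C)\sim 4^{-h}$, we obtain
$$\|u\|_{_{\infty,\Gamma}}\,\leq\,c\left(\|u\|_{_{2,\Gamma}}+\Lambda_{_{\Gamma}}(u,u)^{1/2}\right).$$

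For (1), take a Cauchy sequence $\{u_n\}$ for $\|\cdot\|_{_{D(\Lambda_{_{\Gamma}})}}$. By the sup bound above it is Cauchy in $C(\Gamma)$, so it converges uniformly to some $u\in C(\Gamma)$, and also in $L^2_{\mu}(\Gamma)$. Fix $n$ and apply lower semicontinuity to $u_m-u_n\rightarrow u-u_n$:
$$\Lambda_{_{\Gamma}}(u-u_n,u-u_n)\,\leq\,\liminf_{m\rightarrow\infty}\Lambda_{_{\Gamma}}(u_m-u_n,u_m-u_n),$$
and the right-hand side is small for $n$ large. Hence $u\in D(\Lambda_{_{\Gamma}})$ and $u_n\rightarrow u$ in the norm, which proves completeness.

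For (2), consider the piecewise harmonic functions of level $h$: prescribe arbitrary values on $\mathcal{V}_h$ and extend harmonically into each $h$-cell. For the Koch curve this is linear interpolation in the arclength-type parametrization $\gamma_i:[0,1]\rightarrow K_i$, $\gamma_{i\ast}(dt)=\mu_i$. These functions have finite energy, so they belong to $D(\Lambda_{_{\Gamma}})$. Given $v\in C(\Gamma)$, let $v_h$ be its level-$h$ piecewise harmonic interpolant. Then
$$\|v-v_h\|_{_{\infty,\Gamma}}\,\leq\,2\,\sup\{|v(P)-v(Q)|\,:\,|P-Q|\leq c\,3^{-h}\},$$
which tends to $0$ by the uniform continuity of $v$. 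Therefore $D(\Lambda_{_{\Gamma}})$ is uniformly dense in $C(\Gamma)$. Since $\mu_{_{\Gamma}}$ is a finite Borel measure on the compact set $\Gamma$, $C(\Gamma)$ is dense in $L^2_{\mu}(\Gamma)$, and (2) follows.
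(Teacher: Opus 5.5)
Your proof is correct. The paper does not prove this statement; it imports it from \cite{CapAcca40}. So your write-up is a self-contained substitute, not a variant of an argument in the text.

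What your route buys is that it isolates the only two features of the discrete construction of \cite{FRE-LAN04} that are actually needed. The first is the monotonicity of the renormalized energies $E^{(i)}_h$, which gives lower semicontinuity of $\Lambda_{_{\Gamma}}$ under pointwise convergence on $\mathcal{V}_\star$. The second is the cellwise oscillation bound. Since $h$-cells have diameter $3^{-h}$ and $2^{-h}=(3^{-h})^{d_f/2}$, that bound is exactly the H\"older embedding $D(\Lambda_{_{\Gamma}})\hookrightarrow C^{0,\beta}(\Gamma)$ with $\beta=d_f/2$. The paper quotes this embedding immediately afterwards, from a different source, as Proposition \ref{hold}.

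So, inside the paper's own logic, part (1) reduces to Proposition \ref{hold} (which gives sup-norm control) plus lower semicontinuity. You instead derive that embedding rather than assuming it. You obtain (2) by an explicit approximation: piecewise harmonic interpolation, i.e.\ piecewise linear interpolation in the parametrization $\gamma_i$, combined with uniform continuity.

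There is one small imprecision in the chain argument, and it affects only constants. Moving from a vertex of an $n$-cell to a vertex of a prescribed $(n+1)$-subcell may require up to three level-$(n+1)$ edges, not one jump inside a single cell. Each level therefore contributes at most $3\cdot 2^{-(n+1)}\Lambda_{_{\Gamma}}(u,u)^{1/2}$. The geometric series still yields $\textrm{osc}_{C}\,u\leq c\,2^{-h}\Lambda_{_{\Gamma}}(u,u)^{1/2}$.
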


\indent It follows that $D(\Lambda_{_{\Gamma}})$ is injected in $L^2_{\mu}(\Gamma)$ and it is a Hilbert space with
the scalar product associated to the norm $\|\cdot\|_{\mathcal{D}(\Lambda_{_{\Gamma}})}$. Then
we extend $\Lambda_{_{\Gamma}}$ as usual on the completed space $D(\Lambda_{_{\Gamma}})$.
By $\Lambda_{_{\Gamma}}(\cdot,\cdot)$ we denote the bilinear form defined on $D\left(\Lambda_{_{\Gamma}}
\right)\times D\left(\Lambda_{_{\Gamma}}\right)$ by polarization, i.e.
$$\Lambda_{_{\Gamma}}(u,v):=\frac{1}{2}\left(\Lambda_{_{\Gamma}}[u+v]-\Lambda_{_{\Gamma}}[u]-\Lambda_{_{\Gamma}}[v] \right),
\quad u,v \in D\left(\Lambda_{_{\Gamma}}\right),$$
where $\Lambda_{_{\Gamma}}[w]:=\Lambda_{_{\Gamma}}(w,w)$. The form $\Lambda_{_{\Gamma}}(\cdot,\cdot)$ is regular and strongly local.
Moreover, the functions in $D(\Lambda_{_{\Gamma}})$ posses
continuous representatives, which are actually H\"{o}lder continuous
on $F$ (as the next result shows).

\begin{prop}\,(see \cite[Proposition 2.4 and Corollary 3.3]{LV99})\label{hold}
{\it The space $D(\Lambda_{_{\Gamma}})$ is continuously embedded into $\mathbb{B\!}^{\,2}_{\,\beta}(\Gamma,\mu)$
and $C^{0,\beta}(\Gamma)$, respectively, where $C^{0,\beta}(\Gamma)$ denotes the space of H\"{o}lder
continuous functions with exponent} $\beta=\frac{\ln 4}{2 \ln
3}=\frac{d_f}{2}$ over $\Gamma$.
\end{prop}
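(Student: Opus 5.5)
The plan is to work with the self-similar discrete approximations of the Koch energy from \cite{FRE-LAN04}. For $u\in D(\Lambda_{_{\Gamma}})$ one has
$$\Lambda_{_{\Gamma}}(u,u)=\lim_{h\to\infty}\Lambda^{(h)}_{_{\Gamma}}[u],\qquad \Lambda^{(h)}_{_{\Gamma}}[u]:=\sum_{i=1}^3\sum_{j_1\ldots j_h}4^{h}\sum_{P,Q\in V^{(i)}_{j_1\ldots j_h}}(u(P)-u(Q))^2 ,$$
and the sequence $\Lambda^{(h)}_{_{\Gamma}}[u]$ is nondecreasing in $h$. Each $K_i$ is a quasi-arc. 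With respect to its natural arc parametrization, carrying the mass $4^{-h}$ of $\mu_i$ on each level-$h$ cell, the form $\Lambda_{_{\Gamma}}$ is just the Dirichlet integral of $u$ as a function on an interval. I will exploit this one-dimensional structure first on $V_\star$ and then pass to $\Gamma=\overline{\mathcal{V}_\star}$ by density, as in Theorem \ref{fractals01}.

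\emph{Step 1 (oscillation on cells).} For a level-$h$ cell $K_{j_1\ldots j_h}$ and $P,Q\in V_\star\cap K_{j_1\ldots j_h}$, I would join $P$ to $Q$ by a chain of successive refinements, as in the proof of Morrey's inequality on an interval. Applying Cauchy--Schwarz on each level $m\geq h$ and summing the resulting geometric series gives
$$|u(P)-u(Q)|\le C\,4^{-h/2}\,\Lambda_{_{\Gamma}}(u,u)^{1/2},$$
and, more precisely, $\mathrm{osc}_{K_w}u^2\le C4^{-h}E_{K_w}(u)$, where $E_{K_w}$ is the part of the energy carried by the cell $K_w$.

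\emph{Step 2 (H\"older bound).} The geometry of the snowflake (bounded turning, with contraction ratio $1/3$) shows the following: if $|P-Q|\sim 3^{-h}$, then $P$ and $Q$ lie in the same level-$h$ cell or in two adjacent level-$h$ cells, and the number of such cells is uniformly bounded. This covers the junction points $A_1,A_3,A_5$ as well. Since $4^{-h/2}=(3^{-h})^{\log4/(2\log3)}$, Step 1 yields $|u(P)-u(Q)|\le C|P-Q|^{d_f/2}\|u\|_{D(\Lambda_{\Gamma})}$. The sup bound follows from $\|u\|_{2,\Gamma}$ together with this oscillation bound. The estimate then extends from $\mathcal{V}_\star$ to the continuous representative on $\Gamma$, giving the embedding into $C^{0,\beta}(\Gamma)$.

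\emph{Step 3 (Besov bound).} I would split $\Gamma\times\Gamma$ into the annuli $\{3^{-h-1}<|x-y|\le 3^{-h}\}$. Each such pair lies in a bounded number of adjacent level-$h$ cells, and each cell has $\mu$-mass comparable to $4^{-h}$ by the $d$-measure property (\ref{eq1}). Using the localized bound of Step 1 and the fact that each cell's energy is counted only a bounded number of times, the contribution of the $h$-th annulus to the double integral is at most $C\,4^{-h}\,3^{h\,\kappa}\sum_w E_{K_w}(u)\le C\,4^{-h}3^{h\kappa}\Lambda_{_{\Gamma}}(u,u)$. Here $\kappa$ is the exponent of $|x-y|$ in the kernel of the Besov norm with smoothness $\beta$ in (\ref{nonlocal-boundary}).

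The main obstacle is this last summation over $h$. A naive count at $\beta=d_f/2$ makes the factor $4^{-h}3^{h\kappa}$ borderline, so the geometric series does not automatically converge. I would first check carefully which exponent the paper's normalization of $\mathbb{B\!}^{\,2}_{\,\beta}(\Gamma,\mu)$ actually uses. If the estimate really is borderline, I would replace the crude bound of Step 1 by a sharper $L^2$ average: estimate $\int_{K_w}\int_{K_{w'}}|u(x)-u(y)|^2\,d\mu\,d\mu$ through level-$m$ differences with $m\ge h$, weighted by $4^{m-h}$. This is a discrete Hardy-type inequality, which should give a summable double series $\sum_h\sum_{m\ge h}2^{-(m-h)}\Lambda^{(m)}_{\text{local}}$, as in \cite[Proposition 2.4]{LV99}.
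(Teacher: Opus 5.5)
Your Steps 1 and 2 are sound. In the measure parametrization of each $K_i$ the energy is a one-dimensional Dirichlet integral, so $\mathrm{osc}^2_{K_w}u\le 4^{-h}E_{K_w}(u)$. Bounded turning of the snowflake then turns $4^{-h/2}=(3^{-h})^{d_f/2}$ into the H\"older bound. The snowflake is a quasicircle, so the junctions $A_1,A_3,A_5$ cause no trouble, and ``same or adjacent cell'' holds after a fixed shift of level: between non-adjacent cells the connecting arc contains a whole cell of diameter $3^{-h}$. The paper itself gives no argument for this proposition; it imports it from \cite{LV99}.

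The gap is in Step 3, and it comes from a miscount, not from a real obstruction. Your bound $C4^{-h}3^{h\kappa}\sum_wE_{K_w}(u)$ for the $h$-th annulus omits the $\mu\otimes\mu$-mass of the region of integration. For $x\in K_w$, the admissible $y$ lie in a bounded chain of cells around $K_w$, so this region has mass at most $C4^{-2h}$. The correct bound is therefore $C\,4^{-2h}\cdot 4^{-h}\cdot 3^{h\kappa}\sum_wE_{K_w}(u)$.

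The normalization is not in doubt either. For $N=2$ one has $\beta=d_f/2=r_d/2=1-\frac{N-d}{2}$, so $\mathbb{B\!}^{\,2}_{\,\beta}(\Gamma,\mu)$ is exactly $H^{r_d/2}_{\mu}(\Gamma)$. Its kernel in (\ref{nonlocal-boundary}) is $|x-y|^{-(2+2d-N)}=|x-y|^{-2d_f}$, hence $\kappa=2d_f$ and $3^{h\kappa}=16^h$. With this $\kappa$ your expression would actually grow like $4^h$, not be borderline, which already signals the miscount.

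With the measure factor restored, the $h$-th annulus contributes at most $C4^{-h}\Lambda_{_{\Gamma}}(u,u)$. Summing the geometric series, and bounding the pairs with $|x-y|>1$ by $C\|u\|^2_{_{2,\Gamma}}$, gives the Besov embedding; no Hardy-type refinement is needed.

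A true borderline appears only in two situations. One is if you use the global bound $|u(x)-u(y)|^2\le C|x-y|^{d_f}\Lambda_{_{\Gamma}}(u,u)$ instead of the localized one: each annulus then contributes $\asymp\Lambda_{_{\Gamma}}(u,u)$, giving a logarithmic divergence. The other is at smoothness $d_f$ instead of $d_f/2$.

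Equivalently, in the bi-H\"older measure parametrization the Besov seminorm becomes the $H^{1/2}$ Gagliardo seminorm on a loop, and $D(\Lambda_{_{\Gamma}})$ becomes $H^1$. The Besov claim is then just $H^1\hookrightarrow H^{1/2}$ in one dimension.
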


\indent In the following we identify $u \in D(\Lambda_{_{\Gamma}})$ with its continuous
representative, still denoted by $u$. By proceeding as in \cite{FRE-LAN04}, one can define on $\Gamma$ a Lagrangian $(\mathcal{L},D(\Lambda_{_{\Gamma}}))$
and an energy form $(\Lambda_{_{\Gamma}}, D(\Lambda_{_{\Gamma}}))$. More precisely, $\Lambda_{_{\Gamma}}(u,v)=\int_{\Gamma} d\mathcal{L}(u,v), $
for every $u,\,v\in D(\Lambda_{_{\Gamma}})$ where $$D(\Lambda_{_{\Gamma}})=\{ u\in C(\Gamma)\mid\, u|_{K_i}\in D(\Lambda_{_{\Gamma_i}}),\,\,\,
\textrm{for}\,\,\,i\in\{1,2,3\}\}.$$
In particular it holds that
\begin{equation}
\label{energyonF}
\Lambda_{_{\Gamma}}[u]= \displaystyle\sum_{i=1}^3\Lambda_{_{\Gamma_i}}i[u|_{K_i}].
\end{equation}
Moreover, since the form $(\Lambda_{_{\Gamma}}, D(\Lambda_{_{\Gamma}}))$ is a regular Dirichlet form on $L^2_{\mu}(\Gamma)$ with
domain $D(\Lambda_{_{\Gamma}})$ dense in $L^2_{\mu}(\Gamma)$,
by \cite[Chap. 6, Theorem 2.1]{KATO77}, there exists an unique self-adjoint, non-positive operator on $L^2_{\mu}(\Gamma)$, which we denote by
$\mathcal{L}_{\mu}$, with domain $D(\mathcal{L}_{\mu})\subseteq D(\Lambda_{_{\Gamma}})$, such that
\begin{equation}\label{Laplace-Koch}
\displaystyle\int_{\Gamma} d\mathcal{L}(u,v)=\Lambda_{_{\Gamma}}(u,v)=-\displaystyle\int_{\Gamma}(\mathcal{L}_{\mu}u)v\,d\mu,\indent\textrm{for all}
\,\,\,u\in D(\mathcal{L}_{\mu}),\,\,v\in D(\Lambda_{_{\Gamma}}).
\end{equation}

\indent From all the above constructions and known properties, one sees that
the form in \eqref{3.1.04} is symmetric and clearly satisfies {\bf (B1)} and  {\bf (B2)}. Furthermore,
 since $\mathcal{E}[u]$ is strongly local, one has {\bf (B3)} and a straightforward verification shows that conditions {\bf (B4)}
 and {\bf (B4)'} are fulfilled for $c^{\star}_1=1$, and $c^{\star}_2=c^{\star}_3=0$.

\subsection{Applications of Wentzell boundary value problems}\label{subsec06-03}

\indent There is a vast number of applications involving differential equations with Wentzell-type boundary conditions.
One can name few applications, such as in phase-transition phenomena,
fluid diffusion, heat flow subject to nonlinear cooling on the boundary, suspended transport energy,
fermentation, population dynamics, and climatology, among many others (see for example \cite{ARON-WIN78,DIAZ-TEL08,EVANS76-77,PAO92}).
Furthermore, Wentzell problems over fractal-like domains is valuable due to the fact that several natural and industrial
processes lead to the formation of rough surfaces or occur across them. Fractal boundaries and fractal layers may be of great interest for
those applications in which the surface effects are enhanced with respect to the surrounding volume
(see \cite{LAN-CEF-DELL12,Ce-La-Hd,Ce-La} for details and motivations).

\section{Examples: nonlocal operators}\label{sec07}

\indent In this part we give concrete examples of nonlocal operators $\mathcal{J}_{_{\Omega}}$ and $\Theta_{_{\Gamma}}$. Unless specified,
we will assume all the conditions of Assumption \ref{As1}.

\subsection{Besov-type nonlocal operators}\label{subsec07-01}

\indent We begin by considering the classical Besov-type operators, which are clear examples of nonlocal maps. These play a crucial role
in fractional differential equations, and thus could bring some sort of ``fractional flavor" into the boundary value problems
(\ref{1.05}).\\
\indent To begin, the interior and boundary Besov-type operators are defined by the maps $\mathcal{J}_{_{\Omega}}:H^{\mathfrak{s}/2}(\Omega)\rightarrow
H^{-\mathfrak{s}/2}(\Omega)$ and $\Theta_{_{\Gamma}}:H^{r_d/2}_{\mu}(\Gamma)\rightarrow H^{-r_d/2}_{\mu}(\Gamma)$, explicitly given by
\begin{equation}\label{Besov-Interior-Map}
(\mathcal{J}_{_{\Omega}}u)v:=\displaystyle\int_{\Omega}\int_{\Omega}\mathfrak{a}(x,y)\frac{(u(x)-u(y))(v(x)-v(y))}
{|x-y|^{2\mathfrak{s}+N}}\,dxdy,
\end{equation}
and
\begin{equation}\label{Besov-Boundary-Map}
(\Theta_{_{\Gamma}}u)v:=\displaystyle\int_{\Gamma}\int_{\Gamma}\mathfrak{b}(x,y)\frac{(u(x)-u(y))(v(x)-v(y))}{|x-y|^{2+2d-N}}\,d\mu_xd\mu_y,
\end{equation}
for $\mathfrak{s}\in(0,1)$, where $\mathfrak{a}\in L^{\infty}(\Omega\times\Omega,dx\oplus dy)$ and $\mathfrak{b}\in L^{\infty}(\Gamma\times\Gamma,d\mu_x\oplus d\mu_y)$
are both nonnegative almost everywhere in their respective domains. Then in this case
the nonlocal form $K(\cdot,\cdot)$ given by (\ref{nonlocal-term}) is explicitly written as

\begin{equation}\label{4.1.01}
K(u,v):=\displaystyle\int_{\Omega}\int_{\Omega}\mathfrak{a}\frac{(u(x)-u(y))(v(x)-v(y))}
{|x-y|^{2s+N}}\,dxdy+\displaystyle\int_{\Gamma}\int_{\Gamma}\mathfrak{b}\frac{(u(x)-u(y))(v(x)-v(y))}{|x-y|^{2+2d-N}}\,d\mu_xd\mu_y.
\end{equation}
Then a straightforward calculation shows that the conditions {\bf (A1)}, {\bf (A2)}, {\bf (A3)}, and {\bf (A3)'} are valid for this case.

\subsection{Dirichlet-to-Neumann map}\label{subsec07-02}

\indent We now consider a generalized notion of the classical Dirichlet-to-Neumann map, which is nonlocal
boundary operator. To begin, given $\gamma\in L^{\infty}(\Gamma)$ with $\displaystyle{\textrm{ess}}\inf_{^{x\in\partial\Omega}}
\gamma(x)\,\geq\,\gamma_0$ for some constant $\gamma_0>0$, let $\mathfrak{w}$ be the solution of the Dirichlet problem
\begin{equation}
\label{4.1.06}\left\{
\begin{array}{lcl}
-\textrm{div}(\gamma\nabla\mathfrak{w})\,=\,0\indent\,\textrm{in}\,\,\Omega\\
\,\,\,\mathfrak{w}|_{_{\Gamma}}\,=\,w\indent\indent\indent\,\,\,\,\,\textrm{on}\,\Gamma,\\
\end{array}
\right.
\end{equation}
for $w\in H^{r_d/2}_{\mu}(\Gamma)$ given. Then we define the {\it Dirichlet-to-Neumann map} $\mathcal{J}_{_{\Gamma}}:H^{r_d/2}_{\mu}(\Gamma)\rightarrow
H^{-r_d/2}_{\mu}(\Gamma)$ by
\begin{equation}\label{4.1.07}
\mathcal{J}_{_{\Gamma}}w:=\gamma\displaystyle\frac{\partial\mathfrak{w}}{\partial\nu_{\mu}},
\end{equation}
where the normal derivative here is interpreted in the generalized sense (see Definition \ref{Def-gen-normal} or \cite[Definition 1.1]{AVS-WAR}).
For properties of this nonlocal map on sufficiently smooth domains (that is, for at least bounded Lipschitz domains),
one can refer to \cite{AR-MAZZ12,JONN99,JONS97,SILV-UHLM} (and the references therein). On rough non-smooth domain, one can refer
to the work by Arendt and ter Elst \cite{AR-ELST11}.\\
\indent Before proceeding with the verification of the properties of this nonlocal operator, we first state a
general result due to Arendt and Elst (e.g. \cite[Theorem 3.2]{AR-ELST12}), which will be needed for the next important result.

\begin{theorem}\label{sectorial}
\,(see \cite{AR-ELST12}) \,Let $(a,D(a))$ be a sesquilinear form over a Hilbert space $H$, and let $j:D(a)\rightarrow H$ be a linear map.
If $a$ is $j$-sectorial (e.g. \cite[section 3, pag. 8]{AR-ELST12}) and $j(D(a))$ is dense in $H$, then the following conditions hold:
\begin{enumerate}
\item[(a)]\,\, There exists an operator $A$ in $H$ such that for all $x,\,y\in H$, one has $x\in D(A)$ and $Ax=y$ if and only if
there exists a sequence $\{u_n\}\subseteq D(a)$ such that $\displaystyle\lim_{n\rightarrow\infty}j(u_n)=x$ weakly in $H$,
$\displaystyle\sup_{n\in\mathbb{N\!}}\textrm{Re}\{a(u_n,u_n)\}<\infty$, and $\displaystyle\lim_{n\rightarrow\infty}a(u_n,v)=\langle y,j(v)\rangle_{H}$
for all $v\in D(a)$.
\item[(b)]\,\, The operator $A$ of Statement (a) is $m$-sectorial.
\item[(c)]\,\, Given $x,\,y\in H$, one has $x\in D(a)$ and $Ax=y$ if and only if there exists a Cauchy sequence $\{u_n\}\subseteq D(a)$
such that $\displaystyle\lim_{n\rightarrow\infty}j(u_n)=x$ in H and
$\displaystyle\lim_{n\rightarrow\infty}a(u_n,v)=\langle y,j(v)\rangle_{H}$ for all $v\in D(a)$.
\end{enumerate}
\end{theorem}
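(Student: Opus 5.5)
The plan is to reduce the $j$-sectorial (possibly non-closable) form $a$ to a \emph{closed} sectorial form on $H$, and then invoke Kato's representation theorem \cite[Chap. 6]{KATO77}.

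\textbf{Normalization and completion.} Recall that $j$-sectoriality means there exist $\gamma\in\mathbb{R}$ and $C\ge0$ with
$$\mathrm{Re}\,a(u,u)+\gamma\|j(u)\|_H^2\ge0\quad\text{and}\quad|\mathrm{Im}\,a(u,u)|\le C\bigl(\mathrm{Re}\,a(u,u)+\gamma\|j(u)\|_H^2\bigr).$$
First replace $a$ by $a+(\gamma+1)\langle j\cdot,j\cdot\rangle_H$. This shifts $A$ by a scalar and changes none of the statements (a)--(c). Now $\|u\|_a^2:=\mathrm{Re}\,a(u,u)+\|j(u)\|_H^2$ is a seminorm on $D(a)$. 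Quotient by its kernel and complete to obtain a Hilbert space $\widetilde V$. The sectoriality bound gives the Cauchy--Schwarz-type estimate $|a(u,v)|\le c\|u\|_a\|v\|_a$. Hence $a$ extends to a bounded form $\tilde a$ on $\widetilde V$, and $j$ extends to a bounded map $\tilde j:\widetilde V\to H$.

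\textbf{Splitting off the kernel of $\tilde j$.} Let $W:=\ker\tilde j$, a closed subspace of $\widetilde V$. On $W$ one has $\mathrm{Re}\,\tilde a(w,w)=\|w\|^2_{\widetilde V}$, so $\tilde a$ is coercive there. Define
$$W^{a}:=\{u\in\widetilde V\mid \tilde a(u,w)=0\ \text{for all } w\in W\}.$$
For $u\in\widetilde V$, Lax--Milgram on $W$ gives a unique $w\in W$ with $\tilde a(u-w,\cdot)=0$ on $W$. Therefore $\widetilde V=W\oplus W^{a}$, with a bounded projection $P$ onto $W^{a}$.

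\textbf{The closed form and the operator $A$.} The map $\tilde j|_{W^a}$ is injective. Set $D(b):=\tilde j(\widetilde V)=\tilde j(W^a)$, which is dense in $H$ since $j(D(a))$ is, and define $b(\tilde j u,\tilde j v):=\tilde a(Pu,Pv)$. Equip $D(b)$ with the norm transported from $W^a$. One checks that this norm is equivalent to $(\mathrm{Re}\,b+\|\cdot\|_H^2)^{1/2}$, so $b$ is a densely defined, closed, sectorial form on $H$. Kato's theorem then yields an $m$-sectorial operator $A$ associated with $b$, which gives (b). Unwinding the definitions, $x\in D(A)$ with $Ax=y$ if and only if there is $u\in\widetilde V$ with $\tilde j u=x$ and $\tilde a(u,v)=\langle y,\tilde j v\rangle$ for all $v\in\widetilde V$. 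Such a $u$ automatically lies in $W^a$, as one sees by testing with $v\in W$.

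\textbf{Statements (c) and (a).} For (c), a $\|\cdot\|_a$-Cauchy sequence $\{u_n\}\subseteq D(a)$ converges in $\widetilde V$ to some $u$. Continuity of $\tilde a$ and $\tilde j$ turns the two limit hypotheses into the characterization just stated. Conversely, given such a $u$, approximate it in $\widetilde V$ by elements of $D(a)$.

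The main obstacle is (a), where only weak convergence of $j(u_n)$ and boundedness of $\mathrm{Re}\,a(u_n,u_n)$ are assumed. The plan here is as follows:
\begin{itemize}
\item[(1)] These hypotheses give boundedness of $\{u_n\}$ in $\widetilde V$.
\item[(2)] Extract a weakly convergent subsequence $u_n\rightharpoonup u$ in $\widetilde V$.
\item[(3)] Weak continuity of the bounded map $\tilde j$ identifies $\tilde j u=x$.
\item[(4)] For fixed $v$, $\tilde a(\cdot,v)$ is a bounded linear functional on $\widetilde V$, so $\tilde a(u,v)=\lim a(u_n,v)=\langle y,j(v)\rangle$ for $v\in D(a)$, and this extends to all $v\in\widetilde V$ by density.
\end{itemize}
By the characterization above, this gives $x\in D(A)$ and $Ax=y$. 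The converse direction of (a) follows from (c), since strong convergence implies weak convergence and Cauchy sequences have bounded energy.
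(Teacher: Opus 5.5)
The paper gives no proof of this result; it simply quotes it from Arendt--ter Elst \cite{AR-ELST12}. Your argument is correct and follows the standard route of that reference: complete $D(a)$ in the energy norm to reduce to the complete $j$-elliptic case, realize the operator through the closed sectorial form on $H$ obtained from the splitting $\widetilde V=\ker\tilde j\oplus W^{a}$ together with Kato's representation theorem, and get the weak characterization (a) by weak compactness in $\widetilde V$ (reading ``$x\in D(a)$'' in (c) as the intended ``$x\in D(A)$'', as you did).
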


\indent Next, we establish the following
result, which is a sort of generalization (in the sense of more general measures) of \cite[Theorem 3.3]{AR-ELST11}.

\begin{theorem}\label{equiv-Dirichlet-to-Neumann}
The operator $\mathcal{J}_{_{\Gamma}}$ over $L^2_{\mu}(\Gamma)$ exists, such that the following conditions hold: given $\phi,\,\psi\in L^2_{\mu}(\Gamma)$,
one has $\phi\in D(\mathcal{J}_{_{\Gamma}})$ and $\mathcal{J}_{_{\Gamma}}\phi=\psi$ if and only if there exists $u\in H^1(\Omega)$ satisfying
\begin{enumerate}
\item[(a)]\,\, $\Delta u=0$;
\item[(b)]\,\, $\phi$ is the trace of $u$;
\item[(c)]\,\, $\gamma\displaystyle\frac{\partial u}{\partial\nu_{\mu}}\in L^2_{\mu}(\Gamma)$ and
$\gamma\displaystyle\frac{\partial u}{\partial\nu_{\mu}}=\psi$.
\end{enumerate}
\end{theorem}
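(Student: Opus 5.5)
The plan is to follow the strategy of \cite[Theorem 3.3]{AR-ELST11}, with the surface measure replaced by the upper $d$-Ahlfors measure $\mu$. We apply Theorem \ref{sectorial} to the following data.
\begin{itemize}
\item The Hilbert space is $H:=L^2_{\mu}(\Gamma)$.
\item The form domain is $D(a):=H^1(\Omega)$, with the (accretive, continuous) form
$$a(u,v):=\int_{\Omega}\gamma\,\nabla u\cdot\nabla v\,dx .$$
Here $\gamma$ is understood as a bounded, uniformly positive coefficient on $\overline{\Omega}$, as in (\ref{4.1.06}).
\item The map $j:H^1(\Omega)\rightarrow L^2_{\mu}(\Gamma)$ is the trace $j(u):=u|_{_{\Gamma}}$.
\end{itemize}
Under Assumption \ref{As1}, $j$ is linear and bounded by (\ref{2.04}); in fact it is compact by Theorem \ref{embeddings}, since $2<2d/(N-2)$. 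The operator $A$ produced by Theorem \ref{sectorial} will be our $\mathcal{J}_{_{\Gamma}}$. We then identify it via Theorem \ref{sectorial}(c) and Definition \ref{Def-gen-normal}.

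\textbf{Step 1: hypotheses of Theorem \ref{sectorial}.}
\begin{itemize}
\item \emph{$j$-sectoriality.} We need $\textrm{Re}\,a(u,u)+\omega\|j(u)\|^2_{_{2,\Gamma}}\geq c\|u\|^2_{_{H^1(\Omega)}}$ up to the usual sectorial bound on the imaginary part. The imaginary part is trivial because $a$ is symmetric and nonnegative. The coercivity follows from $\gamma\geq\gamma_0>0$ together with the equivalent norm of Remark \ref{R-embedding}(b), namely $\|\nabla u\|^2_{_{2,\Omega}}+\|u|_{_{\Gamma}}\|^2_{_{2,\Gamma}}$.
\item \emph{Density of $j(H^1(\Omega))$ in $L^2_{\mu}(\Gamma)$.} Traces of $C^1_c(\mathbb{R}^N)$ functions lie in $j(H^1(\Omega))$, since $\Omega$ is an extension domain. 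They contain the restrictions to $\Gamma$ of all $C_c(\mathbb{R}^N)$ functions up to uniform approximation (Stone--Weierstrass). Because $\mu$ is a finite Borel regular measure on the compact set $\Gamma$, continuous functions are dense in $L^2_{\mu}(\Gamma)$.
\end{itemize}
Theorem \ref{sectorial}(a)--(b) then gives an $m$-sectorial operator $\mathcal{J}_{_{\Gamma}}$ on $L^2_{\mu}(\Gamma)$, which proves existence.

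\textbf{Step 2: characterization.} Let $\phi\in D(\mathcal{J}_{_{\Gamma}})$ with $\mathcal{J}_{_{\Gamma}}\phi=\psi$. Theorem \ref{sectorial}(c) gives a Cauchy sequence $\{u_n\}\subseteq H^1(\Omega)$ with $j(u_n)\to\phi$ and $a(u_n,v)\to\langle\psi,j(v)\rangle$ for all $v\in H^1(\Omega)$.
\begin{itemize}
\item \emph{Harmonicity.} Testing with $v\in H^1_0(\Omega)$, where $j(v)=0$, gives $a(u_n,v)\to0$.
\item \emph{Convergence of $u_n$.} We split $u_n=h_n+z_n$, where $z_n\in H^1_0(\Omega)$ and $h_n$ is $a$-harmonic (orthogonal to $H^1_0(\Omega)$ with respect to $a$). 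The harmonic parts $h_n$ have the same traces as $u_n$. On $a$-harmonic functions, $a(h,h)+\|j(h)\|^2_{_{2,\Gamma}}$ controls $\|h\|^2_{_{H^1}}$, so Cauchy-ness of $\{u_n\}$ makes $\{h_n\}$ Cauchy in $H^1(\Omega)$. Hence $h_n\to u$ in $H^1(\Omega)$.
\item \emph{Properties of the limit.} The limit $u$ satisfies $-\textrm{div}(\gamma\nabla u)=0$ weakly, which is condition (a) read with the coefficient $\gamma$. Continuity of $j$ gives $j(u)=\phi$, which is (b). Since $a(u_n-h_n,v)=a(z_n,v)=0$ for $a$-harmonic $v$, we get $a(u,v)=\langle\psi,v|_{_{\Gamma}}\rangle_{_{2,\Gamma}}$ for all $v\in H^1(\Omega)$.
\item \emph{Normal derivative.} By Definition \ref{Def-gen-normal}, with $f=0$ and boundary measure $\psi\,d\mu$, this identity says exactly that $\gamma\,\partial u/\partial\nu_{\mu}=\psi\in L^2_{\mu}(\Gamma)$, which is (c).
\end{itemize}
Conversely, given $u$ satisfying (a)--(c), the constant sequence $u_n:=u$ meets the criterion of Theorem \ref{sectorial}(c). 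The identity $a(u,v)=\langle\psi,j(v)\rangle$ for $v\in C^1(\overline{\Omega})$ extends to all $v\in H^1(\Omega)$ by density of $C^1(\overline{\Omega})$ in $H^1(\Omega)$ (extension domain) and continuity of the trace (\ref{2.04}). Hence $\phi\in D(\mathcal{J}_{_{\Gamma}})$ and $\mathcal{J}_{_{\Gamma}}\phi=\psi$.

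\textbf{Main obstacle.} The main obstacle is the density of $C^1(\overline{\Omega})$-traces in $L^2_{\mu}(\Gamma)$, together with the matching of the generalized normal derivative (defined with $C^1(\overline{\Omega})$ test functions) to the $H^1$ variational identity. Both rest on $\Omega$ being a $W^{1,2}$-extension domain and on $\mu$ being upper $d$-Ahlfors with $d>N-2$, so that (\ref{2.04}) is available. We would verify these carefully first; the rest transcribes \cite{AR-ELST11}.
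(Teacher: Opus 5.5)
Your proposal is correct and follows the same route as the paper: Theorem \ref{sectorial} with $H=L^2_{\mu}(\Gamma)$, $j$ the trace and $a(u,v)=\int_{\Omega}\gamma\nabla u\cdot\nabla v\,dx$, then identification of the operator through part (c) and Definition \ref{Def-gen-normal}. The differences are minor: you take $D(a)=H^1(\Omega)$ instead of $H^1(\Omega)\cap C(\overline{\Omega})$, which is legitimate here because the trace is continuous by (\ref{2.04}) and turns the converse into a constant-sequence argument, and your splitting $u_n=h_n+z_n$ is unnecessary, since Remark \ref{R-embedding}(b) applied directly to $u_n-u_m$ already gives convergence in $H^1(\Omega)$, exactly as the paper does; also, as you flag, (a) comes out as $\textrm{div}(\gamma\nabla u)=0$, which is the literal $\Delta u=0$ only for constant $\gamma$, an assumption the paper makes implicitly by writing $a(v,w)=\gamma\int_{\Omega}\nabla v\nabla w\,dx$.
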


\begin{proof}
The proof runs as in \cite[proof of Theorem 3.3]{AR-ELST11} with minor changes, but for the sake of completeness, we spell the details below.
Given $H:=L^2_{\mu}(\Gamma)$, consider the bilinear form $(a,D(a))$ given by: $D(a):=H^1(\Omega)\cap C(\overline{\Omega})$, and
$a(v,w):=\gamma\int_{\Omega}\nabla v\nabla w\,dx$. Sea $j:D(a)\rightarrow L^2_{\mu}(\Gamma)$, defined by $j(u):=u_{_{\Gamma}}$.
Then $j(D(a))$ is dense in $L^2_{\mu}(\Gamma)$, and $a(\cdot,\cdot)$ is sectorial (e.g. \cite[Lemma 3.1]{AR-ELST12}).
Thus, let $A$ be the operator associated with $(a,j)$ in the sense of Theorem \ref{sectorial}. We show that $A$ possesses the properties
of $\mathcal{J}_{_{\Gamma}}$. Indeed, let $\phi,\,\psi\in L^2_{\mu}(\Gamma)$, and assume first that $\phi\in D(A)$ and $A\phi=\psi$.
Then one can find a sequence $\{u_n\}\subseteq D(a)$ such that
\begin{equation}\label{equiv_DN-01}
\displaystyle\lim_{n,\,m\rightarrow\infty}\|\nabla(u_n-u_m)\|^2_{_{2,\Omega}}=0,\,\,\,\,\,\,\,\,\,\,\,\displaystyle\lim_{n\rightarrow\infty}u_n|_{_{\Gamma}}=\phi
\,\,\,\textrm{in}\,\,L^2_{\mu}(\Gamma),
\end{equation}
and
\begin{equation}\label{equiv_DN-02}
\displaystyle\lim_{n\rightarrow\infty}\gamma\displaystyle\int_{\Omega}\nabla u_n\nabla v\,dx=\displaystyle\int_{\Gamma}\psi v\,d\mu,\,\,\,\,\,\,\,\,
\textrm{for every}\,\,v\in D(a).
\end{equation}
Recalling Remark \ref{R-embedding}(b), one sees that $\{u_n\}\subseteq H^1(\Omega)$ is a Cauchy sequence, and thus
$\displaystyle\lim_{n\rightarrow\infty}u_n=u$ in $H^1(\Omega)$. Moreover, by (\ref{equiv_DN-02}) we see that
\begin{equation}\label{equiv_DN-03}
\gamma\displaystyle\int_{\Omega}\nabla u\nabla v\,dx=\displaystyle\int_{\Gamma}\psi v\,d\mu,\,\,\,\,\,\,\,\,
\textrm{for all}\,\,v\in D(a).
\end{equation}
Taking $v\in C^{\infty}_c(\Omega)$ in (\ref{equiv_DN-03}), we see that $\Delta u=0$, and consequently,
\begin{equation}\label{equiv_DN-04}
\gamma\displaystyle\int_{\Omega}(\Delta u)v\,dx+\gamma\displaystyle\int_{\Omega}\nabla u\nabla v\,dx=\displaystyle\int_{\Gamma}\psi v\,d\mu,\,\,\,\,\,
\textrm{for every}\,\,v\in D(a).
\end{equation}
Integrating by parts (in the generalized way), we find that $\gamma\displaystyle\frac{\partial u}{\partial\nu_{\mu}}\in L^2_{\mu}(\Gamma)$ and
$\gamma\displaystyle\frac{\partial u}{\partial\nu_{\mu}}=\psi$, as desired.\\
\indent Conversely, suppose that there exists $u\in H^1(\Omega)$ fulfilling the conditions in (a),\,(b),\,(c) of the theorem.
Then by density, there exists $\{u_n\}\subseteq D(a)$ such that $\displaystyle\lim_{n\rightarrow\infty}u_n=u$ in $H^1(\Omega)$ and
$\displaystyle\lim_{n\rightarrow\infty}u_n|_{_{\Gamma}}=\phi$ in $L^2_{\mu}(\Gamma)$. Thus, we have that
$\displaystyle\lim_{n,\,m\rightarrow\infty}a(u_n-u_m,u_n-u_m)=0$, and using the fact that $\Delta u=0$ together with the definition of
$\displaystyle\frac{\partial u}{\partial\nu_{\mu}}$, we find that
\begin{equation}\label{equiv_DN-05}
\displaystyle\lim_{n\rightarrow\infty}a(u_n,v)=\displaystyle\lim_{n\rightarrow\infty}\gamma\displaystyle\int_{\Omega}\nabla u_n\nabla v\,dx
=\gamma\displaystyle\int_{\Omega}(\Delta u)v\,dx+\gamma\displaystyle\int_{\Omega}\nabla u\nabla v\,dx=\displaystyle\int_{\Gamma}\psi v\,d\mu,\,\,\,\,\,
\textrm{for all}\,\,v\in D(a).
\end{equation}
Equality (\ref{equiv_DN-05}) shows that $\phi\in D(A)$ and $A\phi=\psi$. In particular, this shows that the operator with the properties
of $\mathcal{J}_{_{\Gamma}}$ is well defined, and equals $A$. This completes the proof of the theorem.
\end{proof}

\indent Next, for $u,\,v\in\mathcal{W}_2(\Omega;\Gamma)$, we define concretely the nonlocal form $K(\cdot,\cdot)$ by
\begin{equation}\label{4.1.08}
K(u,v):=\langle\mathcal{J}_{_{\Gamma}}u,v\rangle=\displaystyle\int_{\Gamma}(\mathcal{J}_{_{\Gamma}}u)v\,d\mu,
\end{equation}
for $\mathcal{J}_{_{\Gamma}}$ given by (\ref{4.1.07}). Notice that we are assuming that $\mathcal{J}_{_{\Gamma}}u\in L^2_{\mu}(\Gamma)$ in the weak sense
(e.g. Theorem \ref{equiv-Dirichlet-to-Neumann} and Definition \ref{a-normal-a}(c)). Then $K(\cdot,\cdot)$ is symmetric and positive (e.g. \cite[Theorem 4.1]{JONS97}).
We now show the validity of {\bf (A1)} for this case. In fact,
given $u\in\mathcal{W}_2(\Omega;\Gamma)$, let $\mathfrak{u}$ be the solution of (\ref{4.1.06}) for
$g\equiv u$. Then we let $v_{_m}$ be defined by (\ref{3.2.02}), and consider the solution $\mathfrak{v}_{_m}$ of problem
(\ref{4.1.06}) for $g\equiv v_{_m}$. Then one has
$$\mathfrak{v}_{_m}|_{_{\Gamma}}=v_{_m}|_{_{\Gamma}}=\psi_{_{k-1,m}}(u)|_{_{\Gamma}}=\psi_{_{k-1,m}}(\mathfrak{u})|_{_{\Gamma}}\,\,\,
\mu\textrm{-a.e. on}\,\,\Gamma,$$
where we recall the definition of the function $\psi_{_{t,m}}(\cdot)$ given in (\ref{3.2.01}).
Then, in views of (\ref{4.1.07}) and (\ref{4.1.08}), we get the following calculation.\\[2ex]
$K(u,v_{_m})=\displaystyle\int_{\Gamma}\gamma v_{_m}\frac{\partial\mathfrak{u}}{\partial\nu_{\mu}}\,d\mu
=\displaystyle\int_{\Gamma}\gamma\psi_{_{k-1,m}}(\mathfrak{u})\frac{\partial\mathfrak{u}}{\partial\nu_{\mu}}\,d\mu
=\displaystyle\int_{\Omega}\gamma\nabla\psi_{_{k-1,m}}(\mathfrak{u})\nabla\mathfrak{u}\,dx$\\
$$\indent\indent\indent\indent\,\geq\,m
\displaystyle\int_{\Omega\cap\{\mathfrak{u}\geq m\}}|\nabla\mathfrak{u}|^2\,dx+
(k-1)\displaystyle\int_{\Omega\cap\{\mathfrak{u}<m\}}|\mathfrak{u}|^{k-2}|\nabla\mathfrak{u}|^2\,dx\,\geq\,0$$
(recall that $k\geq2$). This shows that {\bf (A1)} holds for $\eta_0=0$. Moreover, a calculation similar as the one carried out in
\cite[Theorem 4.1]{AR-MAZZ12} yields that $K(u^+,u^-)\leq0$, establishing {\bf (A2)}.\\
\indent To establish the validity of {\bf (A3)}, again given $u\in\mathcal{W}_2(\Omega;\Gamma)$, let $\mathfrak{u}$ be the solution of (\ref{4.1.06}) for
$g\equiv u|_{_{\Gamma}}$. Then one has that $u_k|_{_{\Gamma}}=(\mathfrak{u}|_{_{\Gamma}}-k)^+:=\mathfrak{u}_k|_{_{\Gamma}}$, and consequently
$$K(u,u_{k})=\displaystyle\int_{\Gamma}\gamma u_{k}\frac{\partial\mathfrak{u}}{\partial\nu_{\mu}}\,d\mu=
\displaystyle\int_{\Gamma}\gamma\mathfrak{u}_k\frac{\partial\mathfrak{u}}{\partial\nu_{\mu}}\,d\mu=
\displaystyle\int_{\widehat{\Omega}_k}\gamma\nabla\mathfrak{u}\nabla\mathfrak{u}\,dx\,\geq\,0,$$
where $\widehat{\Omega}_k:=\{x\in\Omega\mid\mathfrak{u}(x)\geq k\}$. We have shown the fulfillment of property {\bf (A3)}. Analogously, one easily sees that
condition {\bf (A3)'} is valid, and therefore all the results on this paper hold for the Dirichlet-to-Neumann map given by (\ref{4.1.07}).

\subsection{Further remarks and results for nonlocal maps}\label{subsec07-03}

\begin{remark}
Another class of nonlocal (boundary operators) has been investigated by Gesztezy and Mitrea \cite{GES-MITR09}.
However, it may not be possible to show the fulfillment of the conditions {\bf (A1)} and {\bf (A2)} without giving more concrete
assumptions to the nonlocal operator presented in \cite{GES-MITR09}. On the other hand, it may open the door to consider more classes of
nonlocal maps, and applications. We will no go further here.
\end{remark}

\indent We now establish a relationship between boundary value problems with different nonlocal maps.\\

\begin{theorem}
Let $\widetilde{\mathcal{J}}_{_{\Omega}}:H^{s/2}(\Omega)\rightarrow H^{-s/2}(\Omega)$ and
$\widetilde{\Theta}_{_{\Gamma}}:H^{r_d/2}_{\mu}(\Gamma)\rightarrow H^{-r_d/2}_{\mu}(\Gamma)$ be linear bounded (nonlocal)
operators, such that $\widetilde{\mathcal{J}}_{_{\Omega}}1=\widetilde{\Theta}_{_{\Gamma}}1=0$,\,
$\langle\widetilde{\mathcal{J}}_{_{\Omega}}u,1\rangle_{s/2}=0=
\langle\widetilde{\Theta}_{_{\Gamma}}v,1\rangle_{r_d/2}$, \,$\langle\widetilde{\mathcal{J}}_{_{\Omega}}u,u\rangle_{s/2}\geq0$,
and $\langle\widetilde{\Theta}_{_{\Gamma}}v,v\rangle_{r_d/2}\geq0$,
for each $u\in H^{s/2}(\Omega)$ and $v\in H^{r_d/2}_{\mu}(\Gamma)$.  Given $u\in\mathcal{W}_2(\Omega;\Gamma)$ a weak solution of (\ref{1.05}),
let $\tilde{u}\in\mathcal{W}_2(\Omega;\Gamma)$ be a weak solution of (\ref{1.05}) related to the maps $\widetilde{\mathcal{J}}_{_{\Omega}}$
and $\widetilde{\Theta}_{_{\Gamma}}$. Then there exists a constant $C>0$ such that
\begin{equation}\label{4.1.13}
\|u-\tilde{u}\|_{_{\mathcal{W}_2(\Omega;\Gamma)}}\,\leq\,C\,\left(\|\mathcal{J}_{_{\Omega}}u-\widetilde{\mathcal{J}}_{_{\Omega}}u\|
_{_{H^{-s/2}(\Omega)}}+\|\Theta_{_{\Gamma}}u-\widetilde{\Theta}_{_{\Gamma}}u\|_{_{H^{-r_d/2}_{\mu}(\Gamma)}}+\|u-\tilde{u}\|_{_{2,\Omega}}\right).
\end{equation}
Moreover, if any of the conditions in Remark \ref{coercivity} hold, then
\begin{equation}\label{4.1.14}
\|u-\tilde{u}\|_{_{\mathcal{W}_2(\Omega;\Gamma)}}\,\leq\,C'\,\left(\|\mathcal{J}_{_{\Omega}}u-\widetilde{\mathcal{J}}_{_{\Omega}}u\|
_{_{H^{-s/2}(\Omega)}}+\|\Theta_{_{\Gamma}}u-\widetilde{\Theta}_{_{\Gamma}}u\|_{_{H^{-r_d/2}_{\mu}(\Gamma)}}\right),
\end{equation}
for some constant $C'>0$.\\
\end{theorem}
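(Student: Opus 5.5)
The plan is to subtract the two weak formulations and test with the difference. Write $\widetilde{\mathcal{E}}_{\mu}$ for the form (\ref{3.1.01}) built with $\widetilde{\mathcal{J}}_{_{\Omega}},\widetilde{\Theta}_{_{\Gamma}}$, and $\widetilde{K}$ for the corresponding nonlocal term (\ref{nonlocal-term}). Since both $u$ and $\tilde{u}$ satisfy (\ref{3.1.07}) with the same data $(f,g)$, we get for all $\varphi\in\mathcal{W}_2(\Omega;\Gamma)$
$$\mathcal{E}_{\mu}(u,\varphi)-\widetilde{\mathcal{E}}_{\mu}(\tilde{u},\varphi)=0.$$
Adding and subtracting $\widetilde{\mathcal{E}}_{\mu}(u,\varphi)$, and using that the two forms differ only in the nonlocal term, this becomes
$$\widetilde{\mathcal{E}}_{\mu}(u-\tilde{u},\varphi)=-\bigl\langle(\mathcal{J}_{_{\Omega}}-\widetilde{\mathcal{J}}_{_{\Omega}})u,\varphi\bigr\rangle_{s/2}-\bigl\langle(\Theta_{_{\Gamma}}-\widetilde{\Theta}_{_{\Gamma}})u,\varphi\bigr\rangle_{r_d/2}.$$
Next, take $\varphi=w:=u-\tilde{u}$. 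The coercivity step is the key point. The proof of Proposition \ref{cont-coercive-form} used only $\langle\mathcal{J}_{_{\Omega}}w,w\rangle\geq0$ and $\langle\Theta_{_{\Gamma}}w,w\rangle\geq0$, together with the local structure. These sign properties are assumed for the tilde maps too, so (\ref{3.1.06}) holds for $\widetilde{\mathcal{E}}_{\mu}$ with the same constants $\min\{c_0,c^{\star}_0\}/2$ and $\delta^{\star}_0$.

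We then bound the right-hand side by duality:
$$\bigl(\|(\mathcal{J}_{_{\Omega}}-\widetilde{\mathcal{J}}_{_{\Omega}})u\|_{H^{-s/2}(\Omega)}\|w\|_{H^{s/2}(\Omega)}+\|(\Theta_{_{\Gamma}}-\widetilde{\Theta}_{_{\Gamma}})u\|_{H^{-r_d/2}_{\mu}(\Gamma)}\|w\|_{H^{r_d/2}_{\mu}(\Gamma)}\bigr).$$
By (\ref{2.04b}) and (\ref{2.05}) (see also Remark \ref{R-embedding}(a)), both $\|w\|_{H^{s/2}(\Omega)}$ and $\|w\|_{H^{r_d/2}_{\mu}(\Gamma)}$ are at most $c\|w\|_{\mathcal{W}_2(\Omega;\Gamma)}$. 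Combining with (\ref{3.1.06}) gives
$$\tfrac{\min\{c_0,c^\star_0\}}{2}\|w\|^2_{\mathcal{W}_2}\leq c\,(\ldots)\|w\|_{\mathcal{W}_2}+\delta^\star_0\|w\|^2_{2,\Omega}.$$
We apply Young's inequality to the first term, absorbing $\tfrac{\min\{c_0,c^{\star}_0\}}{4}\|w\|^2_{\mathcal{W}_2}$ into the left side. Taking square roots, with $\sqrt{a+b}\leq\sqrt a+\sqrt b$, then yields (\ref{4.1.13}).

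For (\ref{4.1.14}), suppose a case of Remark \ref{coercivity} holds. Then the $\delta^{\star}_0\|w\|^2_{2,\Omega}$ term is compensated: in case (a) by $\xi>\delta^{\star}_0$, and in case (b) by $\zeta>\delta^{\star}_0C_{\tau^{\ast}}$ via the boundary $L^2$ norm, exactly as in the computations of Remark \ref{coercivity}. So $\widetilde{\mathcal{E}}_{\mu}(w,w)\geq c\|w\|^2_{\mathcal{W}_2}$ and the $L^2$ term drops out. In case (c) we work in $H_0(\Omega)$ or $H_{0,\mu}(\Omega;\Gamma)$ instead, so the statement holds modulo constants.

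The main obstacle is checking that the coercivity constants are uniform, i.e. unaffected by swapping the nonlocal maps. This reduces to noting that (\ref{delta-star}) involves no quantity attached to $\mathcal{J}_{_{\Omega}}$ or $\Theta_{_{\Gamma}}$. Beyond that, the argument is routine bookkeeping.
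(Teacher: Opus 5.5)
Your proposal is correct and follows essentially the same route as the paper's proof. In both, you subtract the two weak formulations, split $\mathcal{J}_{_{\Omega}}u-\widetilde{\mathcal{J}}_{_{\Omega}}\tilde u=(\mathcal{J}_{_{\Omega}}-\widetilde{\mathcal{J}}_{_{\Omega}})u+\widetilde{\mathcal{J}}_{_{\Omega}}(u-\tilde u)$ (and likewise for $\Theta_{_{\Gamma}}$), and test with $u-\tilde u$. You then discard the tilde terms by positivity, bound the difference terms by duality together with \eqref{2.04b} and \eqref{2.05}, and absorb via Young's inequality. The only difference is presentational: you invoke the weak coercivity inequality \eqref{3.1.06} for the tilde form directly, correctly noting that $\delta^{\star}_0$ involves no data from the nonlocal maps, whereas the paper re-derives the lower-order estimates inline.
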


\begin{proof}
Clearly one has $\mathcal{E}_{\mu}(u-\tilde{u},\varphi)=0$ for all $\varphi\in\mathcal{W}_2(\Omega;\Gamma)$
(recall the definition of $\mathcal{E}_{\mu}(\cdot,\cdot)$ given in (\ref{3.1.01})). Moreover, since the (nonlocal) maps are linear, we see that
$$\mathcal{J}_{_{\Omega}}u-\widetilde{\mathcal{J}}_{_{\Omega}}\tilde{u}=(\mathcal{J}_{_{\Omega}}u-\widetilde{\mathcal{J}}_{_{\Omega}}u)+
\widetilde{\mathcal{J}}_{_{\Omega}}(u-\tilde{u});$$ the same property holds for $\Theta_{_{\Gamma}}$ and $\widetilde{\Theta}_{_{\Gamma}}$.
Using these facts, rearranging, applying Young's inequality, using the positivity of both $\widetilde{\mathcal{J}}_{_{\Omega}},\,
\widetilde{\Theta}_{_{\Gamma}}$, applying \cite[Lemma 4.2 and 4.3]{GES-MITR09} to the $L^2$-boundary terms,
taking into account (\ref{1.04}), and using (\ref{2.04b}) and (\ref{2.05}), by selecting $\varphi:=u-\tilde{u}$ one has\\[2ex]
$\min\{c_0,1\}\|u-\tilde{u}\|^2_{_{\mathcal{W}_2(\Omega;\Gamma)}}$\\
$$\leq\,-\displaystyle\int_{\Omega}
\left(\displaystyle\sum^N_{^{j=1}}\hat{a}_j(x)(u-\tilde{u})\partial_j(u-\tilde{u})+
\displaystyle\sum^N_{^{i=1}}\check{a}_i(x)\partial_i(u-\tilde{u})(u-\tilde{u})+\lambda(x)|u-\tilde{u}|^2\right)\,dx+$$
$$-\Lambda_{_{\Gamma}}(u-\tilde{u},u-\tilde{u})+c\|u-\tilde{u}\|^2_{_{2,\Gamma}}-\displaystyle\int_{\Gamma}\beta(x)|u-\tilde{u}|^2\,d\mu+\indent\indent\indent$$
$$-\left(\left\langle\mathcal{J}_{_{\Omega}}u-\widetilde{\mathcal{J}}_{_{\Omega}}u,u-\tilde{u}\right\rangle_{s/2}+
\left\langle\widetilde{\mathcal{J}}_{_{\Omega}}(u-\tilde{u}),u-\tilde{u}\right\rangle_{s/2}+
\left\langle\Theta_{_{\Gamma}}u-\widetilde{\Theta}_{_{\Gamma}}u,u-\tilde{u}\right\rangle_{r_d/2}\right)+$$
$$\indent\indent\indent-\left\langle\widetilde{\Theta}_{_{\Gamma}}(u-\tilde{u}),u-\tilde{u}\right\rangle_{r_d/2}+
\min\{c_0,1\}\left\|\textbf{u}-\tilde{\textbf{u}}\right\|^2_{_{\mathbb{X\!}^{\,2}(\Omega;\Gamma)}}$$
$$\leq\,\epsilon\left\|\nabla(u-\tilde{u})\right\|^2_{_{2,\Omega}}+
C^{\ast}_{\epsilon}(\delta^{\ast}+c+\min\{c_0,1\})\|u-\tilde{u}\|^2_{_{2,\Omega}}+\indent\indent\indent\indent$$
$$\indent+\|\mathcal{J}_{_{\Omega}}u-\widetilde{\mathcal{J}}_{_{\Omega}}u\|_{_{H^{-s/2}(\Omega)}}\|u-\tilde{u}\|_{_{H^{s/2}(\Omega)}}+
\|\Theta_{_{\Gamma}}u-\widetilde{\Theta}_{_{\Gamma}}u\|_{_{H^{-r_d/2}_{\mu}(\Gamma)}}\|u-\tilde{u}\|_{_{H^{r_d/2}_{\mu}(\Gamma)}}$$
$$\leq\,3\epsilon\left\|\nabla(u-\tilde{u})\right\|^2_{_{2,\Omega}}+
\{C^{\ast}_{\epsilon}(\delta^{\ast}+c+\min\{c_0,1\})+C'_{\epsilon}\}\|u-\tilde{u}\|^2_{_{2,\Omega}}+$$
$$\indent\indent\indent\indent\indent\indent+\|\mathcal{J}_{_{\Omega}}u-\widetilde{\mathcal{J}}_{_{\Omega}}u\|^2_{_{H^{-s/2}(\Omega)}}+
\|\Theta_{_{\Gamma}}u-\widetilde{\Theta}_{_{\Gamma}}u\|^2_{_{H^{-r_d/2}_{\mu}(\Gamma)}},$$
for all $\epsilon>0$, for some positive constants $c,\,C^{\ast}_{\epsilon},\,C'_{\epsilon}$, and where the constant $\delta^{\ast}$
is given by (\ref{3.2.10}). By selecting $\epsilon>0$ suitably, from the above calculation we deduce the validity of (\ref{4.1.13}).
Furthermore, if any of the conditions in Remark \ref{coercivity} occur, then one gets immediately from the above estimates that
(\ref{4.1.14}) is valid, completing the proof.
\end{proof}

\indent To conclude this section, we will establish $L^{\infty}$-type a priori estimates for elliptic and parabolic problems of type (\ref{1.05}) and
(\ref{1.06}), respectively, but now for the case where we have nonlocal interior and boundary data. To be more precise, consider the solvability of the
problems
\begin{equation}
\label{1.05B}\left\{
\begin{array}{lcl}
\mathcal{A}u\,=\,\mathcal{J}^{_{B}}_{_{\Omega}}f(x)\indent\,\textrm{in}\,\,\Omega,\\
\mathcal{B}u\,=\,\Theta^{_{B}}_{_{\Gamma}}g(x)\indent\,\textrm{on}\,\,\Gamma,\\
\end{array}
\right.
\end{equation}
and
\begin{equation}
\label{1.06B}\left\{
\begin{array}{lcl}
u_t-\mathcal{A}u\,=\,\mathcal{J}^{_{B}}_{_{\Omega}}\tilde{f}(t,x)\indent\,\textrm{in}\,\,(0,\infty)\times\Omega,\\
\,\,\,\,\mathcal{B}u\,\,\,\,\,\,\,\,=\,\Theta^{_{B}}_{_{\Gamma}}\tilde{g}(t,x)\indent\,\textrm{on}\,\,(0,\infty)\times\Gamma,\\
\,u(0,x)=u_0(x)\,\,\,\,\,\,\,\,\,\,\,\,\,\,\,\,\,\,\,\,\,\,\textrm{in}\,\,\Omega,
\end{array}
\right.
\end{equation}
where $\mathcal{J}^{_{B}}_{_{\Omega}}$ and $\Theta^{_{B}}_{_{\Omega}}$ denote the nonlocal operators defined by (\ref{Besov-Interior-Map}) and
(\ref{Besov-Boundary-Map}) for $\mathfrak{a}=\mathfrak{b}=1$, respectively,  and where in the denominator $N$ and $2d-N$ are replaced by $2N/\tilde{p}$ and $(4d-2N)/\tilde{q}$, respectively (for corresponding values of $\tilde{p},\,\tilde{q}\in[2,\infty)$). Then our main result regarding this situation is the following.

\begin{theorem}\label{Nonlocal-estimate}
Given $T>0$ a fixed constant, let $f\in H^{\mathfrak{s}/2}(\Omega)$, \,$g\in H^{r_d/2}_{\mu}(\Gamma)$,\, $\tilde{f}\in L^2(0,T;H^{\mathfrak{s}/2}(\Omega))$, and
$\tilde{g}\in L^2(0,T;H^{r_d/2}_{\mu}(\Gamma))$. Write $\eta_{\tilde{q}}:=1-(N-d)/\tilde{q}$. Then the following hold.
\begin{enumerate}
\item[(a)]\,\,\,Problem (\ref{1.05B}) is uniquely solvable over $\mathcal{W}_2(\Omega;\Gamma)$. Moreover, if $f\in\mathbb{B\!}^{\,\tilde{p}}_{\,\mathfrak{s}}(\Omega)$
for $\tilde{p}>N$, and $g\in\mathbb{B\!}^{\,\tilde{q}}_{_{\,\eta_{\tilde{q}}}}(\Gamma,\mu)$ for $\tilde{q}>2d(2+d-N)^{-1}$, then $u$ is globally bounded with
$$|\|\mathbf{u}\||_{_{\infty}}\,\leq\,C\left(\mathcal{M}^{\tilde{p}}_{\mathfrak{s}}(f,\Omega)+
\mathcal{N}^{\tilde{q}}_{\eta_{\tilde{q}}}(g,\Gamma,\mu)+\|u\|_{_{2,\Omega}}\right),$$
for some constant $C>0$, where we recall that $\mathcal{M}^{\tilde{p}}_{\mathfrak{s}}(f,\Omega)$ and $\mathcal{N}^{\tilde{q}}_{\eta_{\tilde{q}}}(g,\Gamma,\mu)$ are defined in
(\ref{nonlocal-interior}) and (\ref{nonlocal-boundary}), respectively. Furthermore, if any of the conditions in Remark \ref{R-embedding} holds, then one has that
$$|\|\mathbf{u}\||_{_{\infty}}\,\leq\,C\left(\mathcal{M}^{\tilde{p}}_{\mathfrak{s}}(f,\Omega)+
\mathcal{N}^{\tilde{q}}_{\eta_{\tilde{q}}}(g,\Gamma,\mu)\right).$$
\item[(b)]\,\,\,Given $\tilde{p},\,\tilde{q}\in[2,\infty)$ fulfilling
$$\displaystyle\frac{1}{\kappa_{\tilde{p}}}+\displaystyle\frac{N}{\tilde{p}}<1,\,\,\,\,\,\,\,\,\,\,\,\,\,\,\,\,\textrm{and}\,\,\,\,\,\,\,\,
\,\,\,\,\,\,\,\,\displaystyle\frac{1}{\kappa_{\tilde{q}}}+\displaystyle\frac{d}{\tilde{q}(2+d-N)}<\frac{1}{2},$$
if $\tilde{f}\in L^{\kappa_{\tilde{p}}}(0,T;\mathbb{B\!}^{\,\tilde{p}}_{\,\mathfrak{s}}(\Omega))$,\, $\tilde{g}\in L^{\kappa_{\tilde{q}}}
(0,T;\mathbb{B\!}^{\,\tilde{q}}_{_{\,\eta_{\tilde{q}}}}(\Gamma,\mu))$, $u_0\in L^{\infty}(\Omega)$, and if $u\in C([0,T];L^2(\Omega))\cap L^2((0,T), \mathcal{W}_2(\Omega;\Gamma))$ is a weak solution to problem \eqref{1.06B}, then
$$\|u\|_{_{L^{\infty}(0,T;L^{\infty}(\Omega))}}
\,\leq\,C\left(\|u_0\|_{_{\infty,\Omega}}+\left\{\displaystyle\int^T_0\left[\mathcal{M}^{\tilde{p}}_{\mathfrak{s}}(\tilde{f}(t),\Omega)\right]^
{\kappa_{\tilde{p}}}dt\right\}^{1/\kappa_{\tilde{p}}}+\left\{\displaystyle\int^T_0\left[\mathcal{N}^{\tilde{q}}_{\eta_{\tilde{q}}}(\tilde{g}(t),\Gamma,\mu)\right]^
{\kappa_{\tilde{q}}}dt\right\}^{1/\kappa_{\tilde{q}}}\right),$$
for some constant $C>0$. Additionally, all the results in subsection \ref{subsec04-03} can be adapted to this case.
\end{enumerate}
\end{theorem}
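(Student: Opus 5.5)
The plan is to reduce Theorem \ref{Nonlocal-estimate} to the results already proved for $L^p$ data: Theorem \ref{elliptic-bounded} (Case 5), Remark \ref{coercivity}, Theorem \ref{exist-uniq-weak-soln} and Theorem \ref{infitiny-u_0-infinity}. The key observation is that the data $\mathcal{J}^{B}_{\Omega}f$ and $\Theta^{B}_{\Gamma}g$ enter the weak formulation only through the functional
$$\varphi\mapsto \langle \mathcal{J}^{B}_{\Omega}f,\varphi\rangle+\langle\Theta^{B}_{\Gamma}g,\varphi\rangle .$$
By symmetry of the Besov kernels this equals
$$\int_\Omega\int_\Omega \frac{(f(x)-f(y))(\varphi(x)-\varphi(y))}{|x-y|^{2\mathfrak{s}+2N/\tilde p}}\,dx\,dy \;+\;\text{(the analogous boundary double integral)}.$$
These are bilinear pairings of the same type as the form $K(\cdot,\cdot)$.

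For solvability in (a), I will show that this functional is continuous on $\mathcal{W}_2(\Omega;\Gamma)$. Cauchy--Schwarz in the double integral bounds it by
$$\mathcal{M}^2_{\mathfrak{s}}(f,\Omega)\,\mathcal{M}^2_{\mathfrak{s}}(\varphi,\Omega)+\mathcal{N}^2(g)\,\mathcal{N}^2(\varphi),$$
up to the choice of exponents. Then (\ref{2.04b}) and (\ref{2.05}) bound the $\varphi$-seminorms by $\|\varphi\|_{\mathcal{W}_2(\Omega;\Gamma)}$. With Proposition \ref{cont-coercive-form} and Lax--Milgram, or Remark \ref{coercivity} to remove the shift $\delta^\star_0$, this gives a unique weak solution.

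For the $L^\infty$ bound, I will rerun the De Giorgi scheme of Case 5 in Theorem \ref{elliptic-bounded}, testing with $u_k=(u-k)^+$. The only new terms are $\langle \mathcal{J}^B_\Omega f,u_k\rangle$ and $\langle\Theta^B_\Gamma g,u_k\rangle$, which must replace $\int_{\Omega_k}fu_k$ and $\int_{\Gamma_k}gu_k$. Write the kernel as $|x-y|^{-(\mathfrak{s}+N/\tilde p)}\cdot|x-y|^{-(\mathfrak{s}+N/\tilde p')}$, where $\tilde p'$ is the conjugate exponent so that the weights split evenly. Hölder with exponents $\tilde p,\tilde p'$ then gives
$$|\langle \mathcal{J}^B_\Omega f,u_k\rangle|\le \mathcal{M}^{\tilde p}_{\mathfrak{s}}(f,\Omega)\,\mathcal{M}^{\tilde p'}_{\mathfrak{s}}(u_k,\Omega).$$
The second factor has to be controlled by $\|\nabla u_k\|_{2}$ times a positive power of $|\Omega_k|$ (plus a $k^2|\Omega_k|^{\cdots}$ term). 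I intend to get this by localizing $u_k$ to $\Omega_k$, then applying (\ref{2.04b}) in $W^{1,\tilde p'}$ and Hölder from $L^2$ to $L^{\tilde p'}$ on the gradient. This produces a factor $|\Omega_k|^{1/\tilde p'-1/2}$, and $\tilde p>N$ makes the resulting exponent $\theta_{1,\xi}$ positive. The boundary term is handled the same way through (\ref{2.05}); the condition $\tilde q>2d(2+d-N)^{-1}$ is meant to make $\theta_{2,\zeta}>0$. With these bounds, inequality (\ref{Lem0-02}) holds with $\hat k_0^2=\mathcal{M}^{\tilde p}_{\mathfrak{s}}(f,\Omega)^2+\mathcal{N}^{\tilde q}_{\eta_{\tilde q}}(g,\Gamma,\mu)^2$. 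Lemma \ref{Lem0}, applied to $u$ and to $-u$, then gives the first estimate. The coercive version follows as in Remark \ref{coercivity}, by bounding $\|u\|_{2,\Omega}$ by the data norms.

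For (b), I will repeat Steps 1--3 of the proof of Theorem \ref{Thm-Partial-Bounded} with the same replacement of the data terms, but time-integrated. Hölder in time with exponents $\kappa$ and $\kappa'$ converts the bound into the form (\ref{Lem1-02}), using (\ref{Sobolev-parabolic}) for the mixed norms, and Lemma \ref{Lem1} then applies. Finally, the linear splitting $u=T_\mu(t)u_0+\tilde u$ together with (\ref{infinity-semigroup}), as in Theorem \ref{infitiny-u_0-infinity}, yields the global estimate.

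The main obstacle is the localization step: bounding the Besov seminorm of $u_k$ by $\|\nabla u_k\|_2$ times a positive power of the level-set measure. Besov seminorms are not supported on $\Omega_k$, since the double integral sees pairs with $x\in\Omega_k$, $y\notin\Omega_k$. I would first try the pointwise bound $|u_k(x)-u_k(y)|\le|u(x)-u(y)|\chi_{\Omega_k}(x)+|u(x)-u(y)|\chi_{\Omega_k}(y)$ together with the extension operator. If that fails, the fallback is to absorb the term using only $\tilde p>N$, which makes the embedding $W^{1,2}\hookrightarrow \mathbb{B}^{\tilde p'}$ compact, and to move the level-set gain onto the $f$-factor instead.
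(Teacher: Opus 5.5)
Your overall architecture matches the paper's. You test with $u_k$, take $\hat k_0^2$ to be the squared data seminorms, reach (\ref{Lem0-02}) (resp.\ (\ref{Lem1-02})), and apply Lemma \ref{Lem0} (resp.\ Lemma \ref{Lem1}). Your exponent count is also right: both your route and the paper's give $k^2|\Omega_k|^{1-2/\tilde p}$ and $k^2\mu(\Gamma_k)^{1-2/\tilde q}$, and $\tilde p>N$, $\tilde q>2d(2+d-N)^{-1}$ are exactly the conditions $\theta_{1,\xi},\theta_{2,\zeta}>0$.

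The gap is in how you bound $\langle\mathcal{J}^{B}_{\Omega}f,u_k\rangle$ and $\langle\Theta^{B}_{\Gamma}g,u_k\rangle$, and it is not the one you flagged. Cross pairs cause no trouble in your primary route. Applying (\ref{2.04b}) to $u_k$ on all of $\Omega$ already controls them, and $\|u_k\|_{W^{1,\tilde p'}(\Omega)}\le|\Omega_k|^{1/\tilde p'-1/2}\|u_k\|_{H^1(\Omega)}$ because $u_k$ and $\nabla u_k$ vanish off $\Omega_k$. What actually fails is that the route needs (\ref{2.04b}) and (\ref{2.05}) at the exponents $\tilde p',\tilde q'<2$. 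Assumption \ref{As1} only provides the $W^{1,2}$-extension property. More decisively, (\ref{2.05}) at $p=\tilde q'$ requires $d>N-\tilde q'$, which is incompatible with $\tilde q>2d(2+d-N)^{-1}$ on part of the admissible range. For $N=3$, $d=3/2$, it requires $\tilde q<3$, while the hypothesis demands $\tilde q>6$. This is not a technicality: for a measure genuinely carried by a $3/2$-dimensional set, $W^{1,\tilde q'}$-functions have no trace at all. So the boundary half of your main argument breaks down.

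The repair is your fallback, made precise, and it is what the paper does in (\ref{NL01})--(\ref{NL03}). The integrand $(f(x)-f(y))(u_k(x)-u_k(y))$ vanishes outside $E_k:=(\Omega_k\times\Omega)\cup(\Omega\times\Omega_k)$, so apply Young pointwise there. Writing $\mathfrak{K}$ for the kernel,
\[
|f(x)-f(y)|\,|u_k(x)-u_k(y)|\,\mathfrak{K}(x,y)\le\frac{1}{2\epsilon}|f(x)-f(y)|^2\mathfrak{K}(x,y)+\frac{\epsilon}{2}|u_k(x)-u_k(y)|^2\mathfrak{K}(x,y).
\]
The $u_k$-part is absorbed into the left-hand side using (\ref{2.04b}) and (\ref{2.05}) with $p=2$ only. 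In the interior this works because $\mathfrak{K}\le c\,|x-y|^{-(2\mathfrak{s}+N)}$ when $\tilde p\ge 2$. The level-set gain is then placed entirely on the data. H\"older with exponent $\tilde p/2$ over $E_k$, where $|E_k|\le 2|\Omega|\,|\Omega_k|$, gives $\mathcal{M}^{\tilde p}_{\mathfrak{s}}(f,\Omega)^2(2|\Omega|\,|\Omega_k|)^{1-2/\tilde p}\le c\,k^2|\Omega_k|^{1-2/\tilde p}$ for $k\ge\hat k_0$. The data term on $\Gamma$ is handled the same way with $\mu\otimes\mu$, and in the parabolic case you add H\"older in time.

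Contrary to what your fallback suggests, $\tilde p>N$ plays no role in any embedding here; it enters only through $\theta>0$. Alternatively, you can keep your H\"older split and bound the $u_k$-factor by H\"older with exponent $2/\tilde p'$ on $E_k$. That reduces it to the $L^2$-based seminorm times $|E_k|^{1/\tilde p'-1/2}$ and gives the same estimate using only $H^1$ embeddings.
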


\begin{proof}
It suffices to verify this case for the parabolic problem (which is the hardest case). Furthermore, the only key result required to be
established is the validity of Theorem \ref{Thm-Partial-Bounded}; in particular, the derivation of the results in the first step of the proof for this
particular case. Select in this case
$$\hat{k}^2_0:=\left\{\displaystyle\int^T_0\left[\mathcal{M}^{\tilde{p}}_{\mathfrak{s}}(\tilde{f}(t),\Omega)\right]^
{\kappa_{\tilde{p}}}dt\right\}^{2/\kappa_{\tilde{p}}}+\left\{\displaystyle\int^T_0\left[\mathcal{N}^{\tilde{q}}_{\eta_{\tilde{q}}}(\tilde{g}(t),\Gamma,\mu)\right]^
{\kappa_{\tilde{q}}}dt\right\}^{2/\kappa_{\tilde{q}}},$$
where we recall that $\eta_{\tilde{q}}:=1-(N-d)/\tilde{q}$.
Then, for $k\geq\hat{k}_0$, after a verification of the behavior of the nonlocal integral representations for
$(\mathcal{J}^{_{B}}_{_{\Omega}}\tilde{f}(t,x))u_k$ and $(\Theta^{_{B}}_{_{\Gamma}}\tilde{g}(t,x))u_k$,
proceeding in the exact way as in the proof of the first step in the proof, we arrive at the following inequality:\\[2ex]
$\Upsilon^2_{\varsigma}(\psi u_k)\,\leq\,c\left\{L_{\psi'}\displaystyle\int^0_{-\varsigma}\|u_k(t)\|^2_{_{2,\Omega_k(t)}}\,dt\right.+$\\[2ex]
$+k^2\left(\displaystyle\int^0_{-\varsigma}\left(\displaystyle\int_{\Omega_k(t)}
\int_{\Omega_k(t)}\frac{|\tilde{f}(t,x)-\tilde{f}(t,y)|^2}{k^2|x-y|^{2\mathfrak{s}+2N/\tilde{p}}}\,dxdy+\right.\right.$\\
$$+\left.\displaystyle\int_{\Omega_k(t)}\int_{\Omega\setminus\Omega_k(t)}\frac{|\tilde{f}(t,x)-\tilde{f}(t,y)|^2}{k^2|x-y|^{2\mathfrak{s}+2N/\tilde{p}}}\,dxdy
+\displaystyle\int_{\Omega\setminus\Omega_k(t)}\int_{\Omega_k(t)}\frac{|\tilde{f}(t,x)-\tilde{f}(t,y)|^2}{k^2|x-y|^{2\mathfrak{s}+2N/\tilde{p}}}\,dxdy\right)dt+$$\\
$\,\,\,\,\,+\displaystyle\int^0_{-\varsigma}\left(
\displaystyle\int_{\Gamma_k(t)}\int_{\Gamma_k(t)}\frac{|\tilde{g}(t,x)-\tilde{g}(t,y)|^2}{k^2|x-y|^{2+(4d-2N)/\tilde{q}}}\,d\mu_xd\mu_y+\right.$\\
\begin{equation}\label{NL01}
+\left.\left.\left.\displaystyle\int_{\Gamma_k(t)}\int_{\Gamma\setminus\Gamma_k(t)}\frac{|\tilde{g}(t,x)-\tilde{g}(t,y)|^2}{k^2|x-y|^{2+(4d-2N)/\tilde{q}}}\,d\mu_xd\mu_y
+\displaystyle\int_{\Gamma\setminus\Gamma_k(t)}\int_{\Gamma_k(t)}\frac{|\tilde{g}(t,x)-\tilde{g}(t,y)|^2}{k^2|x-y|^{2+(4d-2N)/\tilde{q}}}\,d\mu_xd\mu_y\right)dt\right)\right\},
\end{equation}
for some constant $c>0$ (which can vary from line to line), where as before $L_{\psi'}:=\displaystyle\sup_{-\varsigma\leq t\leq0}|\psi'(t)|$.
Estimating the nonlocal-type data terms, we calculate and obtain that\\[2ex]
$\displaystyle\int^0_{-\varsigma}\left\{\displaystyle\int_{\Omega_k(t)}\int_{\Omega_k(t)}\frac{|\tilde{f}(t,x)-\tilde{f}(t,y)|^2}{k^2|x-y|^{2\mathfrak{s}+2N/\tilde{p}}}\,dxdy+
\displaystyle\int_{\Omega_k(t)}\int_{\Omega\setminus\Omega_k(t)}\frac{|\tilde{f}(t,x)-\tilde{f}(t,y)|^2}{k^2|x-y|^{2\mathfrak{s}+2N/\tilde{p}}}\,dxdy+\right.$\\
$$\,\,\,\,\,\,\,\,\,\,\,\,\,\,\,\,\,\,\,\,\,\,\,\,\,\,\,\,\,\,\,\,\,\,\,\,\,\,\,\,\,\,\,\,\,\,\,\,
\left.+\displaystyle\int_{\Omega\setminus\Omega_k(t)}\int_{\Omega^{\pm}_k(t)}\frac{|\tilde{f}(t,x)-\tilde{f}(t,y)|^2}{k^2|x-y|^{2\mathfrak{s}+2N/\tilde{p}}}\,dxdy\right\}dt$$
$$\leq\,\displaystyle\frac{3}{k^2}|\Omega|^{1/\breve{\tilde{p}}}\left\{\displaystyle\int^T_0\left[\mathcal{M}^{\tilde{p}}_{\mathfrak{s}}(\tilde{f}(t),\Omega)\right]^
{\kappa_{\tilde{p}}}dt\right\}^{1/\kappa_{\tilde{p}}}
\left(\displaystyle\int^0_{-\varsigma}|\Omega_k(t)|^{\breve{\kappa}_{\tilde{p}}/\breve{\tilde{p}}}dt\right)^{1/\breve{\kappa}_{\tilde{p}}}\,
\,\,\,\,\,\,\,\,\,\,\,\,\,\,$$
\begin{equation}\label{NL02}
\,\,\,\,\,\,\,\,\,\,\,\,\,\,\,\,\,\,\leq\,
3|\Omega|^{1/\breve{\tilde{p}}}\left(\displaystyle\int^0_{-\varsigma}|\Omega_k(t)|^{\breve{\kappa}_{\tilde{p}}
/\breve{\tilde{p}}}dt\right)^{1/\breve{\kappa}_{\tilde{p}}},
\end{equation}
and\\[2ex]
$\displaystyle\int^0_{-\varsigma}\left\{\displaystyle\int_{\Gamma_k(t)}\int_{\Gamma_k(t)}\frac{|\tilde{g}(t,x)-\tilde{g}(t,y)|^2}{k^2|x-y|^{2+(4d-2N)/\tilde{q}}}\,d\mu_xd\mu_y+
\displaystyle\int_{\Gamma_k(t)}\int_{\Gamma\setminus\Gamma_k(t)}\frac{|\tilde{g}(t,x)-\tilde{g}(t,y)|^2}{k^2|x-y|^{2+(4d-2N)/\tilde{q}}}\,d\mu_xd\mu_y+\right.$\\
$$\,\,\,\,\,\,\,\,\,\,\,\,\,\,\,\,\,\,\,\,\,\,\,\,\,\,\,\,\,\,\,\,\,\,\,\,\,\,\,\,\,\,\,\,\,\,\,\,
\left.+\displaystyle\int_{\Gamma\setminus\Gamma_k(t)}\int_{\Gamma_k(t)}\frac{|\tilde{g}(t,x)-\tilde{g}(t,y)|^2}{k^2|x-y|^{2+(4d-2N)/\tilde{q}}}\,d\mu_xd\mu_y\right\}d\xi,$$
$$\leq\,\displaystyle\frac{3}{k^2}\mu(\Gamma)^{1/\breve{\tilde{q}}}\left\{\displaystyle\int^T_0\left[\mathcal{N}^{\tilde{q}}_{\eta_{\tilde{q}}}(\tilde{g}(t),\Gamma,\mu)
\right]^{\kappa_{\tilde{q}}}dt\right\}^{1/\kappa_{\tilde{q}}}
\left(\displaystyle\int^0_{-\varsigma}\mu(\Gamma_k(t))^{\breve{\kappa}_{\tilde{q}}/\breve{\tilde{q}}}dt\right)^{1/\breve{\kappa}_{\tilde{q}}}
\,\,\,\,\,\,\,\,\,\,\,\,\,\,$$
\begin{equation}\label{NL03}
\,\,\,\,\,\,\,\,\,\,\,\,\,\,\,\,
\leq\,3\mu(\Gamma)^{1/\breve{\tilde{q}}}\left(\displaystyle\int^0_{-\varsigma}\mu(\Gamma_k(t))^{\breve{\kappa}_{\tilde{q}}
/\breve{\tilde{q}}}dt\right)^{1/\breve{\kappa}_{\tilde{q}}},
\end{equation}
where $\breve{\tau'}:=2\tau'(\tau'-2)^{-1}$ for $\tau'\in(2,\infty)$. Then letting
$$\theta_{1,1}:=\frac{2N+2\mathfrak{r}-N\mathfrak{r}}{N\mathfrak{r}},\,\,\,\,\,\,\,
r_{1,1}:=2(1+\theta_{1,1}),\,\,\,\,\,\,\,s_{1,1}:=(1+\theta_{1,1})\mathfrak{r},$$
$$\theta_{1,2}:=\frac{4\breve{\tilde{p}}+4\breve{\kappa}_{\tilde{p}}N-\breve{\tilde{p}}\breve{\kappa}_{\tilde{p}}N}{\breve{\tilde{p}}\breve{\kappa}_{\tilde{p}}N}
,\,\,\,\,\,\,\,r_{1,2}:=(1+\theta_{1,2})\breve{\kappa}_{\tilde{p}},\,\,\,\,\,\,\,s_{1,2}:=(1+\theta_{1,2})\breve{\tilde{p}},$$
$$\theta_{2,1}:=\frac{2d+2\mathfrak{s}-N\mathfrak{s}}{N\mathfrak{s}},\,\,\,\,\,\,\,
r_{2,1}:=2(1+\theta_{2,1}),\,\,\,\,\,\,\,s_{2,1}:=(1+\theta_{2,1})\mathfrak{s},$$
$$\theta_{2,2}:=\frac{4\breve{\tilde{q}}+4\breve{\kappa}_{\tilde{q}}d-\breve{\tilde{q}}\breve{\kappa}_{\tilde{q}}N}
{\breve{\tilde{q}}\breve{\kappa}_{\tilde{q}}N},\,\,\,\,\,\,\,
r_{2,2}:=(1+\theta_{2,2})\breve{\kappa}_{\tilde{q}},\,\,\,\,\,\,\,s_{2,2}:=(1+\theta_{2,2})\breve{\tilde{q}},$$
we get that\\[2ex]
$\Upsilon^2_{\varsigma}(\psi u_k)\,\leq\,cL_{\psi'}\displaystyle\int^0_{-\varsigma}\|u_k(t)\|^2_{_{2,\Omega_k(t)}}\,dt+$\\
\begin{equation}\label{NL04}
+ck^2\displaystyle\sum^{2}_{\xi=1}\left(\displaystyle\int^0_{-\varsigma}|\Omega_k(t)|
^{^{\frac{r_{1,\xi}}{s_{1,\xi}}}}\,dt\right)^{^{\frac{2(1+\theta_{1,\xi})}{r_{1,\xi}}}}
+ck^2\displaystyle\sum^{2}_{\zeta=1}\left(\displaystyle\int^0_{-\varsigma}\mu(\Gamma_k(t))
^{^{\frac{r_{2,\zeta}}{s_{2,\zeta}}}}\,dt\right)^{^{\frac{2(1+\theta_{2,\zeta})}{r_{2,\zeta}}}}.
\end{equation}
From here, we proceed exactly as in the proof of the first step in Theorem \ref{Thm-Partial-Bounded}, and the rest
of the proofs run unchanged. This is all what we need to verify, so we are done.
\end{proof}

\begin{remark}
    In Theorem \ref{Nonlocal-estimate}(b), observe that we are not guaranteeing the existence of a weak solution, as the procedure in subsection \ref{subsec04-02} may not be applicable for nonlocal data as in problem \eqref{1.06B}. The crucial change is that for this nonlocal version of the problem, the operator $\mathbf{A}$ defined in Definition \ref{a-normal-a}(d) needs to be extended to the space $H^{-\mathfrak{s}/2}(\Omega)\times H^{-r_d/2}(\Gamma)$. Since it is unknown whether the dual of the fractional Sobolev space possesses order continuous norm (see \cite[Appendix C]{AR-BA-HIE-NEU} for a precise definition), it is unknown whether $\mathbf{A}$ may generate a once integrated semigroup (Cf. \cite[Theorem 3.11.7]{AR-BA-HIE-NEU}), and thus Proposition \ref{reg-sol-class} may not hold in this case. In the case $\mathcal{A}$ and $\mathcal{B}$ are symmetric of Laplace-type structure, an alternative path has been recently considered in \cite{G-M-S-AVS24}.
\end{remark}

\subsection{Applications of nonlocal maps}\label{subsec07-04}

\indent The presence of nonlocal boundary value problems have experienced a substantial growth in interest and investigation, motivated by
many applications and advances in the field. For instance, nonlocal Besov-type operators as in subsection \ref{subsec07-01} are
important maps to showcase the presence of ``fractional" diffusion along $\Omega$ (in the case of the map $\mathcal{J}_{_{\Omega}}$
given by (\ref{Besov-Interior-Map})), and over $\Gamma$ (in the case of the map $\Theta_{_{\Gamma}}$ given by (\ref{Besov-Interior-Map})).
In fact, the inclusion of these kind of nonlocal maps provide a bridge to consider the realization of fractional-type boundary value problems
involving unbounded coefficients over rough domains, which is rather new as of the present time (refer to \cite{WAR15-1} in the case of bounded coefficients).
As a more concrete example, we recall the classes of ramified domains given in subsection \ref{subsec5-03}, which are useful in the simulation
of the diffusion of medical sprays in the bronchial tree. Then, since oxygen diffusion between the lungs and the circulatory system takes place
only in the last generation of the lungs tree, a reasonable diffusion model may need to involve nonlocal Robin boundary conditions containing
the Besov boundary operator $\Theta_{_{\Gamma}}$ given by (\ref{Besov-Interior-Map}). The Besov nonlocal maps are also of great value to establish a transition
between nonlocal diffusion equations and the corresponding nonlocal diffusion processes which could be of great value in applications to stable-like processes
(see for example \cite{B-B-C-K09,CHEN-KUM03}).\\
\indent The Dirichlet-to-Neumann map has been a very useful tool to deal with multiple inverse problems, and has also fundamental in the study of
some models related to electrical impedance tomography and conductivity (e.g. \cite{JONN99,JONS97,SILV-UHLM}). Recently in \cite{E-V-U-T20},
the authors developed a model relying in the Dirichlet-to-Neumann map to deal a low-frequency electromagnetic modeling which is computationally
accurate and efficient. Furthermore, in \cite{HUA-LU-LI07,LI-WANG11}, the Dirichlet-to-Neumann map has been a fundamental tool used to develop an
efficient numerical method for the modal analysis of two-dimensional photonic crystal waves (in the area of optics).\\
\indent In views of the above applications, we take the opportunity to comment on the conclusion of Theorem \ref{Nonlocal-estimate}, which establishes
the existence and regularity theory of problems (\ref{1.05B}) and (\ref{1.06B}). In these boundary value problems, the inhomogeneous interior and boundary
data are given in terms of the nonlocal Besov operators defined in subsection \ref{subsec07-01}, which correspond to interesting examples of
boundary value problems with ``fractional-type" data. These kind of model equations are rather new, especially in the time-dependent setting,
and provide the ground for further analysis and applications of problems with fractional data.\\

\noindent{\bf Acknowledgements.}\, We would like to thank Dr. Ives Achdou and Dr. Nicoletta Tchou for their help in providing us with suitable images of ramified domains.\\

\noindent The first author has been supported by: {\it the Gruppo
Nazionale per l'Analisi Matematica, la Probabilit\`a e le loro
Applicazioni (GNAMPA) of the Istituto Nazionale di Alta Matematica
(INdAM)}. The second author was supported by:
{\it The Puerto Rico Science, Technology and Research Trust}, under agreement number 2022-00014, and by: {\it University of Puerto Rico - R\'io Piedras FIPI grant 2024-2026}, award number 20FIP1570025.\\

\indent\\

\noindent{\bf Disclaimer.}  {\it This content is only the responsibility of the authors and does not necessarily represent the official views of The Puerto Rico Science, Technology and Research Trust}.\\

\end{document}